\newtheorem{theorem}{Theorem}[section]
\newtheorem{lemma}[theorem]{Lemma}
\newtheorem{proposition}[theorem]{Proposition}
\newtheorem{corollary}[theorem]{Corollary}
\newtheorem{_computation}[theorem]{Computation}
\newenvironment{computation}{\begin{_computation}\rm}{\end{_computation}}
\newtheorem{_definition}[theorem]{Definition}
\newenvironment{definition}{\begin{_definition}\rm}{\end{_definition}}
\newtheorem{_remark}[theorem]{\it Remark}
\newenvironment{remark}{\begin{_remark}\rm}{\end{_remark}}
\newtheorem{_example}[theorem]{Example}
\newenvironment{example}{\begin{_example}\rm}{\end{_example}}
\numberwithin{equation}{section}
\numberwithin{table}{section}
\numberwithin{figure}{section}
\newcommand{\C}{\mathord{\mathbb C}}
\newcommand{\F}{\mathord{\mathbb F}}
\renewcommand{\P}{\mathord{\mathbb  P}}
\newcommand{\Q}{\mathord{\mathbb  Q}}
\newcommand{\R}{\mathord{\mathbb R}}
\newcommand{\Z}{\mathord{\mathbb Z}}
\newcommand{\BBB}{\mathord{\mathcal B}}
\newcommand{\CCC}{\mathord{\mathcal C}}
\newcommand{\DDD}{\mathord{\mathcal D}}
\newcommand{\EEE}{\mathord{\mathcal E}}
\newcommand{\FFF}{\mathord{\mathcal F}}
\newcommand{\GGG}{\mathord{\mathcal G}}
\newcommand{\LLL}{\mathord{\mathcal L}}
\newcommand{\MMM}{\mathord{\mathcal M}}
\newcommand{\OOO}{\mathord{\mathcal O}}
\newcommand{\PPP}{\mathord{\mathcal P}}
\newcommand{\WWW}{\mathord{\mathcal W}}
\newcommand{\XXX}{\mathord{\mathcal X}}
\newcommand{\YYY}{\mathord{\mathcal Y}}
\newcommand{\ZZZ}{\mathord{\mathcal Z}}
\font\mathgot=eufm10
\newcommand{\SSSS}{\mathord{\hbox{\mathgot S}}}
\newcommand{\maprightsp}[1]{\; \smash{\mathop{\; \longrightarrow \; }\limits\sp{#1}}\; }
\newcommand{\mapdown}{\phantom{\Big\downarrow}\hskip -8pt \downarrow}
\newcommand{\mapdownright}[1]{\mapdown\rlap{$\vcenter{\hbox{$\scriptstyle#1$}}$}}
\newcommand{\mapdownleft}[1]{\rlap{$\vcenter{\hbox{$\scriptstyle#1$}}$}\mapdown}
\newcommand{\inj}{\hookrightarrow}
\newcommand{\isom}{\mathbin{\,\raise -.6pt\rlap{$\to$}\raise 3.5pt%
\hbox{\hskip .3pt$\mathord{\sim}$}\,}}
\newcommand{\set}[2]{\{\; {#1} \; \mid \; {#2} \;  \}}
\newcommand{\shortset}[2]{\{ {#1} \,|\, {#2}   \}}
\newcommand{\sprime}{\sp\prime}
\newcommand{\sptimes}{\sp{\times}}
\newcommand{\sperp}{\sp{\perp}}
\newcommand{\dual}{\sp{\vee}}
\newcommand{\inv}{\sp{-1}}
\newcommand{\NS}{\mathord{\rm NS}}
\newcommand{\Aut}{\operatorname{\rm Aut}\nolimits}
\newcommand{\Sing}{\operatorname{\rm Sing}\nolimits}
\newcommand{\sing}{\operatorname{\rm sing}\nolimits}
\newcommand{\Hom}{\operatorname{\rm Hom}\nolimits}
\newcommand{\rank}{\operatorname{\rm rank}\nolimits}
\newcommand{\disc}{\operatorname{\rm disc}\nolimits}
\newcommand{\ord}{\mathop{\rm ord}\nolimits}
\newcommand{\Pt}{\P^2}
\newcommand{\pione}{\pi_1}
\newcommand{\Pic}{\operatorname{\rm Pic}\nolimits}
\newcommand{\rmand}{\textrm{and}}
\newcommand{\rmor}{\textrm{or}}
\newcommand{\quand}{\quad\rmand\quad}
\newcommand{\tsum}{\textstyle\sum}
\newcounter{rmkakkocounter}
\newcommand{\setrmkakko}{\setcounter{rmkakkocounter}{1}}
\newcommand{\rmkakko}{{\rm (\thermkakkocounter)} \addtocounter{rmkakkocounter}{1}}
\newcommand{\tensor}{\otimes}
\newcommand{\XB}{X_B}
\newcommand{\YB}{Y_B}
\newcommand{\Gm}{\Gamma}
\newcommand{\tlGm}{\tilde\Gamma}
\newcommand{\tlGmplus}{\tilde\Gamma^+}
\newcommand{\tlGmminus}{\tilde\Gamma^-}
\newcommand{\tlGmsprimeplus}{\tilde\Gamma^{\prime+}}
\newcommand{\tlBi}{\tilde{B}_i}
\newcommand{\tlrhoB}{\tilde{\rho}_B}
\newcommand{\degs}{\mathop{\rm degs}}
\newcommand{\excEss}[1]{\EEE_{#1}}
\newcommand{\excEsB}{\excEss{B}}
\newcommand{\blat}{\Sigma} % bone lattice
\newcommand{\bBlat}{\Theta}
\newcommand{\lat}{\Lambda}
\newcommand{\blatt}[1]{\blat_{#1}} % bone lattice
\newcommand{\bBlatt}[1]{\bBlat_{#1}}
\newcommand{\latt}[1]{\lat_{#1}}
\newcommand{\blatB}{\blatt{B}} % bone lattice
\newcommand{\bBlatB}{\bBlatt{B}}
\newcommand{\latB}{\latt{B}}
\newcommand{\configtype}{\gamma}
\newcommand{\latdata}{\ell}
\newcommand{\latdataZP}{\ell^P}
\newcommand{\lattype}{\lambda}
\newcommand{\lattypeZP}{\lambda^P}
\newcommand{\QC}{{\rm{QC}}}
\newcommand{\gen}[1]{\langle#1\rangle}
\newcommand{\releqs}{\sim_{\rm{eqs}}}
\newcommand{\rellat}{\sim_{\rm{lat}}}
\newcommand{\relconfig}{\sim_{\rm{cfg}}}
\newcommand{\relemb}{\sim_{\rm{emb}}}
\newcommand{\nreleqs}{{\not\sim}_{\rm{eqs}}}
\newcommand{\nrellat}{{\not\sim}_{\rm{lat}}}
\newcommand{\nrelemb}{{\not\sim}_{\rm{emb}}}
\newcommand{\intnum}[2]{(#1,#2)}
\newcommand{\torus}{\mathord{\rm{trs}}}
\newcommand{\Btorus}{B_{\torus}}
\newcommand{\AAAA}{\mathord{\hbox{\mathgot A}}}
\newcommand{\BBBB}{\mathord{\hbox{\mathgot B}}}
\newcommand{\CCCC}{\mathord{\hbox{\mathgot C}}}
\newcommand{\DDDD}{\mathord{\hbox{\mathgot D}}}
\newcommand{\aaaa}{\mathord{\hbox{\mathgot a}}}
\newcommand{\bbbb}{\mathord{\hbox{\mathgot b}}}
\newcommand{\cccc}{\mathord{\hbox{\mathgot c}}}
\newcommand{\dddd}{\mathord{\hbox{\mathgot d}}}
\newcommand{\eeee}{\mathord{\hbox{\mathgot e}}}
\newcommand{\ffff}{\mathord{\hbox{\mathgot f}}}
\font\smaleufm=eufm8
\newcommand{\smallAAAA}{\mathord{\hbox{\smaleufm A}}}
\newcommand{\smallBBBB}{\mathord{\hbox{\smaleufm B}}}
\newcommand{\smallCCCC}{\mathord{\hbox{\smaleufm C}}}
\newcommand{\smallDDDD}{\mathord{\hbox{\smaleufm D}}}
\newcommand{\smallaaaa}{\mathord{\hbox{\smaleufm a}}}
\newcommand{\smallbbbb}{\mathord{\hbox{\smaleufm b}}}
\newcommand{\smallcccc}{\mathord{\hbox{\smaleufm c}}}
\newcommand{\smalldddd}{\mathord{\hbox{\smaleufm d}}}
\newcommand{\Klat}{\mathord{\mathbb L}}
\newcommand{\polB}{h_B}
\newcommand{\rlat}{L}
\newcommand{\zeroGamma}{{}^0\Gamma}
\newcommand{\Kahler}{\mathord{\mathcal K}}
\newcommand{\spzero}{{}^0 \hskip -1pt }
\newcommand{\pol}{\LLL}
\newcommand{\per}{\omega}
\newcommand{\persp}{\Omega}
\newcommand{\tR}{\tensor\R}
\newcommand{\Ball}{\mathbb{B}}
\newcommand{\tlE}{\tilde{E}}
\newcommand{\tlB}{\tilde{B}}
\newcommand{\LD}{\mathord{\tt LD}}
\newcommand{\ttlatdataZP}{{\mathord{l}}^{P}}
\newcommand{\perspcond}{\diamond}
\newcommand{\mystruth}[1]{\phantom{\hbox{\vrule height #1}}}
\newcommand{\mystrutd}[1]{\phantom{\hbox{\vrule depth #1}}}
\newcommand{\mystruthd}[2]{\phantom{\hbox{\vrule  height #1 depth #2}}}
\begin{document}

\title[Lattice Zariski $k$-ples]{Lattice Zariski $k$-ples 
of plane sextic curves and $Z$-splitting curves for double plane sextics}

\author{Ichiro Shimada}
\address{
Department of Mathematics, 
Graduate School of Science, 
Hiroshima University,
1-3-1 Kagamiyama, 
Higashi-Hiroshima, 
739-8526 JAPAN
}
\email{shimada@math.sci.hiroshima-u.ac.jp
}

%\thanks{Partially supported by
% JSPS Core-to-Core Program 18005:
%``New Developments of Arithmetic Geometry, Motive, Galois Theory, and Their Practical Applications"}
\thanks{Partially supported by
 JSPS Grants-in-Aid for Scientific Research (20340002) and 
 JSPS Core-to-Core Program (18005).}

\subjclass[2000]{14H50, 14J28, 14E20}

% 14H50 Plane and space curves
% 14J28 $K3$ surfaces and Enriques surfaces 
% 14E20 Coverings

\begin{abstract}
A simple sextic is 
a reduced  complex projective plane curve   of degree $6$
with only simple singularities.
We introduce a notion of $Z$-splitting curves
for the double covering of the projective plane branching along a simple sextic,
and  investigate lattice  Zariski $k$-ples of simple sextics by means of this notion.
Lattice types of $Z$-splitting curves and their  specializations are defined. 
All  lattice types of  $Z$-splitting curves of degree less than or equal to $3$ are classified 
up to specializations.
 \end{abstract}

\maketitle

\section{Introduction}\label{sec:Introduction}
In virtue of the theory of period mapping,
the lattice theory has become a strong computational tool 
in the study of complex $K3$ surfaces.
In this paper,
we apply  this tool to the classification of complex projective plane  curves of degree $6$
with only simple singularities.
In particular, we explain the phenomena of \emph{Zariski pairs}
from lattice-theoretic point of view.
\par
\medskip
A \emph{simple sextic} is 
a reduced  (possibly reducible) complex projective plane curve   of degree $6$
with only simple singularities.
For a simple sextic $B\subset \P^2$,
we denote by $\mu_B$ the total Milnor number of $B$,
by $\Sing B$ the singular locus of $B$, 
by $R_B$ the $ADE$-type of the singular points  of $B$,
and by $\degs B=[d_1, \dots, d_m]$ the list of degrees $d_i=\deg B_i$ of the irreducible components
$B_1, \dots, B_m$ of $B$.
\par
\medskip
We have the following equivalence relations among simple sextics.
\begin{definition}\label{def:firstthreerels}
Let $B$ and $B\sprime$ be simple sextics.

(1) We write $B\releqs B\sprime$ if $B$ and $B\sprime$ are 
contained in the same connected component
of an equisingular family of simple sextics.

(2) We say that 
$B$ and $B\sprime$ 
are of  the \emph{same configuration type} and write $B\relconfig  B\sprime$
 if there exist  tubular neighborhoods
 $T\subset \Pt$ of $B$  and  $T\sprime\subset \Pt$ of $B\sprime$
 and a homeomorphism  $\varphi: (T, B)\isom (T\sprime, B\sprime)$
 such that $\deg \varphi(B_{i})=\deg B_{i}$ holds
 for each irreducible component $B_{i}$ of $B$,
that $\varphi$ induces a bijection $\Sing B\isom \Sing B\sprime$,  
and that $\varphi$ is an analytic isomorphism of plane curve singularities
 locally  around  each $P\in \Sing B$.
%The configuration type containing  $B$ is denoted by $\gamma(B)$.
Note that $R_B$ and $\degs B$ are invariants of 
the configuration type.
(See \cite[Remark 3]{MR2409555} %Remark~\ref{rem:combinat}
for a combinatorial definition of $\relconfig$.)

(3) We say that 
$B$ and $B\sprime$ 
are of  the \emph{same embedding type} and write $B\relemb  B\sprime$
 if there exists  a homeomorphism  $\psi: (\Pt, B)\isom (\Pt, B\sprime)$
 such that 
$\psi$ induces a bijection $\Sing B\isom \Sing B\sprime$ and that,
locally around  each $P\in \Sing B$, $\psi$ is an analytic isomorphism of  plane curve singularities.
%The embedding type containing  $B$ is denoted by $\varepsilon(B)$.
\end{definition}
It is obvious that 
$$
B\releqs B\sprime\; \Longrightarrow\; B\relemb B\sprime \; \Longrightarrow\; B\relconfig B\sprime,
$$
while the converses do not necessarily hold.
\begin{example}\label{example:Z1}
Zariski~\cite{MR1506719} showed that there exist irreducible simple sextics $B_1$ and $B_2$ with $R_{B_1}=R_{B_2}=6A_2$
such that  $\pione (\Pt\setminus B_1)\cong \Z/2\Z\times \Z/3\Z$ while  $\pione (\Pt\setminus B_2)\cong  \Z/2\Z*  \Z/3\Z$,
where $*$ denotes the free product of groups. 
(See also Oka~\cite{MR1167373} and Shimada~\cite{MR1421396}).
Therefore we have $B_1\relconfig B_2$, 
but $B_1\nrelemb B_2$ and hence $B_1\nreleqs B_2$.
\end{example}
Artal-Bartolo~\cite{MR1257321} revived the study of pairs of plane curves that are 
of the same configuration type but 
are not connected by equisingular deformation.
Since then,
many works have been done about the discrepancies 
between  equisingular deformations  and  configuration types,
not necessarily for simple sextics 
but also for curves of higher degrees and with other types of singularities.
(See the survey paper~\cite{MR2409555}.)
The main theme of these works is  to find  pairs of plane curves 
(called \emph{Zariski pairs} or \emph{Zariski couples})
that have the same configuration type but have different embedding topologies.
% that are distinguished 
%by  the fundamental groups of the complements or its variations such as Alexander polynomials.
%
\par
\medskip
As for simple sextics,
there have been two important works about 
  $\releqs$ and  $\relconfig$;
one is Yang~\cite{MR1387816},
in which the configuration types of simple sextics are completely classified,
and the other is Degtyarev~\cite{MR2357681},
in which an algorithm to calculate the connected components 
of the equisingular family of simple sextics in a given configuration type
is presented.
The main tool of these two works  is the
theory of period mapping of complex $K3$ surfaces
applied to double plane sextics.
\par
\medskip
In this paper, 
we introduce another equivalence relation $\rellat$
by means of the structure of the N\'eron-Severi lattices of the $K3$ surfaces
obtained as the double covers of $\Pt$ branching along the simple sextics.
This relation  is coarser than $\releqs$ but finer than $\relconfig$,
and hence can play the same role as $\relemb$.
The definition of $\rellat$ is, however, purely algebraic
and therefore computationally easier to deal with than $\relemb$.
In fact, Yang's method~\cite{MR1387816} provides us with an algorithm to classify 
all the equivalence classes of the relation $\rellat$,
which are called the \emph{lattice types} of simple sextics.
Moreover we can sometimes conclude $B\nrelemb B\sprime$
by looking at an invariant of the lattice types (Theorem~\ref{thm:lat_and_emb}).

We then define the notion of \emph{$Z$-splitting curves},
and investigate lattice types of simple sextics by means of this notion.
A notion of \emph{lattice Zariski couples} 
(or more generally, \emph{lattice Zariski $k$-ples}) is introduced for $\rellat$
in the same way as  the notion of classical Zariski couples was introduced for $\relemb$ in~\cite{MR1257321}.
%The lattice Zariski couple
%is a pair of simple sextics
%that have the same configuration type but of different lattice type.
The notion of  $Z$-splitting curves provides us with a unifying tool 
to describe  all lattice Zariski $k$-ples.
In fact, the members of any lattice Zariski $k$-ple are distinguished by 
numbers of $Z$-splitting curves of degree $\le 2$ (Theorem~\ref{thm:LZkplets}).

Finally, we define \emph{lattice types of $Z$-splitting curves}, and 
classify all lineages via specialization of lattice types of $Z$-splitting curves of degree $\le 3$.
It turns out that these lineages are completely indexed by the 
\emph{class-order} of the $Z$-splitting curves (Theorems~\ref{thm:Zlines},~\ref{thm:Zconics} and~\ref{thm:sp}).
These lineages seem to yield 
many examples of simple sextics with interesting geometry.
For example,
the $Z$-splitting conics with class-order $3$
are the splitting conics of \emph{torus sextics},
which have been studied intensively by Oka and others~(see \cite{MR1948673}, for example).

Another importance of $Z$-splitting curves comes from the fact that,
for a  simple sextic $B$ that is generic in  an irreducible component of an equisingular family,
the N\'eron-Severi lattice of the corresponding $K3$ surface
is generated by the reduced parts of the lifts of the irreducible components of $B$ and 
the lifts of $Z$-splitting curves of degree $\le 3$
(Theorem~\ref{thm:FB}).
\par
\medskip
The plan of this paper is as follows.
In \S\ref{sec:definitions},
we define various notions that are investigated in this paper.
The relation $\rellat$ is defined in Definition~\ref{def:rellat}, 
and the notion of $Z$-splitting curves is defined in Definition~\ref{def:Zsplitting}.
The main results are stated in~\S\ref{sec:mainresults}.
Most of these results are proved computationally
with assistance of a computer.
We present lattice-theoretic algorithms to prove them
in the following sections.
In \S\ref{sec:Yang}, we  explain the  method of Yang 
to make the complete list of lattice types of simple sextics.
In \S\ref{sec:algorithm},
we give an algorithm to determine
the configuration type and the classes of lifts of smooth $Z$-splitting curves of degree $\le 3$
for a given lattice type of simple sextics.
In \S\ref{sec:specialization},
we present an algorithm
about specialization of lattice types of $Z$-splitting curves.
Results in \S\ref{sec:specialization} are the main theoretical ingredients
for our classification of the lineages of $Z$-splitting curves.
In~\S\ref{sec:demonstration},
we demonstrate the algorithms for a concrete example.
We conclude this paper
by presenting miscellaneous facts, examples and remarks in~\S\ref{sec:remarks}.
\par
\medskip
When we were finishing the first version of this paper,
a preprint by Yang and  Xie~\cite{discYang} appeared on the e-print archive.
In their paper,  Yang and  Xie also investigate the classical Zariski pairs of simple sextics by lattice theory
and the result in~\cite{nonhomeo, MR2405237}. See also Theorem~\ref{thm:lat_and_emb} of this paper.
\par
\medskip
{\bf Acknowledgement.} Part of this work was done during the author's stay at National University of Singapore
in September 2008.
Thanks are due to Professor De-Qi Zhang  for his warm hospitality.
The author is  also deeply grateful to  the referee for many valuable comments on the first version of this paper.
\section{Definitions}\label{sec:definitions}
A \emph{lattice} is a free $\Z$-module $L$ of finite rank
with a non-degenerate symmetric bilinear form 
$\intnum{\phantom{i}}{\phantom{i}}: L\times L\to \Z$.
We say that a lattice $L$ is \emph{even} if $x^2\in 2\Z$ holds for any $x\in L$. 
We say that $L$ is \emph{negative-definite} if $x^2<0$ holds for any non-zero $x\in L$.
\par
We fix several conventions  about lattices.
Let $L$ be a lattice, and let $S$ be a subset of $L$.
We denote by $\gen{S}$ the sublattice of $L$ generated by $S$ and by $\gen{S}\sp+$
the \emph{monoid} of vectors $\sum a_v v$ $(v\in S)$ with $a_v\in \Z_{\ge 0}$.
When $S=\{v\}$, we write $\gen{v}$ for $\gen{\{v\}}$.
We denote by $S\sp{\perp}$ or $(S\subset L)\sp\perp$
the orthogonal complement of $\gen{S}$ in $L$.
\par
Let $L\sprime$ be another lattice.
An \emph{embedding} of $L$ into $L\sprime$
is a homomorphism of $\Z$-modules $\phi:L\to L\sprime$
that satisfies $(x, y)=(\phi(x), \phi(y))$ 
for any $x, y\in L$.
Note that such a homomorphism is necessarily injective.
An embedding $\phi$ is said to be \emph{primitive}
if the cokernel of $\phi$ is torsion free.
For  an embedding $\phi$, 
we use the same letter $\phi$ to denote the induced linear homomorphism
$L\tensor \C\to L\sprime\tensor \C$.
%
%\par
%
%A lattice $L$ is called \emph{even} if $v^2\in 2\Z$ holds for any $v\in L$.
%
%
%
\begin{definition}
Let $L$ be an even negative-definite lattice.
A vector $d\in L$ is called a \emph{root} if $d^2=-2$.
Let $D_L$ be the set of roots in $L$.
A subset $F$ of $D_L$ is called a \emph{fundamental system of roots in $L$}
if  $F$ is a basis of $\gen{D_L}$
and every $d\in D_L$ can be written as a linear combination of elements of $F$
with coefficients all non-positive or all non-negative.
An even negative-definite lattice $L$ is called a \emph{root lattice}
if $\gen{D_L}=L$ holds.
\end{definition}
A fundamental system of roots   exists for any even negative-definite lattice.
The isomorphism classes of 
fundamental systems of roots (and hence root lattices) are classified by means of \emph{Dynkin diagrams}.
See Ebeling~\cite[\S1.4]{MR1938666} or Bourbaki~\cite{MR0240238}, for example, for the proof of these facts.
\par
\medskip
We denote by $L\dual$ the \emph{dual lattice}
$\shortset{v\in  L\tensor \Q}{\intnum{x}{v}\in \Z\;\textrm{for any}\; x\in L}$
of $L$, which is a submodule of $L\tensor \Q$ with finite rank containing $L$.
\begin{definition}
Let $L$ be a lattice.
A submodule $L\sprime$ of $L\dual$  is called 
an \emph{overlattice} of $L$ if 
$L\sprime$ contains $L$ and  the $\Q$-valued symmetric bilinear form on $L\dual$
extending 
the  symmetric bilinear form on $L$
takes values in $\Z$ on  $L\sprime$.
\end{definition}
\begin{definition}\label{def:lattice-type}
\emph{Lattice data}  is a triple
$[\EEE, h, \lat]$,
where $\EEE$ is a fundamental system of roots  in 
the negative-definite root lattice  $\gen{\EEE}$
generated by $\EEE$,
$h$ is a vector with $h^2=2$ that generates a positive-definite lattice  $\gen{h}$ of rank $1$,
and $\lat$ is an even overlattice of the orthogonal direct-sum $\gen{h}\oplus\gen{\EEE}$.

 \emph{Extended lattice data}  is a quartet 
$[\EEE, h, \lat, S]$,
where $[\EEE, h, \lat]$ is  lattice data and $S$ is a subset of $\lat$
with cardinality $2$.
\end{definition}
\begin{remark}
In the geometric application,
$S$ is the place holder for the classes of the lifts of a $Z$-splitting curve.
(See Definition~\ref{def:latdataZsplpair}.) 
\end{remark}
\begin{definition}\label{def:isom-lattice-type}
An \emph{isomorphism} from  lattice data $[\EEE, h, \lat]$ 
to  lattice data $[\EEE\sprime, h\sprime, \lat\sprime]$
is an isomorphism of lattices $\phi: \lat\isom \lat\sprime$  that satisfies $\phi(\EEE)=\EEE\sprime$ and $\phi(h)=h\sprime$.
If $\phi: \lat\isom \lat\sprime$ is an isomorphism of lattice data,
then $\phi$ induces an isomorphism of fundamental systems of roots between $\EEE$ and $\EEE\sprime$.
\end{definition}
\begin{definition}\label{def:isom-lattice-typeZP}
An \emph{isomorphism} from  extended lattice data $[\EEE, h, \lat, S]$ 
to extended lattice data $ [\EEE\sprime, h\sprime, \lat\sprime, S\sprime]$
is an isomorphism $\phi: \lat\isom \lat\sprime$ of lattice data from $[\EEE, h, \lat]$ to $[\EEE\sprime, h\sprime, \lat\sprime]$ 
that induces a bijection from  $S$ to $S\sprime$.
\end{definition}
Let $B\subset \P^2$ be a simple sextic.
Consider  the double covering $\pi_B: \YB\to \P^2$ branching exactly along  $B$.
Then $\YB$ has only rational double points of type $R_B$ as its singularities,
and the minimal resolution $\rho_B: \XB\to\YB$ of $\YB$
yields a $K3$ surface $\XB$.
Let $\tlrhoB: X_B\to\Pt$ denote the composite of $\rho_B$ and $\pi_B$.
\par
\medskip
We denote by 
$\NS(\XB)\subset H^2(\XB,\Z)$
 the N\'eron-Severi lattice of $X_B$.
Let $\excEsB$ be the set of $(-2)$-curves on $\XB$ that are contracted  by $\tlrhoB: X_B\to \Pt$.
We regard $\EEE_B$ as a subset of $\NS(\XB)$ by $E\mapsto [E]$,
where $[E]\in \NS(\XB)$ denotes the class of the curve $E\in \EEE_B$.
We consider the sublattice 
$$
\blatB:=\gen{\polB} \oplus \gen{\excEsB}\;\subset\;\NS(\XB)
$$
of $\NS(\XB)$ generated by the polarization class 
$$
\polB:=[\tlrhoB\sp * (\OOO_{\Pt}(1))]
$$
and $\excEsB\subset \NS(X_B)$.
Remark that $\EEE_B$ is a fundamental system of roots 
in the root lattice $\gen{\EEE_B}$ of type $R_B$.
We  then denote by
$$
\latB:=(\blatB\otimes\Q)\cap H^2(\XB,\Z)
$$
the primitive closure of $\blatB$ in $H^2(\XB,\Z)$, 
which is an even overlattice of $\blatB$.
Since $\NS(X_B)$ is primitive in $H^2(X_B, \Z)$,
$\latB$ is the primitive closure of $\blatB$ in $\NS(X_B)$.
Finally  we define the finite abelian group
$G_B$ by 
$$
G_B:=\latB/\blatB.
$$
\begin{definition}
We denote by $\latdata (B)$ the lattice data $[\EEE_B, \polB, \latB]$,
and call it the \emph{lattice data of $B$}.
\end{definition}
\begin{definition}\label{def:rellat}
Let $B$ and $B\sprime$ be simple sextics.
We  write $B\rellat  B\sprime$
 if there exists an isomorphism between the lattice data $\latdata (B)$
 and $\latdata (B\sprime)$.
An equivalence class of the relation $\rellat$ is called a \emph{lattice type} of simple sextics.
The lattice type containing a simple sextic $B$ is denoted by $\lattype (B)$.
\end{definition}
By definition,
an isomorphism of lattice data from $\latdata (B)$ to $\latdata (B\sprime)$
is an isomorphism of lattices $\latB\isom \lat_{B\sprime}$ that preserves the
polarization class and  the set of classes of the exceptional $(-2)$-curves.
\par
\medskip
It is obvious that the isomorphism class of the finite abelian group $G_B$ is 
an invariant of the lattice type $\lattype(B)$.
\par
\medskip
Let $B_1, \dots, B_m$ be the irreducible components of $B$.
We denote by $\tilde{B}_i\subset \XB$ the reduced part of the strict transform of
$B_i$, and put  
$$
\bBlatB:=\blatB+\gen{[\tilde{B}_1], \dots, [\tilde{B}_m]} \;\;\subset\;\; \NS(\XB).
$$
%to be  the sublattice of $\NS(\XB)$ generated by $\blatB$ and the classes of  $\tilde{B}_i$.
Then we have  %$\bBlatB$ is contained in $\latB$, and hence we have
$$
\blatB\subset \bBlatB\subset \latB \subset \NS(X_B).
$$
We see that the implications 
$$
B\releqs B\sprime\;\; \Longrightarrow\;\; B\rellat B\sprime \;\; \Longrightarrow\;\; B\relconfig B\sprime
$$
hold, 
where the second implication was proved by Yang~\cite{MR1387816}.
(See also Corollary~\ref{cor:Yang}).
Hence the isomorphism class of 
the finite abelian group
$$
F_B:=\latB/\bBlatB
$$
is also an invariant of the lattice type $\lattype(B)$. 
\par
\medskip 
In fact, Yang~\cite{MR1387816} gave an algorithm to classify 
all lattice types and configuration types of simple sextics
using the idea of Urabe~\cite{MR1101859, MR1000608}. % and the theory of discriminant forms by Nikulin~\cite{MR525944}.
The numbers of these types are given  in Table~\ref{table:numbs}.
\begin{table}
\newcommand{\numbox}[1]{\hbox to 5mm {\hss$#1$\hss}}
$$
\begin{array}{c|ccccccccccccc}
\renewcommand{\arraystretch}{1.2}
\mu_B&
\numbox{0}&
\numbox{1}& 
\numbox{2}& 
\numbox{3}& 
\numbox{4}& 
\numbox{5}& 
\numbox{6}& 
\numbox{7}& 
\numbox{8}& 
\numbox{9}& 
\numbox{10}& 
\numbox{11}& 
\\ 
\hline
\relconfig&
\numbox{1}&
\numbox{1}& 
\numbox{2}& 
\numbox{3}& 
\numbox{6}& 
\numbox{10}& 
\numbox{18}& 
\numbox{30}& 
\numbox{53}& 
\numbox{89}& 
\numbox{148}& 
\numbox{246}& 
\\ 
\rellat &
\numbox{1}&
\numbox{1}& 
\numbox{2}& 
\numbox{3}& 
\numbox{6}& 
\numbox{10}& 
\numbox{18}& 
\numbox{30}& 
\numbox{53}& 
\numbox{89}& 
\numbox{148}& 
\numbox{246}& 
\\ 
\end{array}
$$
\vskip 5pt 
\renewcommand{\numbox}[1]{\hbox to 8mm {\hss$#1$\hss}}
$$
\begin{array}{c|cccccccc|c}
\renewcommand{\arraystretch}{1.2}
\mu_B&
\numbox{12}& 
\numbox{13}& 
\numbox{14}& 
\numbox{15}& 
\numbox{16}& 
\numbox{17}& 
\numbox{18}& 
\numbox{19}& 
\textrm{total} \\ 
\hline
\relconfig&
\numbox{415}& 
\numbox{684}& 
\numbox{1090}& 
\numbox{1623}& 
\numbox{2139}& 
\numbox{2283}& 
\numbox{1695}& 
\numbox{623}& 
\numbox{11159} \\ 
\rellat &
\numbox{416}& 
\numbox{686}& 
\numbox{1096}& 
\numbox{1639}& 
\numbox{2171}& 
\numbox{2330}& 
\numbox{1734}& 
\numbox{629}& 
\numbox{11308} \\ 
\end{array}
$$
\vskip 15pt 
\caption{Numbers of  configuration types and  lattice types}\label{table:numbs}
\end{table}
(Yang did not present the complete table in his paper,
and hence we re-produced the classification table by ourselves
along with  the complete list of configurations of rational double points on
normal $K3$ surfaces in~\cite{MR2369942}.)
%The table is available from the authors's web-page~\cite{shimada_table}.
Table~\ref{table:numbs} shows that, for $\mu_B>11$,
there exist many lattice Zariski $k$-ples
 $(k>1)$,
which is defined as follows.
\begin{definition}\label{def:LZP}
A  configuration type $\gamma$ of simple sextics is called a \emph{lattice Zariski $k$-ple}
if $\gamma$ contains exactly $k$ lattice types.
\end{definition}
\begin{example} \label{example:Z2}
The configuration type  of irreducible simple sextics $B$ 
with $R_B=6A_2$ is a lattice Zariski couple with $\mu_B=12$.
Indeed,  for $B_1$ and $B_2$ in Example~\ref{example:Z1}, we have  
$G_{B_1}=0$ while $G_{B_2}\cong \Z/3\Z$.
\end{example}
\begin{remark}
See~\S\ref{sec:demonstration} for an example of 
lattice Zariski triples.
Looking at the classification table, we see that
there exist no lattice Zariski $k$-ples with $k>3$.
\end{remark}
Next we define the notion of \emph{$Z$-splitting curves},
where $Z$ stands for \emph{Zariski}.
Let $B$ be a simple sextic.
We denote by
$$
{\iota_B} : X_B\isom  X_B
$$
 the involution of $X_B$ over $\Pt$,
 and use the same letter $\iota_B$
 to denote the induced involution on 
 the lattice  $H^2(X_B, \Z)$.
 Note that $\iota_B$ preserves the sublattices
  $\blat_B$, $\latB$, $\bBlatB$ and  $\NS(X_B)$.
\begin{definition}
%Let $B$ be a simple sextic.
A reduced irreducible projective plane curve $\Gm\subset \Pt$ is said to be \emph{splitting for $B$}
if the strict transform of $\Gm$ by
$\tlrhoB: \XB\to\Pt$ 
splits into  two  (possibly equal) irreducible components
$\tlGmplus$ and $\tlGmminus= \iota_B (\tlGmplus)$.
We call $\tlGmplus$ and $\tlGmminus$ 
the \emph{lifts} of the splitting curve $\Gm$.
\end{definition}
We have $\tlGmplus=\tlGmminus$ if and only if $\Gm$ is an irreducible component of $B$.
\begin{definition}\label{def:Zsplitting}
A splitting curve $\Gm$ is said to be \emph{pre-$Z$-splitting}
if the class $[\tlGmplus]$ of a lift  $\tlGmplus\subset \XB$
 of $\Gm$ is contained in $\latB$.
 (Note  that
$[\tlGmplus]\in \latB$ if and only if $[\tlGmminus]\in \latB$,
because we have $[\tlGmplus]+[\tlGmminus]\in \blatB$.)
\end{definition}
\begin{definition}
A pre-$Z$-splitting curve $\Gm$ is said to be \emph{$Z$-splitting}
if the classes $[\tlGmplus]$ and $[\tlGmminus]=\iota_B([\tlGmplus])$ are distinct.
\end{definition}
\begin{remark}
Since $\iota_B$ acts on the orthogonal complement of $\latB$ in $H^2(X_B, \Z)$
as the multiplication by $-1$,
it follows  that, if a splitting curve $\Gm$ is not pre-$Z$-splitting,
then we have $[\tlGmplus]\ne [\tlGmminus]$.
See Table~\ref{table:preZ}.
\end{remark}
\begin{table}
\begin{tabular}{c|cc}
&$[\tlGmplus]\in \latB$ & $[\tlGmplus]\notin \latB$ \mystruthd{12pt}{7pt}\\
\hline 
$[\tlGmplus]= [\tlGmminus]$&  $I$ & $\emptyset$\mystruthd{11pt}{7pt} \\
\hline 
$[\tlGmplus]\ne  [\tlGmminus]$ &  $II$ & $III$ \mystruth{12pt}
\end{tabular}
\hskip .5cm 
\begin{tabular}{lcl}
splitting&:& $I+II+III$\\
pre-$Z$-splitting&:& $I+II$\\
$Z$-splitting&:& $II$
\end{tabular}
\vskip 10pt 
\caption{Three notions of splittingness}\label{table:preZ}
\end{table}
We  have an easy numerical criterion of pre-$Z$-splittingness (see Proposition~\ref{prop:criterion}).
We also have  the following:
\begin{proposition}\label{prop:stable}
Let $\Gm$ be a pre-$Z$-splitting curve for a simple sextic $B$.
Let $B\sprime$ be a general member of the connected component $\FFF$ of 
the equisingular family containing $B$, 
and let $\phi:H^2(X_B, \Z)\isom H^2(X_{B\sprime}, \Z)$ be an  isomorphism of lattices
induced by an equisingular deformation from $B$ to $B\sprime$.
Then there exists a pre-$Z$-splitting curve $\Gm\sprime$ for $B\sprime$
such that the class of a lift of $\Gm\sprime$ is equal to $\phi([\tlGmplus])$.
If $\Gm$ is $Z$-splitting, then so is $\Gm\sprime$.
\end{proposition}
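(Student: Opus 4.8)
The plan is to realise $\phi$ concretely as parallel transport for the Gauss--Manin connection along a path in $\FFF$ from $B$ to $B\sprime$, and then to transport the curve $\tlGmplus$ itself, not merely its class. First I would observe that over $\FFF$ the double covers fit into a smooth family of $K3$ surfaces: equisingularity means that the $ADE$-configurations, and hence the simultaneous resolutions, are locally constant, so the exceptional curves $\excEsB$, the polarization $\polB=[\tlrhoB\sp*\OOO_{\Pt}(1)]$, and the deck involution $\iota_B$ all propagate flatly. Consequently the monodromy isomorphism $\phi$ satisfies $\phi(\polB)=h_{B\sprime}$, $\phi(\excEsB)=\excEss{B\sprime}$ (matching the fundamental systems), hence $\phi(\blatB)=\blat_{B\sprime}$ and $\phi(\latB)=\lat_{B\sprime}$, and $\phi\circ\iota_B=\iota_{B\sprime}\circ\phi$ on $H^2$. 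In particular $w:=\phi([\tlGmplus])$ lies in $\lat_{B\sprime}\subseteq\NS(X_{B\sprime})$, so it is an algebraic class on $X_{B\sprime}$.

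Next I would show that $w$ is effective. Since $\tlGmplus$ is a reduced irreducible curve on a $K3$ surface, $w^2=[\tlGmplus]^2\ge -2$, so Riemann--Roch gives $\chi(\OOO_{X_{B\sprime}}(w))=2+\tfrac12 w^2\ge 1$, whence $w$ or $-w$ is effective. Because $h_{B\sprime}$ is nef and $(w,h_{B\sprime})=([\tlGmplus],\polB)=\deg\Gm>0$, the class $-w$ cannot be effective, so $w$ is effective. This step uses nothing about genericity and already shows that the class persists to \emph{every} member of $\FFF$.

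The substance is to upgrade the effective class $w$ to an \emph{irreducible} lift of an honest plane curve on the \emph{general} member $B\sprime$. Here I would argue that, because the flatly transported class $v_t$ stays of type $(1,1)$ on every $X_{B_t}$ (it lies in $\lat_{B_t}\subseteq\NS$), the reduced irreducible curve $\tlGmplus$ deforms along $\FFF$: the obstruction to deforming the pair (surface, curve) in each tangent direction of $\FFF$ vanishes exactly because $v_t$ remains algebraic (variational Hodge / semiregularity). Hence the component $\mathcal H_0$ of the relative Hilbert scheme of divisors in class $v_t$ through the point $[\tlGmplus]$ dominates $\FFF$, and since reducedness and irreducibility are open on $\mathcal H_0$, its member over the general point $B\sprime$ is a reduced irreducible curve $\tlGmsprimeplus$ with $[\tlGmsprimeplus]=w$. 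Setting $\Gm\sprime:=\tilde\rho_{B\sprime}(\tlGmsprimeplus)$, the identity $w+\iota_{B\sprime}^* w=\phi([\tlGmplus]+[\tlGmminus])\in\blat_{B\sprime}$ forces $\tlGmsprimeplus+\iota_{B\sprime}(\tlGmsprimeplus)$ to be the full strict transform of the degree-$\deg\Gm$ plane curve $\Gm\sprime$; thus $\Gm\sprime$ is splitting with lifts $\tlGmsprimeplus$ and $\tlGmsprimeminus:=\iota_{B\sprime}(\tlGmsprimeplus)$, and it is pre-$Z$-splitting since $[\tlGmsprimeplus]=w\in\lat_{B\sprime}$. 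I expect this persistence-of-irreducibility step to be the main obstacle: effectivity is automatic, but one must rule out that $w$ decomposes into several effective classes or acquires spurious exceptional components on the general member, and it is precisely the genericity of $B\sprime$ (minimality of $\NS(X_{B\sprime})$) together with the openness of irreducibility along $\mathcal H_0$ that excludes this.

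Finally, the $Z$-splitting assertion is immediate from the intertwining $\phi\circ\iota_B=\iota_{B\sprime}\circ\phi$: if $\Gm$ is $Z$-splitting, so that $[\tlGmplus]\ne[\tlGmminus]=\iota_B^*[\tlGmplus]$, then applying the isomorphism $\phi$ yields $w\ne\iota_{B\sprime}^* w$, i.e. $[\tlGmsprimeplus]\ne[\tlGmsprimeminus]$, and hence $\Gm\sprime$ is $Z$-splitting as well.
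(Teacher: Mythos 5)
Your proposal is correct and follows essentially the same route as the paper: transport the class flatly along the equisingular family, deform the divisor $\tlGmplus$ itself rather than just its class, conclude irreducibility of the deformed lift at the general member, and read off the $Z$-splitting assertion from the compatibility of $\phi$ with the deck involutions. The only point you leave implicit is that the unobstructedness/semiregularity you invoke for extending the divisor (as opposed to the line bundle, which deforms as soon as the class stays of type $(1,1)$) is exactly the vanishing $H^1(X_B,\OOO(\tlGmplus))=0$; the paper supplies this from the irreducibility of $\tlGmplus$ via Saint-Donat's Lemma 3.5 and then extends the defining section using Lemmas~\ref{lem:LLL} and~\ref{lem:V}, which is the same mechanism as your relative Hilbert-scheme argument. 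Your Riemann--Roch effectivity step is not needed once the section itself is deformed, but it is harmless.
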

\begin{proof}
Since $\phi$ is induced by an equisingular deformation,
we see that $\phi$ induces an isomorphism $ \latB\isom \lat_{B\sprime}$.
The second assertion follows from the first assertion because $\phi$ 
commutes with the involutions $\iota_B$ and $\iota_{B\sprime}$.
Since $\Gm$ is irreducible by definition,
the lift $\tlGmplus$ is also irreducible and hence
we have $H^1(X_B, \OOO(\tlGmplus))=0$ by~\cite[Lemma 3.5]{MR0364263}.
Since $B\sprime$ is general in $\FFF$, 
we see that $\tlGmplus$ is deformed to an effective divisor $\tlGmsprimeplus$
on $X_{B\sprime}$ (see Lemmas~\ref{lem:LLL}~and~\ref{lem:V}),
and that $\tlGmsprimeplus$ is irreducible
and mapped birationally to a curve $\Gm\sprime$ on $\Pt$.
Hence $\phi([\tlGmplus])$ is the class of a lift $\tlGmsprimeplus$ of 
a splitting curve $\Gm\sprime$ for $B\sprime$.
Since $\phi([\tlGmsprimeplus])\in \lat_{B\sprime}$,
$\Gm\sprime$ is pre-$Z$-splitting.
\end{proof}
\begin{example}\label{example:tripletangent}
Let $f(x_0, x_1, x_2)$ and $g(x_0, x_1, x_2)$ be  general homogeneous polynomials of degree
$5$ and $3$, respectively.
Then  $B=\{x_0 f+g^2=0\}$ is smooth,
and the triple tangent line $\Gm=\{x_0=0\}$ is splitting for $B$
but not pre-$Z$-splitting,
because a general sextic has no triple tangents.
\end{example}
\begin{example}
Every irreducible component of $B$ is pre-$Z$-splitting, but not $Z$-splitting.
\end{example}
\begin{example} 
Suppose that $B$ is a union of cubic curves $E_0$ and $E_\infty$.
Then  the general member $E_t$ of the pencil in $|\OOO_{\Pt}(3)|$
spanned by $E_0$ and $E_\infty$ is   pre-$Z$-splitting.
The  lifts $\tlE_{t}^+$ and  $\tlE_{t}^-$  of $E_t$ are,
however,  contained in the same elliptic pencil on $X_B$,
and hence $E_t$ is not $Z$-splitting.
%(See~\S\ref{sec:algorithm}).
\end{example}
If a pre-$Z$-splitting curve $\Gm$ is of degree $\le 2$ and not contained in $B$,
then  its  lifts $\tlGmplus$ and  $\tlGmminus$ are distinct $(-2)$-curves on $X_B$,
and hence $\Gm$ is $Z$-splitting.
\begin{example}\label{example:Z3}
Let $f(x_0, x_1, x_2)$ and $g(x_0, x_1, x_2)$ be  general homogeneous polynomials of degree
$2$ and $3$, respectively.
Then the \emph{torus sextic} $\Btorus:=\{f^3+g^2=0\}$ 
is a simple sextic with $R_{\Btorus}=6 A_2$, and 
the conic  $\Gm=\{f=0\}$  is  $Z$-splitting,
as can be seen by
the numerical criterion Proposition~\ref{prop:criterion}. 
(See Example~\ref{example:numtorus}.)
In fact, this torus sextic  $\Btorus$ is the simple sextic $B_2$  in Examples~\ref{example:Z1}~and~\ref{example:Z2},
and the class $[\tlGmplus]$ generates the cyclic group $G_{B_2}=G_{\Btorus}$  of order $3$.
\end{example}
\begin{definition}\label{def:lattice-generic}
A simple sextic $B$ is said to be \emph{lattice-generic}
if  $\latB=\NS(\XB)$ holds,
or equivalently,
the Picard number  of $X_B$ is equal to $\mu_B+1$.
\end{definition}
\begin{remark}\label{rem:lattice_generic}
It is easy to see that  lattice-generic simple sextics 
are dense in any equisingular family.
(See Corollary~\ref{cor:perturb}.)
In particular,
every lattice type contains a lattice-generic member.
\end{remark}
\begin{corollary}
A splitting curve $\Gm$ for a simple sextic $B$ is  pre-$Z$-splitting
if and only if $\Gm$ is stable under  general equisingular deformation of $B$.
\end{corollary}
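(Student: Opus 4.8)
The plan is to read the statement as a biconditional and to interpret ``$\Gm$ is stable under general equisingular deformation of $B$'' to mean that, along a general equisingular deformation of $B$ within its connected component $\FFF$ of the equisingular family, the curve $\Gm$ deforms to a splitting curve $\Gm\sprime$ for the general member $B\sprime$, its lift $\tlGmplus$ deforming continuously to a lift $\tlGmsprimeplus$ of $\Gm\sprime$. With this reading, the forward implication is essentially a restatement of Proposition~\ref{prop:stable}: if $\Gm$ is pre-$Z$-splitting, that proposition already produces a pre-$Z$-splitting curve $\Gm\sprime$ for $B\sprime$ whose lift has class $\phi([\tlGmplus])$, where $\phi\colon H^2(X_B,\Z)\isom H^2(X_{B\sprime},\Z)$ is the equisingular-deformation isomorphism. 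Since pre-$Z$-splitting curves are in particular splitting, this is exactly the assertion that the splitting property persists, so $\Gm$ is stable. (The contrasting behaviour is that of the triple tangent of Example~\ref{example:tripletangent}, which is splitting but not pre-$Z$-splitting and ceases to be splitting under a general deformation.)

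For the converse, the key input is that by Remark~\ref{rem:lattice_generic} the general member $B\sprime$ of $\FFF$ may be taken lattice-generic, so that $\lat_{B\sprime}=\NS(X_{B\sprime})$. Assume $\Gm$ is stable, and let $\Gm\sprime$ be the splitting curve for the lattice-generic $B\sprime$ to which $\Gm$ deforms. Its lift $\tlGmsprimeplus$ is an irreducible, hence effective, divisor on $X_{B\sprime}$, so $[\tlGmsprimeplus]\in\NS(X_{B\sprime})=\lat_{B\sprime}$; thus $\Gm\sprime$ is pre-$Z$-splitting. I would then transport this conclusion back along $\phi$. By the proof of Proposition~\ref{prop:stable} the isomorphism $\phi$ restricts to an isomorphism $\latB\isom\lat_{B\sprime}$, and because $\tlGmplus$ deforms to $\tlGmsprimeplus$ as a continuously varying algebraic cycle, parallel transport gives $\phi([\tlGmplus])=[\tlGmsprimeplus]$. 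From $[\tlGmsprimeplus]\in\lat_{B\sprime}$ and $\phi(\latB)=\lat_{B\sprime}$ it follows that $[\tlGmplus]\in\latB$, i.e.\ $\Gm$ is pre-$Z$-splitting.

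The only genuinely delicate point, and hence the main obstacle, is fixing the notion of stability precisely enough to guarantee the parallel-transport identity $\phi([\tlGmplus])=[\tlGmsprimeplus]$: one must ensure that when the splitting property persists it is an \emph{individual} lift $\tlGmplus$ that deforms to $\tlGmsprimeplus$ (so that its class is flat under parallel transport), rather than the two lifts being exchanged or coalescing along the way. Once this is settled, both directions reduce to facts already in hand: the forward direction is Proposition~\ref{prop:stable}, while the backward direction uses only that effective divisor classes lie in $\NS$, that $\NS(X_{B\sprime})=\lat_{B\sprime}$ for a lattice-generic general member, and the compatibility $\phi(\latB)=\lat_{B\sprime}$.
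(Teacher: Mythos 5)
Your proof is correct and follows the same route as the paper, which disposes of the ``only if'' direction by citing Proposition~3.\ (the stability proposition) and the ``if'' direction by invoking the density of lattice-generic members, exactly as you elaborate. The ``delicate point'' you flag about the two lifts being exchanged is harmless here, since $[\tlGmplus]\in\latB$ if and only if $[\tlGmminus]\in\latB$, so pre-$Z$-splittingness is insensitive to which lift is transported.
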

%after referee
%
\begin{proof}
The ``\,only if\," part follows from Proposition~\ref{prop:stable}.
The ``\,if\," part follows from Remark~\ref{rem:lattice_generic}.
\end{proof}
The assumption that  $B\sprime$ be a general member of $\FFF$ in  Proposition~\ref{prop:stable} 
is indispensable,
as the example below shows.
\begin{example}\label{example:Z12}
Let $f_1$, $f_2$ and $g$ be  general homogeneous polynomials with
$\deg f_1=\deg f_2=1$ and $\deg g=3$.
We put  $B_0:=\{f_1^3 f_2^3+g^2=0\}$.
Then  we have $\Btorus \releqs B_0$.
The $Z$-splitting conic $\Gm=\{f=0\}$ for $\Btorus$  degenerates into the union of two lines
$\{f_1=0\}$ and $\{f_2=0\}$.
Both of them   are  splitting but not pre-$Z$-splitting for $B_0$.
Note that 
 $B_{\torus}$ is lattice-generic, but $B_0$ is not lattice-generic.
\end{example}
\begin{definition}
We call
a pair $(B, \Gm)$ of a  simple sextic $B$ and a $Z$-splitting curve  $\Gm$ for $B$
a \emph{$Z$-splitting pair}.
If $B$ is lattice-generic,
we say that  $(B, \Gm)$ is \emph{lattice-generic}.
\end{definition}
\begin{definition}\label{def:latdataZsplpair}
The \emph{lattice data $\latdataZP(B, \Gm)$  of a $Z$-splitting pair $(B, \Gm)$}
is the extended lattice data
$$
\latdataZP(B, \Gm):=[\EEE_B, h_B, \lat_B, \{[\tlGm^+], [\tlGm^-]\}].
$$
We write $(B, \Gm)   \rellat (B\sprime, \Gm\sprime)$ 
if there exists an isomorphism 
 of  extended  lattice data   
 between $\latdataZP(B, \Gm)$ and $\latdataZP(B\sprime, \Gm\sprime)$.
 The equivalence class of $\rellat$ is called a \emph{lattice type}, and 
 the lattice type  containing a  $Z$-splitting pair $(B, \Gm)$
is denoted by $\lattypeZP (B, \Gm)$.
\end{definition}
By definition,
an isomorphism of lattice data from $\latdataZP(B, \Gm)$ to $\latdataZP(B\sprime, \Gm\sprime)$
is an isomorphism of lattices $\latB\isom\lat_{B\sprime}$
that preserves the polarization class,
the set of exceptional $(-2)$-curves,
and maps the classes of the lifts $\tlGm^\pm$ of $\Gm$  to the classes of the lifts $\tlGm\sp{\prime\pm}$ of $\Gm\sprime$.
\begin{remark}\label{rem:latticegenericZP}
By  Proposition~\ref{prop:stable} and Remark~\ref{rem:lattice_generic}, 
every lattice type $\lattypeZP$ of $Z$-splitting pairs contains a lattice-generic member.
\end{remark}
\section{Main results}\label{sec:mainresults}
\subsection{Classes of lifts of $Z$-splitting curves}\label{subsec:ZZ}
Let $B$ be a simple sextic.
For  $n=1, 2, 3$, we denote by
$$
\ZZZ_n (B):=\set{[\tlGm^+], [\tlGm^-]}{\text{$\Gm$ is a smooth  $Z$-splitting curve of degree $n$}}\;\subset\; \latB.
$$
%
%after referee
\begin{remark}
In this definition,
the condition that $\Gm$ should be smooth is of course redundant when $n<3$.
For $n=3$,
there may be a $Z$-splitting nodal cubic curve $\Gm$
such that   $\tlGm^+$ and $\tlGm^-$ are $(-2)$-curves on $X_B$, 
but we do not consider  such $Z$-splitting curves.
\end{remark}
The main reason why we treat only smooth $Z$-splitting curves of degree $\le 3$
will be revealed in~Theorem~\ref{thm:FB}.
\par
\medskip
Our first main result is as follows:
\begin{theorem}\label{thm:ZZZ}
Let $B$ and $B\sprime$ be lattice-generic simple sextics such that $B\rellat B\sprime$.
If $\phi:\latB\isom \latt{B\sprime}$ is an isomorphism  of lattice data 
from $\latdata(B)$ to $\latdata(B\sprime)$, then
$\phi$ induces a bijection between $\ZZZ_n (B)$ and $\ZZZ_n (B\sprime)$ for $n=1, 2, 3$.
\end{theorem}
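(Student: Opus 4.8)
The plan is to show that the set $\ZZZ_n(B)\subset\latB$ admits a description purely in terms of the lattice data $[\EEE_B,\polB,\latB]$, so that any isomorphism $\phi$ of lattice data automatically carries $\ZZZ_n(B)$ onto $\ZZZ_n(B\sprime)$ by the rule $\{v^+,v^-\}\mapsto\{\phi(v^+),\phi(v^-)\}$. Concretely, I would isolate a list of lattice-theoretic conditions $(\ast)$ on an unordered pair $\{v^+,v^-\}$ of vectors of $\latB$ and prove that, for a lattice-generic $B$, the pair lies in $\ZZZ_n(B)$ if and only if $(\ast)$ holds. Since $\phi$ preserves $\polB$, the subgroups $\gen{\EEE_B}$ and $\blatB=\gen{\polB}\oplus\gen{\EEE_B}$, and the bilinear form, every condition in $(\ast)$ is preserved by $\phi$, and the theorem follows, bijectivity being witnessed by $\phi\inv$, which is again an isomorphism of lattice data.

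The first ingredient is that the covering involution $\iota_B$, restricted to $\latB$, is intrinsic to the lattice data. Indeed $\iota_B$ fixes $\polB$, and a local analysis at each singular point shows that it acts on $\gen{\EEE_B}$ through the canonical automorphism of the root system determined by the $ADE$-type $R_B$ (for an $A_n$-point the two branches of the double cover are exchanged, which induces the order-reversing involution of the chain of exceptional curves; the $D$- and $E$-cases are similar). Since $\latB\tensor\Q=\blatB\tensor\Q$, this determines $\iota_B$ on $\latB$ uniquely by $\Q$-linearity. Because $\phi$ restricts to an isomorphism of fundamental systems of roots $\EEE_B\isom\EEE_{B\sprime}$ and fixes the polarization, it intertwines the two canonical automorphisms, whence $\phi\circ\iota_B=\iota_{B\sprime}\circ\phi$ on $\latB$. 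This is what allows the $Z$-splitting condition to be transported.

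With $\iota_B$ now available lattice-theoretically, I would take $(\ast)$ to consist of: $v^+,v^-\in\latB$ with $v^-=\iota_B(v^+)$ and $v^+\ne v^-$; $v^++v^-\in\blatB$ (the pre-$Z$-splitting/degree condition, cf.\ Proposition~\ref{prop:criterion}); $v^+\cdot\polB=n$; $(v^+)^2=-2$ when $n\le 2$ and $(v^+)^2=0$ when $n=3$; together with an effectivity condition selecting the classes of irreducible curves. The necessity of these conditions for a genuine smooth $Z$-splitting curve $\Gm$ is the routine direction: $v^\pm=[\tlGmplus],[\tlGmminus]$ lie in $\latB$ by pre-$Z$-splittingness, $\tlGmminus=\iota_B(\tlGmplus)$ and $v^+\ne v^-$ by the definition of $Z$-splitting, $v^+\cdot\polB=\deg\Gm=n$ because $\tlGmplus\to\Gm$ is birational, and $(v^+)^2=2\,g(\Gm)-2$ by adjunction on the $K3$ surface $X_B$, giving $-2$ for a line or conic and $0$ for a smooth cubic.

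The crux is the converse (realization): given a pair satisfying $(\ast)$, one must produce an actual smooth $Z$-splitting curve of degree $n$ realizing it. Here the hypothesis that $B$ be lattice-generic, so that $\NS(X_B)=\latB$, is essential. I would argue that $v^+$ is effective by Riemann--Roch on $X_B$ (using $(v^+)^2\ge -2$ and $v^+\cdot\polB>0$), decompose $|v^+|$ into fixed and moving parts against the $(-2)$-curves, and show that $v^+$ is in fact represented by an irreducible curve $\tlGmplus$ whose image $\Gm:=\tlrhoB(\tlGmplus)$ is an irreducible plane curve of degree $n$; smoothness of $\Gm$ follows from the genus computation $(v^+)^2=2g-2$ together with $v^+\ne\iota_B(v^+)$, and $\Gm$ is $Z$-splitting precisely because $v^+\ne v^-$. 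The content to be checked carefully is that this effectivity-and-irreducibility analysis is governed only by the lattice $\latB$, the class $\polB$ (which $\phi$ preserves, so the distinguished component of the positive cone is preserved), and the configuration of $(-2)$-curves (likewise determined by the lattice via Riemann--Roch); this is exactly what makes the realization condition a lattice-theoretic condition and hence transportable by $\phi$. I expect this realization step --- and in particular verifying that the chamber and positivity data entering it are preserved by an arbitrary isomorphism of lattice data --- to be the main obstacle, and it is presumably where the algorithm of the later sections does the real work.
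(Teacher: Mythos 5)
Your overall strategy is the paper's: describe $\ZZZ_n(B)$ by conditions intrinsic to the lattice data $[\EEE_B,\polB,\latB]$, observe that $\iota_B|_{\latB}$ is itself determined by that data (your argument here matches the paper's Remark~\ref{rem:iota}), and transport everything by $\phi$. The necessity direction and the adjunction computation are also as in the paper. The gap is in the list $(\ast)$ itself: the one condition that does all the work, ``an effectivity condition selecting the classes of irreducible curves,'' is not a lattice-theoretic condition as stated, and asserting that it is ``governed only by the lattice \dots via Riemann--Roch'' begs the question. Riemann--Roch gives effectivity of $v^+$, but whether the (unique, since $(v^+)^2=-2$, or moving, in the cubic case) effective representative is irreducible depends a priori on knowing which classes are represented by irreducible curves --- exactly the geometric input you are trying to eliminate. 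You flag this yourself, but flagging it does not close it, and the theorem is precisely the statement that it can be closed.

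What the paper supplies, and what is missing from your $(\ast)$, is the following. First, the \emph{$v$-smoothness} condition (Definition~\ref{def:vsmooth}): each component $x_P\in\gen{\EEE_P}\tensor\Q$ of the class must be $0$ or some $e_i\dual$. This is forced on genuine lifts by Lemma~\ref{lem:vsmooth}, and via the case-by-case Lemma~\ref{lem:casebycase} it yields the rigidity statement Corollary~\ref{cor:x=y}, which is what pins the effective representative down to a single irreducible curve in Proposition~\ref{prop:hdeg1}. Second, for $n=2$ even a $v$-smooth class with the right numerics can be represented by $\tilde C_1+\tilde C_2+\sum e_i$ with the $\tilde C_i$ lifts of lines; the paper must explicitly exclude the classes $x$ with $x-(l_1+l_2)\in\gen{\EEE_B}^+$ for $l_1,l_2\in\LLL_B$, and prove the dichotomy of Proposition~\ref{prop:hdeg2} (together with Proposition~\ref{prop:hdeg2notblat} to rule out non-split double covers of lines via $x\notin\blatB$). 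Third, for $n=3$ the paper replaces any appeal to the K\"ahler chamber by the checkable condition $\intnum{g}{v}\ge 0$ for all $v\in\EEE_B\cup\LLL_B$, which suffices for nefness because every other irreducible curve has $\intnum{\cdot}{\polB}\ge 2$ (Lemma~\ref{lem:gnef}); this is how the worry you raise at the end --- whether the positivity data entering the realization are preserved by an arbitrary isomorphism of lattice data --- is discharged without ever choosing a chamber. Until conditions of this kind are formulated and proved equivalent to membership in $\ZZZ_n(B)$ for lattice-generic $B$, the bijectivity claim is not established.
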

More precisely,
we will give in \S\ref{sec:algorithm} an algorithm 
to calculate the sets $\ZZZ_1 (B)$, $\ZZZ_2 (B)$ and  $\ZZZ_3 (B)$
for a lattice-generic simple sextic $B$ from
the lattice  data $\latdata (B)$.
\par
\medskip
When $n<3$, 
each  element of $\ZZZ_n (B)$ is the class of a unique $(-2)$-curve,
which is a lift of a $Z$-splitting curve of degree $n$.
Hence the cardinality of $\ZZZ_1 (B)$ (resp.~$\ZZZ_2 (B)$) is twice of the number of 
$Z$-splitting lines (resp.~$Z$-splitting conics).
%By Proposition~\ref{prop:stable},
%the numbers of $Z$-splitting lines and  conics
%are lower-semicontinuous functions on equisingular family of simple sextics,
%and Theorem~\ref{thm:ZZZ} states 
%that their maximum  do not depend on the connected component of the equisingular family.
By Theorem~\ref{thm:ZZZ}, we can make the following:
\begin{definition}
For a lattice type $\lattype=\lattype(B)$ of simple sextics,
we define $z_1(\lattype)$ and $z_2(\lattype)$
to be the numbers of $Z$-splitting lines and of $Z$-splitting conics 
for a lattice-generic member $B$ of $\lattype$.
\end{definition}
In the above definition,
the condition that $B$ should be lattice-generic is indispensable.
\begin{example}\label{exampe:z1z2}
The non lattice-generic member $B_0$ 
of the lattice type $\lattype (B_{\torus})=\lattype (B_0)$ in Example~\ref{example:Z12}
has no $Z$-splitting conics,
while $z_2(\lattype (B_{\torus}))=1$.
\end{example}
The usefulness of 
the notion of $Z$-splitting curves in the study of
lattice Zariski $k$-ples comes from  the following:
\begin{theorem}\label{thm:LZkplets}
Let $\lattype$ and $\lattype\sprime$ be 
lattice types  of simple sextics
in the same  configuration type.
If $z_1(\lattype)=z_1(\lattype\sprime)$ and $z_2(\lattype)=z_2(\lattype\sprime)$,
then $\lattype=\lattype\sprime$.
Namely, the lattice types in any lattice Zariski $k$-ple are
distinguished by the numbers $z_1(\lattype)$ and $z_2(\lattype)$.
\end{theorem}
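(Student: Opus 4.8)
The plan is to show that a lattice type $\lattype$ of simple sextics (in a fixed configuration type) is determined by the pair of numbers $(z_1(\lattype),z_2(\lattype))$. The key invariant to exploit is the finite abelian group $G_B=\latB/\blatB$: by the discussion preceding Definition~\ref{def:LZP}, its isomorphism class is an invariant of $\lattype$, and since all members of a given configuration type share the same $\blatB$ (equivalently the same $[\EEE_B,\polB]$ and the same rank of $\latB$), the lattice type $\lattype$ is essentially encoded by the overlattice $\latB\supset\blatB$, i.e. by the subgroup $G_B\subset(\blatB)\dual/\blatB$ together with the way $\polB$ and $\EEE_B$ sit inside it. So the first reduction I would make is: \emph{for sextics in one configuration type, specifying the lattice data $\latdata(B)=[\EEE_B,\polB,\latB]$ up to isomorphism is the same as specifying the subgroup of the discriminant form $\blatB\dual/\blatB$ corresponding to $\latB$, modulo the automorphisms of $[\EEE_B,\polB]$.}

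\textbf{Reconstructing the lattice from the $Z$-splitting data.} Next I would connect $z_1,z_2$ to this subgroup. For a lattice-generic $B$, the sets $\ZZZ_1(B),\ZZZ_2(B)\subset\latB$ consist of classes $[\tlGm^+]$ of lifts of $Z$-splitting lines and conics. By the numerical criterion (Proposition~\ref{prop:criterion}, assumed) and the remark that for degree $\le 2$ each such class is a unique $(-2)$-curve, these classes are precisely the vectors $v\in\latB\setminus\blatB$ of a prescribed type: $v^2=-2$, with $\intnum{v}{\polB}=n$ for $n=1,2$, whose image in $G_B$ is nonzero (the $Z$-splitting condition $[\tlGm^+]\ne[\tlGm^-]=\invol_B([\tlGm^+])$ forcing $v\notin\blatB$). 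The crucial structural point I would aim to prove is that \emph{the elements of $G_B$ arising from $\ZZZ_1(B)\cup\ZZZ_2(B)$, together with the contribution from irreducible components of $B$, generate $G_B$}—this is the essence of Theorem~\ref{thm:FB} restricted to degrees $\le 2$, and I would try to leverage exactly that generation statement. If the nonzero elements of $G_B$ that are ``hit'' by low-degree $Z$-splitting curves generate $G_B$ and moreover determine its group structure through the counts $z_1,z_2$, then $G_B$—and hence $\latB$ as an overlattice of the fixed $\blatB$—is recovered from $(z_1,z_2)$.

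\textbf{From counts to the subgroup.} The heart of the argument is combinatorial/lattice-theoretic: within one configuration type the finitely many candidate overlattices $\latB$ form a poset, and I would verify that the map $\lattype\mapsto(z_1(\lattype),z_2(\lattype))$ separates its points. Concretely, each overlattice determines, via the algorithm of \S\ref{sec:algorithm}, the multiset of short vectors of square $-2$ and degree $1$ or $2$ lying in $\latB$ but outside $\blatB$; counting these (up to the $\invol_B$-pairing $v\leftrightarrow\polB\cdot(\text{something})-v$ that sends $[\tlGm^+]$ to $[\tlGm^-]$) yields $z_1,z_2$. I would then argue that two distinct overlattices of the same $\blatB$ inside the fixed ambient lattice, preserving $\polB$ and $\EEE_B$, cannot produce the same short-vector counts in degrees $1$ and $2$ simultaneously. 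This is where a uniform proof is delicate, so in practice—and consistent with how the paper describes its main results as verified ``computationally with assistance of a computer''—I expect this separation to be established by exhausting the finite list of configuration types containing more than one lattice type (the $\mu_B>11$ cases of Table~\ref{table:numbs}) and checking on each that the $(z_1,z_2)$ values are pairwise distinct across the lattice types in that type.

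\textbf{The main obstacle.} The genuinely hard part is the last step: proving that $(z_1,z_2)$ \emph{separates} lattice types rather than merely being an invariant. There is no a priori reason a subgroup $G_B\subset\blatB\dual/\blatB$ should be reconstructible from only the degree-$1$ and degree-$2$ short-vector counts, since in principle two nonisomorphic overlattices could share those counts while differing in higher-degree data. The paper sidesteps this by the finiteness of the classification (there are no lattice Zariski $k$-ples with $k>3$, and only finitely many configuration types with $k>1$), so the separation becomes a finite verification rather than a structural theorem. I would therefore organize the proof as: (i) the invariance of $z_1,z_2$ under $\rellat$ (immediate from Theorem~\ref{thm:ZZZ}); (ii) reduction to the finitely many multi-lattice-type configurations via Table~\ref{table:numbs}; and (iii) the computer-assisted check that within each such configuration the pairs $(z_1,z_2)$ are all distinct, completing the proof that equality of $(z_1,z_2)$ forces $\lattype=\lattype\sprime$.
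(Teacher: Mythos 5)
Your proposal is correct and follows essentially the same route as the paper: the invariance of $z_1,z_2$ comes from Theorem~\ref{thm:ZZZ}, and the separation of lattice types within a configuration type is established not structurally but by the exhaustive computer-assisted check over the complete classification list (this is exactly Computation~\ref{comp:latdata}, after which the paper states that Theorem~\ref{thm:LZkplets} is proved). Your candid identification of the separation step as a finite verification rather than a reconstruction theorem matches the paper's actual argument.
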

The set $\ZZZ_3 (B)$ is in two-to-one correspondence with a set of
one-dimensional families of  $Z$-splitting cubic curves.
\begin{proposition}\label{prop:cubic}
Let $\tlE$ be an effective divisor on $X_B$.
We have 
 $[\tlE]\in  \ZZZ_3 (B)$ if and only if 
$|\tlE|$ is an elliptic pencil on $X_B$ whose general member is a lift of a $Z$-splitting cubic curve.
\end{proposition}
\begin{proof}
Let $\tlE$ be a lift of a smooth $Z$-splitting cubic curve $E$.
Then $\tlE$ is smooth of genus $1$,
and hence $|\tlE|$ is an elliptic pencil.
Conversely,
if $|\tlE|$ is an elliptic pencil on $X_B$ whose general member $\tlE$ is a lift of a $Z$-splitting cubic curve $E$,
then $E$ must be smooth because $E$ is birational to $\tlE$ and hence of genus $1$.
\end{proof}
\subsection{Classification of $Z$-splitting curves of degree $\le 2$}
Next we give a  classification of lattice types of $Z$-splitting pairs $(B, \Gm)$ with $\deg \Gm\le 2$.
The numbers of lattice types $\lattype$ of simple sextics  with $z_1(\lattype)>0$
or $z_2(\lattype)>0$
are  given in Table~\ref{table:z1z2numbs}.
%
%after referee
If $\mu_B<12$,  then $B$ has no $Z$-splitting curves of degree $\le 2$.
(Remark that there are lattice types $\lambda$ for which   both $z_1(\lattype)>0$ and $z_2(\lattype)>0$ hold.
Such lattice types are counted twice in Table~\ref{table:z1z2numbs}.)
\begin{table}
$$
\begin{array}{c|cccccccc|c}
\mu_B &12 &13 &14 &15 &16 &17 &18 &19 &\rm{total}\mystrutd{4pt}\\ 
 \hline 
\rm{lines}&0 &0 &0 &1 &2 &7 &13 &18 &41 \mystruthd{12pt}{4pt}\\ 
\rm{conics}&1 &2 &7 &18 &47 &86 &108 &55 &324\\ 
\end{array}
$$

\caption{Numbers of lattice types with $Z$-splitting lines or conics}\label{table:z1z2numbs}
\end{table}
\par
\medskip
The entire classification  table  is
too huge to be presented in a paper.
In order to state our classification in a concise way,
we introduce the notion of
\emph{specialization} of lattice types.
\begin{definition}
Let $\lattype_0$ and $\lattype$ be lattice types of simple sextics.
 We say that $\lattype_0$ is a \emph{specialization} of  $\lattype$
 if  there exists an analytic  family $f:\BBB\to \Delta$ of simple sextics $f\inv (t)=B_t$  
parameterized by a unit disc $\Delta\subset \C$, where $\BBB$ is a surface in $\Pt\times\Delta$ and $f$ is a projection, 
such that the central fiber $B_0$ is a member of $\lattype_0$ and
the other  fibers $B_t$ $(t\ne 0)$ are members of $\lattype$.
\end{definition}
\begin{definition}
Let $\lattypeZP_0$ and $\lattypeZP$ be lattice types of  $Z$-splitting pairs.
 We say that $\lattypeZP_0$ is a \emph{specialization} of  $\lattypeZP$
if  there exists  an analytic  family 
$f : \PPP \to \Delta$
of $Z$-splitting pairs $f\inv (t)=(B_t, \Gm_t)$ 
such that  the central fiber  $f\inv (0)$ is a  member of
$\lattypeZP_0$ and the other  fibers  
$f\inv (t)$ $(t\ne 0)$  are 
 members of $\lattypeZP$.
\end{definition}
We  give the list of lattice types of $Z$-splitting pairs that generate
all other lattice types  by specialization.
It turns out that the lineages of lattice types 
via specialization are classified by the \emph{class-order} defined below.
\begin{definition}
 The \emph{class-order} of a  $Z$-splitting pair $(B, \Gm)$ 
 (or of a lattice type $\lattypeZP(B, \Gm)$ of $Z$-splitting pairs)
 is the order of $[\tlGmplus]$ in $G_B=\latB/\blatB$.
\end{definition}
%

%after referee
In the following,
the index $\tau$ in lattice types $\lattype_{\alpha, \tau}$
takes symbolic values
$n$, $l$ or $c$,
which stand for ``\emph{none}", ``\emph{line}" and ``\emph{conic}",
respectively.
\par
\medskip
The classification of $Z$-splitting lines is as follows.
\begin{definition}\label{def:Zlinesconfig}
For $\alpha\in \{\AAAA,\BBBB,\CCCC,\DDDD\}$,
let $\configtype_\alpha$ be the configuration type 
given in the following table:
$$
\renewcommand{\arraystretch}{1.2}
\begin{array}{cllll}
\alpha  & R_B &\degs && \textrm{} \\
\hline
\AAAA & 3A_5 & [3,3]&& (\textrm{the cubics are smooth})\\
\BBBB & A_3+2A_7 & [2, 4]&&  (\textrm{the quartic has $A_3$})\\
\CCCC & 2A_4+A_9 & [1, 5]&& (\textrm{the quintic has $2 A_4$}) \\
\DDDD & A_3+A_5+A_{11} & [2, 4]&&  (\textrm{the quartic has $A_5$})
\end{array}
$$
\end{definition}
\begin{proposition}\label{prop:Zlineslattype1}
Let $\alpha$ be one of $\AAAA,\BBBB,\CCCC,\DDDD$.

{\rm (1)} The lattice types $\lattype_{\alpha, \tau}$ in 
the configuration type $\configtype_\alpha$ 
are given in Table~\ref{table:lattypeZPorglin}.
The invariants $z_1(\lattype_{\alpha, \tau})$,
$z_2(\lattype_{\alpha, \tau})$,
$G_B$ and $F_B$ of these lattice-types are also given in 
this table.

{\rm (2)} Let $B$ be a lattice-generic member of 
$\lattype_{\alpha, l}$,
so that there exists a unique $Z$-splitting line $\Gm$ for $B$.
Then $\Gm$ passes through the three singular points of $B$,
and the cyclic group $G_B$ is generated by $[\tlGmplus]$.
\end{proposition}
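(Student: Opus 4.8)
The plan is to prove Proposition~\ref{prop:Zlineslattype1} in two largely independent pieces, treating part (1) as an explicit lattice-theoretic computation and part (2) as a geometric consequence of the lattice data. For part (1), I would first invoke Yang's algorithm (explained in~\S\ref{sec:Yang}) to produce the complete list of lattice types $\lattype$ lying in each of the four configuration types $\configtype_\alpha$ of Definition~\ref{def:Zlinesconfig}. For each such lattice type I would then run the algorithm promised in~\S\ref{sec:algorithm} to compute the sets $\ZZZ_1(B)$, $\ZZZ_2(B)$ and hence the invariants $z_1(\lattype)$ and $z_2(\lattype)$ for a lattice-generic member $B$; by Theorem~\ref{thm:ZZZ} these are well-defined invariants of the lattice type. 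The finite abelian groups $G_B=\latB/\blatB$ and $F_B=\latB/\bBlatB$ are read off directly from the overlattice $\latB$ and the sublattices $\blatB$, $\bBlatB$, both of which are determined by the lattice data together with the classes $[\tilde B_i]$ of the components. Assembling these computed values into Table~\ref{table:lattypeZPorglin} completes part (1); this step is computational and, granting the cited algorithms, routine rather than conceptually difficult.

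For part (2), I would start from a lattice-generic member $B$ of $\lattype_{\alpha,l}$, so that by the definition of this lattice type there is (up to the two-element class pair) a unique $Z$-splitting line $\Gm$, with lifts $\tlGmplus$, $\tlGmminus$ whose classes lie in $\latB$. The first geometric claim — that $\Gm$ passes through the three prescribed singular points — I would extract from the intersection numbers $\intnum{[\tlGmplus]}{[\EEE_B]}$ computed in part (1). Concretely, a line meets $B$ in total degree $6$ counted with multiplicity along the branch locus, and the way $\tlGmplus$ meets the exceptional $(-2)$-curves $\EEE_B$ over a singular point $P$ records the local intersection behavior of $\Gm$ with $B$ at $P$; passing through $P$ forces nonzero intersection with the corresponding exceptional classes, so the pattern of these intersection numbers pins down which singular points lie on $\Gm$. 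Since the total degree available to a line is small, the numerical constraints should force $\Gm$ to pass through all three singular points listed in $\configtype_\alpha$, with no freedom left over.

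The second claim of part (2) — that $[\tlGmplus]$ generates the cyclic group $G_B$ — I would establish by comparing $[\tlGmplus]$ against the known structure of $G_B$ from Table~\ref{table:lattypeZPorglin}. Because $\Gm$ is $Z$-splitting, $[\tlGmplus]\in\latB$ but $[\tlGmplus]\notin\blatB$ (otherwise the two lifts could not have distinct classes, as $[\tlGmplus]+[\tlGmminus]\in\blatB$), so $[\tlGmplus]$ maps to a nonzero element of $G_B=\latB/\blatB$. It then suffices to check that the order of this element equals $|G_B|$; this is precisely the class-order of $(B,\Gm)$, and since $G_B$ is cyclic in each of these four cases (as recorded in part (1)), I would verify that the class-order equals $|G_B|$ by direct computation of the image of $[\tlGmplus]$ in $G_B$, using the explicit overlattice description of $\latB$.

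The main obstacle I anticipate is the interface between the lattice computation and the honest geometry in part (2): the algorithms of~\S\ref{sec:algorithm} return \emph{candidate} classes in $\latB$ that satisfy the numerical criterion for pre-$Z$-splittingness (Proposition~\ref{prop:criterion}), and one must confirm that for a lattice-generic $B$ such a class is genuinely represented by an irreducible curve realizing a $Z$-splitting line through the three singular points, rather than by some other effective divisor with the same class. I expect this gap to be bridged by the effectivity and deformation arguments already used in Proposition~\ref{prop:stable} — namely that for a smooth $Z$-splitting curve of degree $\le 2$ the lifts are necessarily distinct $(-2)$-curves, together with the fact that a $(-2)$-class of the computed intersection profile is effective on a $K3$ surface and its image in $\Pt$ is forced to be a line of the asserted incidence type.
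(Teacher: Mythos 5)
Your proposal follows essentially the same route as the paper: part (1) is exactly Computation~\ref{comp:yang} followed by Computation~\ref{comp:latdata} (carried out explicitly for $\alpha=\BBBB$ in \S\ref{sec:demonstration}), and part (2) is read off from the computed class vector $[\tlGmplus]$ — its nonzero components $[\tlGmplus]_P$ give the incidence with the singular points, and its order in $\latB/\blatB$ gives the generation of $G_B$. The effectivity gap you flag at the end is precisely what Proposition~\ref{prop:hdeg1} and Corollaries~\ref{cor:LLLb}--\ref{cor:LLLl} close, so your argument is complete and matches the paper's.
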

\begin{table}
$$
\renewcommand{\arraystretch}{1.2}
\begin{array}{ccllccccccc}
\alpha && R_B &\degs&& \tau &  z_1 & z_2 && \;\;\;\;G_B\;\;\;\;  & F_B \\
\hline
\AAAA && 3A_5 &[3,3]&&l&1 & 0 && \Z/6\Z & \Z/3\Z  \\ 
  &&      &&&n&0 & 0 && \Z/2\Z & 0\\ 
\hline 
\BBBB && A_3+2A_7 &[2,4]&& l &1 & 0 && \Z/8\Z & \Z/4\Z \\ 
  &&          &&& c &0 & 1 && \Z/4\Z &  \Z/2\Z \\
  &&          &&& n &0 & 0 && \Z/2\Z & 0 \\ 
\hline 
\CCCC && 2A_4+A_9  &[1,5]&& l &1 & 0 && \Z/10\Z& \Z/5\Z\\ 
  &&           &&& n &0 & 0 && \Z/2\Z & 0  \\ 
\hline 
\DDDD && A_3+A_5+A_{11} &[2,4]&& l  &1 & 1 &&  \Z/12\Z & \Z/6\Z\\ 
\end{array}
$$
\vskip 2mm
\caption{Lattice types $\lattype_{\alpha, \tau}$ in $\configtype_\alpha$  for $\alpha\in \{\AAAA,\BBBB,\CCCC,\DDDD\}$}\label{table:lattypeZPorglin}
\end{table}
\begin{definition}\label{def:Zlineslattype2}
Let $B$ and $\Gm$ be as in Proposition~\ref{prop:Zlineslattype1}~(2).
We put
$$
\lattypeZP_{lin, d}:=\lattypeZP(B, \Gm),
$$
where $d$ is the order of $G_B$;
that is,  $d=6,8,10,12$
according to $\alpha=\AAAA,\BBBB,\CCCC,\DDDD$.
\end{definition}
These lattice types $\lattypeZP_{lin, d}$ are the originators of the lineages of
lattice types of $Z$-splitting lines.
\begin{theorem}\label{thm:Zlines}
Let $(B, \Gm)$ be  a  $Z$-splitting pair with $\deg \Gm=1$.
Then the class-order $d$ of  $\lattypeZP(B, \Gm)$ is  $6, 8, 10$ or $ 12$,
and $\lattypeZP(B, \Gm)$ is a  specialization of the lattice type $\lattypeZP_{lin, d}$.
\end{theorem}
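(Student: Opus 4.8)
The plan is to prove Theorem~\ref{thm:Zlines} in two stages: first, establish that the class-order $d$ of any $Z$-splitting line pair $(B,\Gm)$ must be one of $6,8,10,12$; second, show that every such pair specializes from the originator $\lattypeZP_{lin,d}$ identified in Definition~\ref{def:Zlineslattype2}. For the first stage, I would analyze the lift $\tlGmplus$ of a $Z$-splitting line $\Gm$. Since $\Gm$ is a line not contained in $B$, the lifts $\tlGmplus$ and $\tlGmminus$ are distinct $(-2)$-curves on $X_B$ with $[\tlGmplus]+[\tlGmminus]=\polB$ (as $\Gm$ has degree $1$), so $[\tlGmplus]$ projects to an element of order $d$ in $G_B=\latB/\blatB$. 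The key constraint is geometric: $\Gm$ must meet $B$ in a way compatible with splitting, which forces $\Gm$ to pass through singular points of $B$ whose local types accumulate the intersection multiplicity. First I would compute $\intnum{\tlGmplus}{\tlGmplus}=-2$ and $\intnum{\tlGmplus}{\polB}=1$, then use the structure of $\blatB=\gen{\polB}\oplus\gen{\excEsB}$ to pin down how $[\tlGmplus]$ distributes among the exceptional classes of the rational double points, yielding the arithmetic condition that only $A_k$-chains summing appropriately can occur, and this is exactly what produces the four configuration types $\configtype_\alpha$ of Definition~\ref{def:Zlinesconfig} and the orders $d=6,8,10,12$.

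For the second stage, the strategy is to realize an arbitrary $Z$-splitting line pair $(B,\Gm)$ of class-order $d$ as a specialization of $\lattypeZP_{lin,d}$. I would argue as follows. By Remark~\ref{rem:latticegenericZP}, I may replace $(B,\Gm)$ by a lattice-generic member of its lattice type $\lattypeZP(B,\Gm)$ without loss of generality. The class-order $d$ fixes the order of $[\tlGmplus]$ in $G_B$, and since $d$ already determines the minimal configuration type $\configtype_\alpha$ (the one with the smallest total Milnor number realizing that class-order, namely the originating type in Table~\ref{table:lattypeZPorglin}), every other lattice type of $Z$-splitting lines with the same $d$ should arise by \emph{adding} further singularities or further degenerating, which on the level of equisingular families corresponds precisely to specialization. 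Concretely, I would invoke the specialization algorithm of \S\ref{sec:specialization}: starting from the extended lattice data $\latdataZP_{lin,d}=[\EEE_B,h_B,\latB,\{[\tlGmplus],[\tlGmminus]\}]$ of the originator, I would show every extended lattice datum of a class-order-$d$ line pair is obtained by the lattice-theoretic specialization moves (enlarging $\EEE$ to a bigger fundamental system of roots while preserving the polarization class, the pair $\{[\tlGmplus],[\tlGmminus]\}$, and the order of $[\tlGmplus]$ in $G_B$).

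The technical heart is translating a lattice-theoretic specialization into an honest analytic family $f:\PPP\to\Delta$ of $Z$-splitting pairs. Here I would use Proposition~\ref{prop:stable}, which guarantees that pre-$Z$-splitting (hence $Z$-splitting) curves deform within connected components of equisingular families, together with the surjectivity-of-period-map machinery underlying Yang's method (\S\ref{sec:Yang}) to produce simple sextics realizing the target lattice type. The point is that a specialization of lattice data corresponds to a boundary stratum in the period domain, and the double-plane $K3$ construction $\tlrhoB:X_B\to\Pt$ lets one descend a degenerating family of $K3$ surfaces to a degenerating family of sextics carrying the limiting $Z$-splitting line. The main obstacle I expect is \textbf{the realizability step}: verifying that every combinatorially admissible extended lattice datum of class-order $d$ actually occurs as a genuine $Z$-splitting line pair on an algebraic sextic — i.e., that the lattice-theoretic specialization is not obstructed geometrically. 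This requires checking that the limiting class $[\tlGmplus]$ remains the class of an effective, irreducible $(-2)$-curve that maps to a line, which is where the genus and irreducibility arguments (as in the proof of Proposition~\ref{prop:stable} via $H^1(X_B,\OOO(\tlGmplus))=0$) must be carried through uniformly across the whole lineage. I anticipate that this is handled computationally by the algorithm of \S\ref{sec:specialization}, so the proof will likely reduce to a finite verification that the class-order is a complete invariant of the lineages, exactly as asserted.
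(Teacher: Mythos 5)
Your overall architecture --- reduce to lattice-generic representatives, identify the originators $\lattypeZP_{lin,d}$ as the class-order-$d$ lattice data of minimal total Milnor number, and realize every other class-order-$d$ line pair as a specialization by converting a lattice-theoretic embedding into an analytic family via period-map machinery --- is indeed the paper's strategy (Computation~\ref{comp:specialization} built on Proposition~\ref{prop:onlyifspZP}). But two of your steps have genuine gaps. First, your stage one is asserted rather than proved: the restriction $d\in\{6,8,10,12\}$ is \emph{not} obtained in the paper from a local analysis of how $[\tlGmplus]$ distributes over the exceptional classes; it is read off from the exhaustive enumeration of all lattice types (Computations~\ref{comp:yang} and~\ref{comp:latdata}) followed by computing $\ZZZ_1$ and the class-order for each. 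A purely local argument would have to combine the numerical criterion (Proposition~\ref{prop:criterion}, which for a line forces $\sum_P\sigma_P(\Gm)=t_\Gm-1/2$) with the order of $e_j\dual$ (for $j=\tau_P(\tlGmplus)$) in each local discriminant group, and then still verify realizability of the resulting overlattices via Proposition~\ref{prop:urabe}; you state the conclusion (``only $A_k$-chains summing appropriately'') without carrying any of this out, and you conflate the four originating configuration types of Definition~\ref{def:Zlinesconfig} with the full list of $41$ lattice types admitting $Z$-splitting lines (Table~\ref{table:z1z2numbs}) that the theorem must cover. Also, $[\tlGmplus]+[\tlGmminus]=\polB$ is false once $\Gm$ meets $\Sing B$ (which it must); the correct statement is only that $[\tlGmplus]+[\tlGmminus]\in\blatB$.

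Second, Proposition~\ref{prop:stable} cannot carry the deformation step you assign to it: it moves a $Z$-splitting curve \emph{within} one connected equisingular family, where the lattice data stays constant, whereas a specialization passes \emph{between} distinct lattice types. The paper's actual mechanism is Proposition~\ref{prop:onlyifsp} (an analytic family of sextics built from a geometric embedding of lattice data using the \emph{refined} period map $\tau_2$ of Theorem~\ref{thm:refined}, which controls the K\"ahler cone along the degeneration --- mere surjectivity of $\tau_1$ does not keep the polarization and the exceptional set coherent across the family) together with Proposition~\ref{prop:onlyifspZP}, whose hypothesis is the vanishing-$h^1$ condition on $\sigma(v^+)\in\{v_0^{\pm}\}+\gen{\EEE_0}^+$. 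That condition is the precise form of the ``effectivity and irreducibility of the limiting class'' obstacle you flag, and it is exactly what is checked, embedding by embedding, in Computation~\ref{comp:specialization}. You correctly anticipate that the proof ends in a finite verification, but without Propositions~\ref{prop:onlyifsp} and~\ref{prop:onlyifspZP} (or equivalents) the passage from ``a geometric embedding of extended lattice data exists'' to ``an analytic family of $Z$-splitting pairs exists'' remains unproved.
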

The classification of $Z$-splitting conics is as follows.
\begin{definition}\label{def:Zconicsconfig}
For $\alpha\in \{\aaaa,\bbbb,\cccc,\dddd,\eeee,\ffff\}$,
let $\configtype_\alpha$ be the configuration type 
given in the following table:
$$
\renewcommand{\arraystretch}{1.15}
\begin{array}{cllll}
\alpha  & R_B &\degs && \textrm{} \\
\hline
\aaaa & 6A_2 &[6]&& \\ 
\bbbb & 2A_1+4A_3 &[2,4]&&  (\textrm{the quartic has $2A_1$}) \\
\cccc & 4A_4 &[6]&&  \\
\dddd & 2A_1+2A_2+2A_5 &[2,4] && (\textrm{the quartic has $2A_2$}) \\
\eeee & 3A_6 &[6]&& \\
\ffff & A_1+A_3+2A_7 &[2,4]&& (\textrm{the quartic has $A_1+A_3$})\\
\end{array}
$$
\end{definition}
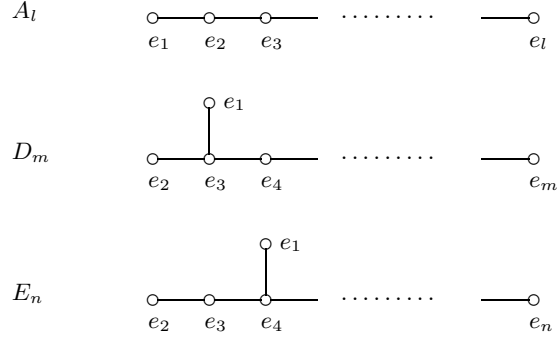
\begin{figure} 
\def\ha{40}
\def\hav{37}
\def\hd{25}
\def\hdv{22}
\def\he{10}
\def\hev{7}
\setlength{\unitlength}{1.25mm}
\centerline{
{\small
\begin{picture}(100, 37)(-20, 7)
\put(-5, \ha){$A\sb l$}
\put(10, \ha){\circle{1}}
\put(9.5, \hav){$e\sb 1$}
\put(10.5, \ha){\line(5, 0){5}}
\put(16, \ha){\circle{1}}
\put(15.5, \hav){$e\sb 2$}
\put(16.5, \ha){\line(5, 0){5}}
\put(22, \ha){\circle{1}}
\put(21.5, \hav){$e\sb 3$}
\put(22.5, \ha){\line(5, 0){5}}
\put(30, \ha){$\dots\dots\dots$}
\put(45, \ha){\line(5, 0){5}}
\put(50.5, \ha){\circle{1}}
\put(50, \hav){$e\sb {l}$}
\put(-5, \hd){$D\sb m$}
\put(10, \hd){\circle{1}}
\put(9.5, \hdv){$e\sb 2$}
\put(10.5, \hd){\line(5, 0){5}}
\put(16, 31){\circle{1}}
\put(17.5, 30.5){$e\sb 1$}
\put(16, 25.5){\line(0,1){5}}
\put(16, \hd){\circle{1}}
\put(15.5, \hdv){$e\sb 3$}
\put(16.5, \hd){\line(5, 0){5}}
\put(22, \hd){\circle{1}}
\put(21.5, \hdv){$e\sb 4$}
\put(22.5, \hd){\line(5, 0){5}}
\put(30, \hd){$\dots\dots\dots$}
\put(45, \hd){\line(5, 0){5}}
\put(50.5, \hd){\circle{1}}
\put(50, \hdv){$e\sb {m}$}
\put(-5, \he){$E\sb n$}
\put(10, \he){\circle{1}}
\put(9.5, \hev){$e\sb 2$}
\put(10.5, \he){\line(5, 0){5}}
\put(16, \he){\circle{1}}
\put(15.5, \hev){$e\sb 3$}
\put(22, 16){\circle{1}}
\put(23.5, 15.5){$e\sb 1$}
\put(22, 10.5){\line(0,1){5}}
\put(16.5, \he){\line(5, 0){5}}
\put(22, \he){\circle{1}}
\put(21.5, \hev){$e\sb 4$}
\put(22.5, \he){\line(5, 0){5}}
\put(30, \he){$\dots\dots\dots$}
\put(45, \he){\line(5, 0){5}}
\put(50.5, \he){\circle{1}}
\put(50, \hev){$e\sb {n}$}
\end{picture}
}
}
\vskip 5pt
\caption{Dynkin diagram}\label{figure:dynkin}
\end{figure}
\begin{definition}\label{def:tau}
Let $P$ be a singular point of  $B$, and 
let $e_1, \dots, e_r$ be the exceptional $(-2)$-curves
on $\XB$ over $P$ indexed in such a way that
the dual graph  is given in Figure~\ref{figure:dynkin}.
Let $\tlGmplus$ be a lift of a smooth splitting curve $\Gm$. %  (not necessarily $Z$-splitting).
Suppose that  $P\in \Gm$.
Since $\Gm$ is smooth and splitting, 
 there exists a unique  $e_j$  among $e_1, \dots, e_r$
that intersects $\tlGmplus$. 
(See Lemma~\ref{lem:vsmooth}.)
We put $\tau_P (\tlGmplus):=j$.
If $P\notin \Gm$,
we put $\tau_P (\tlGmplus):=0$ and $\tau_P (\tlGmminus):=0$.
\end{definition}
\begin{proposition}\label{prop:Zconicslattype1}
Let $\alpha$ be one of $\aaaa,\bbbb,\cccc,\dddd,\eeee,\ffff$.

{\rm (1)}
The lattice types $\lattype_{\alpha, \tau}$ in the configuration type $\configtype_\alpha$ are given
in Table~\ref{table:lattypeZPorgcon},
together with
the invariants $z_1(\lattype_{\alpha, \tau})$,
$z_2(\lattype_{\alpha, \tau})$,
$G_B$ and $F_B$.

{\rm (2)}
Let $B$ be a lattice-generic member of $\lattype_{\alpha, c}$.
Then the $Z$-splitting conics $\Gm$ for $B$ are given  in Table~\ref{table:Zconics},
where $\ord  $ is the class-order of  $(B, \Gm)$,
and $\tau_P(\tlGmplus)$ is described under  an appropriate choice of 
numbering of the exceptional $(-2)$-curves and the lift of $\Gm$.
\begin{table}
$$
\renewcommand{\arraystretch}{1.25}
\begin{array}{ccllcclccccc}
\alpha && R_B &\degs && \tau &&  z_1 & z_2 && \;\;\;\;G_{B}\;\;\;\; & F_{B}   \\
\hline
\aaaa && 6A_2  &[6]& & c&&0 & 1 && \Z/3\Z & \Z/3\Z  \\ 
&&         &&& n&&0 & 0 && 0 &0  \\ 
\hline 
\bbbb && 2A_1+4A_3 &[2,4]&& c&&0 & 1 && \Z/4\Z &\Z/2\Z\\ 
&&             &&& n&&0 & 0 && \Z/2\Z & 0 \\ 
\hline 
\cccc && 4A_4 &[6]&& c&&0 & 2 && \Z/5\Z & \Z/5\Z\\ 
&&       &&& n&&0 & 0 && 0 &0 \\ 
\hline 
\dddd && 2A_1+2A_2+2A_5 &[2,4]&& c&&0 & 2 && \Z/6\Z &\Z/3Z \\ 
 &&                &&& n&&0 & 0 && \Z/2\Z &0 \\ 
\hline 
\eeee && 3A_6  &[6]&& c&&0 & 3 && \Z/7\Z&\Z/7\Z \\ 
&&         &&& n&&0 & 0 && 0 &0    \\ 
\hline 
\ffff && A_1+A_3+2A_7 &[2,4]&& c&&0 & 3 && \Z/8\Z &\Z/4\Z \\ 
&&                &&& l&&1 & 0 && \Z/8\Z &\Z/4\Z \\
&&                &&& n&&0 & 0 && \Z/4\Z &\Z/2\Z \\ 
\end{array}
$$
\vskip 2mm
\caption{Lattice types $\lattype_{\alpha, \tau}$  in $\configtype_\alpha$  for $\alpha\in \{\aaaa,\bbbb, \dots, \ffff\}$}\label{table:lattypeZPorgcon}
\end{table}
\end{proposition}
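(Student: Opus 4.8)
The statement is a classification carried out by a finite lattice-theoretic computation, and the plan is to reduce it to the enumeration performed by the algorithms of \S\ref{sec:Yang} and \S\ref{sec:algorithm}. By Theorem~\ref{thm:ZZZ}, for a lattice-generic $B$ the sets $\ZZZ_1(B)$ and $\ZZZ_2(B)$ depend only on the lattice type $\lattype(B)$; consequently so do the derived quantities $z_1$, $z_2$, the class-orders of the elements of $\ZZZ_2(B)$, and the indices $\tau_P$. Since every lattice type contains a lattice-generic member by Remark~\ref{rem:lattice_generic}, all these invariants are well defined on lattice types, and it suffices to list, for each configuration type $\configtype_\alpha$ with $\alpha\in\{\aaaa,\bbbb,\cccc,\dddd,\eeee,\ffff\}$, the finitely many lattice types it contains and to read off the invariants from the lattice data of a lattice-generic representative.

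First I would enumerate the lattice types. For each $\alpha$ the singularity type $R_B$ is fixed, so by the method of Yang (\S\ref{sec:Yang}), following Urabe, the admissible lattice data $[\EEE_B,\polB,\latB]$ are exactly the even overlattices $\latB$ of $\gen{\polB}\oplus\gen{\EEE_B}$ (with $\EEE_B$ a fundamental root system of type $R_B$ and $\polB^2=2$) that admit a primitive embedding into the $K3$ lattice and satisfy Yang's realizability conditions, classified up to isomorphism of lattice data. This is a finite, explicitly computable list, and its members are pairwise non-isomorphic by construction. From each $\latB$ one reads $G_B=\latB/\blatB$ directly. To compute $F_B=\latB/\bBlatB$ and to confirm that the configuration type is $\configtype_\alpha$ (in particular that $\degs B$ is as stated), I would invoke the algorithm of \S\ref{sec:algorithm}, which recovers from $\latdata(B)$ the classes $[\tlBi]$ of the reduced strict transforms of the irreducible components, hence the lattice $\bBlatB$ and the list $\degs B$.

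Next I would compute the $Z$-splitting data. Applying the algorithm of \S\ref{sec:algorithm} to each lattice type, one determines $\ZZZ_1(B)$ and $\ZZZ_2(B)$: for $n\le 2$, a class $v\in\latB$ with $v^2=-2$ and $(\polB, v)=n$ lies in $\ZZZ_n(B)$ precisely when it satisfies the pre-$Z$-splitting criterion of Proposition~\ref{prop:criterion} together with $\iota_B(v)\ne v$ and is represented by a genuine lift of a smooth $Z$-splitting curve on the lattice-generic $B$. Since $|\ZZZ_n(B)|=2\,z_n(\lattype)$, this produces the columns $z_1$ and $z_2$, which come out as $z_2=1,1,2,2,3,3$ for $\alpha=\aaaa,\dots,\ffff$, with an extra $Z$-splitting line ($z_1=1$) only in the type $\lattype_{\ffff,l}$. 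For part~(2), each $v\in\ZZZ_2(B)$ corresponds to a $Z$-splitting conic whose class-order is the order of $v$ in $G_B$ and whose incidence data are read off as $\tau_P(\tlGmplus)=j$, where $e_j$ is the unique exceptional curve over $P$ with $e_j\cdot v=1$ (Lemma~\ref{lem:vsmooth}); tabulating these over all $v$ in each $\lattype_{\alpha,c}$ yields Table~\ref{table:Zconics}.

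The main obstacle is the correctness of the step passing from numerical classes to actual curves, that is, the ``if'' direction underlying the algorithm of \S\ref{sec:algorithm}. Proposition~\ref{prop:criterion} supplies only necessary conditions, so to know that every $v$ meeting them is the class of an honest smooth $Z$-splitting conic on a lattice-generic $B$ I must show that $v$ (or $-v$) is represented by an irreducible $(-2)$-curve $C$ on $X_B$ with $\iota_B(C)\ne C$ whose image $\tlrhoB(C)$ is a smooth plane conic not contained in $B$. Here the hypothesis $\NS(X_B)=\latB$ is essential: it allows Riemann--Roch on the $K3$ surface $X_B$ to guarantee effectivity, and the Weyl-group reduction built into the algorithm to move $v$ into the chamber of genuine $(-2)$-curves and to rule out reducible or non-birational representatives. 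Establishing this realizability uniformly over the finitely many lattice types, and verifying that neither spurious nor missing classes occur, is the crux; once it is secured, the rest of Proposition~\ref{prop:Zconicslattype1} is bookkeeping of the lattice computation.
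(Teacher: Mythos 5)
Your overall strategy---reduce the proposition to Yang's enumeration of lattice data for the fixed $ADE$-types, followed by a purely lattice-theoretic computation of the splitting data of a lattice-generic member---is exactly the paper's: the proposition is established by Computation~\ref{comp:yang} together with Computation~\ref{comp:latdata}, reading the invariants off the sets $\LLL_B^{l}$, $\LLL_B^{b}$, $\CCC_B^{l}$, $\CCC_B^{b}$ (and $\GGG_B^{b}$ to decide $\degs B=[6]$ versus $[3,3]$ when there is no component of degree $\le 2$).

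However, the step you yourself single out as the crux---that every admissible numerical class is realized by an honest lift of a smooth $Z$-splitting line or conic, with no spurious and no missing classes---is left open in your proposal, and the tools you point to would not close it. Proposition~\ref{prop:criterion} is a numerical test applied to a curve already known to be splitting; it cannot serve as the membership criterion for $\ZZZ_n(B)$ inside $\latB$, since invoking it presupposes the curve whose existence is at issue. Likewise there is no ``Weyl-group reduction into the chamber of genuine $(-2)$-curves'' available here: the data $[\EEE_B,\polB,\latB]$ already fixes the chamber, and moving $v$ by a Weyl element would change the marking. What actually closes the gap in the paper is the $v$-smoothness condition (Definition~\ref{def:vsmooth}) together with the elementary root-lattice inequalities of Lemma~\ref{lem:casebycase}, which yield the rigidity statement of Corollary~\ref{cor:x=y}: two $v$-smooth classes with equal degree and equal square whose difference is effective must coincide. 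Combined with Riemann--Roch for effectivity, this gives Proposition~\ref{prop:hdeg1} (every $v$-smooth class of degree $1$ and square $-2$ is the class of a line lift) and Proposition~\ref{prop:hdeg2} (every $v$-smooth class of degree $2$, square $-2$, not in $\blatB$, is either a sum of two classes from $\LLL_B$ plus exceptional classes or the class of a conic lift, and exactly one of the two alternatives occurs); the exclusion of the decomposable classes is precisely the passage from $\CCC_B\sprime$ to $\CCC_B$. Conversely, Lemma~\ref{lem:vsmooth} and Proposition~\ref{prop:hdeg2notblat} guarantee that every actual lift lands in these sets, so nothing is missed. Without this (or an equivalent) realizability argument, your outline does not yet prove the tables.
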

%
%In order to describe the $Z$-splitting conics
%in the lattice types $\lattype_{\alpha, c}$, we make the following:
%
%
%
\begin{table}
\newcommand{\taubox}[1]{\hbox to 11mm {\hss$#1$\hss}}
$$
\renewcommand{\arraystretch}{1.2}
\begin{array}{lccl}
\alpha & \Gm  & \ord  &  \hss\tau_P(\tlGmplus)\hss \\
 \hline
\aaaa &&& \taubox{A_2} \taubox{A_2}\taubox{A_2}\taubox{A_2}\taubox{A_2}\taubox{A_2} \\
 \cline{4-4}
&\Gm & 3 & \taubox{1} \taubox{1}\taubox{1}\taubox{1}\taubox{1}\taubox{1}\\
\hline
\bbbb &&& \taubox{A_1} \taubox{A_1}\taubox{A_3}\taubox{A_3}\taubox{A_3}\taubox{A_3} \\
 \cline{4-4}
&\Gm  & 4 & \taubox{1} \taubox{1}\taubox{1}\taubox{1}\taubox{1}\taubox{1}\\
\hline
\cccc &  &  & \taubox{A_4} \taubox{A_4}\taubox{A_4}\taubox{A_4} \\
 \cline{4-4}
&\Gm_1&5& \taubox{1} \taubox{1}\taubox{2}\taubox{2}\\
 \cline{4-4}
 &\Gm_2&5& \taubox{2} \taubox{2}\taubox{4}\taubox{4}\\
 \hline
\dddd &  &  & \taubox{A_1} \taubox{A_1}\taubox{A_2}\taubox{A_2}\taubox{A_5}\taubox{A_5} \\
 \cline{4-4}
  &\Gm_1&6&  \taubox{1} \taubox{1}\taubox{2}\taubox{2}\taubox{1} \taubox{1}\\
 \cline{4-4}
&\Gm_2&3& \taubox{0} \taubox{0}\taubox{1}\taubox{1}\taubox{2} \taubox{2}\\
 \hline
\eeee &  &  & \taubox{A_6} \taubox{A_6}\taubox{A_6} \\
 \cline{4-4}
&\Gm_1&7& \taubox{1} \taubox{2}\taubox{3}\\
 \cline{4-4}
 &\Gm_2&7&  \taubox{2} \taubox{4}\taubox{6}\\
  \cline{4-4}
  &\Gm_3&7&  \taubox{3} \taubox{6}\taubox{2}\\
    \hline
\ffff &  &  & \taubox{A_1} \taubox{A_3}\taubox{A_7}\taubox{A_7} \\
 \cline{4-4}
&\Gm_1&8& \taubox{1} \taubox{1}\taubox{1}\taubox{5}\\
 \cline{4-4}
 &\Gm_2&4&  \taubox{0} \taubox{2}\taubox{2}\taubox{2}\\
  \cline{4-4}
  &\Gm_3&8& \taubox{1} \taubox{3}\taubox{3}\taubox{7}\\
   \hline
  \end{array}
$$
\vskip 5pt
\caption{$Z$-splitting conics of  $\lattype_{\alpha, c}$ for $\alpha\in \{\aaaa,\bbbb, \dots, \ffff\}$}\label{table:Zconics}
\end{table}
\begin{definition}\label{def:Zconicslattype2}
Let $B$  be as in Proposition~\ref{prop:Zconicslattype1}~(2),
and let $\Gm$ be a $Z$-splitting conic for $B$ such that $[\tlGmplus]$ generates $G_B$.
We put
$$
\lattypeZP_{con, d}:=\lattypeZP(B, \Gm),
$$
where $d$ is the order of $G_B$.
\end{definition}
\begin{remark}\label{rem:Zconicslattype2}
For $d=5,7,8$, 
the lattice type $\lattypeZP_{con, d}=\lattypeZP(B, \Gm)$ does not depend on the choice of 
$\Gm$ as long as  $[\tlGmplus]$ generates $G_B$.
\end{remark}
These lattice types $\lattypeZP_{con, d}$ are the originators of the lineages of
lattice types of $Z$-splitting conics.
\begin{theorem}\label{thm:Zconics}
Let $(B, \Gm)$ be a $Z$-splitting pair with $\deg \Gm=2$.
Then the class-order $d$ of  $\lattypeZP(B, \Gm)$ is  $3,4,5,6,7$ or $ 8$,
and $\lattypeZP(B, \Gm)$ is a  specialization of the lattice type $\lattypeZP_{con, d}$.
\end{theorem}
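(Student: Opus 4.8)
The plan is to translate the statement into pure lattice theory and then settle it by a finite, computer-assisted enumeration organised along the specialization algorithm of \S\ref{sec:specialization}. By Remark~\ref{rem:latticegenericZP} I may assume $(B,\Gm)$ is lattice-generic, so $\NS(X_B)=\latB$. Write $v:=[\tlGmplus]$. Since $\blatB=\gen{\polB}\oplus\gen{\excEsB}$ is an orthogonal direct sum with $v\cdot\polB=\deg\Gm=2$ and $v\cdot e\in\Z$ for every $e\in\excEsB$, the vector $v$ decomposes over $\Q$ as $v=\polB+w$ with $w\in\gen{\excEsB}\dual$, and $v^2=-2$ forces $w^2=-4$ (this is the content of the numerical criterion, Proposition~\ref{prop:criterion}). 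Because the $\gen{\polB}$-component of $v$ is the integral vector $\polB$, the order of $\overline v$ in $G_B=\latB/\blatB$ equals the order $d$ of $\overline w$ in the discriminant group $\gen{\excEsB}\dual/\gen{\excEsB}$, and $w^2=-4\in2\Z$ says that $\overline w$ is isotropic for the discriminant quadratic form. Moreover $[\tlGmminus]=\iota_B(v)=\polB+\sigma(w)$, where $\sigma:=\iota_B|_{\gen{\excEsB}}$ is the chain-reversal automorphism (computed from the local model $z^2=x^2+y^{n+1}$ of the double cover over an $A_n$ point), which acts as $-1$ on $\gen{\excEsB}\dual/\gen{\excEsB}$. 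Thus the class-order is exactly the order of an isotropic discriminant-form element $\overline w$ admitting a lift of norm $-4$.

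This reformulation controls $d$. The root lattice $\gen{\excEsB}$ has rank $\mu_B\le19$, so only finitely many $ADE$-types occur; for each, the isotropic classes carrying a norm-$(-4)$ lift, together with the requirement that $\latB$ be an even overlattice realisable by a simple sextic (Yang's criterion, \S\ref{sec:Yang}), form a finite list. The upper bound $d\le8$ follows from a residue computation: for $d\ge9$ the combination of isotropy $q(\overline w)=0$ with the existence of a norm-$(-4)$ lift cannot be met inside a root lattice of rank $\le19$ of the correct signature. The lower bound $d\ge3$ also emerges from the enumeration; conceptually it reflects that a conic with class-order $\le2$ would, because $\sigma$ acts as $-1$ on the discriminant group, be forced either to coincide with its conjugate or to be a component of $B$, contradicting $Z$-splittingness. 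The precise statement that $d\in\{3,4,5,6,7,8\}$, and that the lattice-generic realisations are exactly the types of Proposition~\ref{prop:Zconicslattype1}, is then obtained by running the algorithm of \S\ref{sec:algorithm} over Yang's complete list; this simultaneously produces the originators $\lattypeZP_{con,d}$ of Definition~\ref{def:Zconicslattype2}, each with cyclic $G_B$ of order $d$ generated by the conic class.

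For the specialization assertion I would invoke the criterion of \S\ref{sec:specialization}: a lattice type $\lattypeZP_0$ of $Z$-splitting pairs is a specialization of $\lattypeZP$ precisely when the extended lattice data of $\lattypeZP$ admits an embedding into that of $\lattypeZP_0$ preserving $\polB$, sending the fundamental system of roots into a sub-system, and carrying $\{[\tlGmplus],[\tlGmminus]\}$ to its counterpart, together with a realisability clause furnishing an actual degenerating family. Granting this, the core is to exhibit, for an arbitrary class-order-$d$ pair $(B,\Gm)$, an embedding of the data underlying $\lattypeZP_{con,d}$ into $\latdataZP(B,\Gm)$. The natural candidate is the sublattice of $\latB$ spanned by $\polB$, by $v$ and $\sigma(v)$, and by the exceptional curves meeting $\tlGmplus$ or $\tlGmminus$: since $\overline w$ has order $d$ and norm $-4$, this sublattice realises the minimal configuration $\configtype_\alpha$ attached to $d$, and its extended lattice data is isomorphic to that of $\lattypeZP_{con,d}$. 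Checking compatibility with the root systems and the conic classes then shows $\lattypeZP(B,\Gm)$ specializes from $\lattypeZP_{con,d}$, and uniqueness of the originator for each $d$ (Remark~\ref{rem:Zconicslattype2} for $d=5,7,8$, a direct check otherwise) shows that the class-order indexes the lineages.

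The main obstacle is this last step: proving that the originator's extended lattice data genuinely embeds into that of \emph{every} class-order-$d$ type, i.e.\ that the class-order alone determines the lineage. Here the global structure of the even overlattice $\latB$ and of Yang's realisability condition intervenes, since the candidate sublattice must be shown to be primitively embedded with the correct discriminant-form behaviour, and each abstract specialization must be promoted to an honest analytic family of $Z$-splitting pairs (using Proposition~\ref{prop:stable} and the deformation input behind Theorem~\ref{thm:FB}). In practice this verification is the computational heart of \S\ref{sec:specialization}, and I expect the genuine conceptual content to lie in setting up the embedding criterion correctly and in confirming that no class-order-$d$ configuration fails to dominate its originator.
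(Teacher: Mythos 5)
Your overall strategy is the same as the paper's: reduce everything to extended lattice data, enumerate the finite list $\LD_2$ of realizable data by Yang's method, read off the class-orders, and realize each type as a specialization of the minimal one via a geometric embedding satisfying the vanishing-$h^1$ condition (Proposition~\ref{prop:onlyifspZP}); your orientation of the embedding (the originator's data into the given pair's data) is the correct one. Your normalization $v=\polB+w$ with $w^2=-4$ and $\overline{w}$ isotropic of order $d$ in the discriminant group of $\gen{\excEsB}$ is a correct reformulation consistent with Proposition~\ref{prop:criterion}.

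Two points, however, do not hold up. First, the bounds $3\le d\le 8$ are not established by the ``residue computation'' you allude to; in the paper they are simply read off from the finite list produced by Computation~\ref{comp:latdata}, and your sketched argument for $d\ge 3$ is incomplete: $\overline{w}=-\overline{w}$ only gives $\iota_B(v)-v\in\gen{\excEsB}$, which does not force $\iota_B(v)=v$, since that difference need not be effective and Corollary~\ref{cor:x=y} does not apply. Second, and more seriously, your ``natural candidate'' for the geometric embedding --- the sublattice spanned by $\polB$, $v$, $\iota_B(v)$ and the exceptional curves meeting $\tlGmplus$ or $\tlGmminus$ --- does not in general carry the lattice data of $\lattypeZP_{con, d}$. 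A geometric embedding must send each root of the originator's fundamental system into $\gen{\EEE_B}^+$, i.e.\ to an effective sum of exceptional classes, and for a class-order-$d$ pair with large singularities the originator's configuration (e.g.\ six disjoint $A_2$'s for $d=3$) is realized only by non-trivial chains of exceptional curves, not by the individual curves meeting the lifts; moreover primitivity of the embedding and the vanishing-$h^1$ condition on the image of $v$ must be verified before Proposition~\ref{prop:onlyifspZP} produces an actual analytic family. These verifications are exactly what the paper's computer search in Computation~\ref{comp:specialization} performs, so your proof ultimately rests on the same finite check as the paper's, and where your added scaffolding goes beyond the paper it is not actually carried out.
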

\begin{remark}
The simple sextics  in $\lattypeZP_{con, 3}$ 
are the classical torus sextics,
which have been studied in details by many authors~(for example, see~\cite{MR1948673}).
The simple sextics  in $\lattypeZP_{con, 5}$ 
are studied by Degtyarev in~\cite{MR2439628} and~\cite{degtyarev-2007}.
The simple sextics  in $\lattypeZP_{con, 7}$ 
are studied by Degtyarev in~\cite{MR2439628} and by Degtyarev-Oka in~\cite{degtyarev-oka-2007}.
\end{remark}
%
%There is a relation between $\lattypeZP_{con, d}$ and $\lattypeZP_{lin, 2d}$.
%See~\S\ref{subsec:relation}.
%
%
\subsection{Generators of $F_B$ and $Z$-splitting cubic curves}\label{subsec:genFB}
%
%The finite abelian group $F_B$ is generated by the classes of lifts of $Z$-splitting curves
%for a lattice-generic $B$.
%
%
\begin{theorem}\label{thm:FB}
\setrmkakko
Let $B$ be a lattice-generic member  of a lattice type $\lattype=\lattype (B)$.

\rmkakko 
The finite abelian group  $F_B=\latB/\Theta_B$ is  generated by the classes of 
lifts of smooth $Z$-splitting curves of degree $\le 3$;
that is, we have
\begin{equation}\label{eq:gen123}
\latB=\bBlatB+\gen{\ZZZ_1(B)}+\gen{ \ZZZ_2(B)}+\gen{ \ZZZ_3(B)}.
\end{equation}

\rmkakko If $z_1(\lattype)>0$ or $z_2(\lattype)>0$,
then $F_B$ is non-trivial
and is generated by the classes of 
lifts of $Z$-splitting curves of degree $\le 2$.
\end{theorem}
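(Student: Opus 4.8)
The plan is to reduce the whole statement to a question about the finite group $F_B=\latB/\bBlatB$, generate that group by classes of lifts of $Z$-splitting curves, and then cut the degree down to $\le 3$; the degree reduction is the only serious difficulty. First I would use the hypothesis that $B$ is lattice-generic: by Definition~\ref{def:lattice-generic} we have $\latB=\NS(\XB)$, so $F_B=\NS(\XB)/\bBlatB$. The key consequence of lattice-genericity is that every splitting curve for $B$ is automatically pre-$Z$-splitting: if $\Gm$ is splitting then its lift $\tlGmplus$ is a genuine curve on $\XB$, whence $[\tlGmplus]\in\NS(\XB)=\latB$, which is exactly the condition of Definition~\ref{def:Zsplitting}. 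In the language of Table~\ref{table:preZ}, column $III$ is empty for lattice-generic $B$, so a splitting curve is either an irreducible component of $B$ (type $I$) or $Z$-splitting (type $II$).

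Next I would establish generation of $F_B$ by classes of lifts of $Z$-splitting curves of arbitrary degree. Since $\NS(\XB)$ is spanned by classes of irreducible curves, it suffices to treat a single irreducible $C\subset\XB$. If $C$ is contracted by $\tlrhoB$ then $[C]\in\gen{\excEsB}\subset\blatB\subset\bBlatB$. Otherwise put $\Gm:=\tlrhoB(C)$, an irreducible plane curve of degree $e$, and use the divisorial relation $\tlrhoB\sp*\Gm\sim e\,\polB$. If $\Gm$ is a component $B_i$ of $B$ then $C=\tlBi$ and $[C]\in\bBlatB$; if $\Gm\not\subset B$ is not splitting then $\tlrhoB\inv(\Gm)$ has a single non-exceptional component $C$ occurring with multiplicity one, so that $[C]=e\,\polB-(\textrm{exceptional})\in\blatB$; and if $\Gm\not\subset B$ is splitting then, by the previous paragraph, $\Gm$ is $Z$-splitting and $C$ is one of its lifts. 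Thus every generator of $F_B$ is, modulo $\bBlatB$, the class of a lift of a $Z$-splitting curve; moreover $[\tlGmplus]+[\tlGmminus]\in\blatB$ for such $\Gm$, so these generators occur in $\pm$ pairs in $F_B$.

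The hard part is the degree reduction: passing from $Z$-splitting curves of arbitrary (and possibly non-smooth) degree to smooth ones of degree $\le 3$, i.e.\ proving the equality~(\ref{eq:gen123}). Here I would exploit the geometry of the degree-$2$ polarized $K3$ surface $(\XB,\polB)$. The lifts of smooth $Z$-splitting curves of degree $\le 3$ are exactly the minimal effective classes: $Z$-splitting lines and conics lift to $(-2)$-curves, while cubics lift to classes generating elliptic pencils (Proposition~\ref{prop:cubic}). The aim is to rewrite the class of any $Z$-splitting lift, modulo $\bBlatB$, as an integral combination of these minimal classes; I would do this by moving $\tlGmplus$ in its linear system and decomposing the nef and negative parts via Riemann--Roch on $\XB$, repeatedly subtracting $\polB$ and classes in $\bBlatB$ to lower the $\polB$-degree, checking at each step that the resulting class is again represented by a smooth $Z$-splitting curve of degree $\le 3$. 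This is the technical heart, and in practice it is confirmed by the explicit algorithm of~\S\ref{sec:algorithm}, which computes $\ZZZ_1(B),\ZZZ_2(B),\ZZZ_3(B)$ from $\latdata(B)$, together with the classification in Theorems~\ref{thm:Zlines} and~\ref{thm:Zconics}.

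For part~(2) I would argue separately. Because each component $B_i$ lies in the branch locus, the ramification identity $\tlrhoB\sp*B_i=2\tlBi+(\textrm{exceptional})$ gives $2[\tlBi]\in\blatB$, so the image of $\bBlatB$ in $G_B=\latB/\blatB$ is a $2$-torsion group. Now if $z_1(\lattype)>0$ or $z_2(\lattype)>0$, take a $Z$-splitting line or conic $\Gm$ for a lattice-generic $B$; its class-order $d$, the order of $[\tlGmplus]$ in $G_B$, satisfies $d\ge 3$ by Propositions~\ref{prop:Zlineslattype1} and~\ref{prop:Zconicslattype1}, so the image of $[\tlGmplus]$ in the quotient $F_B=G_B/(\bBlatB/\blatB)$ by a $2$-torsion subgroup remains non-trivial, giving $F_B\ne 0$. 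That $F_B$ is generated by lifts of $Z$-splitting curves of degree $\le 2$ then follows from part~(1) together with the fact, recorded in Propositions~\ref{prop:Zlineslattype1}(2) and~\ref{prop:Zconicslattype1}(2), that $[\tlGmplus]$ generates the cyclic group $G_B$ in the originating types; propagating this along the specializations of Theorems~\ref{thm:Zlines} and~\ref{thm:Zconics} to all lattice types with $z_1>0$ or $z_2>0$ forces the degree-$3$ contributions in~(\ref{eq:gen123}) to be redundant.
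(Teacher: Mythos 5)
Your reduction of part (1) to a degree bound is sound and is a genuinely nicer framing than the paper gives: since $B$ is lattice-generic, $\latB=\NS(\XB)$ is spanned by classes of irreducible curves, and each such class lies in $\blatB$ (exceptional, or the strict transform of a non-splitting curve, via $\tlrhoB\sp*\Gm\sim e\,\polB$), in $\bBlatB$ (components of $B$), or is the lift of a $Z$-splitting curve of some degree. So $F_B$ is indeed generated by lifts of $Z$-splitting curves of arbitrary degree. Likewise your observation for part (2) that $\bBlatB/\blatB$ is $2$-torsion while a $Z$-splitting line or conic has class-order $\ge 3$ (by Theorems~\ref{thm:Zlines} and~\ref{thm:Zconics}), so that its image in $F_B=G_B/(\bBlatB/\blatB)$ survives, is a correct and clean argument for $F_B\ne 0$.

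However, the heart of the statement --- equality~\eqref{eq:gen123}, i.e.\ that \emph{smooth} $Z$-splitting curves of degree $\le 3$ already suffice, and the stronger claim in part (2) that degree $\le 2$ suffices when $z_1>0$ or $z_2>0$ --- is not proved in your proposal. The procedure you sketch (``repeatedly subtracting $\polB$ and classes in $\bBlatB$ to lower the $\polB$-degree, checking at each step that the resulting class is again represented by a smooth $Z$-splitting curve of degree $\le 3$'') is exactly the assertion to be proved, and there is no a priori reason such a subtraction lands on a class in $\ZZZ_1(B)\cup\ZZZ_2(B)\cup\ZZZ_3(B)$. Deferring to ``the explicit algorithm of \S\ref{sec:algorithm}'' does not close this: that algorithm computes the sets $\ZZZ_n(B)$ from $\latdata(B)$, but the generation identity itself is established in the paper only by the exhaustive computer verification of Computation~\ref{comp:latdata}, which checks $\lat=\bBlat+\gen{\LLL^l}+\gen{\CCC^l}$ (resp.\ $\lat=\bBlat+\gen{\GGG^l}$) separately for each of the $11308$ lattice types. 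The same applies to your ``propagating along specializations'' step for part (2): a specialization induces an embedding $\lat_{B}\inj\lat_{B_0}$ and a map $G_B\to G_{B_0}$ that is in general not surjective (the lattice, and $G_B$, grow under specialization), so generation of $G_B$ by $[\tlGmplus]$ in the originating type does not propagate to descendants without further argument. In short, your proof correctly isolates the two computational claims but does not supply the case-by-case verification (or any conceptual substitute) on which the paper's proof of Theorem~\ref{thm:FB} actually rests.
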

The generators $\gen{ \ZZZ_3(B)}$ are indispensable in~\eqref{eq:gen123},
as the following example $\lattype_{QC, n}$ shows.
\begin{proposition}\label{prop:sp}
Let $\gamma_{\QC}$ be 
the configuration type  of simple sextics $B=Q+ C$
with $\degs B=[2,4]$, $R_B=3A_1+4A_3$ and the quartic curve $Q$ having $3A_1$.
\setrmkakko

\rmkakko
The configuration type $\gamma_{\QC}$
contains exactly two lattice types $\lattype_{\QC, n}$ and $\lattype_{\QC, c}$,
which  are distinguished by the following:
 $$
 z_1(\lattype_{\QC, c})=0, \;\; z_2(\lattype_{\QC, c})=1,
 \qquad
  z_1(\lattype_{\QC, n})=0, \;\; z_2(\lattype_{\QC, n})=0.
 $$
 These lattice types have isomorphic $G_B$ and $F_B$;
 for a  member $B$ of  $\gamma_{\QC}$,
 $G_B$ is cyclic of order $4$ and $F_B$ is of order $2$.
 
\rmkakko
Let $B=Q+C$ be a lattice-generic member of $\lattype_{\QC, c}$,
and let  $\Gm$  be the unique $Z$-splitting  conic for $B$.
Then  $G_B$ is generated by $[\tlGmplus]$.

\rmkakko
Let $B\sprime=Q\sprime+C\sprime$ be a lattice-generic member of $\lattype_{\QC, n}$,
so that $\ZZZ_1(B\sprime)=\ZZZ_2(B\sprime)=\emptyset$.
Then $\ZZZ_3(B\sprime)$ consists of two elements $[\tlE\sp+]$ and $[\tlE\sp-]$,
and  $G_{B\sprime}$ is generated by $[\tlE\sp +]$.
Let $E$ be the image of a general member of the elliptic pencil $|\tlE\sp+|$,
which is a smooth $Z$-splitting cubic curve.
Then $E$ passes through every point of $\Sing B\sprime$
and is tangent to each of $Q\sprime$ and $C\sprime$.
\end{proposition}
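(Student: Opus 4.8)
The plan is to obtain parts (1) and (2) from the lattice-theoretic machinery of \S\ref{sec:Yang} and \S\ref{sec:algorithm}, and then to read the geometry of part (3) off the resulting intersection data. First I would fix $\blatB=\gen{\polB}\oplus\gen{\EEE_B}$ with $\gen{\EEE_B}$ of type $3A_1+4A_3$, so that $\mu_B=15$, and apply Yang's classification (\S\ref{sec:Yang}) to the configuration type $\gamma_{\QC}$. The degrees $\degs B=[2,4]$ together with the requirement that the quartic $Q$ carry the three nodes determine the intermediate lattice $\bBlatB=\blatB+\gen{[\tilde Q],[\tilde C]}$; enumerating the realizable even overlattices $\latB$ of $\blatB$ containing $\bBlatB$ --- equivalently the admissible isotropic subgroups $G_B=\latB/\blatB$ of the discriminant form of $\blatB$ --- should return exactly two isomorphism classes of lattice data. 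A direct discriminant-form computation then gives $G_B\cong\Z/4\Z$ and $|F_B|=|\latB/\bBlatB|=2$ for both.

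Since $G_B$ and $F_B$ agree, the two classes are not separated by these groups; instead I would run the algorithm of \S\ref{sec:algorithm} on a lattice-generic member of each (these exist by Remark~\ref{rem:lattice_generic}). I expect $z_1=0$ throughout, while $z_2=1$ for one class and $z_2=0$ for the other. By Theorem~\ref{thm:LZkplets} the two are then genuinely distinct, and Yang's count shows they exhaust $\gamma_{\QC}$ (so $\gamma_{\QC}$ is a lattice Zariski couple); this yields $\lattype_{\QC,c}$ and $\lattype_{\QC,n}$ with the stated invariants, proving (1). Part (2) is immediate: the unique pair of $\ZZZ_2(B)$ returned by the algorithm for a lattice-generic member of $\lattype_{\QC,c}$ is $\{[\tlGmplus],[\tlGmminus]\}$, and one checks that $[\tlGmplus]$ has nonzero image in $F_B\cong\Z/2\Z$, hence generates $G_B\cong\Z/4\Z$.

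For part (3), $z_1(\lattype_{\QC,n})=z_2(\lattype_{\QC,n})=0$ forces $\ZZZ_1(B\sprime)=\ZZZ_2(B\sprime)=\emptyset$ for a lattice-generic $B\sprime$, so by Theorem~\ref{thm:FB}(1) the nontrivial group $F_{B\sprime}$ must be generated by the classes in $\ZZZ_3(B\sprime)$; the algorithm of \S\ref{sec:algorithm} should return exactly one pair $\{[\tlE^+],[\tlE^-]\}$. As above, $[\tlE^+]$ has nonzero image in $F_{B\sprime}=\latt{B\sprime}/\bBlatt{B\sprime}\cong\Z/2\Z$ and hence generates $G_{B\sprime}\cong\Z/4\Z$. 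Proposition~\ref{prop:cubic} upgrades the class to an elliptic pencil $|\tlE^+|$ whose general member is the lift of a smooth $Z$-splitting cubic $E$. To locate $E$, I would compute the intersection numbers of $[\tlE^+]$ against the marked vectors of the lattice data: its intersection with the polarization class equals $3$ (reflecting $\deg E=3$); the invariants $\tau_P(\tlE^+)$ of Definition~\ref{def:tau}, via Lemma~\ref{lem:vsmooth}, record through which singular points $E$ passes and how its lift meets the resolution, and I expect these to be nonzero at all seven points, i.e.\ $E$ passes through every point of $\Sing B\sprime$. Recalling that a curve not contained in $B\sprime$ is splitting exactly when its local intersection multiplicity with $B\sprime$ is even at every point, and that at each of the seven singular points the two local branches supply an even contribution (transverse passage giving local multiplicity $2$), the B\'ezout totals $E\cdot C\sprime=6$ and $E\cdot Q\sprime=12$ leave precisely one further even-contact point on each component --- the asserted tangencies of $E$ to $Q\sprime$ and to $C\sprime$.

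The main obstacle, and the only step that is not essentially mechanical once the lattice data are computed, is this final local-to-global translation: passing from the intersection numbers of the abstract class $[\tlE^+]$ to honest incidence and tangency statements for the plane cubic $E$. This rests on the local analysis of lifts at the nodes of $Q\sprime$ and at the tangential ($A_3$) points of $Q\sprime\cap C\sprime$ underlying Lemma~\ref{lem:vsmooth}, and on Proposition~\ref{prop:stable} to guarantee that the class survives to, and is represented by an irreducible curve on, a lattice-generic $B\sprime$. I expect the delicate point to be verifying that $E$ meets the branches \emph{transversally} at the seven singular points, so that the two residual even-contact points are genuine simple tangencies rather than extra contact absorbed at the singularities.
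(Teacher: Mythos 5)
Your proposal is correct and follows essentially the same route as the paper, which establishes Proposition~\ref{prop:sp} not by a written-out argument but via the computer-assisted Computations~\ref{comp:yang} and~\ref{comp:latdata}: Nikulin--Urabe--Yang enumeration of isotropic subgroups of the discriminant form of $\gen{h}\oplus\gen{\EEE}$ for $R=3A_1+4A_3$, followed by the algorithmic determination of $\LLL^{l,b}$, $\CCC^{l,b}$, $\GGG^{l,b}$, $F_B$ and the invariants $z_1,z_2$. Your extraction of the incidence and tangency statements in part (3) from the components $g_P$ of $[\tlE^+]$ together with the splitting-parity and B\'ezout count is exactly the intended local-to-global translation underlying Lemmas~\ref{lem:vsmooth}--\ref{lem:tGamma}, and your observation that ``nonzero image in $F_B\cong\Z/2\Z$'' is equivalent to ``generates $G_B\cong\Z/4\Z$'' correctly supplies the generation claims in parts (2) and (3).
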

We need 
 $Z$-splitting cubic curve to generate $F_{B\sprime}\ne 0$.
We put
$$
\lattypeZP_{\QC, n}:=\lattypeZP(B\sprime, E),
$$
where $(B\sprime, E)$ is the $Z$-splitting pair in Proposition~\ref{prop:sp}~(3).
The lattice type $\lattypeZP_{\QC, n}$ is the ancestor of all 
lattice types for which  we need $Z$-splitting cubic curves to generate $F_B$.
\begin{theorem}\label{thm:sp}
\setrmkakko
Let  $\lattype_0$ be a lattice type of simple sextics with a lattice-generic member $B_0$.
Suppose that  $z_1(\lattype_0)=0$ and $z_2(\lattype_0)=0$  but
$F_{B_0}\ne 0$.

\rmkakko
The set $\ZZZ_3(B_0)$ consists of two elements $[\tlE_0\sp+]$ and $[\tlE_0\sp-]$, and 
  $G_{B_0}$ is cyclic of order $4$  generated by $[\tlE_{0}\sp +]$.

\rmkakko
Let $E_{0}$ be the image of a general member of the elliptic pencil $|\tlE_0\sp+|$.
Then  the lattice type  $\lattypeZP(B_0, E_0)$ is a specialization of  the lattice type $\lattypeZP_{\QC, n}$ defined above.
\end{theorem}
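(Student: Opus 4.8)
The plan is to prove the two assertions separately, using Theorem~\ref{thm:FB} to isolate the cubic contribution in (1) and the specialization machinery of \S\ref{sec:specialization} to establish the lineage in (2); throughout, $B_0$ denotes the fixed lattice-generic member of $\lattype_0$.

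First I would dispose of (1). The hypotheses $z_1(\lattype_0)=z_2(\lattype_0)=0$ say exactly that $\ZZZ_1(B_0)=\ZZZ_2(B_0)=\emptyset$. Since $B_0$ is lattice-generic, Theorem~\ref{thm:FB}~(1) gives $\latB=\bBlatB+\gen{\ZZZ_3(B_0)}$, so the assumption $F_{B_0}=\latB/\bBlatB\ne 0$ forces $\gen{\ZZZ_3(B_0)}$ to surject onto the non-trivial group $F_{B_0}$; in particular $\ZZZ_3(B_0)\ne\emptyset$. By Proposition~\ref{prop:cubic}, each element $[\tlE]$ of $\ZZZ_3(B_0)$ has $|\tlE|$ an elliptic pencil whose general member is a lift of a smooth $Z$-splitting cubic. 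The remaining numerical content---that $\ZZZ_3(B_0)$ has exactly the two elements $[\tlE_0^+],[\tlE_0^-]$, that $G_{B_0}$ is cyclic of order $4$, and that $[\tlE_0^+]$ generates it---I would verify by sweeping the classification: using Yang's list (\S\ref{sec:Yang}) together with the algorithm of \S\ref{sec:algorithm}, one enumerates the finitely many lattice types satisfying $z_1=z_2=0$ and $F_B\ne 0$ and, for each, reads off $G_B$, the set $\ZZZ_3$, and the orders of the lift-classes in $G_B$.

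For the set-up of (2), the pencil $|\tlE_0^+|$ is elliptic and its general member maps birationally onto a smooth $Z$-splitting cubic $E_0$ (Proposition~\ref{prop:cubic}), so $\latdataZP(B_0,E_0)$ is well defined; by (1) the class-order of $(B_0,E_0)$, namely the order of $[\tlE_0^+]$ in $G_{B_0}$, equals $4$. This matches the class-order of the ancestor $\lattypeZP_{\QC,n}$, whose lattice-generic member $B\sprime$ has $R_{B\sprime}=3A_1+4A_3$ and carries a cubic $E$ with $[\tlE^+]$ generating $G_{B\sprime}\cong\Z/4\Z$ (Proposition~\ref{prop:sp}). To prove (2) itself I would invoke the translation provided by \S\ref{sec:specialization}: $\lattypeZP(B_0,E_0)$ is a specialization of $\lattypeZP_{\QC,n}$ precisely when there is a specialization of extended lattice data from $\latdataZP(B\sprime,E)$ to $\latdataZP(B_0,E_0)$, i.e.\ a primitive embedding $\lat_{B\sprime}\hookrightarrow\latB$ carrying $h_{B\sprime}$ to $\polB$, sending the root system $\EEE_{B\sprime}$ of type $3A_1+4A_3$ into an adjacent sub-root-system of $\EEE_{B_0}$, and matching $\{[\tlE^+],[\tlE^-]\}$ with $\{[\tlE_0^+],[\tlE_0^-]\}$. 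Running the specialization algorithm of \S\ref{sec:specialization} over the finite list produced in (1) exhibits such a datum in every case, and the period-theoretic results of that section upgrade the purely lattice-theoretic specialization to an honest analytic family $f:\PPP\to\Delta$ of $Z$-splitting pairs with central fiber $(B_0,E_0)$ and generic fiber in $\lattypeZP_{\QC,n}$.

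I expect the heart of the matter to be assertion (1): showing that, once $Z$-splitting lines and conics are excluded, the only mechanism producing $F_{B_0}\ne 0$ is a single conjugate pair of class-order-$4$ cubic pencils, so that $G_{B_0}$ is uniformly $\Z/4\Z$. It is this rigidity---rather than a family of ancestors indexed by class-order as in Theorems~\ref{thm:Zlines} and~\ref{thm:Zconics}---that makes $\lattypeZP_{\QC,n}$ a single universal originator, and it is the step that genuinely depends on scanning the whole classification rather than on an isolated lattice computation.
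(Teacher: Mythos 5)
Your proposal is correct and takes essentially the same route as the paper: assertion (1) is established exactly as in Computation~\ref{comp:latdata} (enumerate the finitely many lattice types with $z_1=z_2=0$ and $F_B\ne 0$, compute $\GGG^l$, and confirm it has two elements with $G_B$ cyclic of order $4$), and assertion (2) is established as in Computation~\ref{comp:specialization} via Proposition~\ref{prop:onlyifspZP}, by exhibiting for each such lattice type a geometric embedding of the extended lattice data of $\lattypeZP_{\QC,n}$ satisfying the vanishing-$h^1$ condition. The only point to make explicit is that the ``upgrade'' to an analytic family in your step (2) requires checking the vanishing-$h^1$ condition on the chosen embedding, which is part of what the algorithm verifies.
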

\section{Classification of lattice types of simple sextics}\label{sec:Yang}
\subsection{Fundamental system of roots}\label{subsec:fundamental-system-of-roots}
Let $\rlat$ be an even negative-definite  lattice,
and 
let  $D_\rlat$ be the set of roots in $\rlat$.
We denote by $\spzero  \Hom(L, \R)$ the space of all linear forms
$t: L\to \R$ such that 
$t(d)\ne 0$ holds
for any $d\in D_\rlat$.
For $t\in \spzero  \Hom(L, \R)$,
we put
$$
(D_\rlat)_t^+:=\set{d\in D_\rlat}{t(d)>0}.
$$
An element $d\in (D_\rlat)_t^+$
is said to be  \emph{decomposable}
if there exist  $d_1, d_2\in (D_\rlat)_t^+$ such that $d=d_1+d_2$;
otherwise,  we say that  $d$  is \emph{indecomposable}.
The proof of the following well-known fact is found, for example, in Ebeling~\cite[Proposition~1.4]{MR1938666}.
\begin{proposition}\label{prop:t-roots}
The set $F_t$ of indecomposable elements in $(D_\rlat)_t^+$ is 
a fundamental system of roots in $\rlat$.
Conversely, if $F$ is a fundamental system of roots in $\rlat$,
then there exists a linear form $t\sprime \in \spzero  \Hom(L, \R)$
such that $F$ is equal to the set  $F_{t\sprime}$ 
of  indecomposable elements in $(D_\rlat)_{t\sprime}^+$.
\end{proposition}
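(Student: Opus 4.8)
The plan is to prove the statement in two directions, establishing the bijection between fundamental systems of roots and the combinatorial data extracted from generic linear forms.

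\medskip

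\textbf{Direction one: $F_t$ is a fundamental system of roots.} First I would fix a generic $t\in \spzero \Hom(L,\R)$ and let $F_t$ denote the indecomposable elements of $(D_\rlat)_t^+$. The first key step is to show that every element of $(D_\rlat)_t^+$ is a non-negative integer combination of elements of $F_t$. I would argue by induction on the value $t(d)$: since $\rlat$ is negative-definite and even, the set of possible values $\{t(d) : d \in (D_\rlat)_t^+\}$ is bounded below by a positive quantity (the $t$-values are positive reals, and there are only finitely many roots because a negative-definite lattice has finitely many vectors of norm $-2$), so a minimal counterexample exists; if $d$ is decomposable as $d_1+d_2$ with $d_i\in (D_\rlat)_t^+$, then $t(d_i)<t(d)$ and induction applies. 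The second, more delicate, step is linear independence of $F_t$: here I would use the standard argument that if $d, d' \in F_t$ are distinct indecomposables, then $\intnum{d}{d'}\le 0$. The proof is that if $\intnum{d}{d'}>0$, then since $d^2=d'^2=-2$ one computes $(d-d')^2 = -4 - 2\intnum{d}{d'} < -2$ or else $d-d'$ is itself a root, and one shows $d-d'$ or $d'-d$ lies in $(D_\rlat)_t^+$, contradicting indecomposability of whichever of $d,d'$ has the larger $t$-value. Once all pairwise inner products among $F_t$ are $\le 0$ and all $t$-values are positive, a positivity argument (if $\sum a_i d_i = 0$, split into positive and negative coefficient parts, take inner product, and use negative-definiteness) yields linear independence. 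Finally, that $F_t$ spans $\gen{D_\rlat}$ follows from the first step applied to both $(D_\rlat)_t^+$ and $-( D_\rlat)_t^+ = (D_\rlat)_t^-$, since every root lies in one of these two sets.

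\medskip

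\textbf{Direction two: every $F$ arises as some $F_{t'}$.} Given a fundamental system of roots $F = \{d_1,\dots,d_k\}$, I would construct $t'$ explicitly. Since $F$ is a basis of $\gen{D_\rlat}$, I can choose a linear form on $\gen{D_\rlat}\tensor\R$ that takes the value $1$ on each $d_i$, and extend it arbitrarily to $L\tensor\R$; call it $t'$. By the defining property of $F$, every root $d$ is $\sum c_i d_i$ with all $c_i$ of one sign, so $t'(d) = \sum c_i$ is a nonzero integer, confirming $t' \in \spzero\Hom(L,\R)$ after the extension is perturbed generically if needed to avoid vanishing on any root outside $\gen{D_\rlat}$ — but in fact all roots lie in $\gen{D_\rlat}$ by definition of $D_\rlat$, so no perturbation is required. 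Then $(D_\rlat)^+_{t'}$ consists precisely of the roots with all $c_i\ge 0$, and I would verify that the indecomposable such elements are exactly $d_1,\dots,d_k$: each $d_i$ is clearly indecomposable since $t'(d_i)=1$ is minimal, and conversely any positive root with $\sum c_i \ge 2$ can be peeled off by subtracting some $d_j$ appearing with $c_j>0$ and checking the difference is still a positive root, which is the content of the elementary root-system combinatorics.

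\medskip

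The main obstacle I anticipate is the linear independence step in the first direction, specifically the case analysis showing $\intnum{d}{d'}\le 0$ for distinct indecomposables and ruling out the configuration where $d-d'$ sneaks into the positive system. Since this is the well-known structure theory of fundamental systems of roots and the excerpt explicitly cites Ebeling~\cite[Proposition~1.4]{MR1938666} and Bourbaki~\cite{MR0240238} for exactly these facts, the honest and efficient approach is to attribute the detailed verification to those references while recording the inductive skeleton above, rather than to reproduce the full case analysis.
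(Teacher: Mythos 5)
The paper offers no proof of this proposition at all: it is stated as a well-known fact with a pointer to Ebeling~\cite[Proposition~1.4]{MR1938666} and Bourbaki~\cite{MR0240238}, so your decision to defer the detailed verification to those references is exactly what the paper does. The overall skeleton you record (induction on $t(d)$ for the spanning/positivity statement, a pairwise inner-product bound for linear independence, an explicit $t\sprime$ taking the value $1$ on each element of $F$ for the converse) is also the standard and correct architecture.

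However, the key lemma in your first direction is stated with the wrong sign for this paper's conventions, and the argument as written does not close. The lattice $\rlat$ here is \emph{negative}-definite with roots of square $-2$ (adjacent simple roots satisfy $\intnum{e_i}{e_{i+1}}=+1$ throughout the paper), so the correct claim is $\intnum{d}{d\sprime}\ge 0$ for distinct indecomposables, not $\le 0$. Concretely: from $(d-d\sprime)^2=-4-2\intnum{d}{d\sprime}$, the hypothesis $\intnum{d}{d\sprime}>0$ that you try to refute forces $(d-d\sprime)^2\le -6$, so $d-d\sprime$ is \emph{never} a root and the decomposability contradiction you invoke never materializes --- the ``or else $d-d\sprime$ is itself a root'' branch is vacuous. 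The case that actually must be excluded is $\intnum{d}{d\sprime}=-1$, where $(d-d\sprime)^2=-2$ makes $d-d\sprime$ a root; then whichever of $d-d\sprime$, $d\sprime-d$ lies in $(D_\rlat)_t^+$ exhibits $d$ or $d\sprime$ as decomposable. The sign matters again in the linear-independence squeeze: writing $v=\sum_{a_i>0}a_id_i=\sum_{a_j<0}(-a_j)d_j=w$, one needs the cross terms $\intnum{d_i}{d_j}\ge 0$ so that $0\ge v^2=\intnum{v}{w}\ge 0$ forces $v=0$; with your claimed $\intnum{d_i}{d_j}\le 0$ both quantities have the same sign and no conclusion follows. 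This is a transcription of the positive-definite root-system argument without negating the form, and it should be corrected (or, as the paper does, omitted entirely in favour of the citation). Your second direction is fine, modulo the standard facts (integrality of the coefficients of a root in a fundamental system, and the peeling step $\intnum{d}{d_j}<0$ for some $j$ with $c_j>0$) that you correctly flag as root-system combinatorics.
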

We  call  $F_t$
the  \emph{fundamental system of roots associated with $t: \rlat \to \R$}.
\begin{corollary}\label{cor:geom_fund_roots}
There exists a one-to-one correspondence 
between the set of fundamental systems of roots in $\rlat$
and the set of connected components of $\spzero \Hom(L, \R)$.
\end{corollary}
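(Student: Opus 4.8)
The final statement to prove is Corollary~\ref{cor:geom_fund_roots}: a one-to-one correspondence between fundamental systems of roots in $\rlat$ and connected components of $\spzero \Hom(L, \R)$.

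\medskip

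The plan is to exploit Proposition~\ref{prop:t-roots} directly, since it already does the heavy lifting: it tells us that every $t\in \spzero\Hom(L,\R)$ produces a fundamental system $F_t$ of roots, and that every fundamental system arises this way. So the map from connected components to fundamental systems will be $C \mapsto F_t$ for any $t\in C$; the whole content of the corollary is that this is \emph{well-defined} (constant on each component) and \emph{bijective}. First I would define the assignment $t\mapsto F_t$ on all of $\spzero\Hom(L,\R)$ and verify it is locally constant. The key observation is that $F_t$ depends on $t$ only through the sign pattern $(\operatorname{sign} t(d))_{d\in D_\rlat}$: indeed $(D_\rlat)_t^+$ is determined by which roots have $t(d)>0$, and the indecomposability condition is a purely combinatorial statement about sums within that finite set. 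Since $D_\rlat$ is finite (there are only finitely many vectors of norm $-2$ in a negative-definite lattice), the sign pattern is constant on each connected component of $\spzero\Hom(L,\R)$: crossing from one region to another would require some $t(d)$ to pass through zero, which is forbidden on $\spzero\Hom(L,\R)$. Hence $t\mapsto F_t$ is constant on each connected component, giving a well-defined map $\Phi$ from components to fundamental systems.

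\medskip

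Next I would establish surjectivity and injectivity of $\Phi$. Surjectivity is immediate from the second half of Proposition~\ref{prop:t-roots}: every fundamental system $F$ equals $F_{t'}$ for some $t'$, and $t'$ lies in some component, so $F$ is in the image. For injectivity, suppose two components $C_1, C_2$ give the same fundamental system, i.e. $F_{t_1}=F_{t_2}=F$ for $t_1\in C_1$, $t_2\in C_2$. I must show $C_1=C_2$. The idea is that $F$ determines the sign pattern: once $F$ is fixed, every root $d\in D_\rlat$ is written as a non-negative or non-positive integer combination of the elements of $F$, and the sign of $t(d)$ is forced to agree with this (positive combinations of $F$-elements, all of which have $t>0$, stay positive; negative combinations stay negative). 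Thus $F$ recovers the sign pattern $(\operatorname{sign} t(d))_d$, which in turn pins down the component. Concretely, a connected component of $\spzero\Hom(L,\R)$ is exactly a maximal connected set on which all the sign functions $t\mapsto \operatorname{sign} t(d)$ are constant, because the complement of the hyperplane arrangement $\{t : t(d)=0\}_{d\in D_\rlat}$ has components characterized by their sign vectors. So equal sign patterns force $C_1=C_2$.

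\medskip

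I expect the main obstacle to be the injectivity step, specifically the claim that a connected component of the complement of a finite hyperplane arrangement is \emph{uniquely} determined by its sign vector. This is standard for hyperplane-arrangement chambers, but it must be argued rather than asserted: two points with the same sign vector on every $t(d)$ can be joined by a straight segment, and along that segment each $t(d)$ keeps its sign (a linear function on the segment that is nonzero at both endpoints of the same sign, and — since it is linear — cannot change sign without vanishing, but vanishing would put an interior point off $\spzero\Hom(L,\R)$; one checks the segment stays in the arrangement complement because convexity of each open halfspace guarantees the whole segment avoids every hyperplane). Hence same sign vector implies same component, completing injectivity. The remaining verifications — finiteness of $D_\rlat$ and the elementary sign bookkeeping — are routine, so the essential work is simply packaging Proposition~\ref{prop:t-roots} together with this chamber-identification observation into the stated bijection.
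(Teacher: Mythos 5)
Your proof is correct and is exactly the argument the paper intends: the paper states this corollary without proof as an immediate consequence of Proposition~\ref{prop:t-roots}, and your unpacking — $F_t$ depends only on the sign vector $(\operatorname{sign} t(d))_{d\in D_\rlat}$, which is constant on chambers and is recovered from $F$ via the all-nonnegative/all-nonpositive expansion of each root — is the standard way to make that immediacy precise. No gaps; the chamber-identification step via convexity of the segment is handled correctly.
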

\begin{remark}
A fundamental system  of roots $F$ in $\rlat$ is associated with $t\in \spzero \Hom(L, \R)$
if and only if $\intnum{t}{d}>0$ holds for any $d\in F$.
\end{remark}
\subsection{The K\"ahler cone and polarizations of a $K3$ surface}\label{subsec:Kaehler}
Let $X$ be a  $K3$ surface, and 
let $\per_X$ be a basis of $H^{2,0}(X)$.
We  put
\begin{eqnarray*}
H_{X} &:=& \set{x\in H^2(X, \R)}{\intnum{x}{\per_X}=0},\\
D_{X}&:=& \set{d\in \NS(X)}{d^2=-2}, \\
\Gamma_{X} &:=& \set{x\in H_{X}}{x^2>0}, \\
\zeroGamma_{X} &:=& \set{x\in \Gamma_{X} }{ \intnum{x}{d}\ne 0\;\;\textrm{for all}\;\; d\in D_{X}}.
\end{eqnarray*}
We have $H_{X}=H^2(X, \R)\cap H^{1,1}(X)$ and $\NS(X)=H^2(X, \Z)\cap H_X$.
We also have 
$$
\Gamma_X = \Gamma_X^+\sqcup (-\Gamma_X^+) \quad({\rm disjoint}),
$$
where  $\Gamma_X^+$ is the connected component of $\Gamma_X$
that contains  a K\"ahler class of $X$.
\begin{definition}
The \emph{K\"ahler cone} $\Kahler_X$ of $X$
is the set of vectors 
$\kappa\in H_X$
satisfying  $\intnum{D}{\kappa}>0$ 
for any effective divisor $D$  on $X$.
\end{definition}
%
%
%\begin{remark}
Every K\"ahler class of $X$ is contained in $\Kahler_X$.
Conversely, 
as a corollary of Theorem~\ref{thm:refined} below, 
we see that
every vector in $\Kahler_X$ is a K\"ahler class on $X$.
%\end{remark}
%
\par
\medskip
The following proposition is an immediate consequence of the definition.
\begin{proposition}\label{prop:nefcone}
A vector $v\in \NS(X)$ is nef if and only if $v$ is contained in the closure  of 
the K\"ahler cone $\Kahler_X$ in $H_X$. 
\end{proposition}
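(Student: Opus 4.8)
The plan is to argue directly from the two definitions, using the pairing between nef classes and effective divisors. Recall that $v\in\NS(X)$ is nef precisely when $\intnum{v}{D}\ge 0$ for every effective divisor $D$ on $X$, while $\Kahler_X$ consists of the vectors $\kappa\in H_X$ with $\intnum{D}{\kappa}>0$ for every effective $D$. Thus the only difference between the condition defining ``nef'' and the condition defining ``$\kappa\in\Kahler_X$'' is the strictness of the inequality, and the closure operation is exactly what bridges that gap. So I expect a short two-directional argument.

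For the ``if'' direction, suppose $v$ lies in the closure $\overline{\Kahler_X}$ taken in $H_X$, so that $v=\lim_{n}\kappa_n$ with $\kappa_n\in\Kahler_X$. For a fixed effective divisor $D$, the map $x\mapsto \intnum{D}{x}$ is a continuous linear form on $H_X$, whence $\intnum{D}{v}=\lim_n\intnum{D}{\kappa_n}\ge 0$. Since $D$ is arbitrary, $v$ is nef.

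For the ``only if'' direction I would run a perturbation argument. Since $X$ is a projective (hence Kähler) $K3$ surface, the cone $\Kahler_X$ is non-empty: by the remark following its definition, any Kähler class $\kappa_0$ of $X$ lies in $\Kahler_X$. Given a nef class $v$, set $v_t:=v+t\kappa_0$ for $t>0$. Because $v\in\NS(X)\subset H_X$ and $\kappa_0\in H_X$, we have $v_t\in H_X$; and for every effective divisor $D$,
\[
\intnum{D}{v_t}=\intnum{D}{v}+t\,\intnum{D}{\kappa_0}\ge 0+t\,\intnum{D}{\kappa_0}>0,
\]
since $v$ is nef and $\kappa_0\in\Kahler_X$. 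Hence $v_t\in\Kahler_X$ for every $t>0$, and letting $t\to 0^+$ yields $v=\lim_{t\to 0^+}v_t\in\overline{\Kahler_X}$.

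I expect no serious obstacle, as the assertion is essentially a reformulation of the two definitions. The only points requiring attention are that $\Kahler_X$ be non-empty — so that a class $\kappa_0$ is available to perturb by, which holds because $X$ is projective and therefore carries a Kähler class — and that the perturbed classes stay in $H_X$, which is immediate from $\NS(X)\subset H_X$ together with $\Kahler_X\subset H_X$.
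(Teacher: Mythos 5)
Your proof is correct. The paper states this proposition without proof, calling it an immediate consequence of the definition of $\Kahler_X$, and your two-directional argument (continuity of $x\mapsto\intnum{D}{x}$ for one direction, perturbation by a K\"ahler class $\kappa_0$ for the other) is exactly the standard argument that fills in that omitted step; note only that with the paper's definition of $\Kahler_X$ as all of $\set{\kappa\in H_X}{\intnum{D}{\kappa}>0 \text{ for effective } D}$, your verification that $v_t\in\Kahler_X$ needs nothing beyond what you wrote.
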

We set
$$
\Delta_X:=\set{d\in D_X}{\textrm{$d$ is effective}}.
$$
By  Riemann-Roch theorem,
we see that $D_X$ is a disjoint union of $\Delta_X$ and $-\Delta_X$.
%we have
%$$
%D_X=\Delta_X\sqcup(-\Delta_X)\quad({\rm disjoint}).
%$$
For $d\in D_X$, we put
$$
d\sperp:=\set{x\in H_X}{\intnum{x}{d}=0},
$$
and call $d\sperp$ the \emph{wall} associated with $d\in D_X$.
The family of walls $\shortset{d\sperp}{d\in D_X}$
is locally finite in the cone $\Gamma_X$,
and partitions $\Gamma_X$ into the connected components of $\zeroGamma_X$. 
The following  proposition is  well-known.
(See, for example,~\cite[Corollary~3.9 in Chap.~VIII]{MR2030225}).
\begin{proposition}\label{prop:Kcone}
The K\"ahler cone 
$\Kahler_X\subset H_X$ is the unique connected component of $\Gamma_X^+\cap \zeroGamma_X$
such that 
$\intnum{x}{d}>0$ holds for every $d\in \Delta_X$ and every $x\in \Kahler_X$.
\end{proposition}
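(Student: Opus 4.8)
The plan is to identify $\Kahler_X$ with the connected component $C_0$ of $\Gamma_X^+\cap\zeroGamma_X$ containing a fixed K\"ahler class, and to check that this is the only component on which every effective root is positive. Two standard facts drive the argument. First, by the Hodge index theorem the form on $H_X$ has signature $(1,19)$, so $\Gamma_X$ has two components and one has the \emph{light-cone lemma}: if $x\in\Gamma_X^+$ and $y\in\overline{\Gamma_X^+}\setminus\{0\}$, then $\intnum{x}{y}>0$. Second, adjunction on the $K3$ surface gives $C^2=2p_a(C)-2\ge -2$ for every irreducible curve $C$, so the class of such a $C$ lies in $\Delta_X$ when $C^2=-2$ and, being effective and nonzero, in $\overline{\Gamma_X^+}\setminus\{0\}$ when $C^2\ge 0$.

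First I fix a K\"ahler class $\kappa_0$; it satisfies $\kappa_0^2>0$ and $\intnum{\kappa_0}{D}>0$ for every nonzero effective divisor $D$, so $\kappa_0\in\Gamma_X^+\cap\Kahler_X$, and $\intnum{\kappa_0}{d}\neq 0$ for all $d\in D_X$ gives $\kappa_0\in\zeroGamma_X$. Let $C_0$ be the component of $\Gamma_X^+\cap\zeroGamma_X$ containing $\kappa_0$. As $C_0$ is connected and meets no wall $d^\perp$, the sign of $\intnum{\cdot}{d}$ is constant on $C_0$ for each $d\in D_X$; evaluating at $\kappa_0$ shows $\intnum{x}{d}>0$ for all $x\in C_0$, $d\in\Delta_X$. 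The heart of the proof is then $C_0\subseteq\Kahler_X$: given $x\in C_0$ and a nonzero effective $D=\sum a_iC_i$ (with $a_i>0$ and $C_i$ irreducible), each summand with $C_i^2=-2$ has $C_i\in\Delta_X$, whence $\intnum{x}{C_i}>0$, while each summand with $C_i^2\ge 0$ has $C_i\in\overline{\Gamma_X^+}\setminus\{0\}$, so the light-cone lemma gives $\intnum{x}{C_i}>0$. Hence $\intnum{x}{D}=\sum a_i\intnum{x}{C_i}>0$ and $x\in\Kahler_X$.

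For equality I prove $\Kahler_X\subseteq C_0$. The cone $\Kahler_X$ is convex, hence connected, and avoids every wall, since $\intnum{\kappa}{d}>0$ for $d\in\Delta_X$ and $\intnum{\kappa}{d}<0$ for $d\in-\Delta_X$. Granting $\Kahler_X\subseteq\Gamma_X^+$, the connected set $\Kahler_X$ lies in one component of $\Gamma_X^+\cap\zeroGamma_X$, necessarily $C_0$ because $\kappa_0\in\Kahler_X\cap C_0$; with the previous paragraph this gives $\Kahler_X=C_0$. Uniqueness is then immediate: if $C_1$ is another component on which $\intnum{\cdot}{d}>0$ for all $d\in\Delta_X$, the decomposition argument applies verbatim to give $C_1\subseteq\Kahler_X=C_0$, so $C_1=C_0$.

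The step I expect to be the main obstacle is the inclusion $\Kahler_X\subseteq\Gamma_X^+$, that is, $\kappa^2>0$ whenever $\kappa$ is positive on all effective divisors. Testing against $\kappa_0$ alone only yields $\intnum{\kappa}{\kappa_0}>0$, which does not force $\kappa^2>0$ because $\kappa_0^\perp$ is negative definite; one must use positivity against \emph{all} effective classes, equivalently against the extremal rays of the Mori cone (the roots in $\Delta_X$ together with the isotropic nef classes), to confine $\kappa$ to the interior of the positive cone. This is precisely the content of the Nakai--Moishezon/Kleiman theory, and is where projectivity of $X$ and the structure of the effective cone enter; the remaining points (openness and convexity of $\Kahler_X$, and that $C_0$ is a single chamber) are routine given the local finiteness of the walls recorded before the proposition.
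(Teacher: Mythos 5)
The paper does not actually prove this proposition: it is quoted as well known with a pointer to Barth--Hulek--Peters--Van de Ven, Ch.~VIII, Cor.~3.9, so there is no in-text argument to compare yours with step by step. Taken on its own, the forward half of your argument is correct and is the standard one: the chamber $C_0$ containing a K\"ahler class $\kappa_0$ has constant sign against each root and hence is positive on $\Delta_X$; and for $x\in C_0$ and $D=\sum a_iC_i$ effective, adjunction sorts the $C_i$ into roots (handled by the chamber condition) and classes with $C_i^2\ge 0$ lying in $\overline{\Gamma_X^+}\setminus\{0\}$ (handled by the light-cone inequality), giving $C_0\subseteq\Kahler_X$. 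The uniqueness argument is also fine.

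The genuine gap is the inclusion $\Kahler_X\subseteq\Gamma_X^+$, and the repair you propose via Nakai--Moishezon/Kleiman cannot close it, because with the paper's literal definition of $\Kahler_X$ that inclusion is \emph{false} whenever $\rho(X)<20$. The defining condition $\intnum{D}{\kappa}>0$ for all effective $D$ only constrains the component of $\kappa$ in $\NS(X)\otimes\R$; the orthogonal complement of $\NS(X)\otimes\R$ inside $H_X$ is negative definite of rank $20-\rho(X)$, so choosing $t$ there with $t^2<0$, the class $\kappa_0+Nt$ is positive on every effective divisor yet satisfies $(\kappa_0+Nt)^2<0$ for $N\gg 0$. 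Nakai--Moishezon and Kleiman's criterion live entirely in $\NS(X)\otimes\R$ and say nothing about these transcendental directions. The statement is correct only if $\Kahler_X$ is read as the cone of genuine K\"ahler classes (which is how the cited corollary is formulated), or if membership in $\Gamma_X^+$ is added to the definition; with either reading $\Kahler_X\subseteq\Gamma_X^+$ is immediate (a K\"ahler class has positive square and the K\"ahler cone is convex, hence lies in one component of $\Gamma_X$), and the rest of your proof then closes with no appeal to ampleness criteria. So the obstacle you singled out is real, but its resolution is definitional rather than a deep positivity theorem, and as written your final step would not go through.
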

A line bundle $\pol$ on  $X$ is called a \emph{polarization}
if $\pol$ is nef, $\pol^2>0$,   and 
the complete linear system  $|\pol|$ has no fixed components.
If $\pol$  is a polarization, then $|\pol|$ has no base points by~\cite[Corollary 3.2]{MR0364263},
and hence defines a morphism
$$
\Phi_{|\pol|}: X\to \P^N,
$$
where $N=\dim |\pol|$.
\begin{proposition}\label{prop:polarization}
A vector  $v\in \NS(X)$ is the  class of a polarization
if and only if $v^2>0$, $v$ is nef, and the set
$\shortset{x\in \NS(X)}{\intnum{v}{x}=1, x^2=0}$
is empty.
\end{proposition}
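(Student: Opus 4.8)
The plan is to reduce the statement to Saint-Donat's classification of linear systems with a fixed component on a $K3$ surface. Throughout we work under the standing hypotheses that $v\in\NS(X)$ is nef and $v^2>0$; these appear on both sides of the asserted equivalence (on the polarization side they are part of the definition of a polarization, on the numerical side they are imposed explicitly), so the real content is to show that, given these, the system $|v|$ has no fixed component if and only if there is no $x\in\NS(X)$ with $x^2=0$ and $\intnum{v}{x}=1$. First I would record that $|v|\ne\emptyset$: by Riemann--Roch $\chi(\OOO_X(v))=2+v^2/2>0$, while $-v$ cannot be effective because $v$ is nef with $v^2>0$, so $h^0(\OOO_X(v))\ge 2$.

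For the implication (existence of a fixed component $\Rightarrow$ existence of $x$) I would invoke the structure theorem of Saint-Donat~\cite{MR0364263}: if $v$ is nef with $v^2>0$ and $|v|$ has a fixed component, then $v=aE+\Gamma$ with $a\ge 2$, where $E$ is an irreducible curve of arithmetic genus $1$ (so $E^2=0$), $\Gamma$ is a $(-2)$-curve with $\intnum{E}{\Gamma}=1$, and $\Gamma$ is the fixed part. Setting $x:=[E]\in\NS(X)$ then gives $x^2=0$ and $\intnum{v}{x}=\intnum{aE+\Gamma}{E}=\intnum{\Gamma}{E}=1$. Contrapositively, emptiness of the set forces $|v|$ to have no fixed component, which together with the standing hypotheses is precisely the statement that $v$ is the class of a polarization; this is the ``if'' direction.

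For the converse (existence of $x$ $\Rightarrow$ existence of a fixed component), which yields the ``only if'' direction in contrapositive form, I would upgrade the numerical solution $x$ to geometry. Since $x^2=0$ we have $x\ne 0$; Riemann--Roch gives $\chi(\OOO_X(x))=2$, and $-x$ is not effective because $v$ is nef while $\intnum{v}{-x}=-1<0$, so $h^0(\OOO_X(x))\ge 2$ and $x$ is effective. Writing $|x|=|M|+\Phi$ with movable part $M$ and fixed part $\Phi$ (a sum of $(-2)$-curves), the moving part $M$ is base point free, hence nef, by Saint-Donat~\cite{MR0364263}. Since $\intnum{v}{M}\le\intnum{v}{x}=1$ while $v^2\ge 2$, Hodge index rules out $M^2>0$ (which would give $\intnum{v}{M}^2\ge v^2M^2\ge 4$), so $M^2=0$ and hence $M=kE$ for a primitive nef elliptic pencil class $E$ with $E^2=0$ and $k\ge 1$. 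Because $v^{\perp}$ is negative definite, the nef nonzero class $E$ satisfies $\intnum{v}{E}\ge 1$, and then $1=\intnum{v}{x}=k\,\intnum{v}{E}+\intnum{v}{\Phi}\ge k\ge 1$ forces $k=1$, $\intnum{v}{E}=1$, and $\intnum{v}{\Phi}=0$. Thus $E$ is an irreducible elliptic curve with $\intnum{v}{E}=1$, and applying the classification of~\cite{MR0364263} in the reverse direction yields $v=aE+\Gamma$, so $|v|$ has the fixed $(-2)$-curve $\Gamma$.

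The main obstacle is exactly this passage from a purely numerical solution $x$ to an honest irreducible elliptic curve $E$ with $\intnum{v}{E}=1$: one must use Riemann--Roch to make $x$ effective, invoke base point freeness of the moving part together with the classification of square-zero effective divisors on a $K3$ surface to extract the genuine elliptic pencil, and then deploy Hodge index with the nefness of $v$ to guarantee that all the intersection degree $1$ is carried by $E$ rather than by the fixed $(-2)$-curves lying in $v^{\perp}$. Once the equivalence ``$|v|$ has a fixed component'' $\Leftrightarrow$ ``there is an irreducible $E$ with $E^2=0$ and $\intnum{v}{E}=1$'' is established, the two implications combine to give the proposition.
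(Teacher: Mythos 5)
Your proof is correct. Note, however, that the paper offers no argument of its own for this proposition: its ``proof'' consists entirely of the citation to Nikulin's Proposition~0.1 and to the argument of (4)$\Rightarrow$(1) in Urabe's Proposition~1.7, which contain precisely the Saint-Donat--style analysis you have reconstructed. So your write-up is essentially the same approach, with the virtue of being self-contained: both directions reduce to the monogonal case $v=aE+\Gamma$ with $E^2=0$, $\Gamma^2=-2$, $\intnum{E}{\Gamma}=1$, $a\ge 2$. The one step you dispatch with the phrase ``applying the classification in the reverse direction'' is worth making explicit, since it is where the numerical hypothesis actually produces the fixed component: having found the irreducible $E$ with $E^2=0$ and $\intnum{v}{E}=1$, set $\Gamma:=v-(v^2/2+1)E$; then $\Gamma^2=-2$ and $\intnum{\Gamma}{E}=1>0$, so $\Gamma$ (not $-\Gamma$) is effective by Riemann--Roch, and comparing $h^0(v)=v^2/2+2$ with $h^0(aE)=a+1$ for $a=v^2/2+1$ shows $|v|=|aE|+\Gamma$, exhibiting $\Gamma$ as the fixed part. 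With that spelled out, the two implications you prove combine exactly as you claim.
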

\begin{proof}
See
Nikulin~\cite[Proposition 0.1]{MR1260944},  and 
the argument in the proof of (4)$\Rightarrow$(1)
in Urabe~\cite[Proposition 1.7]{MR1101859}.
\end{proof}
Let $\pol$ be a polarization on $X$.
The orthogonal complement 
$[\pol]\sperp$
of $\gen{[\pol]}$ in $\NS(X)$ is  negative-definite by Hodge index theorem.
Then we can easily prove the following.
(See~\cite[Proposition 2.4]{MR2369942}.)
\begin{proposition}\label{prop:geomroots}
The set of
 classes of $(-2)$-curves 
that are contracted by    $\Phi_{|\pol|}$
 is equal to the fundamental system of roots
in $[\pol]\sperp$
associated with the linear form 
$t_\kappa: [\pol]\sperp \to \R$
given by $t_\kappa(v):=\intnum{v}{\kappa}$, 
where  $\kappa$ is  a vector in the K\"ahler cone $\Kahler_X$.
\end{proposition}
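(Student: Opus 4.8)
The plan is to set $L:=[\pol]\sperp$, which is negative-definite by the Hodge index theorem, and to realize both the set of classes of contracted $(-2)$-curves and the fundamental system $F_{t_\kappa}$ as subsets of the set $(D_L)_{t_\kappa}^+$ of positive roots of $L$; the proposition then amounts to showing these two subsets coincide. First I would note that a curve $C$ is contracted by $\Phi_{|\pol|}$ if and only if $\intnum{\pol}{[C]}=0$, because $\Phi_{|\pol|}^{*}\OOO_{\P^N}(1)=\pol$; hence a $(-2)$-curve $C$ is contracted exactly when $[C]\in L$. Since $\kappa\in\Kahler_X\subset\zeroGamma_X$, we have $\intnum{d}{\kappa}\ne 0$ for every $d\in D_X$, so $t_\kappa$ restricts to an element of $\spzero\Hom(L,\R)$ and $F_{t_\kappa}$ is defined by Proposition~\ref{prop:t-roots}. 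Using the decomposition $D_X=\Delta_X\sqcup(-\Delta_X)$ together with Proposition~\ref{prop:Kcone}, which yields $\intnum{d}{\kappa}>0$ precisely for the effective roots, I would identify $(D_L)_{t_\kappa}^+$ with the set of effective roots lying in $L$.

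For the inclusion of contracted $(-2)$-curve classes into $F_{t_\kappa}$, let $C$ be a contracted $(-2)$-curve and suppose $[C]=d_1+d_2$ with $d_1,d_2\in(D_L)_{t_\kappa}^+$. Expanding $\intnum{[C]}{[C]}=-2$ with $d_1^2=d_2^2=-2$ gives $\intnum{d_1}{d_2}=1$ and therefore $\intnum{[C]}{d_1}=-1$. As $C$ is irreducible and $d_1$ is effective, a negative intersection number forces $C$ to be a component of a representative of $d_1$, so $-d_2=d_1-[C]$ is effective; this contradicts $d_2\in\Delta_X$ via $\Delta_X\cap(-\Delta_X)=\emptyset$. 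Hence $[C]$ is indecomposable, so $[C]\in F_{t_\kappa}$.

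For the reverse inclusion I would first show that any effective root $d\in(D_L)_{t_\kappa}^+$ decomposes as $d=\sum_i n_i[C_i]$ with $n_i\ge 1$, where $C_i$ are the irreducible components of a representative: nefness of $\pol$ and $\intnum{\pol}{d}=0$ force $\intnum{\pol}{[C_i]}=0$ for each $i$, and since each $C_i$ then lies in the negative-definite lattice $L$ it has negative self-intersection, so on a $K3$ surface adjunction forces $C_i^2=-2$. Thus every $C_i$ is a contracted $(-2)$-curve, already shown to lie in $F_{t_\kappa}$. Applying this to an indecomposable $d\in F_{t_\kappa}$ writes a simple root as a non-negative integer combination of simple roots; because a fundamental system of roots is linearly independent, this is only possible if $d=[C_i]$ for a single $i$, so $d$ is the class of a contracted $(-2)$-curve. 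Combining the two inclusions gives the desired equality.

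The step I expect to be the main obstacle is reconciling the two competing notions of indecomposability: ``cannot be written as a sum of two positive roots,'' which defines $F_{t_\kappa}$, versus ``is the class of a single irreducible curve.'' The bridge is the $K3$-specific fact that a contracted irreducible curve automatically has self-intersection $-2$, so that the monoid of effective roots in $L$ is generated by the contracted $(-2)$-curve classes; feeding this into the linear independence of $F_{t_\kappa}$ is what pins down the identification in both directions.
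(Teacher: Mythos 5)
Your proof is correct. The paper gives no argument for this proposition in the text---it defers entirely to \cite[Proposition 2.4]{MR2369942}---and what you wrote is precisely the standard argument that reference supplies: a contracted $(-2)$-curve class is indecomposable because a decomposition $[C]=d_1+d_2$ forces $\intnum{[C]}{d_1}=-1$, hence $-d_2$ effective, contradicting $\Delta_X\cap(-\Delta_X)=\emptyset$; conversely, nefness of $\pol$ together with adjunction on a $K3$ shows every effective root of $[\pol]\sperp$ is a non-negative integral combination of classes of contracted $(-2)$-curves, and the linear independence of a fundamental system then identifies each indecomposable element with a single such class. I see no gaps.
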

%
%after referee
%\begin{corollary}\label{cor:walls}
%The K\"ahler cone $\Kahler_X$ is bounded in $H_X$ by the  walls $d\sperp$ locally around the point $[\pol]\in H_X$,
%where $d$ are the $(-2)$-curves contracted by  $\Phi_{|\pol|}$.
%\end{corollary}
%
\begin{corollary}\label{cor:walls}
Let $U\subset H_X$ be a sufficiently small open  ball with the center $[\pol]$.
Then  $U\cap \Kahler_X$ is an open cone with the vertex $[\pol]$ and with the faces  being  the  walls $d\sperp$,
where $d$ are the $(-2)$-curves contracted by  $\Phi_{|\pol|}$.
\end{corollary}
\subsection{Lattice types  of simple sextics}\label{sec:lattice-types-of-simple-sextics}
We denote by $\Klat$ the $K3$ lattice,
that is, an even unimodular lattice of signature $(3, 19)$,
which is unique up to isomorphisms.
We put
$$
\Omega_{\Klat}:=\set{[\per]\in \P_*(\Klat\otimes\C)}{\intnum{\per}{\per}=0, \;\intnum{\per}{\bar\per}>0},
$$
which is a complex manifold of dimension $20$ with two connected components.
A \emph{marked $K3$ surface} is a pair $(X, \phi)$
of a $K3$ surface $X$ and an isomorphism  $\phi: H^2(X, \Z)\isom \Klat$ of lattices.
There exists a universal family 
$$
(\pi_1: \XXX_1\to\MMM_1, \Phi_1)
$$
of marked $K3$ surfaces
over a \emph{non-Hausdorff} smooth complex manifold $\MMM_1$ of dimension $20$,
where $\Phi_1$ is 
an isomorphism $R^2\pi_{1*}\Z\cong \MMM_1\times \Klat$ of locally constant systems of lattices over $\MMM_1$.
(See~\cite[\S12 of Chap.~VIII]{MR2030225} or~\cite{MR785231}.)
For $t\in \MMM_1$,
we have  a point
$$
\tau_1 (t):=[\phi_{t}(\per_{X_t})]\;\;\in \;\;\Omega_{\Klat},
$$
where $(X_t, \phi_t)$ is the  marked $K3$ surface corresponding to $t$,
and $\per_{X_t}$ is a basis of $H^{2,0}(X_t)$.
We call $\tau_1 (t)$ the \emph{period point of $(X_t, \phi_t)$}.
It is well-known  that the \emph{period map} 
$$
\tau_1 \;:\; \MMM_1\;\to\; \Omega_{\Klat}
$$
 is   holomorphic and surjective.
 (See~\cite[\S12 of Chap.~VIII]{MR2030225} or~\cite{MR785231}.)
 %
 %
%\begin{definition}
%We say that a lattice data   $[\EEE, h, \lat]$ 
%is a \emph{lattice data of simple sextics} 
%if there exists a simple sextic $B$ whose lattice type 
%$\latdata(B)=[\EEE_B, \polB, \latB]$ is isomorphic to $[\EEE, h, \lat]$
%\end{definition}
%
%
\par
\medskip
Yang~\cite{MR1387816} presented an algorithm to classify  all  lattice data 
that can be  realized as lattice data  of  simple sextics.
His method  is based on the following proposition, 
which was proved by  the surjectivity of $\tau_1$ and 
Propositions~\ref{prop:polarization} and~\ref{prop:geomroots}.
\begin{proposition}[Urabe~\cite{MR1101859, MR1000608}]\label{prop:urabe}
Lattice data $[\EEE, h, \lat]$ is isomorphic to  lattice data of simple sextics
if and only if $[\EEE, h, \lat]$ satisfies the following:
\begin{itemize}
\item[(i)] the lattice $\lat$ can be  embedded primitively in $\Klat$,
\item[(ii)]  $\shortset{x\in \lat}{\intnum{x}{h}=0, \;x^2=-2}\;=\;\shortset{x\in \gen{\EEE}}{x^2=-2}$, and
\item[(iii)]  $\shortset{x\in \lat}{\intnum{x}{h}=1, \;x^2=0}\;=\;\emptyset$.
\end{itemize}
\end{proposition}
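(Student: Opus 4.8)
The plan is to prove Proposition~\ref{prop:urabe} by realizing a given abstract lattice datum $[\EEE, h, \lat]$ satisfying (i)--(iii) as the lattice datum of an actual simple sextic, using the surjectivity of the period map together with the geometric characterizations of polarizations and of contracted $(-2)$-curves established in Propositions~\ref{prop:polarization} and~\ref{prop:geomroots}. The necessity direction (that lattice data of simple sextics satisfy (i)--(iii)) is essentially bookkeeping: condition (i) holds because $\latB$ is by construction a primitive sublattice of $H^2(X_B,\Z)\cong\Klat$; condition (ii) encodes that the roots orthogonal to the polarization class $\polB$ are exactly those supported on the exceptional configuration $\EEE_B$, which follows from Proposition~\ref{prop:geomroots} applied to the polarization $\polB$ coming from $\tlrhoB\sp*\OOO_{\Pt}(1)$; and condition (iii) is precisely the nonexistence-of-base-points criterion in Proposition~\ref{prop:polarization}, reflecting that $\polB$ is a genuine polarization defining the degree-$2$ map $\tlrhoB$.

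For the sufficiency direction, I would argue as follows. Use condition (i) to fix a primitive embedding $\lat\inj\Klat$. Its orthogonal complement $\lat\sperp$ carries a Hodge structure once we choose a point $[\per]\in\Omega_{\Klat}$ lying in $\lat\sperp\tensor\C$; by surjectivity of $\tau_1$ there is a marked $K3$ surface $(X,\phi)$ with period point $[\per]$ and with $\phi(\NS(X))\supseteq\lat$. By choosing $[\per]$ generically in $\lat\sperp\tensor\C$ we arrange $\NS(X)=\phi\inv(\lat)$ exactly, so that via $\phi$ we may identify $\NS(X)$ with $\lat$, the vector $h$ with a class in $\NS(X)$, and $\EEE$ with a set of roots. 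Condition (iii) and Proposition~\ref{prop:polarization} then make $h$ (after possibly replacing $h$ by its image in the correct component $\Gamma_X^+$, i.e. choosing the sign so that $h$ is nef) the class of a polarization $\pol$ with $\pol^2=2$. The associated morphism $\Phi_{|\pol|}:X\to\P^N$ has $N=\dim|\pol|=2$ by Riemann--Roch (since $\pol^2=2$ gives $\chi(\pol)=3$ and higher cohomology vanishes), so $\Phi_{|\pol|}:X\to\Pt$ is a degree-$2$ morphism. Its branch locus $B\subset\Pt$ is then a sextic, and by Proposition~\ref{prop:geomroots} the $(-2)$-curves contracted by $\Phi_{|\pol|}$ form exactly the fundamental system of roots in $[\pol]\sperp$ cut out by the Kähler cone; condition (ii) guarantees this system is the prescribed $\EEE$, so the singularities of $B$ have $ADE$-type $R_B$ matching $\gen{\EEE}$ and $\latdata(B)\cong[\EEE,h,\lat]$.

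The step I expect to be the main obstacle is verifying that the branch curve $B$ is genuinely a \emph{simple} sextic, i.e. reduced with only simple singularities, and that the contracted configuration $\EEE$ is realized as rational double points on the double cover rather than producing a non-reduced or otherwise degenerate branch locus. Concretely, one must rule out that $\Phi_{|\pol|}$ is composed with an involution in a way that collapses the branch curve, and one must check that the orientation/sign choice placing $h$ in $\Gamma_X^+$ is consistent with $\EEE\subset\Delta_X$ (the effective roots) rather than $-\Delta_X$; this is where Corollary~\ref{cor:walls} and the precise description of $\Kahler_X$ in Proposition~\ref{prop:Kcone} are used to guarantee that $\pol$ lies on the boundary of the Kähler cone with faces exactly the walls $d\sperp$ for $d\in\EEE$, so that contracting those $(-2)$-curves produces rational double points of the correct type. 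Since the proposition is attributed to Urabe, I would at this point invoke the detailed analysis in Urabe~\cite{MR1101859, MR1000608}, whose argument in the proof of the implication (4)$\Rightarrow$(1) of~\cite[Proposition 1.7]{MR1101859} carries out exactly this verification, together with Nikulin's~\cite[Proposition 0.1]{MR1260944} for the polarization criterion already cited in the proof of Proposition~\ref{prop:polarization}.
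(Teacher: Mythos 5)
Your proposal is correct and follows essentially the same route the paper indicates: the paper gives no written-out proof of Proposition~\ref{prop:urabe}, stating only that it ``was proved by the surjectivity of $\tau_1$ and Propositions~\ref{prop:polarization} and~\ref{prop:geomroots}'' and deferring the details to Urabe~\cite{MR1101859, MR1000608}, which is exactly the skeleton you flesh out. Your identification of the delicate points (genericity of the period point so that $\NS(X)=\lat$, adjusting the marking so that $h$ is nef and $\EEE$ consists of effective classes, and the reducedness of the branch sextic) matches where the cited sources do the real work.
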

\begin{computation}\label{comp:yang}
Let $R$ be an $ADE$-type of rank $\le 19$.
We determine all lattice data  of simple sextics $B$ with $R_B=R$.
We put $\blat:=\gen{h}\oplus \gen{\EEE}$,
where $h^2=2$ and $\EEE$ is the fundamental system of roots of type $R$.
We then calculate the \emph{discriminant form} of $\blat$.
(See~\cite[\S1]{MR525944} for the definition of the discriminant form of an even lattice.)
We then make the complete list of isotropic subgroups $H$ of the discriminant form of $\blat$.

For each isotropic subgroup $H$, 
we calculate the even overlattice $\lat(H)$ of $\blat$ corresponding to $H$ by~\cite[Proposition 1.4.1]{MR525944}.
We then determine whether or not $\lat=\lat(H)$ satisfies the conditions (ii) and (iii) in Proposition~\ref{prop:urabe}
by the method described in~\cite[\S4]{MR2036331}, and then determine 
 whether  or not  $\lat(H)$ can be embedded primitively into $\Klat$   by means of~\cite[Theorem~1.12.1]{MR525944} or 
 by the method of $p$-excess due to 
Conway-Sloane~\cite[Chap.~15]{MR1662447} described in~\cite[\S3]{MR2369942}. 
(See also~\cite[Chapters 8 and 9]{MR522835}.)

We conclude that $[\EEE, h, \lat(H)]$ is realized as lattice data  of simple sextics $B$ with $R_B=R$
if and only if $\lat(H)$ satisfies the conditions  in Proposition~\ref{prop:urabe}.
\end{computation}
More precisely,
the family of simple sextics $B$ with $\latdata(B)\cong [\EEE, h, \lat]$
is described as follows.
Suppose that  lattice data $[\EEE, h, \lat]$  satisfies the conditions (i), (ii) and (iii)
in Proposition~\ref{prop:urabe}.
We choose a primitive  embedding
$$
\psi: \lat\inj \Klat,
$$
and consider $\lat$ as a primitive sublattice of $\Klat$.
In particular, %we consider vectors in $\EEE$ and $h$ as  vectors of $\Klat$.
we have $\EEE\subset \Klat$ and $h\in \Klat$.
\begin{remark}
The  primitive embedding of $\lat$ in $\Klat$ is not unique in general.
In fact, by choosing different primitive embeddings of $\lat$ in $\Klat$,
we often obtain distinct connected components of the equisingular family~(see Degtyarev~\cite{MR2357681}).
More strongly, 
we have obtained  examples of pair of simple sextics $B_1$ and $B_2$ such that $B_1\rellat B_2$
but $B_1\nrelemb B_2$ 
by considering different primitive embeddings of $\lat$ (see~\cite{withArima},~\cite{MR2405237} and~\cite{nonhomeo}).
See also~\S\ref{subsec:lat_and_emb}.
\end{remark}
For $[\omega]\in \Omega_{\Klat}$, we 
put
$$
\NS^{[\per]}\;:=\;  \set{x\in \Klat}{\intnum{x}{\per}=0},
$$
which is a primitive sublattice of $\Klat$.
We then put
$$
\Omega_{\psi^\perp}:=\set{[\omega]\in \Omega_{\Klat}}%
{\intnum{\omega}{x}=0\;\; \textrm{for all} \;\; x\in  \lat}\;\;\subset\;\; \Omega_{\Klat}, 
%{\omega\perp \lat}\;\;\subset\;\; \Omega_{\Klat}, 
$$
and  denote by $\Omega_{\psi^\perp}\sp\perspcond$ the set of all $[\omega]\in \Omega_{\psi^\perp}$
such that $\NS^{[\per]}$ satisfies the following conditions, which correspond to 
the properties (ii) and (iii) for $\lat$ in Proposition~\ref{prop:urabe}:
\begin{eqnarray}
&&\label{eqcond1}
\shortset{x\in  \NS^{[\per]}}{\intnum{x}{h}=0, \;x^2=-2}=\shortset{x\in \gen{\EEE}}{x^2=-2}
\quand\\
&&\label{eqcond2}
\shortset{x\in \NS^{[\per]}}{\intnum{x}{h}=1, \;x^2=0}=\emptyset.
\end{eqnarray}
Note that
the complement of $\Omega_{\psi^\perp}\sp\perspcond$ in $\Omega_{\psi^\perp}$ is 
a locally finite family of complex analytic subspaces.
From the surjectivity of $\tau_1$ and 
Propositions~\ref{prop:polarization} and~\ref{prop:geomroots}, we easily obtain the following:
\begin{proposition}
For any point $p\in \Omega_{\psi^\perp}\sp\perspcond$,
there exists a simple sextic $B$ with a marking $\phi: H^2(X_B,\Z)\isom \Klat$
such that   $\phi(\polB)=h$, $\phi(\EEE_B)=\EEE$, $\phi(\latB)=\lat$,
 and that the  period point of $(X_B, \phi)$ is $p$.
 
 Conversely,
 if $B$ is a simple sextic with a  marking $\phi: H^2(X_B,\Z)\isom \Klat$
 and  $\psi\sprime : \latB\isom \lat$ is an isomorphism of lattice data
 from   $\latdata(B)$ to  the lattice data  $[\EEE, h, \lat]$,
 then  the period point of $(X_B, \phi)$ is
contained in  $\Omega_{\psi^\perp}\sp\perspcond$,
where  $\psi:\lat\inj \Klat$ is  the primitive embedding
obtained from $\phi|\lat_B: \latB\inj \Klat$ via $\psi\sprime$.
\end{proposition}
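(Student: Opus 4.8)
The plan is to prove both implications by transporting, through a marking, the conditions \eqref{eqcond1} and \eqref{eqcond2} that cut out $\Omega_{\psi^\perp}\sp\perspcond$. I identify $\lat$ with its image $\psi(\lat)\subset\Klat$, so that $h,\EEE\subset\Klat$. The basic observation is that a marking $\phi\colon H^2(X,\Z)\isom\Klat$ with period point $[\per]$ restricts to an isometry $\NS(X)\isom\NS^{[\per]}$, because an integral class is orthogonal to $\per_X$ exactly when it is of type $(1,1)$. For the existence statement I would use the surjectivity of $\tau_1$ to choose a marked $K3$ surface $(X,\phi)$ with period point the given $p$. Since $p\in\Omega_{\psi^\perp}$, the lattice $\lat$ lies in $\NS^{[\per]}$, so $\phi\inv(\lat)$ is a primitive sublattice of $\NS(X)$ containing $v:=\phi\inv(h)$ with $v^2=2$. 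After composing $\phi$ with $-1$, which fixes the period point, I may assume $v\in\Gamma_X^+$; and since the reflections $s_d(x)=x+\intnum{x}{d}d$ with $d\in\Delta_X$ fix the orthogonal complement of $\NS(X)$ in $H^2(X,\Z)$ pointwise, they extend to isometries of $H^2(X,\Z)$ preserving $[\per]$, whose fundamental domain on $\Gamma_X^+$ is the closure of $\Kahler_X$ (Proposition~\ref{prop:Kcone}). Composing with a suitable such isometry, I may assume $v$ is nef.

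Transporting \eqref{eqcond2} through $\phi\inv$ makes $\shortset{x\in\NS(X)}{\intnum{v}{x}=1,\,x^2=0}$ empty, so $v$ is the class of a polarization by Proposition~\ref{prop:polarization}. As $v^2=2$, the morphism $\Phi_{|v|}\colon X\to\Pt$ is generically two-to-one and branched along a simple sextic $B$. By \eqref{eqcond1} the roots of $\NS(X)$ orthogonal to $v$ are precisely those of $\gen{\phi\inv(\EEE)}$, while by Proposition~\ref{prop:geomroots} the curves contracted by $\Phi_{|v|}$ form a fundamental system of this root system; a final reflection in roots orthogonal to $v$ (extended by the identity, fixing both $v$ and $[\per]$) lets me assume this fundamental system is $\phi\inv(\EEE)$ itself. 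Then $X\isom X_B$ with $\polB=v$ and $\EEE_B=\phi\inv(\EEE)$, so $\phi(\polB)=h$ and $\phi(\EEE_B)=\EEE$; and $\phi(\latB)=(\blat\tensor\Q)\cap\Klat=\lat$ since $\lat$ is primitive in $\Klat$. Thus $(X_B,\phi)$ has period point $p$, as required.

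For the converse, put $\psi=(\phi|\latB)\circ(\psi\sprime)\inv$, so that $\phi(\latB)=\psi(\lat)$, $\phi(\polB)=h$ and $\phi(\EEE_B)=\EEE$. Because $\latB\subset\NS(X_B)$ consists of algebraic classes, it is orthogonal to $\per_{X_B}$, hence $\psi(\lat)$ is orthogonal to the period and the period point lies in $\Omega_{\psi^\perp}$. By Proposition~\ref{prop:geomroots}, $\EEE_B$ is a fundamental system of the negative-definite root system cut out on $\NS(X_B)$ by $\polB\sperp$, so the $(-2)$-classes of $\NS(X_B)$ orthogonal to $\polB$ are exactly the roots of $\gen{\EEE_B}$; applying the isometry $\phi$ gives \eqref{eqcond1}. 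Since $\polB$ is a genuine polarization, Proposition~\ref{prop:polarization} forces $\shortset{x\in\NS(X_B)}{\intnum{\polB}{x}=1,\,x^2=0}=\emptyset$, and applying $\phi$ gives \eqref{eqcond2}. Therefore the period point lies in $\Omega_{\psi^\perp}\sp\perspcond$.

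The main obstacle is confined to the existence half: arranging, by composing the marking with the sign change and with period-preserving Weyl reflections, that $v=\phi\inv(h)$ is simultaneously nef and carries $\phi\inv(\EEE)$ as its system of contracted curves. Once the marking is normalized this way, Propositions~\ref{prop:polarization} and~\ref{prop:geomroots} convert the combinatorial conditions~\eqref{eqcond1} and~\eqref{eqcond2} into the geometric assertion that $v$ is a degree-$2$ polarization whose double-plane model is a simple sextic with the prescribed root data, while the primitivity of $\lat$ settles the identification $\phi(\latB)=\lat$.
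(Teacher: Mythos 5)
Your proposal is correct and follows exactly the route the paper indicates for this proposition (which it states as an easy consequence of the surjectivity of $\tau_1$ together with Propositions~\ref{prop:polarization} and~\ref{prop:geomroots}): you merely fill in the standard normalization of the marking by $-1$ and by period-preserving Weyl reflections so that $\phi\inv(h)$ becomes nef and $\phi\inv(\EEE)$ becomes the set of contracted curves. No gaps.
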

We then  put
$$
\Omega_{\psi^\perp}\sp{\perspcond\perspcond}:=\set{[\omega]\in \Omega\sp\perspcond_{\psi^\perp}}{\NS^{[\per]}=\lat}.
$$
If $p\in \Omega_{\psi^\perp}\sp{\perspcond\perspcond}$,
then the corresponding simple sextic $B$ is lattice-generic.
It is obvious  that $\Omega_{\psi^\perp}\sp{\perspcond\perspcond}$ is  dense  in $\Omega_{\psi^\perp}\sp\perspcond$.
Hence we obtain the following:
\begin{corollary}\label{cor:perturb}
Given a simple sextic $B$,
we can obtain a lattice-generic simple sextic $B\sprime$
by an arbitrarily  small equisingular deformation  of $B$.
\end{corollary}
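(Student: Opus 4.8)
The plan is to convert the density of $\Omega_{\psi^\perp}^{\perspcond\perspcond}$ in $\Omega_{\psi^\perp}^\perspcond$, observed just above, into an honest small equisingular deformation of $B$, using the surjectivity of the period map $\tau_1$ and the local isomorphism property of the period map on $\MMM_1$. First I would fix a marking $\phi:H^2(X_B,\Z)\isom\Klat$; this induces a primitive embedding $\psi:\latB\inj\Klat$ and, by the realization statement above, places the period point $p_0:=\tau_1(t_0)$ of $(X_B,\phi)$ in $\Omega_{\psi^\perp}^\perspcond$. Since $\Omega_{\psi^\perp}^{\perspcond\perspcond}$ is dense in $\Omega_{\psi^\perp}^\perspcond$ (its complement being contained in the countable union, over nonzero $x\in\psi(\latB)^\perp$, of the proper closed analytic subsets $\{[\omega]\in\Omega_{\psi^\perp}:\intnum{\omega}{x}=0\}$), I can choose a holomorphic arc $p:\Delta\to\Omega_{\psi^\perp}^\perspcond$ with $p(0)=p_0$ and $p(t)\in\Omega_{\psi^\perp}^{\perspcond\perspcond}$ for all $t\neq 0$.

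Next I would lift this arc to a family of marked $K3$ surfaces. By the surjectivity of $\tau_1$ together with the local isomorphism property of the period map on the (non-Hausdorff) moduli space $\MMM_1$, after shrinking $\Delta$ the arc $p$ lifts to $t\mapsto(X_t,\phi_t)$ in $\MMM_1$ with $(X_0,\phi_0)=(X_B,\phi)$. For each $t$ the conditions~\eqref{eqcond1} and~\eqref{eqcond2} hold for $\NS^{[p(t)]}$, so by Propositions~\ref{prop:polarization} and~\ref{prop:geomroots} the class $h_t:=\phi_t\inv(h)\in\NS(X_t)$ is a degree-$2$ polarization whose morphism $\Phi_{|h_t|}:X_t\to\P^2$ contracts exactly the $(-2)$-curves whose classes form the fundamental system of roots $\phi_t\inv(\EEE)$ of type $R=R_B$. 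Hence $\Phi_{|h_t|}$ exhibits $X_t$ as the minimal resolution of the double cover of $\P^2$ branched along a simple sextic $B_t$ with $\latdata(B_t)\cong[\EEE,h,\lat]$; relativizing this construction over the universal family $(\pi_1,\Phi_1)$ yields an analytic family $f:\BBB\to\Delta$ of simple sextics with $f\inv(0)=B$.

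Finally I would read off the conclusion. All fibers $B_t$ share the same $R_B$ and $\degs$, since the lattice type, and \emph{a fortiori} the configuration type, is constant along the family by Yang's implication $\rellat\Rightarrow\relconfig$; thus $f$ is an equisingular deformation of $B$, and as $p(t)\to p_0$ the curves $B_t\to B$, so the deformation is arbitrarily small. For $t\neq0$ we have $\NS(X_t)=\NS^{[p(t)]}=\lat=\lat_{B_t}$ by the defining condition of $\Omega_{\psi^\perp}^{\perspcond\perspcond}$, so by Definition~\ref{def:lattice-generic} the sextic $B\sprime:=B_t$ is lattice-generic, as required.

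I expect the main obstacle to be the middle step: verifying that the lifted family of abstract marked $K3$ surfaces descends to a genuine analytic family of plane sextics near $B$. The delicate points are that $\Phi_{|h_t|}$ remains a degree-$2$ morphism onto $\P^2$ with constant branch behaviour along the whole arc---which is precisely what conditions~\eqref{eqcond1} and~\eqref{eqcond2} guarantee through Propositions~\ref{prop:polarization} and~\ref{prop:geomroots}---and that this morphism varies holomorphically with $t$, cutting out a surface $\BBB\subset\P^2\times\Delta$ with the required projection. Everything else is formal once the density statement and the surjectivity of $\tau_1$ are in hand.
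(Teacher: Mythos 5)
Your argument is correct and follows the same route as the paper: the paper deduces the corollary directly from the density of $\Omega_{\psi^\perp}\sp{\perspcond\perspcond}$ in $\Omega_{\psi^\perp}\sp\perspcond$ together with the realization of points of $\Omega_{\psi^\perp}\sp\perspcond$ by marked double sextics via Propositions~\ref{prop:polarization} and~\ref{prop:geomroots}. The extra details you supply (lifting an arc through the period map and relativizing the construction of $\Phi_{|h_t|}$ to get an honest analytic family) are exactly what the paper spells out later in the proof of Proposition~\ref{prop:onlyifsp}, whose conclusion (iv) re-records this density statement.
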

\section{Algorithms for a lattice type}\label{sec:algorithm}
Let $B$ be a  simple sextic.
\emph{Throughout this section,
we assume that $B$ is lattice-generic,
except for Corollary~\ref{cor:Yang}.}
In particular,
every splitting curve is pre-$Z$-splitting.
We present an algorithm to determine
the configuration type and  the sets $\ZZZ_1(B)$, $\ZZZ_2(B)$ and $\ZZZ_3(B)$
from  the lattice data $\latdata(B)=[\EEE_B, h_B, \latB]$  of $B$.
\par
\medskip
Recall that, 
for a splitting curve $\Gm$,
we denote by $\tlGmplus, \tlGmminus \subset X_B$ the lifts of $\Gm$.
For an irreducible component $B_i$ of $B$,
we denote by $\tilde{B}_i\subset X_B$ the reduced part of the strict transform of $B_i$, that is,
we put $\tilde{B}_i:=\tilde{B}_i^+=\tilde{B}_i^-$.
\par
\medskip
We denote by 
$j_B: W_B\to\Pt$  the \emph{Jung-Horikawa embedded resolution} (\emph{canonical embedded resolution}) of $B\subset \Pt$,
which  is the minimal succession of blowing ups 
such that the strict transform of $B$ is smooth and that
any  distinct  irreducible components of the total  transform 
 of $B$  with \emph{odd} multiplicities  do not intersect.
 (See~\cite[\S7 of Chap. III]{MR2030225}.)
 Then we have the \emph{finite} double covering $\tilde{\pi}_B: X_B\to W_B$
 that makes the following diagram commutative:
 $$
 \begin{array}{ccc}
 X_B & \maprightsp{\rho_B} & Y_B \\
 \mapdownleft{\hskip -10pt\tilde{\pi}_B} &&  \mapdownright{{\pi}_B} \\
 W_B &\maprightsp{j_B} & \Pt.
 \end{array}
 $$
For $P\in \Sing B$,
let $\EEE_P=\{e_1, \dots, e_r\}$ be the set of exceptional $(-2)$-curves on $X_B$ over $P$, 
which are indexed as in  Figure~\ref{figure:dynkin}.
For simplicity, we use the same letter for an exceptional $(-2)$-curve and its class, %cohomology class,
and consider $\EEE_P$ as a subset of $\blatB$.
Then $e_1, \dots, e_r$  form the  fundamental system of roots in  the sublattice $\gen{\EEE_P}$
of $\blatB$ associated with a K\"ahler class of $X_B$.
We denote by $e_1\dual, \dots, e_r\dual$ the dual basis of 
the dual lattice $\gen{\EEE_P}\dual\subset \gen{\EEE_P} \tensor \Q$. %, that is, 
%$$
%\intnum{e_i}{e_j\dual}=\delta_{ij},\;\;\textrm{ where $\delta_{ij}$ is  the Kronecker delta.}
%$$
We have an orthogonal direct-sum decomposition
$$
\blatB=\gen{\polB}\oplus \bigoplus_{P\in \Sing B} \gen{\EEE_P}.
$$
Recall that $\latB$ is the primitive closure of $\blat_B$ in $H^2(X_B, \Z)$.
We consider the decomposition
\begin{equation}\label{eq:decomp}
\latB\otimes\Q\;=\;\blatB\otimes\Q\;=\;\gen{\polB}\otimes\Q \oplus  \bigoplus\gen{\EEE_P}\otimes\Q. %\bigoplus_{P\in \Sing B} \gen{\EEE_P}\otimes\Q.
\end{equation}
For $x\in \latB$,
we denote by $x_h\in \gen{\polB}\otimes\Q$ and $x_P\in  \gen{\EEE_P}\otimes\Q$
the components of $x$ under  the  direct-sum decomposition~\eqref{eq:decomp}.
%Recall that $\gen{\EEE_B}^+\subset \blatB$ is the monoid generated by $\EEE_B=\sqcup_{P\in \Sing B} \EEE_P$ $\textrm{(disjoint)}$.
The following is obvious:
\begin{lemma}\label{lem:hdeg0}
Let $D$ be an effective divisor on $X_B$ such that $\intnum{D}{\polB}=0$.
Then we have   $[D]\in \gen{\EEE_B}^+$.
In particular,  we have  $[D]\in \Sigma_B$ and $[D]_P\in \gen{\EEE_P}^+$ for any $P\in \Sing B$.
\end{lemma}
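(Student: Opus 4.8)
The plan is to reduce the statement to the elementary fact that the polarization class $\polB=[\tlrhoB\sp*(\OOO_{\Pt}(1))]$, being the pullback of an ample class, is nef, and that the irreducible curves it annihilates are exactly the contracted $(-2)$-curves collected in $\EEE_B$. First I would write $D=\sum_i a_i C_i$ with $a_i\in\Z_{>0}$ and the $C_i$ irreducible. By the projection formula, $\intnum{C_i}{\polB}=\intnum{\tlrhoB_* C_i}{\OOO_{\Pt}(1)}\ge 0$ for each $i$, with equality precisely when $\tlrhoB(C_i)$ is a point. Since every $a_i$ is positive, the hypothesis $\intnum{D}{\polB}=0$ then forces $\intnum{C_i}{\polB}=0$, and hence $\tlrhoB(C_i)$ to be a point, for all $i$.

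The key step will be to identify the irreducible curves contracted by $\tlrhoB$. Writing $\tlrhoB=\pi_B\circ\rho_B$, the double covering $\pi_B:\YB\to\Pt$ is finite and therefore contracts no curve, so $C_i$ is contracted by $\tlrhoB$ if and only if it is contracted by the resolution $\rho_B:\XB\to\YB$. The exceptional locus of $\rho_B$ is exactly the union of the $(-2)$-curves resolving the rational double points of $\YB$, i.e.\ the members of $\EEE_B$; hence each $C_i$ appearing in $D$ lies in $\EEE_B$, and $[D]=\sum_i a_i[C_i]\in\gen{\EEE_B}^+$. This identification is really the only point requiring care: I would want to be certain that no other irreducible curve on the $K3$ surface $\XB$ (such as a component of a strict transform) can have zero $\polB$-degree, and the finiteness of $\pi_B$ is precisely what guarantees this.

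The remaining assertions I would treat formally. Since $\gen{\EEE_B}^+\subset\gen{\EEE_B}\subset\blatB$, the containment $[D]\in\blatB$ is immediate. For the component statement, I would use that $\EEE_B$ is the disjoint union of the sets $\EEE_P$ over $P\in\Sing B$, that exceptional curves over distinct singular points are disjoint and hence mutually orthogonal, and that each is orthogonal to $\polB$. Thus under the orthogonal decomposition $\blatB=\gen{\polB}\oplus\bigoplus_P\gen{\EEE_P}$ every basis curve $e\in\EEE_P$ has vanishing components outside the summand $\gen{\EEE_P}$, so projecting $[D]=\sum_i a_i[C_i]$ to $\gen{\EEE_P}$ keeps only the terms with $C_i\in\EEE_P$ while preserving their nonnegative coefficients. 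This yields $[D]_P\in\gen{\EEE_P}^+$ for each $P$, and completes the argument.
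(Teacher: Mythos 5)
Your proof is correct. The paper offers no argument at all for this lemma --- it is introduced with ``The following is obvious'' --- and what you have written out (the projection formula forcing each irreducible component of $D$ to be contracted by $\tlrhoB$, the finiteness of $\pi_B$ identifying the contracted curves with the exceptional curves of the resolution $\rho_B$, i.e.\ with $\EEE_B$, and the orthogonal decomposition of $\blatB$ for the component statement) is exactly the standard reasoning the author leaves implicit.
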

\begin{definition}\label{def:vsmooth}
We say that a vector $x\in \latB$ is \emph{$v$-smooth at $P\in \Sing B$}
if $x_P=0$ or $x_P=e_i\dual$ for some $i$.
We say that $x$ is \emph{$v$-smooth} if $x$ is $v$-smooth at every $P\in \Sing B$.
(The ``$v$" in \emph{$v$-smooth} stands for ``vector".)
\end{definition}
\begin{definition}\label{def:multiplicities}
Let $m_P(e_i\dual)$ denote the multiplicity of the curve $\tilde{\pi}_B (e_i)\subset W_B$
%the image $\tilde{\pi}_B (e_i)\subset W_B$ of $e_i\subset X_B$ 
%by  $\tilde{\pi}_B:X_B\to W_B$
in the total transform of $B$ in $W_B$.
We also put $m_P(0):=0$.
Thus we have $m_P(x_P)$ for a vector $x\in \latB$ that is $v$-smooth at $P$.
\end{definition}
\begin{lemma}\label{lem:vsmooth}
Let $\tlGm$ be a lift of  a splitting curve $\Gm$, and 
let $P$ be a point of $\Sing B$.
Suppose that $P\notin \Gm$ or $\Gm$ is smooth at $P$.
Then the vector $[\tlGm]\in \latB$ is $v$-smooth at $P$ and $m_P([\tlGm]_P)$ is even.
\end{lemma}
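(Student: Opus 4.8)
The plan is to reduce the statement to a local intersection computation on $\XB$ over the fiber above $P$, performed downstairs in the canonical resolution $W_B$ and transported upward through the finite double cover $\tilde\pi_B$. The case $P\notin\Gm$ is immediate: then the strict transform $\Gm_W\subset W_B$ is disjoint from the exceptional locus over $P$, so the lift $\tlGm$ meets no $e_i$, giving $[\tlGm]_P=0$, which is $v$-smooth with $m_P(0)=0$ even. So from now on I assume $P\in\Gm$ and $\Gm$ smooth at $P$.

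The first substantive step is to rewrite the assertion in terms of intersection numbers. Since the $e_i\dual$ are the dual basis of the $e_i$, orthogonality in the decomposition~\eqref{eq:decomp} gives $[\tlGm]_P=\sum_i\intnum{\tlGm}{e_i}\,e_i\dual$, and every coefficient $\intnum{\tlGm}{e_i}\ge 0$ because $\tlGm$ is irreducible and distinct from each $e_i$. Thus \emph{$v$-smoothness at $P$ is precisely the claim that the numbers $\intnum{\tlGm}{e_i}$ all vanish except for at most one index, where the value is $1$}. The decisive input is that $\Gm$ is splitting: the induced double cover $\pi_B\inv(\Gm)\to\Gm$ is disconnected, hence unramified, so $\Gm$ meets $B$ with \emph{even} local intersection multiplicity at every common point, in particular at $P$. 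Consequently the two lifts $\tlGmplus$ and $\tlGmminus$ remain unramified and distinct over the entire fiber of $\tlrhoB$ above $P$.

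Next I would analyze the configuration in $W_B$. As $\Gm$ is a single smooth branch at $P$, its strict transform $\Gm_W$ is smooth and, by the standard behaviour of a smooth branch under a blow-up tree, meets the exceptional configuration over $P$ in exactly one point $q$, lying on a single exceptional curve $E_j$. The unramifiedness established above forces $q$ to be an \'etale point of $\tilde\pi_B$: if $E_j$ had odd multiplicity it would lie in the branch locus of $\tilde\pi_B$, and the cover would locally have the form $z^2=u^k\,\epsilon$ with $\epsilon$ a unit and $E_j=\{u=0\}$, so the preimage of the single branch $\Gm_W$ would be locally irreducible, contradicting the existence of two distinct lifts. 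Hence $E_j=\tilde\pi_B(e_i)$ has even multiplicity, so that $m_P([\tlGm]_P)=m_P(e_i\dual)$ equals the multiplicity of $E_j$, which is even. Over the transversal \'etale point $q$ the fibre $\tilde\pi_B\inv(q)$ consists of two points, one on each lift; thus $\tlGm$ meets the corresponding component $e_i$ of $\tilde\pi_B\inv(E_j)$ in one transversal point and is disjoint from every other $e_{i'}$, i.e.\ $\intnum{\tlGm}{e_i}=1$ and $\intnum{\tlGm}{e_{i'}}=0$ otherwise. Therefore $[\tlGm]_P=e_i\dual$ is $v$-smooth, and both assertions follow.

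The step I expect to be the main obstacle is the local claim in $W_B$ that the smooth splitting branch meets the exceptional divisor over $P$ \emph{transversally} and on a single \emph{even}-multiplicity curve. Transversality is not automatic from \'etaleness alone, since a tangential contact with an even curve would already yield $\intnum{\tlGm}{e_i}=2$; so to rule out both a tangency and a meeting at a branch point I would invoke the explicit local normal form of the Jung--Horikawa resolution for each $ADE$ type, combined with the even-contact condition forced by splitting. This is the only place where the detailed combinatorics of the canonical resolution, rather than soft arguments, is needed.
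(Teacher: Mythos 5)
Your strategy is essentially the paper's: descend to the canonical resolution $W_B$, use that the strict transform of a curve smooth at $P$ meets the exceptional configuration over $P$ transversally at a single point $q$ on a single component $E_j$, and let the splitting hypothesis control the local structure of the finite double cover $\tilde{\pi}_B$ at $q$. But there is a genuine gap: you treat only the case $q\notin B^W$, where $B^W$ denotes the strict transform of $B$ in $W_B$. The branch divisor of $\tilde{\pi}_B$ is $B^W$ \emph{plus} the odd-multiplicity exceptional curves, so ruling out odd multiplicity for $E_j$ does not make $q$ an \'etale point of $\tilde{\pi}_B$: if $q\in B^W$ --- which happens exactly when $\Gm$ and a local branch of $B$ at $P$ share the relevant infinitely near point, e.g.\ a splitting curve tangent to one branch of a node of $B$ --- then $\tilde{\pi}_B\inv(q)$ is a \emph{single} point, through which both $\tlGmplus$ and $\tlGmminus$ pass, and your sentence ``the fibre $\tilde{\pi}_B\inv(q)$ consists of two points, one on each lift'' is false. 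The conclusion $\intnum{\tlGm}{e_i}=1$ still holds, but it needs a separate local computation: splitting forces the contact order $n_Q$ of $\Gm^W$ with $B^W$ at $q$ to be even, hence at least $2$, which forces $B^W$ to be transverse to $E_j$ and the pull-back of $E_j$ to be irreducible, and then in coordinates $z^2=v$ (with $v$ an equation of $B^W$ and $u$ an equation of $E_j$) each branch of the preimage of $\Gm^W$ meets $\{u=0\}$ with multiplicity one. This is precisely the second case in the paper's proof, and it is not vacuous.

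A smaller omission: the lemma also covers the case where $\Gm$ is an irreducible component of $B$ (so $\tlGmplus=\tlGmminus$), for which your opening move --- ``the induced double cover $\pi_B\inv(\Gm)\to\Gm$ is disconnected, hence unramified'' --- does not apply; the paper disposes of this case separately via the explicit list in Remark~\ref{rem:Bsmooth}. On the other hand, the step you single out as the main obstacle is not one: transversality of $\Gm^W$ with a single exceptional component follows from the smoothness of $\Gm$ at $P$ by induction over the blow-ups (the strict transform of a smooth branch meets the reduced exceptional divisor with total intersection number exactly $1$), with no need for $ADE$-by-$ADE$ normal forms; this is exactly what the paper asserts without further comment. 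So the difficulty you anticipate is a non-issue, while the real extra work lies in the case $q\in B^W$ that you omit.
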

This lemma is proved together with the following:
\begin{lemma}\label{lem:tGamma}
Let $\Gm\subset\Pt$ be a smooth splitting curve not contained in $B$.
Let $\Gm^W\subset W_B$ and $B^W \subset W_B$ be the strict transforms of $\Gm$ and $B$, 
respectively,  by  $j_B: W_B\to \Pt$,
and let $\tlB\subset X_B$ be the strict transform of $B$ by $\tilde{\rho}_B: \XB\to\Pt$.
Then we have
$$
\intnum{\tlGm^+}{\tlGm^-}_X=\intnum{\tlGm^+}{\tlB}_X=\intnum{\tlGm^-}{\tlB}_X=\intnum{\Gm^W}{B^W}_W/2, 
$$
where $\intnum{\phantom{i}}{\phantom{i}}_X$ and $\intnum{\phantom{i}}{\phantom{i}}_W$
denote the intersection numbers on $X_B$ and on $W_B$, respectively.
\end{lemma}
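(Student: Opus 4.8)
The plan is to do everything on the finite double covering $\tilde{\pi}_B\colon X_B\to W_B$, which is a degree-$2$ morphism between the \emph{smooth} surfaces $X_B$ and $W_B$ with deck transformation $\iota_B$; write $f=\tilde{\pi}_B$ for brevity. By the Jung--Horikawa construction the reduced branch divisor of $f$ is $B^W+\Xi$, where $\Xi$ is the sum of the exceptional curves of $j_B$ whose multiplicity in the total transform of $B$ is odd; these components are smooth and pairwise disjoint. Since $B^W$ is a smooth component of the branch divisor, $f$ is ramified along the reduced curve $\tlB=f\inv(B^W)_{\mathrm{red}}$, and one has $f^*B^W=2\,\tlB$ with $f|_{\tlB}\colon \tlB\isom B^W$. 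Because $\Gm$ is smooth, splitting, and not a component of $B$, its two lifts satisfy $f^*\Gm^W=\tlGmplus+\tlGmminus$, with $\tlGmminus=\iota_B(\tlGmplus)$ and with $f|_{\tlGmplus}$ and $f|_{\tlGmminus}$ birational onto $\Gm^W$; in particular $f_*\tlGmplus=f_*\tlGmminus=\Gm^W$.

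Next I would settle the two equalities involving $\tlB$ by the projection formula, which for these is completely clean. Using $f^*B^W=2\,\tlB$,
$$2\,\intnum{\tlGmplus}{\tlB}_X=\intnum{\tlGmplus}{f^*B^W}_X=\intnum{f_*\tlGmplus}{B^W}_W=\intnum{\Gm^W}{B^W}_W,$$
so $\intnum{\tlGmplus}{\tlB}_X=\intnum{\Gm^W}{B^W}_W/2$; the identical computation with $\tlGmminus$, or applying $\iota_B$ (which fixes $\tlB$ and interchanges $\tlGmplus$ and $\tlGmminus$), gives $\intnum{\tlGmminus}{\tlB}_X=\intnum{\Gm^W}{B^W}_W/2$ as well.

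For $\intnum{\tlGmplus}{\tlGmminus}_X$ I first observe that every point of $\tlGmplus\cap\tlGmminus$ lies on the fixed locus of $\iota_B$, i.e. on the ramification $R=\tlB+f\inv(\Xi)_{\mathrm{red}}$: if $q\in\tlGmplus\cap\iota_B(\tlGmplus)$, then $q$ and $\iota_B(q)$ both lie on $\tlGmplus$ and have the same image under $f$, so injectivity of $f|_{\tlGmplus}$ forces $\iota_B(q)=q$. Hence $\tlGmplus\cap\tlGmminus=\tlGmplus\cap R$. Granting that $\tlGmplus$ meets $R$ only along $\tlB$ (the point addressed below), a local computation at a ramification point over $p\in\Gm^W\cap B^W$ finishes it: by the splitting hypothesis $\Gm^W$ meets the branch with even order $2k$ at $p$, so locally $f^*\Gm^W=\{v=\psi\}+\{v=-\psi\}$ with $\tlB=\{v=0\}$ and $\psi$ of order $k$; thus $\tlGmplus$ meets both $\tlGmminus$ and $\tlB$ with the same multiplicity $k$ there. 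Summing over such points yields $\intnum{\tlGmplus}{\tlGmminus}_X=\intnum{\tlGmplus}{\tlB}_X=\intnum{\Gm^W}{B^W}_W/2$, completing the chain of equalities.

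The hard part will be exactly the one step I deferred: ruling out intersection points of $\tlGmplus$ and $\tlGmminus$ lying over the odd-multiplicity exceptional curves $\Xi$, equivalently showing that $\Gm^W$ is disjoint from $\Xi=(\text{branch})-B^W$. This cannot come from the double-cover formalism alone, and it is the reason this lemma is proved together with Lemma~\ref{lem:vsmooth}. I would handle it by a local analysis of the Jung--Horikawa resolution at each $P\in\Sing B$ with $P\in\Gm$, carried out simultaneously over the $ADE$-types: a smooth branch of $\Gm$ meets an odd-multiplicity exceptional curve transversally, i.e. with \emph{odd} local intersection number, which is incompatible with the splitting condition that every local intersection of $\Gm^W$ with the branch be even; hence in the splitting case the offending intersection points are separated off by the successive blow-ups until $\Gm^W$ avoids $\Xi$ entirely. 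This is precisely the information recorded by the $v$-smoothness of $[\tlGmplus]_P$ and the evenness of $m_P([\tlGmplus]_P)$ in Lemma~\ref{lem:vsmooth}, so the two lemmas fall out of the same case-by-case study.
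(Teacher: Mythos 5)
Your proposal is correct, and for two of the three equalities it is cleaner than the paper's argument. The paper proves the whole lemma by a single local analysis (shared with Lemma~\ref{lem:vsmooth}): at each $P\in\Sing B\cap\Gm$ it identifies the unique exceptional component $F_i$ that $\Gm^W$ meets (in one point $Q$, transversally, since $\Gm$ is smooth), uses the splitting hypothesis together with the Jung--Horikawa disjointness of odd-multiplicity components to force $m_i$ to be even, and then reads off that the three local intersection numbers at the point of $X_B$ over $Q$ are all equal to $n_Q/2$ (and vanish when $Q\notin B^W$); summing over $\Gm^W\cap B^W$ gives the lemma. You instead obtain $\intnum{\tlGmplus}{\tlB}=\intnum{\tlGmminus}{\tlB}=\intnum{\Gm^W}{B^W}_W/2$ in one line from the projection formula and $\tilde{\pi}_B^*B^W=2\tlB$, which is a genuine simplification and does not use the evenness of the local intersection numbers. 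For $\intnum{\tlGmplus}{\tlGmminus}$ you are driven back to exactly the paper's local computation, and in particular to the same crux, namely that $\Gm^W$ misses the odd-multiplicity exceptional curves; your parity argument for this is the paper's (an odd-multiplicity component met transversally by $\Gm^W$ contributes an odd local intersection with the branch, contradicting splittingness, and the case $Q\in B^W$ with $F_i$ odd is excluded because odd-multiplicity components of the total transform are pairwise disjoint). Two small points: the sentence about offending intersection points being ``separated off by the successive blow-ups'' is misleading, since $j_B$ is fixed independently of $\Gm$ --- the correct conclusion is simply that splittingness forces the unique exceptional component met by $\Gm^W$ to have even multiplicity; and the injectivity of $\tilde{\pi}_B|_{\tlGmplus}$ that you invoke should be justified by noting that a finite birational morphism onto the smooth curve $\Gm^W$ is an isomorphism.
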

\begin{proof}[Proof of Lemmas~\ref{lem:vsmooth} and~\ref{lem:tGamma}]
The statement of Lemma~\ref{lem:vsmooth} is obviously true in the case where $P\notin \Gm$.
The proof of Lemma~\ref{lem:vsmooth} for the case where $\Gm$ is an irreducible component of $B$
is given in  Remark~\ref{rem:Bsmooth} below.

Suppose that $\Gm$ is splitting, is not contained in $B$,
and passes through  $P$.
Let  $F_1, \dots, F_m\subset W_B$ be the exceptional curves over $P$
of   $j_B$,
and let $m_k$ be the multiplicity of $F_k$ in the total transform of $B$ by $j_B$.
We denote by $T\subset W_B$ a sufficiently small tubular neighborhood of $j_B\inv (P)$,
and put  $\tilde{T}:=\tilde{\pi}_B\inv (T) \subset X_B$.
If $\intnum{\sum F_j}{ \Gm^W}_W>1$, then
the image $\Gm$ of $\Gm^W$ by $j_B$  would be singular at $P$.
Hence there exists  a unique irreducible component $F_i$ such that 
$\intnum{F_i }{ \Gm^W}=1$ and $\intnum{F_j }{ \Gm^W}=0$ for $j\ne i$.
%Remark  that $m_P([\tlGm]_P)$ is equal to $m_i$.
Let $Q$ be the intersection point of $F_i$ and $\Gm^W$.
Note that  $\Gm^W$ is smooth at $Q$  and intersects $F_i$ transversely at $Q$.
Suppose that $Q\notin  B^W$, so that $\Gm^W$ is disjoint from $B^W$ in $T$.
Then, since $\Gm$ is splitting, the multiplicity $m_i$ 
is even and $\tilde{\pi}_B\inv (Q)$ consists 
of distinct two points.
Hence 
${\tlGm^+}$, ${\tlGm^-}$ and $\tlB$ are mutually disjoint in $\tilde{T}$, and 
Lemma~\ref{lem:vsmooth} holds by $m_P([\tlGm]_P)=m_i$.
Suppose that $Q\in  B^W$,
and let $n_Q$ be the intersection multiplicity of $B^W$ and $\Gm^W$ at $Q$.
Since $\Gm$ is splitting, $m_i+n_Q$ must be even.
Since $B^W\cap F_i\ne\emptyset$, $m_i$ is even.
Therefore $n_Q>1$, and hence
$B^W$ intersects $F_i$ transversely at $Q$;  in other words, 
$P$ is not of type $A_l$ with $l$ even.
Thus the pull-back of $F_i$ by  $\tilde{\pi}_B$ is irreducible,
and  Lemma~\ref{lem:vsmooth} holds by $m_P([\tlGm]_P)=m_i$.
In this case,
the intersection multiplicity  of 
${\tlGm^+}$ and ${\tlGm^-}$, or of $\tlGm^+$ and $\tlB$, or of $\tlGm^-$ and $\tlB$, 
at the point of $X_B$ over $Q$
is  equal to $n_Q/2$.
\end{proof}
\begin{remark}\label{rem:evensmooth}
If $P$ is of type $A_{l}$, then  the multiplicity $m_P(e_i\dual)$ is even for any $i$.
If $P$ is of other type, 
then $m_P(e_i\dual)$ is even if and only if  $e_i$ is subject to  the following restrictions:
\begin{itemize}
\item[] If $P$ is of type $D_{2k}$, then $i$ is even or $1$ or $2$.
\item[] If $P$ is of type $D_{2k+1}$, then  $i$ is odd or $1$ or $2$.
\item[]  If $P$ is of type $E_{6}$, then  $i\ne 1$.
\item[]  If $P$ is of type $E_{7}$, then  $i\ne 2,4,6$.
\item[]  If $P$ is of type $E_{8}$, then  $i\ne 2,4,6,8$.
\end{itemize}
\end{remark}
\begin{remark}\label{rem:Bsmooth}
Let $B_i$ be   an irreducible component of $B$ that contains $P\in \Sing B$ and is smooth at $P$.
Then the component $[\tlBi]_P \in \gen{\EEE_P}\tensor \Q$ is given as follows:
\begin{itemize}
\item[] If $P$ is of type $A_{2k-1}$,  then $[\tlBi]_P=e_k\dual$.
\item[] If $P$ is of type $D_{2k}$, then $[\tlBi]_P=e_1\dual$ or $[\tlBi]_P=e_2\dual$ or $[\tlBi]_P=e_{2k}\dual$.
\item[] If $P$ is of type $D_{2k+1}$, then  $[\tlBi]_P=e_{2k+1}\dual$.
\item[]  If $P$ is of type $E_{7}$, then  $[\tlBi]_P=e_{7}\dual$.
\end{itemize}
If $P$ is of another type,  every local irreducible components of $B$ at $P$ is singular.
\end{remark}
By  Remark~\ref{rem:Bsmooth}, we obtain the following:
\begin{lemma}\label{lem:Bsmooth}
Let $B_i$ be   an irreducible component of $B$ that contains $P\in \Sing B$ and is smooth at $P$.
Then  $[\tlBi]_P \in \gen{\EEE_P}\tensor \Q$ is not contained in $\gen{\EEE_P}$.
\end{lemma}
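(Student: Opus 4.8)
The plan is to reduce the statement to the explicit description of $[\tlBi]_P$ given in Remark~\ref{rem:Bsmooth}, and from there to a finite verification in the discriminant group of the root lattice $\gen{\EEE_P}$. Since $B_i$ is smooth at $P$, that remark tells us that $[\tlBi]_P$ is one of the dual basis vectors $e_j\dual$, where $j$ depends only on the type of $P$: it is $e_k\dual$ when $P$ is of type $A_{2k-1}$; one of $e_1\dual, e_2\dual, e_{2k}\dual$ when $P$ is of type $D_{2k}$; $e_{2k+1}\dual$ when $P$ is of type $D_{2k+1}$; and $e_7\dual$ when $P$ is of type $E_7$. (By the last sentence of Remark~\ref{rem:Bsmooth}, no other type of $P$ can carry a locally smooth irreducible component of $B$, so these are all the cases.) It therefore suffices to show, type by type, that each of these fundamental weights fails to lie in the root lattice $\gen{\EEE_P}$.

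The mechanism I would use is the standard one: an element $x\in\gen{\EEE_P}\dual$ lies in $\gen{\EEE_P}$ if and only if its image in the discriminant group $\gen{\EEE_P}\dual/\gen{\EEE_P}$ is trivial. Equivalently, writing $e_j\dual=\sum_k c_k e_k$ and imposing $\intnum{e_j\dual}{e_m}=\delta_{jm}$, integrality of $e_j\dual$ amounts to integrality of the $j$-th column of the inverse Gram matrix, which by the theory of $ADE$ lattices is controlled entirely by $\disc\gen{\EEE_P}$. I would then read off the relevant class in each case. For $A_{2k-1}$ the discriminant group is $\Z/2k\Z$, and the middle-node weight $e_k\dual$ maps to the element of order $2$, hence is nonzero. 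For $D_{2k}$ the group is $(\Z/2\Z)^2$, and the two fork-end weights $e_1\dual,e_2\dual$ (the spinor and cospinor classes) together with the tail-end weight $e_{2k}\dual$ (the vector class) are all nonzero; in fact they exhaust the three nonzero elements. For $D_{2k+1}$ the group is $\Z/4\Z$, and the tail-end weight $e_{2k+1}\dual$ (vector class) is the element of order $2$, again nonzero. Finally, for $E_7$ the group is $\Z/2\Z$, and $e_7\dual$, the weight attached to the end of the longest arm, is the minuscule weight generating it. In every case the image is nonzero, so $e_j\dual\notin\gen{\EEE_P}$ and hence $[\tlBi]_P\notin\gen{\EEE_P}$.

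The one point requiring genuine care — and the main potential source of error rather than a deep obstacle — is matching the node numbering of Figure~\ref{figure:dynkin} to the standard labeling, so that each $e_j\dual$ is correctly identified with its class in the discriminant group. Concretely, I would verify from the figure that in the $E_n$-diagram $e_4$ is the trivalent branch node while $e_7$ sits at the end of the longest arm (so that for $E_7$ it is indeed the minuscule node), and that in the $D_m$-diagram $e_1,e_2$ are the two fork vertices and $e_m$ is the end of the long tail. Once this dictionary is fixed, each of the finitely many weights listed above is seen to represent a nontrivial discriminant class, completing the proof.
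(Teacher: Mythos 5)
Your proposal is correct and follows essentially the same route as the paper, which derives the lemma directly from Remark~\ref{rem:Bsmooth} (the case list of possible values $e_j\dual$ for $[\tlBi]_P$) and leaves the non-integrality of those fundamental weights implicit. Your explicit verification via the discriminant groups of $A_{2k-1}$, $D_m$ and $E_7$, including the check that the paper's node numbering puts $e_1,e_2$ at the fork of $D_m$ and $e_7$ at the end of the long arm of $E_7$, correctly fills in that omitted step.
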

The following lemma is elementary,
but plays a crucial role in the following:
\begin{lemma}\label{lem:casebycase}
\setrmkakko
\rmkakko\label{lemrmkakko:ei}
For every $e_i\dual$, we have $(e_i\dual)^2<0$ and $e_i\dual\notin \gen{\EEE_P}^+$.

\rmkakko\label{lemrmkakko:eiej}
Suppose that
$e_i\dual-e_j\dual\in \gen{\EEE_P}^+$.
Then $(e_i\dual)^2 > (e_j\dual)^2$ or $e_i\dual=e_j\dual$.

\rmkakko\label{lemrmkakko:-9/2}
If   $e_i\dual$ is contained in $\gen{\EEE_P}$ and $m_P(e_i\dual)$ is even, then 
 $\intnum{{\iota_B}(e_i\dual)}{ e_i\dual}<-9/2$ holds.
\end{lemma}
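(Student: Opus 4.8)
The plan is to dispose of (1) and (2) directly from the defining property $\intnum{e_i\dual}{e_j}=\delta_{ij}$ of the dual basis together with the negative-definiteness of $\gen{\EEE_P}$, and to settle (3) by a finite inspection of the $ADE$ types. For (1), the vector $e_i\dual$ is a nonzero element of the negative-definite space $\gen{\EEE_P}\tensor\Q$, so $(e_i\dual)^2<0$ is immediate; and if one had $e_i\dual=\sum_k a_k e_k$ with all $a_k\in\Z_{\ge 0}$, then pairing with $e_i\dual$ would give $(e_i\dual)^2=\sum_k a_k\intnum{e_i\dual}{e_k}=a_i\ge 0$, a contradiction, so $e_i\dual\notin\gen{\EEE_P}^+$. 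For (2), I would write $e_i\dual-e_j\dual=\sum_k c_k e_k$ with $c_k\in\Z_{\ge 0}$; pairing with $e_i\dual$ and with $e_j\dual$ gives $c_i$ and $c_j$ respectively, so $(e_i\dual)^2-(e_j\dual)^2=c_i+c_j\ge 0$. If $c_i+c_j>0$ the inequality is strict, while if $c_i+c_j=0$ then $c_i=c_j=0$, hence $(e_i\dual-e_j\dual)^2=c_i-c_j=0$ and negative-definiteness forces $e_i\dual=e_j\dual$.

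For (3) I would first pin down the action of $\iota_B$ on $\gen{\EEE_P}$. Since $\iota_B$ fixes the rational double point of $Y_B$ lying over $P$, it preserves the fiber of $\rho_B$ over that point and hence permutes its irreducible components $e_1,\dots,e_r$; as $\iota_B$ is an isometry, the resulting permutation $\sigma$ is an automorphism of the Dynkin diagram, and therefore $\iota_B(e_i\dual)=e_{\sigma(i)}\dual$. Next I would determine, type by type, the indices $i$ for which both hypotheses hold: that $e_i\dual$ lies in $\gen{\EEE_P}$ (i.e. the $i$-th column of the inverse Cartan matrix is integral) and that $m_P(e_i\dual)$ is even (the list in Remark~\ref{rem:evensmooth}). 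Working in the labeling of Figure~\ref{figure:dynkin}, a direct check shows that for types $A_l$ and $D_l$ no index satisfies both, so the statement is vacuous there, whereas for $E_6$ only the branch node $e_4$ qualifies, for $E_7$ only $e_3$, and for $E_8$ the nodes $e_1,e_3,e_5,e_7$. In every qualifying case the node in question is fixed by $\sigma$ (the branch node of $E_6$ is fixed by all diagram automorphisms, and $E_7,E_8$ admit no nontrivial ones), so $\intnum{\iota_B(e_i\dual)}{e_i\dual}=(e_i\dual)^2=-(C\inv)_{ii}$. Reading the relevant diagonal entries of the inverse Cartan matrices yields the values $-6,-8,-14,-20$, all of which are $<-9/2$.

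The main obstacle is not any individual computation but the bookkeeping in (3): transporting the standard facts about fundamental weights and inverse Cartan matrices into the nonstandard vertex-numbering of Figure~\ref{figure:dynkin}, and checking that the integrality condition and the parity condition of Remark~\ref{rem:evensmooth} hold simultaneously for exactly the indices listed. It is the conjunction of these two hypotheses that excludes the small-norm dual vectors---for instance $e_1\dual$ in type $E_6$, which is integral but has $(e_1\dual)^2=-2$ and is ruled out because $m_P(e_1\dual)$ is odd---and thereby restricts attention to the cases with $(e_i\dual)^2\le -6$, which is what makes the threshold $-9/2$ work.
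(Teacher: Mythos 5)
Your proof is correct, and for parts (1) and (2) it is genuinely different from --- and more illuminating than --- what the paper does. The paper's entire proof of Lemma~\ref{lem:casebycase} is a one-line reduction to ``case-by-case calculations'' over the root lattices $A_l$, $D_m$, $E_n$, using Remarks~\ref{rem:evensmooth} and~\ref{rem:iota}, together with the author's remark that no conceptual proof is known. Your arguments for (1) and (2) are exactly such a conceptual proof for those two parts: the identities $\intnum{e_i\dual}{e_k}=\delta_{ik}$ give $(e_i\dual)^2=a_i\ge 0$ against negative-definiteness in (1), and $(e_i\dual)^2-(e_j\dual)^2=c_i+c_j\ge 0$ with the degenerate case $(e_i\dual-e_j\dual)^2=c_i-c_j=0$ in (2); this replaces what would otherwise be a dominance-order computation in every irreducible type. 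For (3) both you and the paper ultimately do a finite verification, but your organization of it is sound and complete: $\iota_B$ permutes the exceptional curves over $P$ and hence acts as a Dynkin diagram automorphism, sending $e_i\dual$ to $e_{\sigma(i)}\dual$; the integrality condition ($e_i\dual\in\gen{\EEE_P}$) combined with the parity condition of Remark~\ref{rem:evensmooth} is vacuous for $A_l$ and $D_m$ and leaves only $e_4$ in $E_6$, $e_3$ in $E_7$, and $e_1,e_3,e_5,e_7$ in $E_8$, all fixed by $\sigma$, with squares $-6,-6,-8,-14,-20,-6$, each below $-9/2$. Your observation that the parity hypothesis is what excludes the dangerous small-norm integral duals (such as $e_1\dual$ in $E_6$ with square $-2$, and likewise the odd-index tail nodes of $D_m$) correctly identifies why the threshold $-9/2$ cannot be obtained from integrality alone; this is precisely the point the paper leaves implicit.
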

\begin{proof} 
We have to prove this lemma only for the negative-definite root lattices
of type $A_l\; (l=1, \dots, 19)$,
$D_m\; (m=4, \dots, 19)$ and $E_n\; (n=6,7,8)$.
Hence  the assertions  can be proved by the case-by-case calculations.
For the proof, we use 
Remark~\ref{rem:evensmooth} above.
The involution $\iota_B$ is calculated by Remark~\ref{rem:iota} below.
(The author does not know any conceptual proof of this lemma.)
\end{proof}
\begin{remark}\label{rem:iota}
The involution $\iota_B$ on $\latB$ is determined by the $ADE$-type of $R_B$.
We have ${\iota_B}(\polB)=\polB$.
The action of  ${\iota_B}$   on   $\EEE_P$  is 
described as follows.
\begin{itemize}
\item[]  If $P$ is of type $A_l$, then
${\iota_B}(e_i)=e_{l+1-i}$.
\item[]   If  $P$ is of type $D_{2k}$, then  ${\iota_B}$ acts on $\EEE_P$ identically.
\item[]   If  $P$ is of type $D_{2k+1}$, then ${\iota_B}$ 
interchanges $e_1$ and $e_2$ and fixes  $e_3, \dots, e_{2k+1}$.
\item[] If  $P$ is of type $E_6$, then
${\iota_B}(e_1)=e_1$ and  ${\iota_B}(e_i)=e_{8-i}$ for $i=2, \dots, 6$.
\item[]  If  $P$ is of type $E_7$ or $E_8$, then ${\iota_B}$ acts on $\EEE_P$ identically.
\end{itemize}
\end{remark}
\begin{corollary}\label{cor:x=y}
Let $x\in \latB$ and $y\in\latB$ be  $v$-smooth vectors. 
If  $\intnum{x}{\polB}=\intnum{y}{\polB}$ and  $x^2=y^2$ hold and  $x-y$ is effective,
then $x=y$.
\end{corollary}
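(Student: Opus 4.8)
The plan is to reduce the two global hypotheses on $x$ and $y$ to a collection of independent scalar inequalities, one for each singular point $P\in\Sing B$, and then force all of them to be equalities by summing. First I would set $D:=x-y$. The hypothesis $\intnum{x}{\polB}=\intnum{y}{\polB}$ says $\intnum{D}{\polB}=0$, and $D$ is effective by assumption, so Lemma~\ref{lem:hdeg0} applies and yields $D\in\gen{\EEE_B}^+$. Since $\gen{\EEE_B}^+$ is the direct sum of the monoids $\gen{\EEE_P}^+$ over $P\in\Sing B$ and has trivial $\gen{\polB}$-component, this gives at once $x_h=y_h$ and $D_P=x_P-y_P\in\gen{\EEE_P}^+$ for every $P$.

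Next I would rewrite $x^2=y^2$ using the orthogonal decomposition~\eqref{eq:decomp}. Because the decomposition is orthogonal and $x_h=y_h$, the $\gen{\polB}$-contributions to $x^2$ and $y^2$ coincide, so $x^2=y^2$ collapses to $\sum_{P}(x_P^2-y_P^2)=0$. The strategy is then to show that each summand is non-negative and vanishes only when $x_P=y_P$; summing then forces $x_P=y_P$ at every $P$, which together with $x_h=y_h$ gives $x=y$.

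The per-point analysis is where Lemma~\ref{lem:casebycase} enters. Fix $P$. Since $x$ and $y$ are $v$-smooth, each of $x_P,y_P$ is either $0$ or some dual generator $e_i\dual$, and $x_P-y_P\in\gen{\EEE_P}^+$. I would run the four resulting cases. If $x_P=y_P=0$ the summand is $0$. The configuration $x_P=e_i\dual,\ y_P=0$ would force $e_i\dual=D_P\in\gen{\EEE_P}^+$, which is excluded by Lemma~\ref{lem:casebycase}~(1), so it cannot occur. If $x_P=0$ and $y_P=e_j\dual$, then the summand equals $-(e_j\dual)^2$, which is strictly positive by the inequality $(e_j\dual)^2<0$ of Lemma~\ref{lem:casebycase}~(1). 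Finally, if $x_P=e_i\dual$ and $y_P=e_j\dual$ are both nonzero, then $e_i\dual-e_j\dual\in\gen{\EEE_P}^+$, and Lemma~\ref{lem:casebycase}~(2) gives $(e_i\dual)^2>(e_j\dual)^2$ or $e_i\dual=e_j\dual$; in either event the summand $x_P^2-y_P^2$ is non-negative and vanishes exactly when $x_P=y_P$.

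With every summand shown to be non-negative, the vanishing of the sum forces each summand to be $0$, hence $x_P=y_P$ for all $P$, and thus $x=y$. I do not expect a genuine obstacle here: the only non-formal input is Lemma~\ref{lem:casebycase}, which is established by case-by-case computation over the root systems $A_l$, $D_m$, $E_n$. The one point demanding care is organizing the subcases so that \emph{positivity} of the summand, rather than a direct contradiction, is what eliminates the asymmetric configuration $x_P=0,\ y_P\ne 0$; this is precisely what lets the argument avoid deciding whether $-e_j\dual$ can itself lie in $\gen{\EEE_P}^+$.
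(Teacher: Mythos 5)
Your proof is correct and follows essentially the same route as the paper: Lemma~\ref{lem:hdeg0} to place $x-y$ in $\gen{\EEE_B}^+$, the orthogonal decomposition~\eqref{eq:decomp}, and the case analysis via Lemma~\ref{lem:casebycase}~(1) and~(2). The only difference is presentational — you sum the pointwise inequalities $x_P^2\ge y_P^2$ explicitly, whereas the paper argues by contradiction at a single point where $x_P\ne y_P$; the underlying content is identical.
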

\begin{proof}
Since $x-y$ is effective and $\intnum{x-y}{\polB}=0$,
we have $x_P-y_P\in \gen{\EEE_P}^+$ for every $P\in\Sing B$
by Lemma~\ref{lem:hdeg0}.
Suppose that  $x\ne y$, and 
let $P\in \Sing B$ be a point 
such that  $x_P\ne y_P$.
Since $x$ and $y$ are  $v$-smooth,
each of $x_P$ and $y_P$ is $0$ or $e_i\dual$ for some $i$.
If $y_P=0$, then $x_P\ne 0$ and  $x_P\in \gen{\EEE_P}^+$,
which contradicts Lemma~\ref{lem:casebycase}~(1).
If $y_P\ne 0$, 
then we have $x_P^2>y_P^2$
by Lemma~\ref{lem:casebycase}~(1)~and~(2),
which  contradicts $x^2=y^2$.
\end{proof}
\begin{proposition}\label{prop:hdeg1}
Let $x\in \latB$ be a  $v$-smooth vector with $\intnum{x}{\polB}=1$ and $x^2=-2$.
Then $x$ is the class of a $(-2)$-curve that is mapped isomorphically to a line on $\Pt$.
\end{proposition}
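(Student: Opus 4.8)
The plan is to show that $x$ is effective, isolate its unique irreducible component of positive $\polB$-degree, identify that component as a lift of a splitting line, and then use the rigidity coming from $v$-smoothness to rule out any exceptional contribution.

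First I would prove that $x$ is effective. Since $x^2=-2$, Riemann--Roch on the $K3$ surface $X_B$ gives $h^0(\OOO_{X_B}(x))+h^0(\OOO_{X_B}(-x))\ge\chi(\OOO_{X_B}(x))=2+\frac{x^2}{2}=1$, so one of $\pm x$ is effective. Because $\polB=\tlrhoB^{*}\OOO_{\Pt}(1)$ is nef and $\intnum{x}{\polB}=1>0$, the class $-x$ cannot be effective; hence $x$ is. Next, write $x=\sum a_iC_i$ as a sum of irreducible curves with $a_i>0$. Since $\intnum{x}{\polB}=1$ and every $\intnum{C_i}{\polB}\ge0$, exactly one component $C_0$ has $\intnum{C_0}{\polB}=1$, occurring with $a_0=1$, while all remaining components have $\polB$-degree $0$. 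Thus $R:=x-[C_0]$ is effective with $\intnum{R}{\polB}=0$, so $R\in\gen{\EEE_B}^+$ by Lemma~\ref{lem:hdeg0}. The image $\Gm:=\tlrhoB(C_0)$ is a line and $C_0\to\Gm$ is birational. If $\Gm$ were not a component of $B$ and the double cover over $\Gm$ were irreducible, the strict transform of $\Gm$ would be an irreducible curve of $\polB$-degree $2$ dominating $\Gm$, which is incompatible with $\intnum{C_0}{\polB}=1$; hence $\Gm$ is a splitting line (or a line component of $B$) and $C_0$ is a lift of it.

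Since $\Gm$ is a line it is smooth at every point, so by Lemma~\ref{lem:vsmooth} the class $[C_0]$ is $v$-smooth; and by the remark preceding Example~\ref{example:Z3} (together with the fact that a line component of $B$ lifts to a smooth rational curve) $C_0$ is a $(-2)$-curve, so $[C_0]^2=-2=x^2$. Now I would apply Corollary~\ref{cor:x=y} to the pair $x$ and $y:=[C_0]$: both are $v$-smooth, $\intnum{x}{\polB}=\intnum{[C_0]}{\polB}=1$, $x^2=[C_0]^2=-2$, and $x-[C_0]=R$ is effective, so the corollary forces $x=[C_0]$. Finally, $C_0$ is a smooth rational curve mapping birationally onto the smooth curve $\Gm\cong\P^1$, hence isomorphically, which is exactly the assertion.

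The main obstacle is the identification $x=[C_0]$, i.e.\ ruling out the exceptional part $R$. The naive quadratic estimate $x^2=[C_0]^2+2\intnum{C_0}{R}+R^2$ does not by itself force $R=0$, and it is precisely here that $v$-smoothness is indispensable: the real content is packaged into the case-by-case inequalities on the dual vectors $e_i\dual$ in Lemma~\ref{lem:casebycase}, which underlie Corollary~\ref{cor:x=y}. Verifying that $[C_0]$ meets the hypotheses of that corollary---in particular that it is $v$-smooth with square $-2$---is the crux, whereas effectivity and the degree bookkeeping are routine.
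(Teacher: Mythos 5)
Your proof is correct and follows essentially the same route as the paper: effectivity via Riemann--Roch and nefness of $\polB$, isolation of the unique component $C_0$ of $\polB$-degree $1$ as a lift of a (splitting) line, $v$-smoothness of $[C_0]$ via Lemma~\ref{lem:vsmooth}, and the identification $x=[C_0]$ via Corollary~\ref{cor:x=y}. You correctly locate the crux in Corollary~\ref{cor:x=y} and its case-by-case underpinnings in Lemma~\ref{lem:casebycase}; no gaps.
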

\begin{proof}
By Riemann-Roch theorem for $X_B$,
we have an effective divisor $D$ on $X_B$ such that $x=[D]$.
Since $\intnum{x}{\polB}=1$, there exists a unique irreducible component $C$ of $D$ such that
$\intnum{C}{\polB}=1$.
Note that $C$ is mapped isomorphically to a line on $\Pt$,
and hence the image of $C$ is a splitting line.
Therefore $[C]^2=-2$ and  $[C]$ is  $v$-smooth by Lemma~\ref{lem:vsmooth}.
By Corollary~\ref{cor:x=y},
we have  $x=[C]$.
\end{proof}
We put 
\begin{eqnarray*}
\LLL_B&:=&\set{x\in \latB}{\textrm{$x$ is  $v$-smooth},\; \intnum{x}{\polB}=1, \;x^2=-2},\\
\LLL_B^b&:=&\set{x\in \LLL_B}{{\iota_B}(x)=x}, \quand \\
\LLL_B^l&:=&\set{x\in \LLL_B}{{\iota_B}(x)\ne x}.
\end{eqnarray*}

\begin{corollary}\label{cor:LLLb}
The map $B_i\mapsto [\tilde{B}_i]$ induces a bijection from the set of irreducible components $B_i$
of $B$ of degree $1$ to the set $\LLL_B^b$.
\end{corollary}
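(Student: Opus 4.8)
The plan is to establish three things about the map $B_i\mapsto [\tilde{B}_i]$: that it is well defined with image in $\LLL_B^b$, that it is injective, and that it is surjective, the last point carrying essentially all the content.

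First I would check that $[\tilde{B}_i]$ lies in $\LLL_B^b$ whenever $B_i$ is a component of degree $1$. Such a $B_i$ is a line, hence smooth, so Lemma~\ref{lem:vsmooth} (in its irreducible-component case) shows that $[\tilde{B}_i]$ is $v$-smooth. The projection formula gives $\intnum{[\tilde{B}_i]}{h_B}=\deg B_i=1$, since $\tlrhoB$ maps $\tilde{B}_i$ birationally onto $B_i$; and $[\tilde{B}_i]^2=-2$ because $\tilde{B}_i$ is a smooth rational curve, being the reduced strict transform of a smooth branch component on the minimal resolution of simple singularities. Finally $\iota_B$ fixes $[\tilde{B}_i]$ because $\tilde{B}_i=\tilde{B}_i^+=\tilde{B}_i^-$. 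Injectivity is then immediate: two distinct lines have distinct irreducible strict transforms, and distinct irreducible curves of negative self-intersection on a $K3$ surface represent distinct classes.

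The heart of the proof is surjectivity. Given $x\in\LLL_B^b$, Proposition~\ref{prop:hdeg1} produces a $(-2)$-curve $C$ with $[C]=x$ that $\tlrhoB$ maps isomorphically onto a line $\Gm\subset\Pt$. The crucial step is to upgrade the class-level identity $\iota_B(x)=x$ to the curve-level identity $\iota_B(C)=C$: the curve $\iota_B(C)$ is again an irreducible $(-2)$-curve and its class is $\iota_B(x)=x=[C]$, so, since an irreducible curve of negative self-intersection is the unique irreducible representative of its class, we get $\iota_B(C)=C$. Now $C$ is $\iota_B$-invariant and $C\to\Gm$ is birational; if $\Gm$ were not a component of $B$, then $\tlrhoB$ would be generically two-to-one over $\Gm$, forcing $C$ either to map with degree $2$ onto $\Gm$ or to coincide with its distinct conjugate lift, both impossible. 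Hence $\Gm$ is a degree-$1$ component $B_i$ and $C=\tilde{B}_i$, so $x=[\tilde{B}_i]$. Equivalently, $\iota_B(C)=C$ means the two lifts $\tlGmplus$ and $\tlGmminus$ coincide, and by the equivalence recorded just after the definition of splitting curves this happens exactly when $\Gm$ is an irreducible component of $B$.

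I expect the upgrade from classes to curves, together with the branch-locus dichotomy, to be the only genuinely delicate point; the computation $[\tilde{B}_i]^2=-2$ is a minor technical matter settled by the smoothness of strict transforms over $ADE$ points.
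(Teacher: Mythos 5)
Your proof is correct and follows the route the paper intends: the corollary is stated as an immediate consequence of Proposition~\ref{prop:hdeg1}, and your surjectivity argument (apply Proposition~\ref{prop:hdeg1}, upgrade $\iota_B(x)=x$ to $\iota_B(C)=C$ via the uniqueness of an irreducible curve of negative self-intersection in its class, then invoke the fact that $\tlGmplus=\tlGmminus$ exactly when $\Gm$ is a component of $B$) together with Lemma~\ref{lem:vsmooth} and Remark~\ref{rem:Bsmooth} for well-definedness is precisely the intended deduction. No gaps.
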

\begin{corollary}\label{cor:LLLl}
The set $\LLL_B^l$ is equal to the set  $\ZZZ_1(B)$ of the classes of lifts of $Z$-splitting lines.
\end{corollary}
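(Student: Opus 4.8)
The plan is to realise both $\LLL_B^l$ and $\ZZZ_1(B)$ as sets of classes of $(-2)$-curves that $\tlrhoB$ maps isomorphically onto lines, and then to let the involution $\iota_B$ record precisely the distinction between a line that is a component of $B$ (fixed by $\iota_B$, hence landing in $\LLL_B^b$) and a $Z$-splitting line (moved by $\iota_B$). Throughout I would use the standing assumption of this section that $B$ is lattice-generic, so that every splitting curve is automatically pre-$Z$-splitting, although at the crucial point the membership in $\latB$ will in fact be built into the hypotheses.

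For the inclusion $\LLL_B^l \subseteq \ZZZ_1(B)$, I would take $x \in \LLL_B^l \subseteq \LLL_B$ and apply Proposition~\ref{prop:hdeg1}: it writes $x = [C]$ for a $(-2)$-curve $C$ that $\tlrhoB$ maps isomorphically onto a line $\Gm \subset \Pt$. Since $\tlrhoB$ has degree $2$ while $C \to \Gm$ has degree $1$, the line $\Gm$ is not a component of $B$ and its strict transform splits into the two conjugate lifts $C$ and $\iota_B(C)$; I set $C = \tlGmplus$, so $\iota_B(x) = [\tlGmminus]$. Because $x \in \LLL_B \subseteq \latB$, the class $[\tlGmplus]$ lies in $\latB$, so $\Gm$ is pre-$Z$-splitting; and the defining condition $\iota_B(x) \neq x$ of $\LLL_B^l$ reads $[\tlGmplus] \neq [\tlGmminus]$, i.e. $\Gm$ is $Z$-splitting. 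Hence $x = [\tlGmplus] \in \ZZZ_1(B)$.

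For the reverse inclusion $\ZZZ_1(B) \subseteq \LLL_B^l$, I would start from a $Z$-splitting line $\Gm$ and check that each lift lies in $\LLL_B^l$. Since $\Gm$ is $Z$-splitting it is pre-$Z$-splitting, giving $[\tlGmplus] \in \latB$, and it is not a component of $B$ (otherwise $\tlGmplus = \tlGmminus$, contradicting $Z$-splittingness); as $\deg \Gm \le 2$, the lift $\tlGmplus$ is therefore a $(-2)$-curve (the fact recorded just before Example~\ref{example:Z3}), so $[\tlGmplus]^2 = -2$. The isomorphism $\tlGmplus \isom \Gm$ onto a line gives $\intnum{[\tlGmplus]}{\polB} = \deg \Gm = 1$, and $v$-smoothness of $[\tlGmplus]$ follows from Lemma~\ref{lem:vsmooth} because a line is smooth at every point of $\Sing B$. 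Thus $[\tlGmplus] \in \LLL_B$, and $Z$-splittingness gives $\iota_B([\tlGmplus]) = [\tlGmminus] \neq [\tlGmplus]$, so $[\tlGmplus] \in \LLL_B^l$; the same applies to $[\tlGmminus]$.

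The only delicate point — and the step I would treat most carefully — is the passage in the forward direction from ``$C$ maps isomorphically onto a line $\Gm$'' to ``$\Gm$ is splitting with lifts $C$ and $\iota_B(C)$, and $\Gm \not\subset B$''. The key observation is that if $\Gm$ were a component of $B$, or more generally if $C$ were $\iota_B$-fixed, then $\iota_B(x) = x$ would force $x \in \LLL_B^b$ rather than $\LLL_B^l$ (compare Corollary~\ref{cor:LLLb}); ruling this out, the degree-$2$ structure of $\tlrhoB$ forces the preimage of $\Gm$ to be the pair of conjugate components $C$ and $\iota_B(C)$, which is exactly the splitting condition. I do not expect any further obstacle, since the remaining verifications are just the three numerical conditions defining $\LLL_B$, each supplied directly by the cited results.
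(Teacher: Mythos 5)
Your proof is correct and follows essentially the route the paper intends: Proposition~\ref{prop:hdeg1} identifies every element of $\LLL_B$ with a $(-2)$-curve mapped isomorphically to a line, and the dichotomy $\iota_B(x)=x$ versus $\iota_B(x)\ne x$ separates the components of $B$ (Corollary~\ref{cor:LLLb}) from the lifts of $Z$-splitting lines, with Lemma~\ref{lem:vsmooth} supplying $v$-smoothness for the converse inclusion. The one loose sentence in your second paragraph (the degree-$2$ count alone does not exclude $\Gm\subset B$, since the reduced strict transform of a line component also maps isomorphically onto it) is repaired by your own final paragraph, where the correct argument via $\iota_B$-fixedness is given.
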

Next we proceed to the study of $Z$-splitting conics.
\begin{proposition}\label{prop:hdeg2notblat}
Let $\tilde{C}\subset X_B$ be a curve that is mapped isomorphically to a smooth conic $C$ on $\Pt$.
Then $[\tilde{C}]\notin \blatB$.
\end{proposition}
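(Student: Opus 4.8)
The plan is to argue by contradiction. The two invariants of $\tilde C$ that drive everything are $[\tilde C]^2=-2$, which holds because $\tilde C$ is a smooth rational curve on the $K3$ surface $X_B$, and $\intnum{[\tilde C]}{\polB}=2$, which holds because $\tilde C$ maps isomorphically onto a curve of degree $2$. Assuming $[\tilde C]\in\blatB=\gen{\polB}\oplus\gen{\EEE_B}$, I would write $[\tilde C]=a\polB+v$ with $v\in\gen{\EEE_B}$. Pairing with $\polB$ gives $2a=2$, so $a=1$; then $-2=[\tilde C]^2=2+v^2$ forces $v^2=-4$, and in particular $v\neq0$.

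The key is to exploit the splitting structure rather than $v^2$ directly. Since $C$ is smooth, Lemma~\ref{lem:vsmooth} tells me that $[\tilde C]$ is $v$-smooth and that $m_P([\tilde C]_P)$ is even for every $P\in\Sing B$. Writing $v_P=[\tilde C]_P$, each $v_P$ is therefore $0$ or some $e_i\dual$; and because $[\tilde C]\in\blatB$, every nonzero $v_P=e_{i_P}\dual$ in fact lies in the root lattice $\gen{\EEE_P}$, with even multiplicity. This is precisely the hypothesis of Lemma~\ref{lem:casebycase}(3), so $\intnum{\iota_B(v_P)}{v_P}<-9/2$ for each $P$ with $v_P\neq0$.

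I would then globalize. As $\iota_B$ preserves each $\gen{\EEE_P}$ (Remark~\ref{rem:iota}) and distinct summands are orthogonal, $\intnum{v}{\iota_B(v)}=\sum_P\intnum{v_P}{\iota_B(v_P)}$; since $v\neq0$, at least one summand occurs and $\intnum{v}{\iota_B(v)}<-9/2$. Using $\iota_B(\polB)=\polB$, the intersection of the two lifts $\tilde C$ and $\iota_B(\tilde C)$ is
$$
\intnum{[\tilde C]}{\iota_B([\tilde C])}=2+\intnum{v}{\iota_B(v)}<2-9/2=-5/2.
$$
This contradicts the geometry: $\tilde C$ and $\iota_B(\tilde C)$ are irreducible curves on $X_B$, so if they are distinct their intersection number is $\geq0$, while if they coincide it equals $[\tilde C]^2=-2$; in either case $\intnum{[\tilde C]}{\iota_B([\tilde C])}\geq-2>-5/2$. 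Hence $[\tilde C]\notin\blatB$.

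The substantive input is Lemma~\ref{lem:casebycase}(3), and the one point requiring care is the bookkeeping that makes it applicable: one must check that every nonzero local component $v_P$ is a dual basis vector that genuinely lies in $\gen{\EEE_P}$ and carries even multiplicity, which is where $v$-smoothness (from smoothness of $C$) and Lemma~\ref{lem:vsmooth} enter. The conceptual move --- bounding the cross term $\intnum{v}{\iota_B(v)}$ between the two lifts rather than $v^2$ itself --- is what converts the $-9/2$ estimate (which merely beats $-4$) into the required contradiction, and it handles uniformly both the case where $C$ is a component of $B$ and the case where $C$ is a genuine splitting conic.
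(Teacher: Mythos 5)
Your proof is correct and follows essentially the same route as the paper: both arguments rest on Lemma~\ref{lem:vsmooth} (smoothness of $C$ gives $v$-smoothness with even multiplicities) and the estimate $\intnum{\iota_B(e_i\dual)}{e_i\dual}<-9/2$ of Lemma~\ref{lem:casebycase}~(3), applied to the cross term $\intnum{[\tilde C]}{\iota_B([\tilde C])}=2+\sum_P\intnum{\iota_B(x_P)}{x_P}$. The only difference is organizational: the paper splits into the cases $C\subset B$ (settled at once by Lemma~\ref{lem:Bsmooth}) and $C\not\subset B$ (where it uses the bound $\ge 0$ to force $C\cap\Sing B=\emptyset$ and then contradicts the exact value $6$ coming from $\intnum{B}{C}=12$), whereas you treat both cases uniformly by extracting $v\ne 0$ from $v^2=-4$ and using the weaker geometric bound $\intnum{[\tilde C]}{\iota_B([\tilde C])}\ge -2$, which still beats $2-9/2$.
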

\begin{proof}
We put $x:=[\tilde{C}]$.
Suppose that $C$ is an irreducible component of $B$.
Then $x_P\ne 0$ for some $P\in \Sing B$,
and hence $x\notin\blatB$ by Lemma~\ref{lem:Bsmooth}.
Suppose that $C$ is not contained in $B$.
Then 
$\intnum{{\iota_B}(x)}{ x}\ge 0$.
Since $C$ is smooth,
$x_P$ is $v$-smooth with $m_P(x_P)$ being even for every $P\in \Sing B$.
Since  $\intnum{x}{ \polB}=2$, 
we have $\intnum{{\iota_B}(x_h)}{x_h}=x_h^2=2$ and hence 
\begin{equation}\label{eq:ineqxiotax}
\intnum{{\iota_B}(x)}{ x}=2+\tsum_P \intnum{{\iota_B}(x_P)}{ x_P}\ge 0.
\end{equation}
Suppose that $x\in \blatB$
and hence $x_P\in \gen{\EEE_P}$ for any $P\in \Sing B$.
For any  $P\in C\cap \Sing B$,
we have  $x_P\ne 0$ and hence $\intnum{{\iota_B}(x_P)}{ x_P}<-9/2$ by Lemma~\ref{lem:casebycase}~(3).
By~\eqref{eq:ineqxiotax}, we therefore have $C\cap \Sing B=\emptyset$
and hence $\intnum{{\iota_B}(x)}{ x}=2$.
However, we  have  $\intnum{{\iota_B}(x)}{x}=6$ because  $\intnum{B}{C}=12$ on $\Pt$.
Thus we get a contradiction.
\end{proof}
\begin{proposition}\label{prop:hdeg2}
Let $x\in \latB$ be a  $v$-smooth vector such that $\intnum{x}{\polB}=2$, $x^2=-2$ and $x\notin\blatB$.
Then  one and only one of the following holds:
\begin{itemize}
\item[(i)] There exist $l_1, l_2\in \LLL_B$ such that
$x-(l_1+l_2)\in \gen{\EEE_B}^+$, or
%where $\LLL_B$ is the  set  given in Corollary~\ref{cor:LLL}, or
\item[(ii)] $x$ is the class of a $(-2)$-curve $\tilde{C}$
that is a lift of a splitting conic $C$ on $\Pt$.
\end{itemize}
\end{proposition}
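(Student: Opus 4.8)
The plan is to realize $x$ as an effective divisor via Riemann--Roch, split that divisor into a horizontal and an exceptional part, and then run through the (very short) list of ways a horizontal part can have $\polB$-degree $2$. First I would invoke the Riemann--Roch theorem on the $K3$ surface $\XB$: since $x^2=-2$ we have $\chi(\OOO(x))=1$, so $x$ or $-x$ is effective; because $\polB$ is nef and $\intnum{x}{\polB}=2>0$, the class $-x$ cannot be effective, and hence $x=[D]$ for some effective divisor $D$. (Here I use that $B$ is lattice-generic, so $x\in\latB=\NS(\XB)$.) Decomposing $D$ into irreducible components and separating those contracted by $\tlrhoB$ from the rest, I would write $D=H+V$, where $V$ is supported on the exceptional $(-2)$-curves, so that $[V]\in\gen{\EEE_B}^+$ by Lemma~\ref{lem:hdeg0}, and $H$ collects the components $C$ with $\intnum{C}{\polB}\ge1$. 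Since $\intnum{H}{\polB}=2$, the horizontal part corresponds to a partition of $2$: either two components of $\polB$-degree $1$ (possibly one component with multiplicity $2$), or a single component of $\polB$-degree $2$.

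The first possibility yields alternative (i). Any component $C$ with $\intnum{C}{\polB}=1$ maps birationally onto a line, so $C\cong\P^1$, giving $[C]^2=-2$, and $C$ is a lift of a smooth splitting line; by Lemma~\ref{lem:vsmooth} the class $[C]$ is $v$-smooth, whence $[C]\in\LLL_B$ (this is the content of Proposition~\ref{prop:hdeg1} together with the definition of $\LLL_B$). Writing the two resulting classes as $l_1,l_2\in\LLL_B$, I obtain $x-(l_1+l_2)=[V]\in\gen{\EEE_B}^+$, which is (i).

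The main case, and the heart of the argument, is when $H=C_1$ is a single irreducible horizontal component with $\intnum{C_1}{\polB}=2$. Its image $\overline{C_1}=\tlrhoB(C_1)$ is irreducible, so it is either a line or a smooth conic. If $\overline{C_1}$ is a line, then $C_1$ is the irreducible strict transform of a non-splitting line, and the total-transform relation gives $[C_1]=\polB-\sum_i m_i e_i$ with each $e_i$ an exceptional $(-2)$-curve; hence $[C_1]\in\blatB$, and since $[V]\in\gen{\EEE_B}^+\subset\blatB$ as well, we would get $x\in\blatB$, contradicting the hypothesis. This is precisely where the assumption $x\notin\blatB$ is used. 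Therefore $\overline{C_1}$ is a smooth conic $C$, and $\intnum{C_1}{\polB}=2=\deg C$ forces the map $C_1\to C$ to be birational, so $C$ is splitting and $C_1$ is a lift $\tlGmplus$ of $C$; being isomorphic to $C\cong\P^1$, the curve $C_1$ is a $(-2)$-curve whose class is $v$-smooth by Lemma~\ref{lem:vsmooth}. Now $x$ and $[C_1]$ are both $v$-smooth with $\intnum{x}{\polB}=\intnum{[C_1]}{\polB}=2$ and $x^2=[C_1]^2=-2$, and $x-[C_1]=[V]$ is effective; Corollary~\ref{cor:x=y} then yields $x=[C_1]$, so $V=0$ and $x$ is the class of the conic lift, which is alternative (ii).

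Finally I would verify the mutual exclusivity, the ``one and only one''. In case (ii) the class $x=[\tilde{C}]$ is that of a $(-2)$-curve, and such a curve satisfies $h^0(\OOO(\tilde{C}))=1$, so $\tilde{C}$ is the unique effective divisor in its class. An expression as in (i) would exhibit a second, reducible effective divisor (two line lifts plus an exceptional part) in the same class, which is impossible; thus (i) and (ii) cannot hold simultaneously, and since the partition analysis is exhaustive, exactly one of them holds. I expect the single-conic case to be the only real obstacle: the two delicate points are the use of Corollary~\ref{cor:x=y} to force $V=0$ (which is exactly why the $v$-smoothness of $x$ must be in the hypotheses) and the complementary use of $x\notin\blatB$ to rule out the double cover of a line, after which the conclusion follows from the birationality of $C_1\to C$ and the smoothness of an irreducible conic.
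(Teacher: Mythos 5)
Your proposal is correct and follows essentially the same route as the paper's proof: realize $x$ as an effective divisor, split off the exceptional part, distinguish the two possible horizontal configurations (two degree-one components versus one degree-two component), use $x\notin\blatB$ to exclude the non-splitting line in the second case, apply Corollary~\ref{cor:x=y} to conclude $x=[\tilde{C}]$, and derive exclusivity from the fact that an irreducible $(-2)$-curve is the unique effective divisor in its class. The only (harmless) difference is that you phrase the line-image exclusion slightly more carefully by adding the exceptional part $[V]$ back in before contradicting $x\notin\blatB$.
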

\begin{proof}
%
%after referee 
Note that $x$ is the class of an effective divisor of $X_B$.
We denote by $|D|$  the complete linear system  of effective divisors $D$ such that $x=[D]$.
The irreducible decomposition of each $D\in |D|$ is either
\begin{eqnarray}
&&D=\tilde{C}_1+\tilde{C}_2+\tsum e_i\quad
\textrm{with $\intnum{\tilde{C}_1}{\polB}=\intnum{\tilde{C}_2}{\polB}=1$ and $e_i\in \EEE_B$,}\quad\textrm{or} \label{eq:C1C2}\\
&&D=\tilde{C}+\tsum e_i\quad
\textrm{with $\intnum{\tilde{C}}{\polB}=2$ and $e_i\in \EEE_B$}. \label{eq:C}
\end{eqnarray}

Suppose that there exists $D\in |D|$ for which~\eqref{eq:C1C2} holds.
Since $B$ is assumed to be lattice-generic, 
we have $[\tilde{C}_1], [\tilde{C}_2]\in \latB$.
Since  $\tilde{C}_1$ and $\tilde{C}_2$ are mapped isomorphically to  lines on $\Pt$,
the vectors  $[\tilde{C}_1]$ and $[\tilde{C}_2]$ are  $v$-smooth with the square-norm $-2$.
Therefore $[\tilde{C}_1]$ and $[\tilde{C}_2]$ are in $\LLL_B$ and
thus the case  (i) occurs.

Suppose that there exists $D\in |D|$ for which~\eqref{eq:C} holds.
The image of $\tilde{C}$ in $\Pt$ is either a line or a smooth conic.
If the image were a line,
then $\tilde{C}$ would be  a strict transform of the line and hence $[\tilde{C}]$
would be contained in $\blatB$,
which contradicts the assumption.
Therefore $\tilde{C}$ is a lift of a splitting  conic $C$.
In particular,  $[\tilde{C}]\in \latB$ is a  $v$-smooth vector with $[\tilde{C}]^2=-2$.
By Corollary~\ref{cor:x=y}, we have $x=[\tilde{C}]$.
Therefore the case  (ii) occurs.

Suppose that both of the cases (i) and (ii) occur.
Then there exists $D_1\in |D|$ for which~\eqref{eq:C1C2} holds
and there exists $D_2\in |D|$ for which~\eqref{eq:C} holds.
By the argument above,
the existence of $D_2$ implies that $x$ is the class of a lift $\tilde{C}$  of a splitting  conic $C$,
and in particular $|D|$ consists of a single member $\tilde{C}$,
which contradicts the  existence of $D_1$.
Hence only one of (i) or (ii) occurs.
\end{proof}
We put
\begin{eqnarray*}
\CCC_B\sprime&:=&\set{x\in \latB}{\textrm{$x$ is  $v$-smooth},\; \intnum{x}{ \polB}=2, 
\;x^2=-2,\; x\notin\blatB},\quand \\
\CCC_B&:=&\set{x\in \CCC_B\sprime\,}{\textrm{for any $l_1, l_2\in \LLL_B$, we have $x-(l_1+l_2)\notin \gen{\EEE_B}^+$}}, \\
\CCC_B^b&:=&\set{x\in \CCC_B}{{\iota_B}(x)=x}, \\
\CCC_B^l&:=&\set{x\in \CCC_B}{{\iota_B}(x)\ne x}.
\end{eqnarray*}
\begin{corollary}\label{cor:CCCb}
The map $B_i\mapsto [\tilde{B}_i]$ induces a bijection from the set of irreducible components $B_i$
of $B$ of degree $2$ to the set $\CCC_B^b$.
\end{corollary}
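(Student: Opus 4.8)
The plan is to follow the template of the degree-one case, Corollary~\ref{cor:LLLb}, now using Proposition~\ref{prop:hdeg2} as the structural input. The first observation to record is that the extra condition in the definition of $\CCC_B$ (as opposed to $\CCC_B\sprime$) is precisely the negation of alternative~(i) of Proposition~\ref{prop:hdeg2}; hence for every $x\in\CCC_B$ alternative~(ii) must hold, so that $x=[\tilde{C}]$ for a $(-2)$-curve $\tilde{C}$ that is a lift of a splitting conic $C\subset\Pt$, and, conversely, the class of any such lift lies in $\CCC_B$.

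Granting this, I would first verify that the map lands in $\CCC_B^b$. Let $B_i$ be an irreducible degree-two component; being an irreducible conic it is smooth, so its reduced lift $\tilde{B}_i$ is isomorphic to $B_i\cong\P^1$ and is therefore a $(-2)$-curve with $\intnum{[\tilde{B}_i]}{\polB}=\deg B_i=2$. Lemma~\ref{lem:vsmooth} (applied at each $P\in\Sing B$, where the smooth conic $B_i$ is smooth) shows $[\tilde{B}_i]$ is $v$-smooth, while Proposition~\ref{prop:hdeg2notblat} gives $[\tilde{B}_i]\notin\blatB$; thus $[\tilde{B}_i]\in\CCC_B\sprime$. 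As $\tilde{B}_i$ is a lift of the splitting conic $B_i$, alternative~(ii) of Proposition~\ref{prop:hdeg2} holds, so alternative~(i) fails and $[\tilde{B}_i]\in\CCC_B$. Finally $\tilde{B}_i^+=\tilde{B}_i^-$ because $B_i$ is a component of $B$, so $\iota_B([\tilde{B}_i])=[\tilde{B}_i]$ and indeed $[\tilde{B}_i]\in\CCC_B^b$.

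For surjectivity take $x\in\CCC_B^b$; by the opening remark $x=[\tlGmplus]$ for the lift of some splitting conic $\Gm$, and $\iota_B$-invariance gives $[\tlGmminus]=\iota_B(x)=x=[\tlGmplus]$. The crux is then to upgrade this equality of classes to an equality of curves: since $\tlGmplus$ and $\tlGmminus$ are irreducible, two \emph{distinct} such curves would satisfy $\intnum{\tlGmplus}{\tlGmminus}\ge 0$, whereas equality of their classes forces $\intnum{\tlGmplus}{\tlGmminus}=[\tlGmplus]^2=-2<0$. Hence $\tlGmplus=\tlGmminus$, which means $\Gm$ is an irreducible component of $B$, necessarily of degree two, so $x=[\tilde{B}_i]$. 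The same negativity argument yields injectivity: distinct degree-two components give distinct $(-2)$-curves, and equal classes would again produce an intersection number of $-2$. I expect no genuine obstacle beyond this single negativity argument, since the substantive content has already been isolated in Propositions~\ref{prop:hdeg2} and~\ref{prop:hdeg2notblat}.
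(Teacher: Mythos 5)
Your argument is correct and is essentially the route the paper intends: the corollary is stated without proof as a direct consequence of Propositions~\ref{prop:hdeg2notblat} and~\ref{prop:hdeg2} (together with Lemma~\ref{lem:vsmooth} and lattice-genericity), and your reduction of membership in $\CCC_B$ to the failure of alternative~(i), plus the negativity argument $\intnum{\tlGmplus}{\tlGmminus}=-2<0$ forcing $\tlGmplus=\tlGmminus$, is exactly the standard way to fill in the omitted details. No gaps.
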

\begin{corollary}\label{cor:CCCl}
The set $\CCC_B^l$ is equal to the set  $\ZZZ_2(B)$ of the classes of lifts of $Z$-splitting conics.
\end{corollary}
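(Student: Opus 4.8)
The plan is to establish the two inclusions $\ZZZ_2(B)\subseteq \CCC_B^l$ and $\CCC_B^l\subseteq \ZZZ_2(B)$ separately, mirroring the proof of Corollary~\ref{cor:LLLl} but with Propositions~\ref{prop:hdeg2notblat} and~\ref{prop:hdeg2} playing the roles of their degree-one analogues. The mechanism driving both directions is that the defining clause of $\CCC_B$---namely that $x-(l_1+l_2)\notin\gen{\EEE_B}^+$ for all $l_1,l_2\in\LLL_B$---is precisely the negation of alternative (i) in Proposition~\ref{prop:hdeg2}, so that membership in $\CCC_B$ forces alternative (ii).

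First I would treat $\ZZZ_2(B)\subseteq\CCC_B^l$. Let $\Gm$ be a smooth $Z$-splitting conic and set $x:=[\tlGmplus]$. Since $\Gm$ is pre-$Z$-splitting we have $x\in\latB$, and $\intnum{x}{\polB}=\deg\Gm=2$ because $\tlGmplus$ is a lift of $\Gm$. As $\Gm$ is smooth of degree $\le 2$ and not contained in $B$, its lift is a $(-2)$-curve, so $x^2=-2$; Lemma~\ref{lem:vsmooth} gives that $x$ is $v$-smooth at every $P\in\Sing B$; and Proposition~\ref{prop:hdeg2notblat} gives $x\notin\blatB$. Thus $x\in\CCC_B\sprime$, and since $x$ is the class of a lift of a splitting conic it realizes alternative (ii) of Proposition~\ref{prop:hdeg2}; by the mutual exclusivity proved there, alternative (i) fails, which is exactly the statement $x\in\CCC_B$. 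Finally, $Z$-splittingness means $[\tlGmplus]\ne[\tlGmminus]=\iota_B(x)$, so $\iota_B(x)\ne x$ and hence $x\in\CCC_B^l$.

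For the reverse inclusion $\CCC_B^l\subseteq\ZZZ_2(B)$, take $x\in\CCC_B^l$. By definition $x$ is $v$-smooth with $\intnum{x}{\polB}=2$, $x^2=-2$ and $x\notin\blatB$, so Proposition~\ref{prop:hdeg2} applies and exactly one of (i), (ii) holds. Membership $x\in\CCC_B$ rules out (i), so (ii) holds: $x=[\tilde{C}]$ for a $(-2)$-curve $\tilde{C}$ that is a lift of a splitting conic $C$. Because splitting curves are reduced and irreducible by definition and an irreducible conic over $\C$ is smooth, $C$ is smooth. Since $x=[\tilde{C}]\in\latB$, the conic $C$ is pre-$Z$-splitting, and since $\iota_B(x)\ne x$ we have $[\tilde{C}^+]\ne[\tilde{C}^-]$, so $C$ is genuinely $Z$-splitting (in particular $C$ is not a component of $B$). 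Hence $x=[\tlGmplus]$ for the smooth $Z$-splitting conic $\Gm:=C$, giving $x\in\ZZZ_2(B)$.

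I expect no serious obstacle to remain at this stage: all the real work is already invested in Propositions~\ref{prop:hdeg2notblat} and~\ref{prop:hdeg2}, whose proofs rest on the case-by-case Lemma~\ref{lem:casebycase} and on the geometry of the double cover. The only points demanding care are recognizing that the cumbersome $\LLL_B$-condition in the definition of $\CCC_B$ is engineered to be the exact negation of alternative (i) (so that $\CCC_B^l$ membership immediately delivers a splitting conic rather than a degenerate pair of lines), and tracking how the involution condition $\iota_B(x)\ne x$ simultaneously excludes $C$ from being a component of $B$ and upgrades ``pre-$Z$-splitting'' to ``$Z$-splitting''.
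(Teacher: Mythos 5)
Your argument is correct and is exactly the derivation the paper intends: the corollary is stated without proof because it follows from Propositions~\ref{prop:hdeg2notblat} and~\ref{prop:hdeg2} (with Lemma~\ref{lem:vsmooth} supplying $v$-smoothness) in precisely the way you lay out, the clause defining $\CCC_B$ being the negation of alternative (i) and the condition $\iota_B(x)\ne x$ separating $Z$-splitting conics from degree-$2$ components of $B$. No discrepancy with the paper's approach.
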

Next we study $Z$-splitting cubic curves. We put
\begin{eqnarray*}
\GGG_B&:=&\set{g\in \latB}{\textrm{$g^2=0$, $\intnum{g}{\polB}=3$, and 
$\intnum{g}{v}\ge 0$ for any $v\in \EEE_B\cup \LLL_B$}}, \\
\GGG_B^b&:=&\set{g\in \GGG_B}{\iota_B(g)= g}, \\
\GGG_B^l&:=&\set{g\in \GGG_B}{\iota_B(g)\ne g}.
\end{eqnarray*}
\begin{lemma}\label{lem:gnef}
Every $g\in \GGG_B$ is the class $[\tlE]$ of a member of an elliptic pencil $|\tlE|$ on $X_B$.
\end{lemma}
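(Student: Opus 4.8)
The plan is to exhibit $g$ as the class of a free genus-$1$ pencil in three stages: first that $g$ is effective, then that $g$ is nef, and finally that $|g|$ is an elliptic pencil with $g$ a member. Throughout I use that $B$ is lattice-generic, so $\latB=\NS(X_B)$ and hence $g\in\NS(X_B)$. For effectivity, note $g\ne 0$ (as $\intnum{g}{\polB}=3$) and apply Riemann--Roch on the K3 surface $X_B$: since $g^2=0$ we get $h^0(\OOO(g))+h^0(\OOO(-g))\ge\chi(\OOO(g))=2$. Because $\polB=h_B$ is nef with $\intnum{g}{\polB}=3>0$, the class $-g$ cannot be effective, so $g$ is effective.

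The nefness step is where I expect the real work to lie. I would argue by contradiction, taking the Zariski decomposition $g=P+N$, with $P$ nef, $N=\sum_i b_i N_i$ effective with $b_i>0$ and negative-definite support, and $\intnum{P}{N_i}=0$; on a K3 each $N_i$ is then a $(-2)$-curve. If $g$ is not nef then $N\ne 0$, and $P\ne 0$ (else $g=N$ would have $g^2=N^2<0$). From $g^2=0$ and $\intnum{P}{N}=0$ one gets $P^2=-N^2>0$, so $P^2\ge 2$ since $\NS(X_B)$ is even. The Hodge index inequality $\polB^2\,P^2\le\intnum{P}{\polB}^2$ with $\polB^2=2$ then forces $\intnum{P}{\polB}\ge 2$, whence $\intnum{N}{\polB}=3-\intnum{P}{\polB}\le 1$. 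This is the key point: it shows every component $N_i$ has $\intnum{N_i}{\polB}\le 1$. A component of degree $0$ is contracted by $\tlrhoB$ and so lies in $\EEE_B$, while a component of degree $1$ maps isomorphically onto a line and is therefore $v$-smooth of square $-2$ by Lemma~\ref{lem:vsmooth}, hence lies in $\LLL_B$. In either case the hypothesis defining $\GGG_B$ gives $\intnum{g}{N_i}\ge 0$, so $N^2=\intnum{g}{N}=\sum_i b_i\intnum{g}{N_i}\ge 0$, contradicting $N^2<0$. Hence $g$ is nef. The main obstacle is exactly the exclusion of $(-2)$-curves of degree $\ge 2$ from the fixed part; the degree bound coming from Hodge index is what makes the hypotheses on $\EEE_B\cup\LLL_B$ alone sufficient to control nefness.

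Finally, $g$ is a nonzero nef class with $g^2=0$, so by the standard theory of linear systems on K3 surfaces (Saint-Donat~\cite{MR0364263}; see also~\cite[Chap.~VIII]{MR2030225}) the system $|g|$ is base-point free and $g=m f$ for a primitive nef isotropic class $f$ whose pencil $|f|$ is elliptic. To pin down $m=1$ I would use the polarization degree: $\intnum{f}{\polB}=3/m$ is a positive integer, so $m\in\{1,3\}$, and if $m=3$ then $\intnum{f}{\polB}=1$, forcing a general (smooth, genus-$1$) member of $|f|$ to map birationally onto a line, which is impossible. Thus $g=f$ is primitive, $|g|=|\tlE|$ is an elliptic pencil, and $g=[\tlE]$ for a general member $\tlE$, as claimed.
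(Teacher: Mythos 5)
Your proof is correct, and the overall strategy is the same as the paper's: isolate the ``bad'' part of $g$, show its $h_B$-degree is at most $1$, identify its components as elements of $\EEE_B\cup\LLL_B$, and then invoke the defining inequalities of $\GGG_B$ to get a contradiction. The difference is in the decomposition and the source of the degree bound. The paper works with the linear system $|D|$ for $g=[D]$, splits it into movable part $|M|$ and fixed part $\Xi$, and gets $\intnum{M}{\polB}\ge 2$ directly from $\dim|M|>0$ (a moving class on the double plane cannot have $h_B$-degree $\le 1$), whence $\intnum{\Xi}{\polB}\le 1$; nefness follows because any curve meeting $g$ negatively must sit in $\Xi$. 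You instead take the Zariski decomposition $g=P+N$ and extract $\intnum{P}{\polB}\ge 2$ from $P^2=-N^2\ge 2$ together with the Hodge index inequality $2P^2\le\intnum{P}{\polB}^2$ --- a purely numerical argument that avoids discussing the geometry of $|M|$, at the cost of invoking the existence and standard properties of the Zariski decomposition (which on a K3 is harmless: the components of $N$ are automatically $(-2)$-curves). In the endgame you are actually more careful than the paper: where the paper says ``from $\intnum{g}{\polB}=3$ we obviously have $m=1$,'' you spell out that $m\in\{1,3\}$ and that $m=3$ would force a genus-one curve to map birationally onto a line. Both routes are sound; yours trades a small amount of extra machinery for a cleaner numerical control of the negative part.
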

\begin{proof}
We have an effective divisor $D$ such that $g=[D]$ and $\dim|D|>0$.
We decompose $|D|$ into the  movable part $|M|$ and the fixed part $\Xi$.
Since $\dim|M|>0$, we have $\intnum{M}{\polB}\ge 2$ and hence $\intnum{\Xi}{\polB}\le 1$.
Therefore every irreducible component $C$ of $\Xi$ is either an element of $\EEE_B$ or
mapped isomorphically to a line of $\Pt$.
In the latter case, we have $[C]\in \LLL_B$.
Hence  $\intnum{C}{g}\ge 0$ holds for any irreducible component $C$ of $\Xi$
by the definition of $\GGG_B$.
Therefore  $g$ is nef.
Then, by  Nikulin~\cite[Proposition 0.1]{MR1260944},  we have $\Xi=\emptyset$ and 
there exists an elliptic pencil $|\tlE|$ on $X_B$ such that
$|D|=m |\tlE|$ for some integer $m>0$.
From $\intnum{g}{\polB}=3$, 
we obviously have  $m=1$.
\end{proof}
By Proposition~\ref{prop:cubic}, we see that  
every  $g\in \ZZZ_3 (B)$ is nef and hence
satisfies $\intnum{g}{v}\ge 0$ for any $v\in \EEE_B\cup \LLL_B$.
Combining Proposition~\ref{prop:cubic} and Lemma~\ref{lem:gnef}, we obtain the following:
\begin{corollary}\label{cor:GGG}
We have $\GGG^l_B=\ZZZ_3 (B)$.
\end{corollary}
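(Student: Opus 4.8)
The plan is to establish the two inclusions $\ZZZ_3(B)\subseteq \GGG_B^l$ and $\GGG_B^l\subseteq\ZZZ_3(B)$ separately. The first inclusion is essentially already in hand: if $g=[\tlE^+]\in\ZZZ_3(B)$ is the class of a lift of a smooth $Z$-splitting cubic $\Gm$, then Proposition~\ref{prop:cubic} makes $\tlE^+$ a member of an elliptic pencil, so $g^2=0$ and (as noted just before the statement) $g$ is nef, while $\intnum{g}{\polB}=\deg\Gm=3$. Nefness gives $\intnum{g}{v}\ge 0$ for every $v\in\EEE_B\cup\LLL_B$, so $g\in\GGG_B$; and the defining property of $Z$-splittingness, namely $\iota_B(g)=[\tlE^-]\ne[\tlE^+]=g$, places $g$ in $\GGG_B^l$.

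For the reverse inclusion I would start from $g\in\GGG_B^l$ and invoke Lemma~\ref{lem:gnef} to write $g=[\tlE]$ for an elliptic pencil $|\tlE|$ whose general member $\tlE$ is smooth of genus $1$. Since $\intnum{g}{\polB}=3\ne 0$, the curve $\tlE$ is not contracted by $\tlrhoB$, so it maps onto a reduced irreducible plane curve $E:=\tlrhoB(\tlE)$, and the key is to analyse the factorization of $\tlrhoB|_{\tlE}$ through $E$. Writing $d$ for the degree of $\tlE\to E$, we have $d\cdot\deg E=\intnum{g}{\polB}=3$. The goal is to show $d=1$, so that $E$ is a plane cubic; then $E$ has geometric genus $1$, being birational to the smooth genus-$1$ curve $\tlE$, which forces $E$ to be a smooth cubic.

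The main obstacle is precisely ruling out the alternative $d=3$, $\deg E=1$, i.e.\ that $\tlE$ triple-covers a line. This is where I would use that $\tlrhoB=\pi_B\circ\rho_B$ has degree $2$ (as $\rho_B$ is birational): a general point of $E$ has at most two preimages in $X_B$, hence $d\le 2$, and since $3$ is prime the only remaining option is $d=1$. Once $E$ is known to be a smooth cubic, the hypothesis $\iota_B(g)\ne g$ gives $\iota_B(\tlE)\ne\tlE$; both curves map to $E$ because $\iota_B$ is defined over $\Pt$, so $E\not\subset B$ (otherwise its unique lift would be $\iota_B$-invariant, forcing $\iota_B(g)=g$), and the strict transform of $E$ splits into the two lifts $\tlE$ and $\iota_B(\tlE)$, with distinct classes $g$ and $\iota_B(g)$. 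Thus $E$ is a smooth $Z$-splitting cubic whose lift has class $g$, and Proposition~\ref{prop:cubic} yields $g\in\ZZZ_3(B)$, completing the identification $\GGG_B^l=\ZZZ_3(B)$.
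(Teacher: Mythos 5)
Your proof is correct and follows the paper's route: the paper obtains the corollary by combining Proposition~\ref{prop:cubic} with Lemma~\ref{lem:gnef}, exactly the two ingredients you use, and merely leaves implicit the verification you spell out (that the general member of the elliptic pencil maps birationally onto a smooth plane cubic because $\tlrhoB$ has degree $2$, so the covering degree $d$ dividing $3$ must be $1$). Your added details, including the exclusion of $E\subset B$ via $\iota_B$-invariance, are accurate and fill the gap the paper glosses over.
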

\begin{proposition}\label{prop:cubic2}
Suppose that $B$ does not have any irreducible components of degree $\le 2$.
Then $B$ is irreducible if and only if $\GGG_B^b=\emptyset$.
\end{proposition}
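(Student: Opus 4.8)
The plan is to reduce the statement to a purely reducibility dichotomy and then read off both implications from the fibration picture. Since $\deg B=6$ and $B$ has no component of degree $\le 2$, the curve $B$ is either irreducible or a union of two cubics; hence the asserted equivalence is the same as: $B$ is reducible if and only if $\GGG_B^b\ne\emptyset$. I would first record the structural description of $\GGG_B^b$. We have a disjoint decomposition $\GGG_B=\GGG_B^b\cup\GGG_B^l$, and by Corollary~\ref{cor:GGG} the second piece is $\GGG_B^l=\ZZZ_3(B)$; by Lemma~\ref{lem:gnef} every $g\in\GGG_B$ is the class $[\tlE]$ of a member of an elliptic pencil $|g|$ on $X_B$ with $\intnum{g}{\polB}=3$. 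Thus $\GGG_B^b$ is exactly the set of such elliptic-pencil classes that are fixed by $\iota_B$, i.e. the $\iota_B$-invariant genus-one fibrations of $\polB$-degree $3$.

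For the direction ``reducible $\Rightarrow\GGG_B^b\ne\emptyset$'', I would write $B=B_1+B_2$ with $\deg B_1=\deg B_2=3$ and use the pencil of cubics spanned by $B_1$ and $B_2$. Exactly as in the example of a sextic that is a union of two cubic curves, the general member $\Gm$ of this pencil is a splitting cubic whose two lifts lie in one and the same $\iota_B$-invariant elliptic pencil $|g|$; the common class $g=[\tlGmplus]=[\tlGmminus]$ then satisfies $g^2=0$, $\intnum{g}{\polB}=3$ and $\iota_B(g)=g$. Being a fibre class it is nef, so $\intnum{g}{v}\ge 0$ for every $v\in\EEE_B\cup\LLL_B$, and $g\in\latB$ since $\Gm$ is pre-$Z$-splitting. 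Hence $g\in\GGG_B$ with $\iota_B(g)=g$, that is $g\in\GGG_B^b$.

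For the converse ``$\GGG_B^b\ne\emptyset\Rightarrow$ reducible'', I would take $g\in\GGG_B^b$ and the elliptic pencil $|g|\cong\P^1$ from Lemma~\ref{lem:gnef}. As $\iota_B(g)=g$ and $\Pic X_B=\NS X_B$ on a $K3$ surface, $\iota_B^*\OOO_{X_B}(g)\cong\OOO_{X_B}(g)$, so $\iota_B$ acts linearly on $|g|$; a linear involution of $\P^1$ has a fixed point, giving an effective $\iota_B$-invariant member $F\in|g|$ with $\intnum{F}{\polB}=3$. I would then sort the irreducible components $C$ of $F$ with $\intnum{C}{\polB}>0$ according to the $\iota_B$-action: (i) $C$ pointwise fixed; (ii) $\iota_B(C)=C$ with $\iota_B|_C$ nontrivial; (iii) pairs $\{C,\iota_B C\}$ with $\iota_B C\ne C$. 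In case (ii) the orbit $\{p,\iota_B p\}$ over a general point of $\tlrhoB(C)$ lies in $C$, so $\tlrhoB|_C$ is $2:1$ and $\intnum{C}{\polB}$ is even; a pair in (iii) contributes $2\intnum{C}{\polB}$, again even. Since $\intnum{F}{\polB}=3$ is odd, type (i) must occur. A type-(i) component $C$ lies in the fixed locus of $\iota_B$, i.e. the ramification divisor, so $\tlrhoB(C)$ is an irreducible component of $B$ and $\tlrhoB|_C$ is birational, whence $\deg\tlrhoB(C)=\intnum{C}{\polB}\le 3$. By hypothesis this degree is $\ge 3$, so it equals $3$: $B$ has a cubic component and is therefore reducible.

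The hard part will be the converse's fibre analysis, specifically the bookkeeping that converts the $\iota_B$-action on $|g|$ into a genuine cubic component of $B$. The two points requiring care are that $\iota_B(C)=C$ with $\iota_B|_C\ne\mathrm{id}$ really forces $\tlrhoB|_C$ to be $2:1$ (hence even $\polB$-degree), and that a pointwise-fixed horizontal component maps \emph{birationally onto} an irreducible component of the branch curve $B$ of the expected degree. Once these local facts are in hand, the parity argument (even contributions from types (ii) and (iii) against the odd total $3$) combined with the hypothesis that $B$ has no component of degree $\le 2$ pins the fixed component to $\polB$-degree exactly $3$, which is what yields reducibility.
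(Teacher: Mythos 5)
Your proof is correct. The forward direction is essentially the paper's: both take the pencil of cubics spanned by the two components and observe that its general member lifts to fibers of an $\iota_B$-invariant elliptic pencil of $\polB$-degree $3$, giving a class in $\GGG_B^b$ (the paper is more explicit about how the Jung--Horikawa resolution turns this pencil into an honest elliptic fibration on $W_B$ and $X_B$, which is what justifies nefness and $g\in\latB$). The converse, however, is genuinely different. The paper argues globally: since $\iota_B$ fixes the fiber class, the elliptic fibration $\psi:X_B\to\P^1$ descends to a fibration $\phi:W_B\to\P^1$ via a degree-$2$ base change of the base, and the ramification of $X_B\to W_B$ is then forced into the two fibers over the branch points of that base change, each of which maps to a cubic component of $B$. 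You instead pick a single $\iota_B$-invariant member $F$ of the pencil and run a parity count on its components: stable-but-not-fixed components and swapped pairs contribute even $\polB$-degree, so the odd total $3$ forces a pointwise-fixed component, which lies in the ramification divisor and hence maps birationally onto a component of $B$ of degree $\le 3$, necessarily $=3$ by hypothesis. Your route buys something: the paper's assertion that $\psi$ arises by a \emph{degree-$2$} base change tacitly requires the induced involution on the base $\P^1$ to be nontrivial, which is not justified there, whereas your parity argument sidesteps this entirely (and in fact would supply the missing justification). What you lose is the finer structural conclusion that \emph{both} cubic components appear as the two invariant fibers; you only exhibit one cubic component, but that already gives reducibility, which is all the proposition asks. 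Two small bookkeeping points to tighten: carry the multiplicities of $F=\sum m_iC_i$ through the parity count (invariance of $F$ makes swapped components occur with equal multiplicity, so the evenness is unaffected), and note that $\intnum{C}{\polB}\le\intnum{F}{\polB}=3$ because every component has nonnegative $\polB$-degree.
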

\begin{proof}
Suppose that $B$ is reducible.
Then $B$ is a union of two irreducible cubic curves $E_0$ and $E_\infty$.
Note that, for each $P\in E_0\cap E_\infty$,
either $E_0$ or $E_\infty$ is smooth at $P$.
Let $\PPP\subset |\OOO_{\Pt}(3)|$ be the pencil spanned by $E_0$ and $E_\infty$.
%
%after referee
Examining the Jung-Horikawa resolution $j_B: W_B\to \Pt$ explicitly,
we see that $j_B$ resolves the  base points of $\PPP$, and hence 
we obtain  an elliptic fibration 
$$
\phi_{\PPP}: W_B\to  \P^1
$$
on $ W_B$ such that, by $j_B: W_B\to \Pt$, 
the general fiber of $\phi_{\PPP} $ is mapped to
a member of ${\PPP}$,  and $\phi_{\PPP} \inv (0)$ and 
$\phi_{\PPP} \inv (\infty)$ are  mapped to $E_0$ and $E_\infty$, respectively.
Moreover the branching locus of $\tilde{\pi}_B: X_B\to W_B$ is contained in 
$\phi_{\PPP} \inv (0)\cup \phi_{\PPP} \inv (\infty)$.
Indeed,
suppose that $E_0$ is smooth at $P\in E_0\cap E_\infty$,
and let $F_1, \dots, F_m$ be the exceptional curves of $j_B$ over $P$.
There exists a unique $F_i$ among them
that intersects the strict transform of $E_0$.
This component $F_i$ becomes a section of $\phi_{\PPP}$,
and  the other components are mapped to $\infty$ by $\phi_{\PPP}$.
The multiplicity of $F_i$ in the total transform of $B$ is even,
and hence $\tilde{\pi}_B$ does not ramify along the section $F_i$.

Thus we have an elliptic fibration ${\psi}_{\PPP}: X_B\to \P^1$ that fits in
 a commutative diagram
$$
\begin{array}{ccc}
X_B & \maprightsp{\tilde{\pi}_B} & W_B \\
\mapdownleft{\hskip -3.5mm\psi_{\PPP}} && \mapdownright{\phi_{\PPP} } \\
\P^1 & \maprightsp{\bar{\pi}_B} &  \P^1,
\end{array}
$$
where $\bar{\pi}_B: \P^1\to\P^1$ is the double covering branching at $0\in \P^1$ and $\infty\in \P^1$.
Let $\tlE\subset X_B$ be the general fiber of 
the elliptic fibration $\psi_{\PPP}: X_B\to\P^1$.
Since $\tlE$ is nef, we see that 
 $g:=[\tlE]\in \latB$ is an element of $\GGG_B^b$.
\par
\medskip
Conversely, suppose that $g\in \GGG_B^b$.
By Lemma~\ref{lem:gnef},
we have an elliptic fibration  $\psi: X_B\to \P^1$ 
such that the class of its general fiber $\tlE$ is $g$.
Since ${\iota_B}(g)=g$, the involution ${\iota_B}$  preserves this elliptic fibration.
Therefore $\psi: X_B\to \P^1$ is obtained from 
an elliptic fibration $\phi: W_B\to \P^1$ 
on $W_B=X_B/\gen{{\iota_B}}$ by the base change $\bar\pi: \P^1\to\P^1$
of degree $2$.
Since the branch points of $\bar\pi$ consists of two points,
the branch curve  of $\tilde{\pi}_B: W_B\to X_B$ is contained in the union of two fibers of 
$\phi: W_B\to \P^1$, each of which is mapped to a cubic irreducible component of $B$.
\end{proof}
\begin{remark}
Suppose that $g\in \GGG_B^b$.
Note that
a point $P\in \Sing B$ of type $A_1$ is an intersection point of 
the irreducible components 
$E_0$ and  $E_\infty$ of $B$
if and only if $g_P\ne 0$.
Therefore we can recover the configuration type of $B$ from $g$.
\end{remark}
\begin{remark}
There are additional necessary conditions for $\degs B$ to be $[3,3]$,
which are helpful in calculation.
If $\degs B=[3,3]$, then $R_B$ consists of the following $ADE$-types;
$A_2, A_{2k-1}, D_5, D_{2k}, E_7$, and 
moreover, for a point  $P\in \Sing B$  of type $t_P$, 
the component $g_P$ of  the vector  $g\in \GGG_B^b$ should satisfy  the following:
$$
\renewcommand{\arraystretch}{1.2}
\begin{array}{c|c|c|c|c|c|c|c|}
t_P &A_2 & A_1 & A_{2k-1}\;\; (k>1) &D_5 &D_4 &D_{2k}\;\; (k>2) & E_7 \\
g_P & 0 & 0\;\;\rmor\;\; e_1\dual &e_k\dual &e_5\dual &e_1\dual, e_2\dual \;\;\rmor\;\;  e_4\dual &e_1\dual\;\;\rmor\;\; e_2\dual &e_7\dual 
\end{array}
$$
\end{remark}
\par
\medskip
We now interpret these geometric results to lattice-theoretic results.
\begin{definition}
A fundamental system of roots is called \emph{irreducible}
if the corresponding Dynkin diagram is connected.
\end{definition}
Let $\latdata=[\EEE, h, \lat]$ be  lattice data.
We put
$$
\blat:=\gen{h}\oplus\gen{\EEE}.
$$
We denote by $\sing\latdata$ the set of irreducible components of $\EEE$,
and let
$$
\EEE=\bigsqcup_{P\in \sing \latdata} \EEE_P
$$
be the irreducible decomposition of $\EEE$.
We then have an orthogonal direct-sum decomposition
$$
\lat\tensor\Q\;=\;\gen{h}\tensor\Q \;\oplus\; \bigoplus\; \gen{\EEE_P}\tensor \Q.
$$
We say that $x\in \lat$ is \emph{${\bf v}$-smooth at $P\in \sing \latdata$}
if the component $x_P\in \gen{\EEE_P}\tensor \Q$ of $x$ 
is either $0$ or equal to some $e_i\dual\in \gen{\EEE_P}\dual$,
where $e_1\dual, \dots, e_r\dual$ are the basis of $\gen{\EEE_P}\dual$
dual to the basis $\EEE_P=\{e_1, \dots, e_r\}$ of $\gen{\EEE_P}$.
We say that $x$ is \emph{${\bf v}$-smooth} if $x\in \lat$ is ${\bf v}$-smooth at every $P\in \sing \latdata$.
%
%after referee
\begin{remark}
The notion ``\,\emph{${\bf v}$-smooth\,}" is the lattice theoretic version of the geometric notion ``\emph{\,$v$-smooth\,}" defined 
in Definition~\ref{def:vsmooth}.
\end{remark}
We also define an involution $\iota$ of $\lat\tensor\Q$
by Remark~\ref{rem:iota} with $\latB$ replaced by $\lat$ and $\iota_B$ replaced by $\iota$.
Then we can define the subsets 
$$
\LLL^l(\latdata), \;\;
\LLL^b(\latdata), \;\;
\CCC^l(\latdata), \;\;
\CCC^b(\latdata), \;\;
\GGG^l(\latdata), \;\;
\GGG^b(\latdata)
$$
of $\lat$ in the same way as the sets $\LLL^l(B), \LLL^b(B), \CCC^l(B), \CCC^b(B), \GGG^l(B), \GGG^b(B)$
with $\latB$ replaced by $\lat$, $\polB$ replaced by $h$, $\blatB$ replaced by $\blat$, 
$v$-smooth replaced  by ${\bf v}$-smooth, and $\iota_B$ replaced by $\iota$.
If $\phi:\latB\isom\lat$ is an isomorphism of lattice data from $\latdata (B)$ to $\latdata$, then 
$\phi$ maps $\LLL^l(B)$, $\LLL^b(B)$, $\CCC^l(B)$, $\CCC^b(B)$, $\GGG^l(B)$, $\GGG^b(B)$
to 
$\LLL^l(\latdata)$, $\LLL^b(\latdata)$, $\CCC^l(\latdata)$, $\CCC^b(\latdata)$, $\GGG^l(\latdata)$, $\GGG^b(\latdata)$
bijectively, respectively.
In other words, 
these subsets 
%$\LLL^l(B)$, $\LLL^b(B)$, $\CCC^l(B)$, $\CCC^b(B)$, $\GGG^l(B)$, $\GGG^b(B)$ 
of $\latB$
are determined only by the lattice data of $B$.
\par
\medskip
Thus we have shown that the configuration type 
of  a lattice-generic simple sextic $B$ is determined by the lattice type of $B$.
Hence we obtain the following,
which has been proved in Yang~\cite{MR1387816}.
\begin{corollary}\label{cor:Yang}
Let $B_1$ and $B_2$ be simple sextics (not necessarily lattice-generic) 
in the same lattice type.
Then $B_1\relconfig B_2$ holds.
\end{corollary}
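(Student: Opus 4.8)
The plan is to reduce the statement, for arbitrary simple sextics, to the lattice-generic case, which is precisely the point established by the propositions of this section. First I would record what that case gives: if $B$ is lattice-generic, then its entire combinatorial configuration type is an invariant of $\latdata(B)$. Indeed the $ADE$-type $R_B$ is read off from $\EEE$, the irreducible components of degree $1$ and $2$ are counted and located by the intrinsic sets $\LLL^b(\latdata)$ and $\CCC^b(\latdata)$ (Corollaries~\ref{cor:LLLb} and~\ref{cor:CCCb}), the distinction between an irreducible sextic and a union of two cubics is detected by $\GGG^b(\latdata)$ (Proposition~\ref{prop:cubic2}), and the local incidence of each component with each singular point is recovered from the $\tau_P$-invariants of the corresponding lift. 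Hence two lattice-generic sextics in the same lattice type are of the same configuration type.

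To pass to the general case I would perturb. By Corollary~\ref{cor:perturb} there exist lattice-generic sextics $B_1\sprime$ and $B_2\sprime$ obtained from $B_1$ and $B_2$ by arbitrarily small equisingular deformations, so that $B_i\releqs B_i\sprime$ for $i=1,2$. Such a deformation induces a lattice isomorphism on $H^2$ carrying $\latdata(B_i)$ to $\latdata(B_i\sprime)$ --- the very mechanism exploited in the proof of Proposition~\ref{prop:stable} --- whence $B_i\rellat B_i\sprime$; and because $\releqs$ refines $\relconfig$, we also have $B_i\relconfig B_i\sprime$.

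Combining $B_1\rellat B_1\sprime$, the hypothesis $B_1\rellat B_2$, and $B_2\rellat B_2\sprime$ with transitivity of $\rellat$ yields $B_1\sprime\rellat B_2\sprime$. Since both $B_1\sprime$ and $B_2\sprime$ are lattice-generic, the first paragraph gives $B_1\sprime\relconfig B_2\sprime$. Transitivity of $\relconfig$ along $B_1\relconfig B_1\sprime\relconfig B_2\sprime\relconfig B_2$ then delivers $B_1\relconfig B_2$.

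The genuine difficulty is entirely contained in the lattice-generic case and has already been overcome by the cumulative work above: the subtle part is not $R_B$ or $\degs B$ (which are visibly lattice invariants) but the incidence data of the configuration type, whose recovery required identifying the classes of lifts of the components and reading the $\tau_P$-invariants. Granting that, the corollary itself is the formal perturbation-and-transitivity argument just sketched; its only delicate point is that the deformation furnished by Corollary~\ref{cor:perturb}, being equisingular, preserves the configuration type and the lattice type simultaneously, so that the chain of equivalences closes up.
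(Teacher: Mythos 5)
Your proposal is correct and follows the paper's own argument essentially verbatim: perturb $B_1$ and $B_2$ to lattice-generic sextics via Corollary~\ref{cor:perturb}, observe that the configuration type of a lattice-generic sextic is determined by its lattice data (the content of \S\ref{sec:algorithm} preceding the corollary), and close the chain using transitivity of $\rellat$ and $\relconfig$ together with the implications $\releqs\Rightarrow\rellat$ and $\releqs\Rightarrow\relconfig$. No discrepancies to report.
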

\begin{proof}
There exist lattice-generic simple sextics $B_1\sprime$ and $B_2\sprime$ such that
$B_1\sprime\releqs B_1$ and $B_2\sprime\releqs B_2$.
Since $B_1\sprime\rellat B_2\sprime$, we have $B_1\sprime\relconfig B_2\sprime$ by the above arguments.
Thus $B_1\relconfig B_2$ follows.
\end{proof}
We have also shown that the subsets $\ZZZ_1(B)$, $\ZZZ_2(B)$ and $\ZZZ_3(B)$ of $\latB$
for  a lattice-generic simple sextic $B$ are determined only by the lattice type of $B$,
and hence Theorem~\ref{thm:ZZZ} is proved.
\begin{computation}\label{comp:latdata}
We have already obtained the complete list of lattice data of simple sextics by Computation~\ref{comp:yang}.
For  each piece $\latdata=[\EEE, h, \lat]$ of the lattice data  in this list, %each?
we make the following calculation.

We compute the subsets $\LLL^l(\latdata), \LLL^b(\latdata), \CCC^l(\latdata), \CCC^b(\latdata)$ of $\lat$.
If $\LLL^b(\latdata)=\CCC^b(\latdata)=\emptyset$, then 
we calculate $\GGG^b(\latdata)$.
Thus we determine the configuration type
containing the lattice type of the lattice data $\latdata$.
We then calculate
$$
\bBlat:=\begin{cases}
\blat+\gen{\LLL^b(\latdata)}+\gen{\CCC^b(\latdata)} & \textrm{if $\LLL^b(\latdata)\ne \emptyset$ or $\CCC^b(\latdata)\ne\emptyset$, } \\
\blat+\gen{\GGG^b(\latdata)} & \textrm{if $\LLL^b(\latdata)=\CCC^b(\latdata)=\emptyset$, } \\
\end{cases}
$$
and $F_\latdata:=\lat/\bBlat$.

Suppose that $\LLL^l(\latdata)\ne \emptyset$ or $\CCC^l(\latdata)\ne\emptyset$.
We confirm that $F_{\latdata}\ne 0$ and that the equality
$\lat=\bBlat +\gen{\LLL^l(\latdata)}+\gen{\CCC^l(\latdata)}$
holds.
Suppose that $\LLL^l(\latdata)=\CCC^(\latdata)=\emptyset$ but $F_{\latdata}\ne 0$.
We then calculate  $\GGG^l(\latdata)$, and confirm that $\GGG^l(\latdata)$ consists of two elements,  that 
$\lat=\bBlat +\gen{\GGG^l(\latdata)}$
holds, and that $\lat/\blat$ is cyclic of order $4$.
\end{computation}
\begin{remark}
In order to determine whether or not two lattice types are contained in the same configuration type,
we have to use the \emph{combinatorial} definition of the configuration type,
which is given in~\cite[Remark 3]{MR2409555} for example.
\end{remark}
By this calculation,
we prove Theorems~\ref{thm:LZkplets},~\ref{thm:FB} and the first part of Theorem~\ref{thm:sp}.
We also obtain  the complete list of lattice data
of $Z$-splitting pairs $(B, \Gm)$ with $\deg\Gm\le 2$,
or  with $z_1(\lattype(B))=z_2(\lattype(B))=0$, $F_B\ne 0$
and $\Gm$ being smooth cubic.
Our next task is to determine the relation of specializations 
among the lattice data
of $Z$-splitting pairs.
\section{Specialization of lattice types}\label{sec:specialization}
For the study of specialization of lattice types, we need to refine the  period map $\tau_1: \MMM_1\to \Omega_{\Klat}$.
%
%after referee
(See~\cite[Chap.~VIII]{MR2030225} or~\cite{MR785231}.)
Consider the real vector bundle
$R^2 \pi_{1*}\R$ of rank $22$ over the non-Hausdorff moduli space $\MMM_1$,
where $\pi_1: \XXX_1\to \MMM_1$ is the universal family of (marked) $K3$ surfaces.
A point of this vector bundle is given by
$(t, x)$, where $t\in \MMM_1$ and $x\in H^2(X_t, \R)$.
We then put
$$
\MMM_2 :=\set{(t, \kappa)\in R^2 \pi_{1*}\R }{ \textrm{$\kappa$ is a K\"ahler class of $X_t$}},
$$
that is,  $\MMM_2$
is the base space of the universal family of the triples
$(X, \phi, \kappa)$,
where $(X, \phi)$ is a marked  $K3$ surface and 
$\kappa$
 is a  K\"ahler class  of $X$.
\par
For a point  $[\per]$ of $\Omega_{\Klat}$,
we put
\begin{eqnarray*}
H^{[\per]} &:=& \set{x\in \Klat\tensor\R}{\intnum{x}{\per}=0},\\
\NS^{[\per]} &:=& H^{[\per]}\cap \Klat \quad\textrm{(as defined in the previous section)},\\
D^{[\per]}&:=& \set{d\in \NS^{[\per]}}{d^2=-2}, \\
\Gamma^{[\per]} &:=& \set{x\in H^{[\per]}}{ x^2>0}, \\
\zeroGamma^{[\per]} &:=& \set{x\in \Gamma^{[\per]} }{ \intnum{x}{d}\ne 0\;\;\textrm{for all}\;\; d\in D^{[\per]}}.
\end{eqnarray*}
We then put
\begin{eqnarray*}
H\persp_{\Klat}&:=&\set{([\per], x)\in \persp_{\Klat}\times ({\Klat}\tR)}{x\in H^{[\per]}},
\quad \\
K\persp_{\Klat}&:=&\set{([\per], x)\in \persp_{\Klat}\times ({\Klat}\tR)}{x\in \Gamma^{[\per]}},
\quand \\
 \spzero  K\persp_{\Klat}&:=&\set{([\per], x)\in \persp_{\Klat}\times ({\Klat}\tR)}{x\in \zeroGamma^{[\per]}}.
\end{eqnarray*}
We have a commutative diagram
\begin{equation}\label{eq:overpersp}
\begin{array}{ccccc}
  \spzero  K\persp_{\Klat} \hskip -.3cm & \inj  &  K\persp_{\Klat} & \inj & \hskip -.3cmH\persp_{\Klat} \\
  &{}_{\Pi_{\persp}}\searrow\;\;\;\phantom{\Pi_{\persp}} & \mapdown &\phantom{\Pi_{\persp}}\;\;\;\swarrow\phantom{\Pi_{\persp}} & \\
  &&\phantom{,,}\persp_{\Klat}\;, &&
\end{array}
\end{equation}
where the maps to $\persp_{\Klat}$ are the projection onto the first factor.
Note that $ K\persp_{\Klat}$ and $H\persp_{\Klat} $ are locally trivial fiber spaces over $\persp_{\Klat}$.
We have the following:
\begin{lemma}[Corollary 9.2 in Chapter VIII of \cite{MR2030225}]
The space $\spzero K\persp_{\Klat}$ is open in $K\persp_{\Klat}$,
and hence the projection $\Pi_{\Omega}$ is an open immersion.
\end{lemma}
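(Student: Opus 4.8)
The plan is to prove the substantive assertion, that $\spzero K\persp_{\Klat}$ is open in $K\persp_{\Klat}$; the ``hence'' clause is then purely formal, since the inclusion of an open subset into a real-analytic manifold is automatically an open immersion.

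First I would describe the complement explicitly. Set $D_{\Klat}:=\shortset{d\in\Klat}{d^2=-2}$. A point $([\per],x)\in K\persp_{\Klat}$ lies outside $\spzero K\persp_{\Klat}$ precisely when $\intnum{x}{d}=0$ for some $d\in D^{[\per]}$, and $d\in D^{[\per]}$ means $d\in D_{\Klat}$ with $\intnum{\per}{d}=0$. Hence
\[
K\persp_{\Klat}\setminus\spzero K\persp_{\Klat}=\bigcup_{d\in D_{\Klat}} Z_d,\qquad Z_d:=\shortset{([\per],x)\in K\persp_{\Klat}}{\intnum{\per}{d}=0,\ \intnum{x}{d}=0}.
\]
Each $Z_d$ is closed in $K\persp_{\Klat}$, being the common zero locus of two continuous real-analytic functions. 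Since an infinite union of closed sets need not be closed, everything reduces to a local finiteness statement: each point of $\spzero K\persp_{\Klat}$ should have a neighborhood meeting only finitely many of the $Z_d$. This is the relative version, over the total space, of the fiberwise local finiteness of the walls $\shortset{d\sperp}{d\in D^{[\per]}}$ in $\Gamma^{[\per]}$ recorded in \S\ref{subsec:Kaehler}.

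The key step, which I expect to be the main obstacle, is to make this finiteness uniform near a fixed $([\per_0],x_0)\in\spzero K\persp_{\Klat}$ by a majorant argument. Because $\textrm{Re}\,\per_0$ and $\textrm{Im}\,\per_0$ span a positive-definite $2$-plane and $x_0$ is orthogonal to $\per_0$ with $x_0^2>0$, the real span $P_0:=\gen{\textrm{Re}\,\per_0,\textrm{Im}\,\per_0,x_0}$ is positive-definite of rank $3$, and $P_0\sperp$ in $\Klat\tR$ is negative-definite of rank $19$. Let $\langle\,,\rangle_+$ be the positive-definite majorant obtained by reversing the sign of the form on $P_0\sperp$. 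For $([\per],x)$ near $([\per_0],x_0)$ the positive $3$-plane $P:=\gen{\textrm{Re}\,\per,\textrm{Im}\,\per,x}$ is within small angle $\theta$ of $P_0$; any $d\in D_{\Klat}$ with $\intnum{\per}{d}=\intnum{x}{d}=0$ lies in $P\sperp$, so writing $d=d'+d''$ with $d'\in P_0$ and $d''\in P_0\sperp$, Cauchy--Schwarz for $\langle\,,\rangle_+$ (using $|\intnum{u}{v}|\le\langle u,u\rangle_+^{1/2}\langle v,v\rangle_+^{1/2}$) gives $\langle d',d'\rangle_+\le\theta^2\langle d,d\rangle_+$. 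Combining this with $d^2=(d')^2+(d'')^2=-2$, which forces $\langle d,d\rangle_+=2(d')^2+2$, yields the uniform bound $\langle d,d\rangle_+\le 2/(1-2\theta^2)$ once $\theta^2<1/2$. As $\langle\,,\rangle_+$ is positive-definite on the lattice $\Klat$, only finitely many $d$ satisfy such a bound; hence only finitely many $Z_d$ meet a sufficiently small neighborhood of $([\per_0],x_0)$.

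Granting this, the conclusion is immediate. Near $([\per_0],x_0)$ the complement is a finite union of closed sets, each avoiding $([\per_0],x_0)$ since $([\per_0],x_0)\in\spzero K\persp_{\Klat}$; shrinking to avoid these finitely many sets produces a neighborhood contained in $\spzero K\persp_{\Klat}$. Thus $\spzero K\persp_{\Klat}$ is open in $K\persp_{\Klat}$, and the inclusion $\spzero K\persp_{\Klat}\inj K\persp_{\Klat}$ is an open immersion, which is the remaining assertion. The one delicate point in a full write-up is the angular estimate $\langle d',d'\rangle_+\le\theta^2\langle d,d\rangle_+$: one must produce, for each $p_0\in P_0$, a nearby $p\in P$ with $\langle p_0-p,\,p_0-p\rangle_+\le\theta^2\langle p_0,p_0\rangle_+$ and a uniform $\theta$ valid on a fixed neighborhood, which is exactly where positivity of $P_0$ and discreteness of $\Klat$ both enter.
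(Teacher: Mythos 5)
The paper offers no proof of this lemma: it is quoted verbatim from \cite[Chap.~VIII, Cor.~9.2]{MR2030225}, so there is no internal argument to compare yours against. Your proof is correct and is essentially the standard argument behind the cited result: reduce openness to local finiteness of the walls $Z_d$, and obtain that finiteness from a positive-definite majorant. The computation $\langle d,d\rangle_+=2(d')^2+2$ combined with the angular bound $\langle d',d'\rangle_+\le\theta^2\langle d,d\rangle_+$ does yield $\langle d,d\rangle_+\le 2/(1-2\theta^2)$, and discreteness of $\Klat$ for the positive-definite form finishes it; your Cauchy--Schwarz inequality $|\intnum{u}{v}|\le\langle u,u\rangle_+^{1/2}\langle v,v\rangle_+^{1/2}$ for the majorant is valid (split $u,v$ along $P_0\oplus P_0\sperp$ and apply Cauchy--Schwarz in each summand), and you correctly isolate the one genuinely technical point, namely producing a uniform $\theta$ from the continuity of $([\per],x)\mapsto P$ near $([\per_0],x_0)$, where positivity of $P_0$ guarantees $\dim P=3$ on a neighborhood. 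The only caveat concerns the ``hence'' clause: in the paper's diagram~\eqref{eq:overpersp} and again in~\eqref{diagram:Y}, $\Pi_{\Omega}$ denotes the projection $\spzero K\persp_{\Klat}\to\persp_{\Klat}$ onto the first factor, which has $22$-dimensional fibers and therefore cannot literally be an open immersion; what openness of $\spzero K\persp_{\Klat}$ in the locally trivial fiber space $K\persp_{\Klat}\to\persp_{\Klat}$ actually gives is that $\Pi_{\Omega}$ is an open (surjective, submersive) map. Your reading --- that the inclusion $\spzero K\persp_{\Klat}\inj K\persp_{\Klat}$ is an open immersion --- is the sensible repair of what appears to be a slip in the statement, and in either reading the substantive content is exactly the openness you prove.
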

Let  $t$ be a point of $\MMM_1$, 
and let 
$$
[\per_t]:=\tau_1(t)\;\;\in\;\; \persp_{\Klat}
$$
be the period point of  $(X_t, \phi_t)$.
Then 
the marking   $\phi_t: H^2 (X_t, \Z)\isom \Klat$
maps
$H_{X_t}$ to $H^{[\per_t]}$,
$\Gamma_{X_t}$ to $\Gamma^{[\per_t]}$,
$\NS (X_t)$ to $\NS^{[\per_t]}$,
$D_{X_t}$ to $D^{[\per_t]}$,  and hence 
$\phi_t$ maps
$\zeroGamma_{X_t}$ to $\zeroGamma^{[\per_t]}$. 
Since every K\"ahler class of $X_t$ is contained in 
the K\"ahler cone $\Kahler_{X_t}\subset\zeroGamma_{X_t}$,
we can define a map 
$$
\tau_2 : \MMM_2 \to \spzero K\persp_{\Klat},
$$
which is called the \emph{refined period map}, by 
$$
\tau_2 (t, \kappa):=(\tau_1 (t), \phi_t(\kappa)).
$$
Then we obtain a commutative diagram
\begin{equation}\label{diagram:Y}
\renewcommand{\arraystretch}{1.4}
\begin{array}{ccc}
\MMM_2 & \maprightsp{\tau_2} & \spzero K\persp_{\Klat}\\
\mapdownleft{\hskip -14pt\Pi_{\MMM}} &&\mapdownright{\Pi_{\Omega}} \\
\MMM_1 & \maprightsp{\tau_1} &\phantom{,}\persp_{\Klat}, 
\end{array}
\end{equation}
where the  vertical arrows
$\Pi_{\MMM}$ and $\Pi_{\Omega}$ 
are the forgetful maps.
\par
\medskip
The following  plays a crucial role in 
the study of specialization of lattice types:
\begin{theorem}[Theorems 12.3 and 14.1 in Chapter VIII of \cite{MR2030225}]\label{thm:refined}
The refined period map $\tau_2$ is an isomorphism.
\end{theorem}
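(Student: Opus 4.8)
The plan is to deduce the statement from the two classical pillars of $K3$ theory: the surjectivity of the unrefined period map $\tau_1$ (recalled above) and the strong Torelli theorem. Using the commutative square~\eqref{diagram:Y}, I would verify separately that $\tau_2$ is a local isomorphism, that it is injective, and that it is surjective. Since a bijective local isomorphism of complex manifolds is an isomorphism, these three facts together give the theorem.

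That $\tau_2$ is a local isomorphism follows from the local Torelli theorem, which asserts that $\tau_1$ is a local isomorphism, its differential at each $t\in\MMM_1$ being an isomorphism of $20$-dimensional spaces (see \cite{MR2030225}). Indeed, the forgetful maps $\Pi_{\MMM}$ and $\Pi_{\Omega}$ exhibit $\MMM_2$ and $\spzero K\persp_{\Klat}$ as fibre spaces over $\MMM_1$ and $\persp_{\Klat}$, and over a point $t$ with period $[\per]$ the map $\tau_2$ restricts on the fibre $\Kahler_{X_t}$ to the linear open embedding $\phi_t$ into $\zeroGamma^{[\per]}$. Combining this fibrewise open embedding with the local isomorphism $\tau_1$ on the base shows that $\tau_2$ is a local biholomorphism.

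For injectivity I would invoke the strong Torelli theorem. Suppose $\tau_2(t_1,\kappa_1)=\tau_2(t_2,\kappa_2)$, and write $(X_i,\phi_i)$ for the marked surface at $t_i$. Then $\tau_1(t_1)=\tau_1(t_2)=[\per]$, so the isometry $\psi:=\phi_2\inv\circ\phi_1:H^2(X_1,\Z)\isom H^2(X_2,\Z)$ is a Hodge isometry; moreover $\phi_1(\kappa_1)=\phi_2(\kappa_2)$ forces $\psi(\kappa_1)=\kappa_2$, so $\psi$ carries the Kähler class $\kappa_1$ to the Kähler class $\kappa_2$. By the strong Torelli theorem $\psi$ is induced by an isomorphism $f:X_2\isom X_1$ with $f^\ast=\psi$, and since $f^\ast(\kappa_1)=\kappa_2$ this isomorphism identifies the two marked triples, whence $(t_1,\kappa_1)=(t_2,\kappa_2)$ in $\MMM_2$.

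Surjectivity is the subtlest point, and it is here that I expect the main obstacle to lie. Given $([\per],x)\in\spzero K\persp_{\Klat}$, the surjectivity of $\tau_1$ provides a marked $K3$ surface $(X,\phi)$ with period $[\per]$, and I set $y:=\phi\inv(x)\in\zeroGamma_X$. Replacing $\phi$ by $-\phi$ if necessary (which leaves the projective period point $[\per]$ unchanged while interchanging the two components of $\Gamma_X$) I may assume $y\in\Gamma_X^+\cap\zeroGamma_X$. The walls $d\sperp$ with $d\in D_X$ cut this positive cone into chambers on which the Weyl group generated by the reflections $s_d$ acts transitively, and by Proposition~\ref{prop:Kcone} the Kähler cone $\Kahler_X$ is one of these chambers. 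Choosing a reflection word $w$ with $w(y)\in\Kahler_X$ and noting that each $s_d$ with $d\in\NS(X)$ fixes $\per_X$ (as $\per_X\in\gen{\NS(X)}\sperp$), I extend $w$ to an isometry of $H^2(X,\Z)$ fixing $\per_X$ and replace $\phi$ by $\phi\circ w\inv$; the period is unchanged and the new transported vector $w(y)$ lies in $\Kahler_X$. The remaining---and genuinely hard---point is that this chamber is swept out by \emph{bona fide} Kähler classes, so that $w(y)$ is an actual Kähler class defining a point of $\MMM_2$ over $([\per],x)$; this rests on the fact that every $K3$ surface admits a Kähler metric (so that $\Kahler_X\neq\emptyset$) and is part of the global Torelli package of \cite{MR2030225}, with the full identification of $\Kahler_X$ as the set of Kähler classes emerging together with the present theorem. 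Granting this, $\tau_2$ is a bijective local isomorphism, therefore an isomorphism.
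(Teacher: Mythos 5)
The paper does not actually prove this statement: it is imported verbatim, with attribution, from Theorems 12.3 and 14.1 of Chapter VIII of Barth--Hulek--Peters--Van de Ven, and is used later only as a black box. So there is no in-paper argument to measure yours against; what you have written is, in outline, the proof from the cited reference itself. Your three-step structure --- local isomorphism via local Torelli, injectivity via the strong Torelli theorem applied to the Hodge isometry $\phi_2\inv\circ\phi_1$ carrying the K\"ahler class $\kappa_1$ to the K\"ahler class $\kappa_2$, and surjectivity via the surjectivity of $\tau_1$ followed by adjusting the marking by $-1$ and by Weyl-group reflections so that the transported class lands in the chamber $\Kahler_X$ --- is the standard and correct one, and you are right to single out the last step as the delicate point. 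Two caveats are worth recording. First, in the local-isomorphism step you describe the fibre of $\MMM_2\to\MMM_1$ over $t$ as $\Kahler_{X_t}$; before the theorem is proved, that fibre is only the cone of genuine K\"ahler classes, an open convex subcone of $\Kahler_{X_t}$ (the equality of the two is exactly the corollary the paper draws \emph{from} this theorem), so the phrasing is mildly circular, though openness of the fibre is all your differential computation actually needs. Second, the assertion that every vector of the chamber $\Kahler_X$ is a genuine K\"ahler class, which you need to finish surjectivity, is not established by your argument; it is the substantive content of the cited theorems, resting on the K\"ahlerness of all $K3$ surfaces and a further openness-and-closedness argument. As you yourself acknowledge, the proposal is therefore a correct reduction of the statement to the strong Torelli theorem and the K\"ahler-cone description --- that is, to the very results the paper cites --- rather than a self-contained proof, which is a perfectly reasonable stance toward an imported classical theorem.
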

The specialization of lattice types of simple sextics and $Z$-splitting pairs 
can be described by \emph{geometric embeddings} of lattice data.
\begin{definition}\label{def:geometric-specialization}
Let $\latdata=[\EEE, h, \lat]$ 
and $\latdata_0=[\EEE_0, h_0, \lat_0]$
be  lattice data.
By a \emph{geometric embedding} of $\latdata$ into $\latdata_0$,
we mean  
a primitive embedding    $\sigma : \lat\inj \lat_0$ of the lattice $\lat$ into the lattice $\lat_0$
that satisfies  $\sigma (h)=h_0$ and $\sigma (\EEE)\subset \gen{\EEE_0}^+$.
\end{definition}
\begin{definition}\label{def:geometric-specializationZP}
Let $\latdataZP=[\EEE, h, \lat, S]$
and $\latdataZP_0=[\EEE_0, h_0, \lat_0, S_0]$ 
be  extended  lattice data.
A \emph{geometric embedding} of $\latdataZP$ into $\latdataZP_0$
is a geometric embedding $\sigma : \lat\inj \lat_0$ 
of $[\EEE, h, \lat]$ into $[\EEE_0, h_0, \lat_0]$
such that we have
$$
\sigma (S) \subset S_0+ \gen{\EEE_0}^+:=(v_0^++\gen{\EEE_0}^+)\cup(v_0^-+\gen{\EEE_0}^+),
\quad\textrm{where $S_0=\{v_0\sp\pm\}$}.
$$
\end{definition}
Let $f:\BBB\to\Delta$ be an analytic family of
simple sextics,
where $f$ is the projection from $\BBB\subset \P^2\times \Delta$ to $\Delta$,
 and  $B_t:=f\inv (t)$ is a simple sextic on $\Pt\times\{t\}$ for any $t\in \Delta$.
Suppose that $f$ is equisingular over $\Delta\sptimes$.
We define a geometric embedding
$$
\sigma_{\BBB, t}: \lat_{B_t} \inj \lat_{B_0}
$$
of the lattice data $\latdata(B_t)$ with $t\ne 0$ into the lattice data $\latdata(B_0)$
as follows.
We consider the double cover
$$
\YYY_{\BBB}\to \P^2\times \Delta
$$
branching exactly along $\BBB$.
Note that every fiber of $\YYY_{\BBB}\to \Delta$ is birational to
a $K3$ surface.
Therefore, by Kulikov~\cite{MR0506295}, 
there exists a birational transformation
$\XXX_{\BBB}\to \YYY_{\BBB}$
such that the composite holomorphic map
$$
\pi_{\BBB}:\XXX_{\BBB}\to \Delta
$$
 is  a smooth family of $K3$ surfaces.
Note that the fiber of $\pi_{\BBB}$ over $t\in \Delta$ 
is isomorphic to $X_{B_t}$.
Note also that $\XXX_{\BBB}$ has a line bundle $\LLL_{\BBB}$ such that
the class of the restriction of $\LLL_{\BBB}$ to $X_{B_t}=\pi_{\BBB}\inv (t)$
is equal to  $h_{B_t}\in H^2(X_{B_t}, \Z)$ for any $t\in \Delta$.
Then we have a trivialization
$$
R^2 \pi_{\BBB*}\Z\cong \Delta\times \Klat,
$$
which induces markings $H^2(X_{B_t}, \Z)\isom \Klat$ for any $t\in \Delta$.
Using this trivialization,
we obtain a primitive  embedding $\sigma_{\BBB, t}: \lat_{B_t} \inj \lat_{B_0}$ of lattices
by the specialization homomorphism
$$
H^2(X_{B_t}, \Z)\isom H^2(X_{B_0}, \Z).
$$
This   $\sigma_{\BBB, t}$ induces a  geometric embedding 
of the lattice data $\latdata(B_t)$ for $t\ne 0$ into the lattice data $\latdata(B_0)$.
Indeed, $\sigma_{\BBB, t}$ maps $h_{B_t}$ to $h_{B_0}$
because the polarizations on $X_{B_t}$ form a family $\LLL_{\BBB}$. 
Moreover
any exceptional  $(-2)$-curve on $X_{B_t}$ $(t\ne 0)$ degenerates into an effective divisor
on $X_{B_0}$, whose reduced irreducible components must be  
exceptional  $(-2)$-curves on $X_{B_0}$ because its degree with respect to the polarization $h_{B_0}$
is zero.
\begin{proposition}\label{prop:ifspZP}
Let $\{(B_t, \Gm_t)\}_{t\in \Delta}$  be  an analytic family of $Z$-splitting pairs
that is equisingular over $\Delta\sptimes$.
Then the geometric  embedding $\sigma_{\BBB, t}$
of $\latdata (B_t)$ with $t\ne 0$ into $\latdata (B_0)$
yields
a geometric embedding  of the extended lattice data 
$\latdataZP(B_t, \Gm_t)$ with $t\ne 0$  into  the extended lattice data $\latdataZP (B_0, \Gm_0)$.
\end{proposition}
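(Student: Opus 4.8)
The plan is to build directly on the geometric embedding $\sigma_{\BBB, t} : \lat_{B_t} \inj \lat_{B_0}$ of the lattice data $\latdata(B_t)$ into $\latdata(B_0)$ already constructed above, and to check only the one extra condition of Definition~\ref{def:geometric-specializationZP}. Writing $S_t = \{[\tilde\Gamma_t^+], [\tilde\Gamma_t^-]\}$ for the classes of the lifts of $\Gm_t$ on $X_{B_t}$ and $S_0 = \{[\tilde\Gamma_0^+], [\tilde\Gamma_0^-]\}$ for those of $\Gm_0$ on $X_{B_0}$, I must show that
$$
\sigma_{\BBB, t}(S_t) \;\subset\; \bigl([\tilde\Gamma_0^+] + \gen{\EEE_{B_0}}^+\bigr) \cup \bigl([\tilde\Gamma_0^-] + \gen{\EEE_{B_0}}^+\bigr).
$$
Since $\tilde\Gamma_t^- = \iota_{B_t}(\tilde\Gamma_t^+)$ and $\sigma_{\BBB, t}$ commutes with the fibrewise involutions (the relative deck transformation of $\YYY_\BBB \to \Pt\times\Delta$ preserves each fibre and hence the specialization isomorphism), it is enough to treat $[\tilde\Gamma_t^+]$, the case of $[\tilde\Gamma_t^-]$ being identical.

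Next I would realize $\sigma_{\BBB, t}([\tilde\Gamma_t^+])$ geometrically by degeneration, exactly as was done above for the exceptional $(-2)$-curves. The irreducible curves $\tilde\Gamma_t^+ \subset X_{B_t}$ $(t\neq 0)$ sweep out a subvariety of $\XXX_\BBB$ over $\Delta\sptimes$; taking its closure and intersecting with the central fibre $X_{B_0}$ yields an effective divisor $D_0$ whose class is $\sigma_{\BBB, t}([\tilde\Gamma_t^+])$ under the specialization isomorphism. The morphism $\XXX_\BBB \to \Pt\times\Delta$ constructed above restricts on $X_{B_0}$ to $\tlrhoB$ with $B=B_0$ and carries $\overline{\bigcup_{t\neq 0}\tilde\Gamma_t^+}$ into the total space of the flat family $\{\Gm_t\}$, whose central fibre is the $Z$-splitting curve $\Gm_0$ of the pair $(B_0,\Gm_0)$. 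As $\Gm_0$ is reduced and irreducible, the image $\tlrhoB(D_0)$ with $B=B_0$ is contained in $\Gm_0$.

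Finally I would decompose $D_0$ into its part $H$ not contracted by $\tlrhoB$ (with $B=B_0$) and its contracted part, which lies in $\gen{\EEE_{B_0}}^+$. Every non-contracted irreducible component of $D_0$ maps onto $\Gm_0$, and the reduced preimage of $\Gm_0$ consists of the two lifts $\tilde\Gamma_0^\pm$ together with contracted $(-2)$-curves; hence $H = a\,\tilde\Gamma_0^+ + b\,\tilde\Gamma_0^-$ with $a,b\in\Z_{\geq 0}$. Because $\sigma_{\BBB, t}$ is an isometry sending $h_{B_t}$ to $h_{B_0}$, we get $\intnum{D_0}{h_{B_0}} = \intnum{\tilde\Gamma_t^+}{h_{B_t}} = \deg\Gm_t = \deg\Gm_0$, while $\intnum{\tilde\Gamma_0^\pm}{h_{B_0}} = \deg\Gm_0$ and the contracted part has degree $0$; this forces $a+b = 1$, so $(a,b)$ is $(1,0)$ or $(0,1)$ and $\sigma_{\BBB, t}([\tilde\Gamma_t^+]) \in [\tilde\Gamma_0^\pm] + \gen{\EEE_{B_0}}^+$, as required. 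The main obstacle I anticipate is the careful bookkeeping of the flat limit $D_0$ — verifying that its class is genuinely $\sigma_{\BBB, t}([\tilde\Gamma_t^+])$ and that its non-contracted part maps onto $\Gm_0$ rather than collapsing to points — but this is precisely the degeneration argument already used for the exceptional curves, now sharpened by the degree count that pins the horizontal multiplicities to a single lift.
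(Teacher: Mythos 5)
Your proposal is correct and follows essentially the same route as the paper: the paper's own proof simply observes that $\tlGm_t^+$ degenerates into an effective divisor on $X_{B_0}$ equal to $\tlGm_0^+$ (or $\tlGm_0^-$) plus exceptional $(-2)$-curves, which is exactly your flat-limit argument. Your added degree count $\intnum{D_0}{h_{B_0}}=\deg\Gm_0$ forcing $a+b=1$ is a useful explicit justification of the step the paper leaves implicit.
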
 
\begin{proof}
Since $\Gm_t$ degenerates into $\Gm_0$,
the curve $\tlGm_{t}^{+}\subset X_{B_t}$  for $t\ne 0$ degenerates into
an effective divisor on $X_{B_0}$ that is the sum of  $\tlGm_0^{+}$ (or $\tlGm_0^{-}$) and some exceptional $(-2)$ curves
on $X_{B_0}$.
Hence the geometric embedding $\sigma_{\BBB, t}: \lat_{B_t} \inj \lat_{B_0}$ of 
$\latdata(B_t)$  into  $\latdata(B_0)$ constructed above 
satisfies $\sigma_{\BBB, t}([\tlGm_{t}^{+}])\in [\tlGm_0^{+}]+\gen{\EEE_{B_0}}^+$ or 
$\sigma_{\BBB, t}([\tlGm_{t}^{+}])\in [\tlGm_0^{-}]+\gen{\EEE_{B_0}}^+$.
\end{proof} 
\begin{corollary}\label{cor:ifspZP}
Let $\lattypeZP_0$ and $\lattypeZP$ be lattice types of $Z$-splitting pairs,
and let  $\latdataZP_0$ and $\latdataZP$ be the corresponding extended lattice data. 
If $\lattypeZP_0$ is a specialization of $\lattypeZP$, then 
there exists a geometric embedding  of $\latdataZP$ into $\latdataZP_0$.
\end{corollary}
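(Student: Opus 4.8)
The plan is to reduce the statement directly to Proposition~\ref{prop:ifspZP}, the only additional work being to show that the family produced by the definition of specialization is equisingular over the punctured disc. First I would unwind the hypothesis: since $\lattypeZP_0$ is a specialization of $\lattypeZP$, there is an analytic family $f:\PPP\to\Delta$ of $Z$-splitting pairs $f\inv(t)=(B_t,\Gm_t)$ whose central fiber $(B_0,\Gm_0)$ is a member of $\lattypeZP_0$ and whose fibers over $t\ne 0$ are members of $\lattypeZP$. I write $\BBB\subset\P^2\times\Delta$ for the associated family of simple sextics.

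The key preliminary step is to verify that $f$ is equisingular over $\Delta\sptimes$, which is exactly the standing assumption of Proposition~\ref{prop:ifspZP}. Each $B_t$ with $t\ne 0$ has its underlying simple sextic in a single lattice type of simple sextics, so by Corollary~\ref{cor:Yang} all of these fibers share one configuration type; in particular $R_{B_t}$, and hence the total Milnor number $\mu_{B_t}$, is constant on the connected set $\Delta\sptimes$. By upper semicontinuity of the total Milnor number in the analytic family $\BBB$, this constancy on $\Delta\sptimes$ forbids any further degeneration of the singularities away from the origin, and therefore $f$ is equisingular over $\Delta\sptimes$.

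With equisingularity established I would apply Proposition~\ref{prop:ifspZP} to obtain, for any fixed $t\ne 0$, the geometric embedding of extended lattice data
$$
\sigma_{\BBB,t}:\latdataZP(B_t,\Gm_t)\inj\latdataZP(B_0,\Gm_0).
$$
It then remains to transport this into a geometric embedding of the abstract data $\latdataZP$ into $\latdataZP_0$. Because $(B_t,\Gm_t)$ is a member of $\lattypeZP$ and $(B_0,\Gm_0)$ is a member of $\lattypeZP_0$, there are isomorphisms of extended lattice data $\latdataZP\isom\latdataZP(B_t,\Gm_t)$ and $\latdataZP(B_0,\Gm_0)\isom\latdataZP_0$. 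Pre- and post-composing $\sigma_{\BBB,t}$ with these gives a map $\latdataZP\inj\latdataZP_0$, and I would check that it is again a geometric embedding; this is routine, since an isomorphism of extended lattice data preserves primitivity, sends $h$ to $h$, carries the fundamental system of roots to the fundamental system of roots (hence $\gen{\EEE}^+$ to its counterpart), and respects the distinguished set $S$ (Definitions~\ref{def:isom-lattice-typeZP} and~\ref{def:geometric-specializationZP}).

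I expect the only genuine obstacle to be the equisingularity reduction of the second paragraph: the definition of specialization records only the lattice types of the fibers, so one must argue that constancy of the lattice type over $\Delta\sptimes$ upgrades, via Corollary~\ref{cor:Yang} together with semicontinuity of the Milnor number, to honest equisingularity of $f$ over the punctured disc. Everything afterwards is a formal invocation of Proposition~\ref{prop:ifspZP} and the functoriality of geometric embeddings under isomorphisms of extended lattice data.
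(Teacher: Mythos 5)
Your proposal is correct and follows the same route the paper intends: the corollary is stated as an immediate consequence of Proposition~\ref{prop:ifspZP}, and your argument is just the natural unwinding of that reduction. The only content you add beyond what the paper leaves implicit is the verification that the family is equisingular over $\Delta\sptimes$ (via Corollary~\ref{cor:Yang} and constancy of the total Milnor number) and the routine transport through isomorphisms of extended lattice data, both of which are sound.
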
 
Since 
a geometric embedding $\sigma: \lat_B\inj \lat_{B_0}$ of $\latdataZP(B, \Gm)$ into $\latdataZP(B_0, \Gm_0)$
induces a homomorphism of finite abelian groups 
$G_B\to G_{B_0}$ that maps $([\tlGm^+]\bmod \blatB)\in G_B$ to $([\tlGm_0^+]\bmod \blat_{B_0})\in  G_{B_0}$ or 
$([\tlGm_0^-]\bmod \blat_{B_0}) \in  G_{B_0}$,
we obtain the following:
\begin{corollary}
If $\lattypeZP_0=\lattypeZP(B_0, \Gm_0)$ is a specialization of $\lattypeZP=\lattypeZP(B, \Gm)$,
then the class-order of $\lattypeZP_0$ is a divisor of  the class-order of $\lattypeZP$.
\end{corollary}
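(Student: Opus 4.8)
The plan is to read the divisibility off the homomorphism on the finite quotients $G_B=\latB/\blatB$ that any geometric embedding of extended lattice data induces, exactly as indicated in the paragraph preceding the statement. First I would invoke Corollary~\ref{cor:ifspZP}: since $\lattypeZP_0=\lattypeZP(B_0,\Gm_0)$ is a specialization of $\lattypeZP=\lattypeZP(B,\Gm)$, there exists a geometric embedding $\sigma:\latB\inj\lat_{B_0}$ of the extended lattice data $\latdataZP(B,\Gm)$ into $\latdataZP(B_0,\Gm_0)$ in the sense of Definition~\ref{def:geometric-specializationZP}.

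Next I would check that $\sigma$ descends to a homomorphism of the two finite groups. By Definition~\ref{def:geometric-specialization} a geometric embedding satisfies $\sigma(\polB)=h_{B_0}$ and $\sigma(\EEE_B)\subset\gen{\EEE_{B_0}}^+$; since $\blatB=\gen{\polB}\oplus\gen{\EEE_B}$ and $\blat_{B_0}=\gen{h_{B_0}}\oplus\gen{\EEE_{B_0}}$, this forces $\sigma(\blatB)\subseteq\blat_{B_0}$, so $\sigma$ induces a homomorphism $\bar\sigma:G_B\to G_{B_0}$. Then the extension clause of Definition~\ref{def:geometric-specializationZP}, namely $\sigma(\{[\tlGmplus],[\tlGmminus]\})\subset\{[\tlGm_0^+],[\tlGm_0^-]\}+\gen{\EEE_{B_0}}^+$ together with $\gen{\EEE_{B_0}}^+\subseteq\blat_{B_0}$, shows that $\bar\sigma([\tlGmplus]\bmod\blatB)$ equals either $[\tlGm_0^+]\bmod\blat_{B_0}$ or $[\tlGm_0^-]\bmod\blat_{B_0}$ in $G_{B_0}$.

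Finally I would apply the elementary fact that under a group homomorphism the order of the image divides the order of the source element. The class-order $d$ of $\lattypeZP$ is by definition the order of $[\tlGmplus]\bmod\blatB$ in $G_B$, so the order of $\bar\sigma([\tlGmplus]\bmod\blatB)$ divides $d$. It then remains to identify this order with the class-order $d_0$ of $\lattypeZP_0$, i.e.\ with the order of $[\tlGm_0^+]\bmod\blat_{B_0}$. The only point that needs a word is the case in which $\bar\sigma$ lands on the other lift $[\tlGm_0^-]\bmod\blat_{B_0}$: since $[\tlGm_0^-]=\iota_{B_0}([\tlGm_0^+])$ and $\iota_{B_0}$ is an isometry of $\lat_{B_0}$ preserving $\blat_{B_0}$ (Remark~\ref{rem:iota}), it induces an automorphism of $G_{B_0}$, whence $[\tlGm_0^+]$ and $[\tlGm_0^-]$ have the same order modulo $\blat_{B_0}$. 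This yields $d_0\mid d$.

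The argument is essentially formal, and I do not expect a serious obstacle. The only mildly delicate bookkeeping is the verification that $\sigma$ genuinely descends to the quotients and the $\iota_{B_0}$-symmetry used to identify the orders of the two lifts; both are immediate from the defining properties of a geometric embedding and from Remark~\ref{rem:iota}, so the real content of the corollary is already carried by Corollary~\ref{cor:ifspZP}.
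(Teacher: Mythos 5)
Your proposal is correct and follows essentially the same route as the paper, which derives the corollary in the single sentence preceding it: a geometric embedding (supplied by Corollary~\ref{cor:ifspZP}) induces a homomorphism $G_B\to G_{B_0}$ carrying $[\tlGmplus]\bmod\blatB$ to one of $[\tlGm_0^\pm]\bmod\blat_{B_0}$, and divisibility of orders follows. Your additional checks — that $\sigma(\blatB)\subseteq\blat_{B_0}$ so the map descends, and that the two lifts have the same order in $G_{B_0}$ via $\iota_{B_0}$ — are exactly the bookkeeping the paper leaves implicit.
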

In order to show that
the existence of a geometric embedding of lattice data
with certain properties is \emph{sufficient}
for the existence of the specialization, we prepare two easy lemmas.
\par
\medskip
Let $\pi: \XXX\to \Delta$ be a smooth family of $K3$ surfaces.
We put $X_t:=\pi\inv (t)$.
\begin{lemma}\label{lem:LLL}
Let $s$ be a section of $R^2\pi_*\Z$.
If $s_t:=s|X_t\in H^2(X_t, \Z)$ is contained in $H^{1,1}(X_t)$ for any $t\in \Delta$, then
there exists a line bundle $\LLL_{\XXX}$ on $\XXX$ such that 
the class of the restriction $\LLL_t:=\LLL_{\XXX}|X_t$ is equal to $s_t$.
\end{lemma}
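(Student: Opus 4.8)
The plan is to realize the flat section $s$ as the first Chern class of a line bundle on $\XXX$ by way of the exponential sequence, whose only obstruction lives in $H^2(\XXX,\OOO_{\XXX})$, and then to annihilate that obstruction using the fiberwise $(1,1)$-hypothesis. First I would observe that, since $\Delta$ is contractible, Ehresmann's theorem makes $\pi:\XXX\to\Delta$ a topological fiber bundle, so each inclusion $X_t\hookrightarrow\XXX$ is a homotopy equivalence and the restriction map $H^2(\XXX,\Z)\to H^2(X_t,\Z)$ is an isomorphism. A flat section $s$ of $R^2\pi_*\Z$ is then the same datum as a single class $\tilde s\in H^2(\XXX,\Z)$ with $\tilde s|X_t=s_t$ for all $t$. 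The exponential sequence $0\to\Z\to\OOO_{\XXX}\to\OOO_{\XXX}^{*}\to 0$ yields the exact piece $H^1(\XXX,\OOO_{\XXX}^{*})\to H^2(\XXX,\Z)\to H^2(\XXX,\OOO_{\XXX})$, so $\tilde s$ is the class of a line bundle $\LLL_{\XXX}\in\Pic(\XXX)=H^1(\XXX,\OOO_{\XXX}^{*})$ precisely when its image in $H^2(\XXX,\OOO_{\XXX})$ vanishes; and once this holds, $c_1(\LLL_{\XXX}|X_t)=\tilde s|X_t=s_t$ gives the assertion.

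To compute the obstruction I would run the Leray spectral sequence for $\pi$ and the sheaf $\OOO_{\XXX}$. Since the fibers are $K3$ surfaces, $R^1\pi_*\OOO_{\XXX}=0$ and $R^2\pi_*\OOO_{\XXX}$ is a line bundle on $\Delta$ whose fiber at $t$ is $H^2(X_t,\OOO_{X_t})$, by Grauert's theorem and cohomology-and-base-change applied to the constant Hodge numbers $h^1=0$, $h^2=1$. Because $\Delta$ is Stein, Cartan's Theorem B kills the higher cohomology of these coherent sheaves, the spectral sequence degenerates, and $H^2(\XXX,\OOO_{\XXX})\cong H^0(\Delta,R^2\pi_*\OOO_{\XXX})$. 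Under this identification the image of $\tilde s$ is a holomorphic section of a line bundle on $\Delta$ whose value at each $t$ is the image of $s_t$ under $H^2(X_t,\Z)\to H^2(X_t,\OOO_{X_t})$. The hypothesis that $s_t\in H^{1,1}(X_t)$ means exactly that the $(0,2)$-component of $s_t$ vanishes, i.e. its image in $H^2(X_t,\OOO_{X_t})\cong H^{0,2}(X_t)$ is zero (the $(2,0)$- and $(0,2)$-parts of a real class being complex conjugate). Hence the section vanishes at every point, so it is identically zero, and the obstruction is killed; the line bundle $\LLL_{\XXX}$ then exists and restricts as required.

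The main difficulty here is bookkeeping rather than conceptual: I must verify that evaluating the global section of $R^2\pi_*\OOO_{\XXX}$ at a point $t$ genuinely agrees with the fiberwise map $H^2(X_t,\Z)\to H^2(X_t,\OOO_{X_t})$ coming from the exponential sequence on the single fiber $X_t$. This compatibility follows from the naturality of the exponential sequence under restriction to $X_t$ together with the base-change isomorphism for $R^2\pi_*\OOO_{\XXX}$, but it is the step where care is warranted. Everything else is standard; in particular, I would emphasize that compactness of the total space $\XXX$ is never used, only that the base $\Delta$ is Stein so that Theorem~B applies.
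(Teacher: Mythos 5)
Your argument is correct and is essentially the paper's own proof: the paper simply exhibits the commutative diagram relating the exponential-sequence piece $H^1(\XXX,\OOO^{\times})\to H^2(\XXX,\Z)\to H^2(\XXX,\OOO)$ to $H^0(\Delta,R^2\pi_*\Z)\to H^0(\Delta,R^2\pi_*\OOO)$ and declares the lemma immediate, which is exactly the obstruction-vanishing computation you carry out in detail (Leray degeneration over the Stein disc, $R^1\pi_*\OOO=0$ for $K3$ fibers, and fiberwise vanishing of the $(0,2)$-component). Your additional care about the compatibility of the base-change evaluation with the fiberwise exponential sequence is a legitimate filling-in of what the paper leaves implicit.
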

\begin{proof}
This follows immediately from the commutative diagram
\begin{equation*}\label{eq:diagmamPic}
\begin{array}{ccccc}
H^1(\XXX, \OOO\sptimes) 
&\;\;\to\;\; &
H^2(\XXX, \Z)
&\;\;\to\;\; &
H^2(\XXX, \OOO) \\
&&\mapdownright{\wr}
&&\mapdownright{\wr}\\
&&H^0(\Delta, R^2\pi_* \Z)
&\;\;\to\;\;&H^0(\Delta, R^2\pi_* \OOO),
\end{array}
\end{equation*}
where the horizontal sequences are induced from  the exponential exact sequence
$0\to\Z\to\OOO\to\OOO\sptimes\to 0$.
\end{proof}
\begin{lemma}\label{lem:V}
Let $\LLL_{\XXX}$ be a line bundle on $\XXX$,
and we put $\LLL_t:=\LLL_{\XXX}|X_t$ for $t\in \Delta$.
If $h^1(X_0, \LLL_0)=0$ and $h^0(X_0, \LLL_0)>0$,
then there exists a linear subspace $V\subset H^0(\XXX, \LLL_{\XXX})$
of dimension equal to $h^0(X_0, \LLL_0)$
such that, after replacing $\Delta$ with a smaller disc if necessary,
the restriction homomorphism $H^0(\XXX, \LLL_{\XXX})\to H^0(X_t, \LLL_t)$
maps $V$ isomorphically onto $ H^0(X_t, \LLL_t)$ for any $t\in \Delta$.
\end{lemma}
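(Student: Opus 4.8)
The plan is to reduce the statement to the standard cohomology-and-base-change machinery for the proper smooth family $\pi:\XXX\to\Delta$ equipped with the line bundle $\LLL_{\XXX}$, which is flat over $\Delta$ because $\pi$ is smooth and $\LLL_{\XXX}$ is invertible. Write $\phi^i_t\colon (R^i\pi_*\LLL_{\XXX})\otimes k(t)\to H^i(X_t,\LLL_t)$ for the base-change homomorphism. The whole argument amounts to upgrading the single vanishing $h^1(X_0,\LLL_0)=0$ to the assertion that, after finitely many shrinkings of $\Delta$, the coherent sheaf $\pi_*\LLL_{\XXX}$ is locally free and $\phi^0_t$ is an isomorphism for every $t\in\Delta$; once this is in hand, $V$ is obtained by choosing a frame.

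First I would invoke upper-semicontinuity of $t\mapsto h^1(X_t,\LLL_t)$ (see \cite{MR2030225}): since this function is nonnegative and vanishes at $0$, it vanishes on a neighbourhood of $0$, so after shrinking $\Delta$ I may assume $h^1(X_t,\LLL_t)=0$ for all $t\in\Delta$. Applying the base-change theorem in degree $1$ at $t=0$, the map $\phi^1_0$ is trivially surjective (its target is zero), hence an isomorphism near $0$; thus $(R^1\pi_*\LLL_{\XXX})\otimes k(0)=0$, and by Nakayama $R^1\pi_*\LLL_{\XXX}=0$ after a further shrinking. With $R^1\pi_*\LLL_{\XXX}$ now identically zero, and in particular locally free, the descent clause of the base-change theorem shows that $\phi^0_t$ is surjective, hence an isomorphism, for every $t$, and that $\pi_*\LLL_{\XXX}$ is locally free of rank $r:=h^0(X_0,\LLL_0)$. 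Because $\Delta$ is a disc, I may shrink once more and assume $\pi_*\LLL_{\XXX}\cong\OOO_\Delta^{\oplus r}$ is free.

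Finally I would construct $V$ explicitly. Using the identification $H^0(\XXX,\LLL_{\XXX})=H^0(\Delta,\pi_*\LLL_{\XXX})$, I choose sections $s_1,\dots,s_r\in H^0(\XXX,\LLL_{\XXX})$ forming an $\OOO_\Delta$-basis of the free sheaf $\pi_*\LLL_{\XXX}$, and set $V:=\gen{s_1,\dots,s_r}_{\C}\subset H^0(\XXX,\LLL_{\XXX})$, a $\C$-subspace of dimension $r$. For each $t$ the images $s_i\otimes 1$ form a $\C$-basis of the fibre $(\pi_*\LLL_{\XXX})\otimes k(t)$, and since $\phi^0_t$ is an isomorphism their restrictions $s_i|_{X_t}$ form a basis of $H^0(X_t,\LLL_t)$. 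Hence the restriction map $H^0(\XXX,\LLL_{\XXX})\to H^0(X_t,\LLL_t)$ carries $V$ isomorphically onto $H^0(X_t,\LLL_t)$ for every $t$, as required.

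The only genuinely delicate point is the bookkeeping in the second paragraph: one must secure the vanishing of $R^1\pi_*\LLL_{\XXX}$ and the local freeness of $\pi_*\LLL_{\XXX}$ on one and the same (possibly shrunken) disc, so that the base-change isomorphism $\phi^0_t$ holds uniformly in $t$ rather than merely pointwise. The successive shrinkings of $\Delta$ are exactly what make this legitimate, and the hypothesis $h^0(X_0,\LLL_0)>0$ enters only to guarantee $r>0$, so that the deformed sections cutting out $\tlGmsprimeplus$ in Proposition~\ref{prop:stable} are nonzero.
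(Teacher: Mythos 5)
Your proof is correct, but it follows a genuinely different route from the paper's. The paper stays elementary and surface-specific: it deduces $h^2(X_0,\LLL_0)=0$ from $h^0(X_0,\LLL_0)>0$ (Serre duality on the $K3$ fibre), combines semicontinuity with the constancy of $\chi(\LLL_t)=2+\LLL_t^2/2$ to get that $h^1(X_t,\LLL_t)=0$ and $h^0(X_t,\LLL_t)$ is constant, and then obtains surjectivity of $H^0(\XXX,\LLL_{\XXX})\to H^0(X_t,\LLL_t)$ directly from the short exact sequence $0\to\LLL_{\XXX}(-X_t)\to\LLL_{\XXX}\to\LLL_t\to 0$ together with the isomorphism $\LLL_{\XXX}(-X_t)\cong\LLL_{\XXX}$ (the fibres $X_t$ being principal divisors pulled back from the disc) and the vanishing $H^1(\XXX,\LLL_{\XXX})=0$; the choice of $V$ is left implicit. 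You instead run the full cohomology-and-base-change package: kill $R^1\pi_*\LLL_{\XXX}$ via surjectivity of $\phi^1_0$ and Nakayama, descend to get $\phi^0_t$ an isomorphism and $\pi_*\LLL_{\XXX}$ locally free, and extract $V$ from a frame. What your approach buys is generality (nothing about $K3$ surfaces, Serre duality or Riemann--Roch is used, and the hypothesis $h^0>0$ becomes genuinely superfluous) and a cleaner, fully explicit construction of $V$; what the paper's buys is avoidance of the base-change machinery altogether. The only point you should make explicit is that, since $\pi$ is a proper holomorphic map of complex manifolds rather than a projective morphism of schemes, the coherence of the direct images and the base-change statements must be taken in their analytic form (Grauert); with that caveat every step you take is sound.
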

\begin{proof}
From $h^0(X_0, \LLL_0)>0$,
we have $h^2(X_0, \LLL_0)=0$.
By the semi-continuity theorem, 
the assumption $h^1(X_0, \LLL_0)=0$ implies $h^1(X_t, \LLL_t)=0$ 
and $h^0(X_t, \LLL_t)=h^0(X_0, \LLL_0)$
for $t$ in a sufficiently small neighborhood of $0$,
because $\LLL_t^2\in \Z$ is constant.
Hence, by replacing $\Delta$ with a smaller disc if necessary,
we can assume that $H^1(\XXX, \LLL_{\XXX})=0$ and hence
$H^1(\XXX, \LLL_{\XXX}(-X_t))=0$ holds for any $t\in \Delta$,
because $ \LLL_{\XXX}\cong  \LLL_{\XXX}(-X_t)$ on $\XXX$.
Therefore the restriction homomorphism $H^0(\XXX, \LLL_{\XXX})\to H^0(X_t, \LLL_t)$
is surjective for any $t\in \Delta$.
\end{proof}
%
%after referee
The following proposition seems to be well-known.
We present, however,  a complete proof,
because it illustrates how the refined period map is used for the study of specializations of simple sextics,
and it sets up various tools 
necessary for the proof of  Proposition~\ref{prop:onlyifspZP} below.
\begin{proposition}\label{prop:onlyifsp}
Let $\latdata_0=[\EEE_0, h_0, \lat_0]$ and $\latdata=[\EEE, h, \lat]$ be 
lattice data of simple sextics.
Suppose that
a simple  sextic $B_0$ with an isomorphism
$\alpha_0: \lat_{0}\isom\lat_{B_0}$ of 
lattice data from $\latdata_0$ to $\latdata(B_0)$ is given.
If  a geometric embedding $\sigma: \lat\inj \lat_0$ of $\latdata$ into $\latdata_0$ is given,
then  we can construct an analytic family $f:\BBB\to\Delta$ of simple sextics $B_t=f\inv (t)$
and isomorphisms 
$$
\alpha_t: \lat\isom\lat_{B_t}
$$
of  lattice data from $\latdata$ to   $\latdata(B_t)$ for $t\ne 0$ 
that  satisfy the following:
\begin{itemize}
\item[(i)] the central fiber $f\inv (0)$ of $f$ is the given simple sextic $B_0$,
\item[(ii)] $f$ is equisingular over $\Delta\sp\times$, 
\item[(iii)] for $t\ne 0$, the composite $\alpha_0\inv \circ \sigma_{\BBB, t}\circ\alpha_t: \lat\inj \lat_0$
is equal to the given geometric embedding $\sigma$ of $\latdata$ into $\latdata_0$, and
\item[(iv)] the locus of all $t\in \Delta$ such that $B_t$ is lattice-generic is dense in $\Delta$.
\end{itemize}
\end{proposition}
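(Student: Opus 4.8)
The plan is to realize the given geometric embedding $\sigma$ as a degeneration by prescribing a one-parameter family of periods together with a single polarization class, and then to invoke the refined period map $\tau_2$ (Theorem~\ref{thm:refined}) to produce the actual family of $K3$ surfaces; the $K3$ surfaces will then be exhibited as double planes by means of Lemmas~\ref{lem:LLL} and~\ref{lem:V}. First I would fix a marking $\phi_0: H^2(X_{B_0},\Z)\isom\Klat$ and set $\psi_0:=\phi_0\circ\alpha_0:\lat_0\inj\Klat$ and $\psi:=\psi_0\circ\sigma:\lat\inj\Klat$, the latter being primitive because $\sigma$ and $\psi_0$ are. Since $\sigma(h)=h_0$, the degree-$2$ class $H:=\psi(h)=\psi_0(h_0)=\phi_0(h_{B_0})$ is common to both pieces of lattice data. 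Writing $[\per_0]:=\tau_1(\phi_0)$ for the period of $(X_{B_0},\phi_0)$, we have $\per_0\perp\NS(X_{B_0})\supseteq\psi(\lat)$, so $[\per_0]\in\Omega_{\psi^\perp}$. The first step is to choose a holomorphic disc $g:\Delta\to\Omega_{\psi^\perp}$ with $g(0)=[\per_0]$ and $g(t)\in\Omega_{\psi^\perp}\sp{\perspcond\perspcond}$ for $t\ne 0$. This is possible because the complement of $\Omega_{\psi^\perp}\sp{\perspcond\perspcond}$ in $\Omega_{\psi^\perp}$, namely the locus where $\NS^{[\per]}\supsetneq\psi(\lat)$, is a locally finite union of proper analytic subsets (the sections by $v^\perp$ for $0\ne v\in\psi(\lat)^\perp$), while $\dim\Omega_{\psi^\perp}=20-\rank\lat\ge 1$; a generic disc through $[\per_0]$ meets this union only at $t=0$ after shrinking $\Delta$.

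Next I would equip the disc with a Kähler datum so as to lift it across $\tau_2$. Choose a vector $\kappa_0\in\Kahler_{X_{B_0}}$ lying off every wall of $X_{B_0}$, and a real-analytic section $t\mapsto\kappa_t$ of the positive cone with $\kappa_t\perp\per_t$ and $\kappa_t\to\kappa_0$ as $t\to 0$ (note $\per_t\perp\psi(\lat)$ for all $t$, so such a section exists). Shrinking $\Delta$, local finiteness of the walls guarantees that $([\per_t],\kappa_t)$ lies in $\spzero K\Omega_{\Klat}$ for every $t$. Applying $\tau_2^{-1}$ and pulling back the universal family over $\MMM_2$, the holomorphic disc $g$ together with the section $\kappa$ determines a holomorphic family $\pi:\XXX\to\Delta$ of marked $K3$ surfaces $(X_t,\phi_t)$ whose central fibre is $(X_{B_0},\phi_0)$; here the Kähler section is exactly what selects the correct branch of the non-Hausdorff space $\MMM_1$, which is why the refinement $\MMM_2$ is indispensable.

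The heart of the argument, and the step I expect to be the main obstacle, is to check that the constant class $H\in\Klat$, transported to $H_t:=\phi_t^{-1}(H)\in\NS(X_t)$, is a polarization on every fibre that cuts out the desired branch sextic. Since $H\perp\per_t$ we have $H_t\in\NS(X_t)$ with $H_t^2=2$. For nefness I would argue that every effective root $d$ of $X_t$ lies in $\psi(\lat)\subseteq\psi_0(\lat_0)$, hence satisfies $(\kappa_t,d)>0$; as $\kappa_t$ is close to $\kappa_0$ and $\kappa_0$ avoids all walls, $d$ is effective on $X_{B_0}$ as well, where $H$ is nef, so $(H,d)\ge 0$. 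Thus $H_t$ is nef. By condition (ii) of Proposition~\ref{prop:urabe} for $\lat$, the roots orthogonal to $H$ inside $\psi(\lat)$ are exactly those of $\gen{\psi(\EEE)}$, whose effective simple members are $\psi(\EEE)$; this uses crucially that $\sigma(\EEE)\subseteq\gen{\EEE_0}^+$, so that these roots are positive combinations of the exceptional curves of $X_{B_0}$ and are therefore positive on $\kappa_0$. Condition (iii) for $\lat$ and Proposition~\ref{prop:polarization} then show $H_t$ is a polarization; hence $\Phi_{|H_t|}:X_t\to\P^2$ is a double cover branched along a simple sextic $B_t$ whose contracted curves $\psi(\EEE)$ give $R_{B_t}=R_\lat$, and at $t=0$ the contracted set is $\psi_0(\EEE_0)$ by Corollary~\ref{cor:walls}, so $B_0$ is recovered.

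Finally I would assemble the sextics into an analytic family and verify (i)--(iv). By Lemma~\ref{lem:LLL} the class $H$ is represented by a line bundle $\LLL_\XXX$ on $\XXX$; since $h^1(X_0,\LLL_0)=0$ and $h^0(X_0,\LLL_0)=3$, Lemma~\ref{lem:V} makes the relative system $|\LLL_\XXX|$ define a morphism $\XXX\to\P^2\times\Delta$ over $\Delta$ whose branch surface $\BBB\subset\P^2\times\Delta$ gives $f:\BBB\to\Delta$ with $f\inv(t)=B_t$ and $f\inv(0)=B_0$, proving (i). Over $\Delta\sptimes$ all $B_t$ share the lattice data $\latdata$, hence the same configuration type, so after shrinking $f$ is equisingular, giving (ii). Defining $\alpha_t:=(\phi_t|_{\lat_{B_t}})\inv\circ\psi$, one checks $\alpha_t$ is an isomorphism of lattice data from $\latdata$ to $\latdata(B_t)$, and since $\sigma_{\BBB,t}$ is precisely the transport $\phi_0\inv\circ\phi_t$ under the constant trivialization, the composite $\alpha_0\inv\circ\sigma_{\BBB,t}\circ\alpha_t$ equals $\psi_0\inv\circ\psi=\sigma$, which is (iii). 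For (iv), $g(t)\in\Omega_{\psi^\perp}\sp{\perspcond\perspcond}$ yields $\NS(X_t)=\psi(\lat)=\lat_{B_t}$ for all $t\ne 0$, so every such $B_t$ is lattice-generic. The only genuinely delicate point remains the nefness-and-contraction analysis of the third paragraph, where the defining inclusion $\sigma(\EEE)\subseteq\gen{\EEE_0}^+$ of a geometric embedding is exactly what forces the chamber of $H$ for $t\ne 0$ to be the appropriate coarsening of its chamber on $X_{B_0}$.
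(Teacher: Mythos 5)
Your overall strategy is the same as the paper's: prescribe a disc of period points inside $\Omega_{(\psi\circ\sigma)^\perp}$ through the period point of $(X_{B_0},\phi_0)$, lift it to the refined period domain by choosing K\"ahler data, pull the universal family back through $\tau_2$, show that the constant class $h$ is a polarization on every fibre, and assemble the resulting double planes with Lemmas~\ref{lem:LLL} and~\ref{lem:V}. There is, however, a genuine gap in your very first step. You claim the disc can be chosen so that $g(t)\in\Omega_{\psi^\perp}^{\diamond\diamond}$, i.e.\ $\NS^{[\omega_t]}=\psi(\lat)$, for \emph{every} $t\ne 0$, on the grounds that the complement of $\Omega_{\psi^\perp}^{\diamond\diamond}$ in $\Omega_{\psi^\perp}$ is a locally finite union of proper analytic subsets. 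That is false: the locus where $\NS^{[\omega]}\supsetneq\psi(\lat)$ is the union of the hyperplane sections $v^\perp\cap\Omega_{\psi^\perp}$ over the countably many relevant $v\in\Klat\setminus\psi(\lat)$, and this union is not locally finite --- it is in fact dense in $\Omega_{\psi^\perp}$ (the usual density of the Picard-number-jumping, or Noether--Lefschetz, locus). Its trace on any non-constant holomorphic disc is therefore a countable \emph{dense} subset of $\Delta$, and no shrinking of $\Delta$ confines it to $\{0\}$. What is locally finite is the complement of $\Omega_{\psi^\perp}^{\diamond}$, cut out by the conditions~\eqref{eqcond1} and~\eqref{eqcond2}; accordingly the most one can (and need) arrange is $\delta^{-1}\bigl(\Omega^{\diamond}_{(\psi\circ\sigma)^\perp}\bigr)=\Delta\setminus\{0\}$ together with \emph{density} of $\delta^{-1}\bigl(\Omega^{\diamond\diamond}_{(\psi\circ\sigma)^\perp}\bigr)$ --- which is exactly why conclusion (iv) is a density statement rather than ``all $t\ne 0$''.

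This error propagates into your third paragraph. Your nefness argument for $H_t$ opens with ``every effective root $d$ of $X_t$ lies in $\psi(\lat)$'', which is precisely lattice-genericity of $X_t$ and hence fails on a dense set of parameters; for such $t$ there are $(-2)$-classes on $X_t$ outside $\psi_0(\lat_0)$, and your reduction to nefness on $X_{B_0}$ collapses for them. The repair is the one the paper uses: condition~\eqref{eqcond1}, valid on all of $\Omega^{\diamond}_{(\psi\circ\sigma)^\perp}$ whether or not the Picard number jumps, guarantees that the only roots of $\NS^{[\omega_t]}$ \emph{orthogonal to} $h$ are those of $\gen{\EEE}$, so inside a sufficiently small ball centered at $h$ the only walls occurring in the family are those associated with $D_{\gen{\EEE}}$ (for all $t$) and with $D_{\gen{\EEE_0}}$ (at $t=0$); hence $h$ lies in the closure of the chamber containing the K\"ahler class $\kappa_t$ and is nef by Proposition~\ref{prop:nefcone}, and the contracted curves are exactly $\phi_t^{-1}(\EEE)$ by Proposition~\ref{prop:geomroots}. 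With these two corrections your argument coincides with the paper's proof.
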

\begin{proof}
For simplicity,
we put
$$
X_0:=X_{B_0}.
$$
We fix a marking
$$
\phi_0 : H^2(X_0, \Z)\isom \Klat.
$$
By $\alpha_0$ and $\phi_0$, we 
obtain  a primitive embedding  
$$
\psi: \lat_0\inj \Klat.
$$
By the composition of $\sigma$ and $\psi$, we obtain a primitive embedding  
$$
\psi\circ\sigma: \lat\inj \Klat.
$$
From now on, we consider $\lat$ and $\lat_0$ as primitive sublattices of $\Klat$ by
$\psi\circ\sigma$ and $\psi$, respectively:
$$
 \lat\;\;\subset\;\;\lat_0\;\;\subset\;\; \Klat.
$$
In particular,  we have $h=h_0=\phi_0(h_{B_0})\in \Klat$ and
$\EEE_0=\phi_0(\EEE_{B_0})\subset \Klat$,  $\EEE\subset\gen{\EEE_0}^+\subset  \Klat$.
Moreover  we have  inclusions of complex submanifolds
$$
\Omega_{\psi\sp\perp}\subset \Omega_{(\psi\circ \sigma)\sp\perp} \subset \Omega_{\Klat}.
$$
Let  $[\eta_0]\in \Omega_{\Klat}$ be the period point of the marked $K3$ surface $(X_0, \phi_0)$.
Then  $[\eta_0]$ is a point of $\Omega_{\psi\sp\perp}$.
We choose an analytic embedding
$$
\delta: \Delta\inj \Omega_{(\psi\circ \sigma)\sp\perp} 
$$
of the open unit disk $\Delta\subset \C$ into  a sufficiently small neighborhood of $[\eta_0]$
such that $\delta(0)=[\eta_0]$, and that 
$\delta\inv (\Omega_{(\psi\circ \sigma)\sp\perp}\sp\perspcond)=\Delta\setminus\{0\}$ holds, and
that $\delta\inv (\Omega_{(\psi\circ \sigma)\sp\perp}\sp{\perspcond\perspcond})$ is dense in $\Delta$.
 (These properties  can be  achieved because $\Omega_{(\psi\circ \sigma)\sp\perp}\sp\perspcond$ is open in
$\Omega_{(\psi\circ \sigma)\sp\perp}$ and 
$\Omega_{(\psi\circ \sigma)\sp\perp}\sp{\perspcond\perspcond}$ is dense in $\Omega_{(\psi\circ \sigma)\sp\perp}$.)
We write
$$
\delta(t)=[\eta_t]\in \persp_{\Klat}.
$$
Consider the pull-back
\begin{equation}\label{eq:overDelta}
\begin{array}{ccccc}
  \spzero  K\persp_{\delta} \hskip -.3cm & \inj  &  K\persp_{\delta} & \inj & \hskip -.3cm H\persp_{\delta} \\
  &\phantom{{}_{\Pi_{\persp}}}\searrow\;\;\;\phantom{\Pi_{\persp}} & \mapdown &\phantom{\Pi_{\persp}}\;\;\;\swarrow\phantom{\Pi_{\persp}} & \\
  &&\phantom{,,}\Delta\;, &&
\end{array}
\end{equation}
of the diagram~\eqref{eq:overpersp} by $\delta: \Delta\inj \Omega_{(\psi\circ \sigma)\sp\perp}\inj \persp_{\Klat}$.
For simplicity, we put
$$
H:=H^{[\eta_0]}, \quad \Gamma:= \Gamma^{[\eta_0]}.
$$
Then we have trivializations
\begin{equation}\label{eq:trivs}
K\persp_{\delta}\cong \Delta\times\Gamma
\quand
 H\persp_{\delta} \cong \Delta\times H
\end{equation}
over $\Delta$ that  extend the identity maps over $0\in \Delta$,
and such that the inclusion 
$K\persp_{\delta} \inj  H\persp_{\delta} $
is given by the identity map of ${\Delta}$ times the inclusion $\Gamma\inj H$.
Since $([\per], h)\in K\persp_{\Klat}$ for any $[\per]\in \Omega_{(\psi\circ \sigma)\sp\perp}$,
we have a section 
$t\mapsto (\delta(t), h)$ of $K\persp_{\delta}\to \Delta$.
We  choose the trivialization~\eqref{eq:trivs}
in such a way that 
$K\persp_{\delta}\cong \Delta\times\Gamma$ 
maps this section to the constant section 
$t\mapsto (t, h)$ of $\Delta\times\Gamma\to \Delta$.
For a vector $d\in \Klat$ with $d^2=-2$ and a point $[\per]\in \persp_{\Klat}$ with $\intnum{\per}{d}=0$,
we put
$$
W(d):=\set{x\in \Klat\tensor\R}{\intnum{x}{d}=0}
\quand d_{[\per]}\sperp:=W(d)\cap H^{[\per]}.
$$
Then $d_{[\per]}\sperp$ is a hyperplane of the real vector space $H^{[\per]}$.
Since $\gen{\EEE}\subset\gen{\EEE_0}\sp+$, we see that $\gen{\EEE}$ is a sublattice of $\gen{\EEE_0}$,
and hence the set $D_{\gen{\EEE}}$ of roots in $\gen{\EEE}$ is 
a subset of the set $D_{\gen{\EEE_0}}$ of roots in $\gen{\EEE_0}$;
$$
D_{\gen{\EEE}}\subset D_{\gen{\EEE_0}}.
$$
We have
$$
D_{\gen{\EEE}}\subset D^{[\eta_t]}\quad\textrm{for any $t\in \Delta$,}\quand
D_{\gen{\EEE_0}}\subset D^{[\eta_0]}.
$$
More precisely, we have
\begin{equation}\label{eq:Detat}
D_{\gen{\EEE}}=\set{d\in D^{[\eta_t]}}{h\in d_{[\eta_t]}\sperp}\quad\textrm{for $t\ne0$},
\end{equation}
because $\delta(t)\in  \Omega_{(\psi\circ \sigma)\sp\perp}\sp\perspcond$ for $t\ne 0$, and 
\begin{equation}\label{eq:Deta0}
D_{\gen{\EEE_0}}=\set{d\in D^{[\eta_0]}}{h\in d_{[\eta_0]}\sperp}.
\end{equation}
We choose the trivialization~\eqref{eq:trivs}
in such a way that, for each $d\in D_{\gen{\EEE}}$,
the isomorphism $H\persp_{\delta}\cong \Delta\times H$ 
maps the family of walls
$$
\set{([\eta_t], x)\in H\persp_{\delta}}{x\in d_{[\eta_t]}\sperp}
$$
over $\Delta$ to the constant family $\Delta\times d_{[\eta_0]}\sperp$. 
We denote by
$$
\spzero(\Delta\times\Gamma) 
$$
the open subset of $\Delta\times\Gamma$ that corresponds to the open subset  
$\spzero  K\persp_{\delta} \subset   K\persp_{\delta} $
by the trivialization,
and put
$$
\WWW\;\;:=\;\;(\Delta\times\Gamma) \;\setminus\; \spzero(\Delta\times\Gamma).
$$
Recall that  the complement
of $\spzero  K\persp_{\delta}$ in $ K\persp_{\delta} $
is the union of walls
$$
\set{([\eta_t], x)\in K\persp_{\delta}}{x\in d_{[\eta_t]}\sperp \;\;\textrm{for some}\;\;  d\in D^{[\eta_t]}}.
$$
Therefore, by the description~\eqref{eq:Detat}~and~\eqref{eq:Deta0} of walls passing through  $h$, 
if $\Ball \subset\Gamma$ is a sufficiently small ball with the center $h$, then
$$
(\Delta\times\Ball)\cap\WWW=
\bigcup_{d\in D_{\gen{\EEE}}} (\Delta\times d_{[\eta_0]}\sp{\prime\perp} )  
\cup
\bigcup_{d\in D_{\gen{\EEE_0}}\setminus D_{\gen{\EEE}}} (\{0\}\times d_{[\eta_0]}\sp{\prime\perp}).
$$
where $d_{[\eta_0]}\sp{\prime\perp}:=d_{[\eta_0]}\sperp\cap \Gamma$.
In other words, the projection 
$$
\spzero(\Delta\times\Gamma) \cap (\Delta\times\Ball)\;\;\to\;\; \Delta
$$
is a constant family of cones in the ball $\Ball$ 
partitioned by the walls 
associated with $d\in  D_{\gen{\EEE}}$
over $\Delta\sptimes$, 
with the central fiber being partitioned further by the walls 
associated with $d\in D_{\gen{\EEE_0}}\setminus D_{\gen{\EEE}}$.

We have a unique connected component
of the central fiber 
$$
\spzero(\Delta\times\Gamma) \cap (\{0\}\times\Ball)\;\;\subset\;\;\{0\}\times \Gamma= \Gamma^{[\eta_0]}
$$
that is mapped  to the K\"ahler cone $\Kahler_{X_{B_0}}\subset\zeroGamma_{X_{B_0}}$
of $X_{B_0}$ via the marking $\phi_0$.
We choose a point $(0, v_0)$ from this connected component.
Then $v_0\in \Gamma^{[\eta_0]}$ corresponds to a K\"ahler class of $X_{B_0}$ via the marking $\phi_0$.
In particular, we have
\begin{equation*}\label{eq:v0EEE0}
\textrm{
$\intnum{v_0}{e}>0$ holds
for any $e\in \EEE_0$.
}
\end{equation*}
Since $\EEE\subset \gen{\EEE_0}\sp+$, we have 
\begin{equation}\label{eq:v0EEE}
\textrm{
$\intnum{v_0}{e}>0$ holds
for any $e\in \EEE$.
}
\end{equation}
By the description of $\spzero(\Delta\times\Gamma) \cap (\Delta\times\Ball)$ above, we see that
$(t, v_0)\in \Delta\times\Gamma$ is a point of $\spzero(\Delta\times\Gamma)$ for any $t\in \Delta$.
We denote by
$$
\tilde{\delta}:\Delta\to \spzero K\persp_{\delta}
$$
the section of $ K\persp_{\delta}\to \Delta$ corresponding to 
the constant section $t\mapsto (t, v_0)$ of $\spzero(\Delta\times\Gamma)\to \Delta$,
and let
$$
\tilde{\delta}_{\MMM}: \Delta\to \MMM_2
$$
be the map corresponding to $\tilde{\delta}$ via $\tau_2$.
We denote by
 $$
 (X_t, \phi_t, \kappa_t)
 $$
the marked $K3$ surface $(X_t, \phi_t)$ with a K\"ahler class $\kappa_t$
corresponding to $\tilde\delta_{\MMM}(t)\in \MMM_2$.
Let $h_{X_t}\in H^2(X_t, \Z)$ be the vector
such that $\phi_t (h_{X_t})=h$.
Since $\eta_t\perp h$, we have $h_{X_t}\in \NS(X_t)$.
Suppose that $t\ne 0$.
Since $h$ is contained in the closure of the connected component of $\zeroGamma^{[\eta_t]}$
containing $\phi_t (\kappa_t)$,
the class $h_{X_t}\in \NS(X_t)$
is nef by Proposition~\ref{prop:nefcone}.
By Proposition~\ref{prop:polarization} and $\delta(t)\in \Omega_{(\psi\circ \sigma)\sp\perp}\sp\perspcond$,
the condition~\eqref{eqcond2} in the definition of $\Omega_{(\psi\circ \sigma)\sp\perp}\sp\perspcond$
implies that  $h_{X_t}$ is the class of a polarization $\pol_t$ 
of degree $2$ on $X_t$.
Note that we have
$\intnum{\kappa_t}{e}>0$
for any $e\in \phi_t(\EEE)$  by~\eqref{eq:v0EEE}.
By $\delta(t)\in \Omega_{(\psi\circ \sigma)\sp\perp}\sp\perspcond$  again,
the condition~\eqref{eqcond1} in the definition of $\Omega_{(\psi\circ \sigma)\sp\perp}\sp\perspcond$
implies that $\phi_t\inv (\EEE)$ is  a fundamental system of roots 
in $\gen{h_{X_t}}\sp\perp\subset \NS(X_t)$
associated with the K\"ahler class $\kappa_t$.
Consequently, 
Proposition~\ref{prop:geomroots} implies that 
$\phi_t\inv (\EEE)$ is equal to the set of classes of $(-2)$-curves 
contracted by  $\Phi_{|\LLL_t|}: X_t\to \P^2$.
Let $B_t$ be the branch curve of $\Phi_{|\LLL_t|}$.
Then the markings $\phi_t: H^2(X_t, \Z)\cong \Klat$ yield 
isomorphisms of lattices from $\lat_{B_t}\subset H^2(X_t, \Z)$ to $\lat\subset \Klat$ 
 that induce  isomorphisms of
lattice data $\latdata(B_t)\cong \latdata$ for $t\ne 0$.
We define  $\alpha_t :  \lat\isom\lat_{B_t}$ to be the inverse of this isomorphism.

We will show that,
making  $\Delta$  smaller  if necessary,
these simple sextics $B_t$ form an analytic family.
Let $\pi_{\tilde{\delta}}:\XXX_{\tilde{\delta}}\to \Delta$ be the family of $X_t$,
which is the pull-back of the universal family $\pi_1: \XXX_1\to \MMM_1$
by $\Pi_{\MMM}\circ\tilde{\delta}_{\MMM}$.
Then $t\mapsto h_{X_t}$ gives a section of $R^2 \pi_{\tilde{\delta}*}\Z$.
By Lemma~\ref{lem:LLL}, 
there exists a line bundle $\pol_{\XXX}$ on $\XXX_{\tilde{\delta}}$
such that 
the restriction $\pol_{\XXX}|X_t$ is
equal to the polarization $ \pol_t$ given above for any $t\in \Delta$.
Note that $h^0(X_0, \pol_0)=3$ and $h^1(X_0, \pol_0)=0$ by  Nikulin~\cite[Proposition 0.1]{MR1260944}.
Therefore, 
shrinking  $\Delta$  if necessary,
we have  a $3$-dimensional subspace $V$ of $H^0 (\XXX_{\tilde{\delta}}, \pol_{\XXX})$
such that the restriction homomorphism  maps $V$ onto $H^0 (X_t, \pol_t)$ 
isomorphically  
for any $t\in \Delta$.
In particular, the linear system $V$ has not base points.
Considering the morphism 
$$
\Phi_V: \XXX_{\tilde{\delta}}\to \Pt
$$
induced by $V$,
we obtain an analytic  family of morphisms $X_t\to \P^2$ with the branch curve  $B_t\subset \Pt$,
and hence we obtain an  analytic family of simple sextics over $\Delta$.
It is obvious that this analytic family and the isomorphisms $\alpha_t :  \lat\isom\lat_{B_t}$
of lattice data from $\latdata$ to $\latdata(B_t)$ for $t\ne 0$ have the required properties.
\end{proof}
By Proposition~\ref{prop:onlyifsp} together with the construction of 
the geometric embedding $\sigma_{\BBB, t}$, we obtain the following:
\begin{corollary}\label{cor:ifsp}
Let $\lattype_0$ and $\lattype$ be lattice types of simple sextics,
and let  $\latdata_0$ and $\latdata$ be the corresponding lattice data. 
Then  $\lattype_0$ is a specialization of $\lattype$ if and only if  
there exists a geometric embedding  of $\latdata$ into $\latdata_0$.
\end{corollary}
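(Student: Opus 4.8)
The statement is an equivalence, and the plan is to prove the two implications separately, deducing each from material already in place: the ``only if'' direction from the explicit construction of the geometric embedding $\sigma_{\BBB, t}$ carried out just before Proposition~\ref{prop:ifspZP}, and the ``if'' direction from Proposition~\ref{prop:onlyifsp}. The genuine analytic content is already encapsulated in those results, so the only real task here is to translate between \emph{lattice types} $\lattype, \lattype_0$ (equivalence classes of the relation $\rellat$) and their \emph{lattice data} $\latdata, \latdata_0$ (chosen representatives), keeping track of the identifying isomorphisms.

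\textbf{The ``only if'' direction.} Suppose $\lattype_0$ is a specialization of $\lattype$, witnessed by an analytic family $f:\BBB\to\Delta$ of simple sextics with central fiber $B_0\in\lattype_0$ and general fibers $B_t\in\lattype$ for $t\ne 0$, equisingular over $\Delta\sptimes$. I would invoke the construction preceding Proposition~\ref{prop:ifspZP}: via Kulikov's theorem one promotes the family of double covers to a smooth family $\pi_{\BBB}:\XXX_{\BBB}\to\Delta$ of $K3$ surfaces carrying a line bundle $\LLL_{\BBB}$ that restricts to the polarizations $h_{B_t}$, and the trivialization $R^2\pi_{\BBB*}\Z\cong \Delta\times\Klat$ yields the specialization homomorphism $H^2(X_{B_t},\Z)\isom H^2(X_{B_0},\Z)$ and hence a primitive embedding $\sigma_{\BBB,t}:\lat_{B_t}\inj\lat_{B_0}$. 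It is verified there that $\sigma_{\BBB,t}$ sends $h_{B_t}$ to $h_{B_0}$ and carries $\EEE_{B_t}$ into $\gen{\EEE_{B_0}}^+$ (each exceptional $(-2)$-curve degenerates into an effective divisor whose reduced components, having $h_{B_0}$-degree zero, are themselves exceptional), so $\sigma_{\BBB,t}$ is a geometric embedding. Composing with isomorphisms of lattice data $\latdata\isom\latdata(B_t)$ and $\latdata(B_0)\isom\latdata_0$—which preserve $h$ and send $\EEE$ to $\EEE_0$, hence $\gen{\EEE}^+$ to $\gen{\EEE_0}^+$, and preserve primitivity—produces a geometric embedding of $\latdata$ into $\latdata_0$.

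\textbf{The ``if'' direction.} Conversely, suppose a geometric embedding $\sigma:\lat\inj\lat_0$ of $\latdata$ into $\latdata_0$ is given. Since $\lattype_0$ is a lattice type, it contains some member $B_0$ together with an isomorphism $\alpha_0:\lat_0\isom\lat_{B_0}$ of lattice data from $\latdata_0$ to $\latdata(B_0)$. Proposition~\ref{prop:onlyifsp} then produces an analytic family $f:\BBB\to\Delta$ with $f\inv(0)=B_0$, equisingular over $\Delta\sptimes$, and isomorphisms $\alpha_t:\lat\isom\lat_{B_t}$ of lattice data from $\latdata$ to $\latdata(B_t)$ for $t\ne 0$. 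In particular $\latdata(B_t)\cong\latdata$ for $t\ne 0$, so every general fiber lies in $\lattype$, while the central fiber lies in $\lattype_0$; this family is precisely a witness that $\lattype_0$ is a specialization of $\lattype$.

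\textbf{The main obstacle.} Because Proposition~\ref{prop:onlyifsp} (built on Theorem~\ref{thm:refined} and Lemmas~\ref{lem:LLL} and~\ref{lem:V}) and the construction of $\sigma_{\BBB,t}$ (built on Kulikov's theorem) already carry all the substantive geometry, the residual difficulty is purely bookkeeping: one must ensure that the isomorphisms of lattice data used to pass between $\lattype$ and $\latdata$, and between $\lattype_0$ and $\latdata_0$, are compatible, so that the geometric embedding extracted in one direction coincides with the one supplied as input in the other—exactly the compatibility recorded in property (iii) of Proposition~\ref{prop:onlyifsp}. Since the notion of specialization depends only on the lattice types and not on the chosen representatives, this matching is automatic, and the corollary follows at once.
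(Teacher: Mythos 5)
Your proposal is correct and follows exactly the route the paper takes: the paper derives this corollary directly from Proposition~\ref{prop:onlyifsp} (the ``if'' direction) together with the construction of the geometric embedding $\sigma_{\BBB,t}$ preceding Proposition~\ref{prop:ifspZP} (the ``only if'' direction). Your additional remarks on the bookkeeping between lattice types and their representative lattice data are consistent with what the paper leaves implicit.
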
 
\begin{remark}
By the theory of adjacency of singularities (\cite{MR0397777} or~\cite{MR0350775}),
we see that,  if $\lattype(B_0)$ is a specialization of $\lattype(B)$,
the Dynkin diagram of $R_B$ is a subgraph of the Dynkin diagram of $R_{B_0}$.
\end{remark}
Let $B$ be a simple sextic, and let
$D:=C_1+\dots +C_m$
be an effective divisor on $X_B$,
where $C_1, \dots, C_m$ are reduced and irreducible.
A \emph{subcurve} of $D$ is, by definition,  a divisor
$$
C:=C_{i_1}+\dots+ C_{i_n},
$$
where $\{C_{i_1}, \dots, C_{i_n}\}$ is a (possibly empty) subset of $\{C_1, \dots,C_m\}$.
%The following is an easy criterion for $h^1(x)$ to be zero.
%
\begin{lemma}\label{lem:h1}
Let
$D:=C_1+\dots +C_m$
be an effective divisor on $X_B$.
We put $h^1(D):=\dim H^1(\XB, \OOO(D))$.

%aftrer referee 

{\rm (1)}
Suppose  that $D^2=-2$.
For  $h^1(D)=0$ to hold,  it is sufficient  that 
$C^2\le -2$  holds for  any non-empty subcurve $C$ of $D$.

{\rm (2)}
Suppose  that $D^2=0$ and $\intnum{D}{h_B}=3$.
For $h^1(D)=0$ to hold,  it is sufficient  that 
$C^2\le 0$  holds for  any non-empty subcurve $C$ of $D$.
\end{lemma}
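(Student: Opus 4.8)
The plan is to reduce both statements to the single computation $h^0(\XB,\OOO_D)=1$, where $\OOO_D$ is the structure sheaf of the divisor $D$. First I would record an identity valid for every nonzero effective $D$ on the $K3$ surface $\XB$. The exact sequence
$$
0 \longrightarrow \OOO_{\XB}(-D) \longrightarrow \OOO_{\XB} \longrightarrow \OOO_D \longrightarrow 0,
$$
together with $H^0(\OOO_{\XB}(-D))=0$ (because $D$ is nonzero and effective), $H^0(\OOO_{\XB})=\C$ and $H^1(\OOO_{\XB})=0$, yields $H^1(\OOO_{\XB}(-D))\cong H^0(\OOO_D)/\C$. Since $K_{\XB}=0$, Serre duality gives $H^1(\OOO_{\XB}(D))\cong H^1(\OOO_{\XB}(-D))^{\vee}$, hence
$$
h^1(D)=h^0(\XB,\OOO_D)-1 .
$$
So in both parts it suffices to show $h^0(\OOO_D)=1$, and for this I would appeal to the classical fact that $h^0(\OOO_D)=1$ whenever $D$ is numerically $1$-connected, meaning $\intnum{A}{B}\ge 1$ for every way of writing $D=A+B$ with $A$ and $B$ nonzero and effective. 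The entire problem thus becomes to deduce numerical $1$-connectedness from the subcurve hypotheses.

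For part (1) this is immediate: such a decomposition $D=A+B$ is a splitting into complementary nonempty subcurves, so $A^2\le -2$ and $B^2\le -2$, and from $D^2=-2$ we obtain $2\intnum{A}{B}=D^2-A^2-B^2\ge 2$, i.e.\ $\intnum{A}{B}\ge 1$.

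For part (2) the same computation only gives $2\intnum{A}{B}=-A^2-B^2\ge 0$, and the main obstacle is to exclude the boundary case $\intnum{A}{B}=0$. If it occurred, then $A^2+B^2=0$ with $A^2,B^2\le 0$ would force $A^2=B^2=0$; moreover $\intnum{A}{h_B}+\intnum{B}{h_B}=3$ with both summands nonnegative, and neither can vanish, since a nonzero effective divisor of $h_B$-degree $0$ is contracted by $\tlrhoB$ and hence lies in the negative-definite lattice $\gen{\EEE_B}$, contradicting $A^2=0$ (resp.\ $B^2=0$). Hence $\{\intnum{A}{h_B},\intnum{B}{h_B}\}=\{1,2\}$, so one of $A,B$ is a nonzero effective divisor $E$ with $E^2=0$ and $\intnum{E}{h_B}=1$. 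To finish I would show that no such $E$ exists on $\XB$. By Riemann--Roch $h^0(E)\ge\chi(\OOO_{\XB}(E))=2$ (note $h^2(E)=h^0(-E)=0$ as $\intnum{E}{h_B}>0$), so $|E|$ has a nontrivial moving part $M$ with $\intnum{M}{h_B}=1$ (its fixed part has $h_B$-degree $0$); Hodge index applied to $\gen{M,h_B}$ gives $2M^2\le\intnum{M}{h_B}^2=1$, so $M^2\le 0$, and as $M$ moves, $M^2=0$. Then $|M|$ is a base-point-free pencil, and since $\intnum{M}{h_B}=1$ a general member maps birationally onto a line under $\tlrhoB$ and is therefore rational; this would exhibit $\XB$ as covered by a pencil of rational curves, contradicting the non-uniruledness of a $K3$ surface. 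Therefore $\intnum{A}{B}\ge 1$ in every decomposition, $D$ is numerically $1$-connected, $h^0(\OOO_D)=1$, and $h^1(D)=0$.
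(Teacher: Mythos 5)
Your proof is correct, but it follows a genuinely different route from the paper's. The paper argues directly on $|D|$: by Riemann--Roch, $h^1(D)>0$ forces $\dim|D|\ge 1$ in case (1) and $\dim|D|\ge 2$ in case (2), so the movable part $M$ of $|D|$ --- which is itself a non-empty subcurve of $D$ --- satisfies $M^2\ge 0$, contradicting the hypothesis in case (1); in case (2) one gets either $M^2>0$ (again a contradiction) or $|M|=m|E|$ with $m\ge 2$ for an elliptic pencil, whence $\intnum{E}{h_B}=1$, which is excluded because $h_B$ is a polarization (Proposition~\ref{prop:polarization}). You instead reduce to $h^0(\OOO_D)=1$ via the structure sequence and Serre duality and then invoke the Ramanujam-type criterion that numerical $1$-connectedness implies $h^0(\OOO_D)=1$; the subcurve hypotheses translate immediately into $1$-connectedness in case (1), and in case (2) you rule out the boundary case by excluding an effective class $E$ with $E^2=0$ and $\intnum{E}{h_B}=1$. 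That last exclusion is exactly condition (iii) in the paper's characterization of polarizations, so you could have cited Proposition~\ref{prop:polarization} instead of re-deriving it via Hodge index and non-uniruledness; with that shortcut your argument is about as short as the paper's. The trade-off: the paper leans on the K3-specific classification of movable linear systems (Saint-Donat/Nikulin), already set up in \S\ref{sec:Yang}, while your route uses only the general connectedness lemma for divisors on surfaces plus Serre duality, and is in that sense more self-contained and more portable beyond K3 surfaces. One point worth making explicit in your write-up is that an arbitrary decomposition $D=A+B$ into non-zero effective divisors is the same thing as a splitting into complementary non-empty subcurves in the sense of the paper (subcurves are allowed to carry partial multiplicities), which is what lets the subcurve hypothesis control every such decomposition.
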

\begin{proof}
Let $|M|$ be the movable part of $|D|$,
where $M$ is a subcurve of $D$.
Suppose that $D^2=-2$.
If $h^1(D)>0$, then we have $|M|\ne \emptyset$ and hence $M^2\ge 0$.
Suppose that $D^2=0$ and $\intnum{D}{h_B}=3$.
If $h^1(D)>0$,
then either $M^2>0$ or $|M|=m|E|$ with $m>1$ for some elliptic pencil $|E|$.
Since $\intnum{D}{h_B}=3$, we would have  $\intnum{E}{h_B}=1$ in the latter case,
which is absurd.
\end{proof}
We interpret this geometric fact into a lattice-theoretic sufficient condition,
which can be checked easily by a computer.
\begin{definition}
Let $\latdataZP=[\EEE, h, \lat, \{v\sp\pm\}]$
be  extended lattice data,
and let 
$$
w:=v^+ +\sum  m_e e\qquad (e\in \EEE, \;m_e\ge 0)
$$
be an element of $v^++\gen{\EEE}^+$.
We say that $u\in \lat$ is a \emph{subcurve vector} of $w$ 
if $u$ is  $n_v v^+ +\sum  n_e e$ with $m_e\ge n_e \ge 0$ for any $e\in \EEE$ and ($n_v=0$ or $n_v=1$).

Suppose that $ w^2=-2$ or ($w^2= 0$ and $\intnum{w}{h}=3$).
We say that \emph{$w$ satisfies the vanishing-$h^1$ condition}
if $u^2\le w^2$ holds for any non-zero subcurve vector $u$ of $w$.

We also define the vanishing-$h^1$ condition  for  elements $w$ of $v^-+\gen{\EEE}^+$
in the same way.
\end{definition}
\begin{definition}
We say that a geometric embedding 
$\sigma$ of $\latdataZP=[\EEE, h, \lat, \{v\sp\pm\}]$ into $\latdataZP_0=[\EEE_0, h_0, \lat_0, \{v\sp\pm_0\}]$
satisfies the vanishing-$h^1$ condition
if  $\sigma (v^+)\in \{v_0\sp\pm\}+\gen{\EEE_0}\sp+$ satisfies the vanishing-$h^1$ condition.
\end{definition}
\begin{proposition}\label{prop:onlyifspZP}
Let $\latdataZP=[\EEE, h, \lat, \{v\sp\pm\}]$ and $\latdataZP_0=[\EEE_0, h_0, \lat_0, \{v\sp\pm_0\}]$ be
the lattice data of  $Z$-splitting pairs
$(B, \Gm)$ and $(B_0, \Gm_0)$, respectively.
Suppose that $\Gm$ and  $\Gm_0$ are smooth of degree $\le 3$.
Then the lattice type $\lattypeZP(B_0, \Gm_0)$ is a specialization of the lattice type $\lattypeZP(B, \Gm)$
if  there exists a geometric embedding
$\sigma: \lat\inj \lat_{0}$ 
of $\latdataZP$ into $\latdataZP_0$ 
that satisfies  the vanishing-$h^1$ condition.
\end{proposition}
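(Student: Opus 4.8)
The plan is to reduce to the non-extended case already settled in Proposition~\ref{prop:onlyifsp} and then to track the class $v^+$ through the family produced there. First I would forget the place-holder sets and apply Proposition~\ref{prop:onlyifsp} to the underlying geometric embedding $\sigma:\lat\inj\lat_0$ of $[\EEE,h,\lat]$ into $[\EEE_0,h_0,\lat_0]$. This yields an analytic family $f:\BBB\to\Delta$ of simple sextics with central fibre $B_0$, equisingular over $\Delta\sp\times$, together with isomorphisms of lattice data $\alpha_t:\lat\isom\lat_{B_t}$ $(t\neq0)$ such that $\alpha_0\inv\circ\sigma_{\BBB,t}\circ\alpha_t=\sigma$. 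I would keep the auxiliary family $\pi:\XXX\to\Delta$ of $K3$ surfaces and the markings $\phi_t:H^2(X_{B_t},\Z)\isom\Klat$ constructed in that proof, under which $\lat\subset\lat_0\subset\Klat$, $\phi_t\inv(\lat)=\lat_{B_t}$ for $t\neq0$, $\phi_0\inv(\lat_0)=\lat_{B_0}$, and $\alpha_t=(\phi_t|_{\lat_{B_t}})\inv$. Since $\sigma$ is a geometric embedding of extended lattice data, after interchanging $\pm$ we may assume $w:=\sigma(v^+)\in v_0\sp+ +\gen{\EEE_0}\sp+$; writing $\alpha_0(v_0\sp\pm)=[\tlGm_0\sp\pm]$, the class $\alpha_0(w)$ is represented by an effective divisor $D_0=\tlGm_0\sp+ +\sum_e m_e E_e$ on $X_{B_0}$ whose reduced components are $\tlGm_0\sp+$ and exceptional $(-2)$-curves $E_e$. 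The vanishing-$h^1$ hypothesis on $\sigma$ is exactly the statement that every non-zero subcurve vector $u$ of $w$ satisfies $u^2\le w^2$, so Lemma~\ref{lem:h1} gives $h^1(X_{B_0},\OOO(D_0))=0$.

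Next I would spread $D_0$ out over $\Delta$. Regarding $w\in\lat\subset\Klat$ as a constant section of $R^2\pi_*\Z\cong\Delta\times\Klat$, its fibrewise value $s_t:=\phi_t\inv(w)$ lies in $\lat_{B_t}\subset\NS(X_{B_t})$ for every $t$, hence in $H^{1,1}(X_{B_t})$. By Lemma~\ref{lem:LLL} there is a line bundle $\LLL_w$ on $\XXX$ with $[\LLL_w|X_{B_t}]=s_t$, and since $h^1(X_{B_0},\OOO(D_0))=0$ and $h^0(X_{B_0},\OOO(D_0))>0$, Lemma~\ref{lem:V} provides, after shrinking $\Delta$, a subspace $V\subset H^0(\XXX,\LLL_w)$ restricting isomorphically onto $H^0(X_{B_t},\LLL_w|X_{B_t})$ for all $t$. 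Thus $D_0$ deforms to effective divisors $D_t$ of class $s_t=\alpha_t(v^+)$. At the dense set of $t\neq0$ where $B_t$ is lattice-generic (property (iv) of Proposition~\ref{prop:onlyifsp}), Theorem~\ref{thm:ZZZ} identifies $s_t=\alpha_t(v^+)$ as an element of $\ZZZ_n(B_t)$, $n=\deg\Gm$; hence by Proposition~\ref{prop:hdeg1} or~\ref{prop:hdeg2} for $n\le2$, and by Proposition~\ref{prop:cubic} with Lemma~\ref{lem:gnef} for $n=3$, the divisor $D_t$ --- or its general member when $n=3$, where $s_t^2=0$ and $|s_t|$ is an elliptic pencil --- is the lift $\tlGm_t\sp+$ of a smooth $Z$-splitting curve $\Gm_t$ of degree $n$.

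Finally I would assemble these into a family of $Z$-splitting pairs. Because $\alpha_t$ preserves $\EEE$ and $h$, it intertwines the involutions $\iota$ on $\lat$ and $\iota_{B_t}$ on $\lat_{B_t}$ of Remark~\ref{rem:iota}; as $v^-=\iota(v^+)\neq v^+$ (since $\Gm$ is $Z$-splitting), we get $s_t\neq\iota_{B_t}(s_t)=\alpha_t(v^-)$, so the two lifts of $\Gm_t$ are distinct, $\Gm_t$ is genuinely $Z$-splitting, and $\alpha_t$ carries $\{v^\pm\}$ to $\{[\tlGm_t\sp\pm]\}$. Hence $\alpha_t$ is an isomorphism of extended lattice data from $\latdataZP$ to $\latdataZP(B_t,\Gm_t)$, so $(B_t,\Gm_t)\in\lattypeZP$. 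Pushing the divisors $D_t$ down by $\tilde\rho_{B_t}$ (the $E_e$ contract to points, while $\tlGm_0\sp+\to\Gm_0$ is birational because $\Gm_0\not\subset B_0$), their images form an analytic family of plane curves in $\Pt\times\Delta$ with central fibre $\Gm_0$ and general fibre $\Gm_t$; combined with $f:\BBB\to\Delta$ this realises $\lattypeZP_0=\lattypeZP(B_0,\Gm_0)$ as a specialization of $\lattypeZP=\lattypeZP(B,\Gm)$. I expect the main obstacle to be the cubic case: there $|s_t|$ is a pencil, so one must select members coherently over $\Delta$ (a section of the relative pencil whose central value is $\tlGm_0\sp+$) and verify that a generic such choice keeps the general fibre an irreducible smooth cubic lift while the chosen central fibre maps precisely to the prescribed curve $\Gm_0$; controlling this, together with the appearance of the exceptional part $\sum_e m_e E_e$ in $D_0$, is the delicate point.
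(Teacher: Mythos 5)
Your proposal follows essentially the same route as the paper: reduce to Proposition~\ref{prop:onlyifsp}, use the vanishing-$h^1$ condition with Lemma~\ref{lem:h1} to get $H^1(X_{B_0},\OOO(D_0))=0$, deform the divisor via Lemmas~\ref{lem:LLL} and~\ref{lem:V}, identify the resulting classes through Theorem~\ref{thm:ZZZ}, and track the involution to see that the $\pm$ lifts stay distinct. The ``delicate point'' you flag in the cubic case is exactly what the paper resolves by replacing the chosen section $s$ with $s+s\sprime$ for a general $s\sprime\in H^0(\XXX_{\tilde\delta},\DDD(-X_0))$, so that the general fibre of the pencil is irreducible while the central fibre is unchanged.
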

\begin{proof}
By Remark~\ref{rem:latticegenericZP},
we can assume that the representatives
$(B, \Gm)$ and $(B_0, \Gm_0)$ of $\lattypeZP(B, \Gm)$ and $\lattypeZP(B_0, \Gm_0)$
are lattice-generic. 
We  fix a marking
$$
\phi_0: H^2(X_{B_0},\Z)\isom \Klat.
$$
We then  consider  $\lat_0$ as a primitive sublattice of $\Klat$
in such a way that the marking  $\phi_0$ induces an isomorphism 
$$
\phi_0: \lat_{B_0}\isom \lat_0
$$
of lattice data from $\latdataZP(B_0)$ to $\latdataZP_0$.
By Proposition~\ref{prop:onlyifsp}, we have 
an analytic family $\{B_t\}_{t\in \Delta}$ of simple sextics 
constructed from the geometric embedding
$\sigma: \lat\inj \lat_{0}$ 
of $\latdata=[\EEE, h, \lat]$ into $\latdata_0=[\EEE_0, h_0, \lat_0]$ and the isomorphism $\phi_0$.
Let 
$$
\pi_{\tilde\delta}:\XXX_{\tilde\delta}\to\Delta
$$
be the smooth family of $K3$ surfaces 
constructed in the proof of Proposition~\ref{prop:onlyifsp}.
Then $X_t:=\pi_{\tilde\delta}\inv (t)$ is equal to $X_{B_t}$
and equipped with markings 
$$
\phi_t : H^2(X_t, \Z)\isom \Klat
$$
continuously varying with $t$.
We have lifts $\tlGm_0\sp\pm$ of $\Gm_0$ on $X_0=X_{B_0}$.
Our aim is to deform $\tlGm_0\sp\pm$ to curves on $X_t$
that are the lifts of $Z$-splitting curves  for $B_t$.
 \par
 \medskip
By construction, 
 the markings $\phi_t$ induce isomorphisms of lattices
$$
\phi_t: \lat_{B_t}\isom \lat
$$
for $t\ne 0$ that 
induces an isomorphism of lattice data $\latdata(B_t)\cong \latdata$.
Moreover the specialization  homomorphism 
$$
H^2(X_t, \Z)\isom H^2(X_0, \Z)
$$
induces the geometric embedding $\sigma:\lat\inj\lat_0$ of $\latdata$ to $\latdata_0$ under the isomorphisms 
$\phi_t$  $(t\ne 0)$ and $\phi_0$.
Then $v\sp+ \in\lat$ with $\sigma (v\sp +)\in \lat_0$
gives rise to a section $\tilde v$ of the locally constant system $R^2 \pi_{\tilde\delta*}\Z$ on $\Delta$;
namely, 
 $\tilde v_t:=\tilde v|X_t \in H^2(X_t, \Z)$ is mapped by $\phi_t$ to
$v\sp+$ for $t\ne 0$ and 
to $\sigma (v\sp +)$ for $t=0$.
In particular, we have $\tilde v_t\in H^{1,1} (X_t)$ for any $t\in \Delta$,
and hence, by Lemma~\ref{lem:LLL},  there exists a line bundle $\DDD$ on $\XXX_{\tilde\delta}$
such that the class of $\DDD_t:=\DDD|X_t$ is equal to $\tilde v_t$.
Since $[\EEE, h, \lat, \{v\sp\pm\}]$ is the lattice data of 
$(B, \Gm)$,
the  assumption that $\Gm$ be smooth of degree $\le 3$ implies that
$v\sp+$ satisfies 
$ (v\sp+)^2=-2$ or ($(v\sp+)^2= 0$ and $\intnum{v\sp+}{h}=3$),
and hence $\sigma (v\sp+)\in \lat_0$ also satisfies 
$$
(\sigma (v\sp+))^2=-2 \quad \textrm{or} \quad (\;(\sigma (v\sp+))^2= 0 \quand \intnum{\sigma (v\sp+)}{h_0}=3\;).
$$
Therefore Lemma~\ref{lem:h1} can be applied,
and the assumption that $\sigma (v\sp+)$ satisfy
the vanishing-$h^1$ condition  implies
$$
H^1(X_0, \DDD_0)=0.
$$
After interchanging $v_0^+$ and $v_0^-$ (and hence $\tlGm_0^+$ and $\tlGm_0^-$) if necessary,
there exist a finite number of  exceptional $(-2)$-curves 
$e_i$ on $X_0$ such that
$$
\tilde v_0=[\tlGm_0^+ + \sum e_i].
$$
Let $s_0$ be the section of the invertible sheaf $\OOO(\tlGm_0^+ + \sum e_i)$ on $X_0$
such that $s_0=0$ defines the divisor $\tlGm_0^+ + \sum e_i$.
By Lemma~\ref{lem:V},
there exists a section $s\in H^0(\XXX_{\tilde\delta}, \DDD)$
such that its restriction to $X_0$ is $s_0$.
We put $s_t:=s|X_t$ for $t\ne 0$,
and let $\tlGm_t$ be the curve on $X_t$ cut out by $s_t=0$.
Since $\phi_t ([\tlGm_t])= v^+\in \lat$,
we have $[\tlGm_t]\in \lat_{B_t}$.
Since $[\EEE, h, \lat, \{v\sp\pm\}]$ is   the  lattice data of 
$(B, \Gm)$ and $[\tlGm\sp\pm]\in\ZZZ_n(B)$ with $n=\deg\Gm=\intnum{v\sp+}{h}\le 3$, 
we see that,  if $B_{\tau}$ is lattice-generic with $\tau\ne 0$,
then
$$
[\tlGm_{\tau}]\in \ZZZ_n (B_{\tau})
$$
holds by Theorem~\ref{thm:ZZZ}.
In particular, if $n<3$, then
$\tlGm_t$ is a $(-2)$-curve.
When $n=3$, we replace $s$ by $s+s\sprime$
where 
$$
s\sprime\in H^0(\XXX_{\tilde\delta}, \DDD(-X_0))=H^0(\XXX_{\tilde\delta}, \DDD)\otimes \OOO_{\Delta}(-0)
$$
is chosen generally,
and assume that $\tlGm_t$ is irreducible.
We denote by $\Gm_t$ the image of $\tlGm_t$ by the double covering
$X_{B_{t}}\to\Pt$.
Then $\Gm_t$ is  a smooth $Z$-splitting curve that  degenerates to $\Gm_0$.
Since the lattice data of $(B_t, \Gm_t)$ for $t\ne 0$ is isomorphic to $\latdataZP$,
the analytic family $(B_t, \Gm_t)_{t\in \Delta}$ of $Z$-splitting pairs 
gives rise to the specialization of $\latdataZP$ to $\latdataZP_0$.
\end{proof}
%
%\begin{remark}
%We do not know whether or not the converse of Proposition~\ref{prop:onlyifspZP} holds.
%\end{remark}
%
\begin{computation}\label{comp:specialization}
By Computation~\ref{comp:latdata},
we have obtained  the complete list $\LD_n$  of lattice data 
of $Z$-splitting pairs $(B, \Gm)$ with $n:=\deg\Gm\le 2$,
and the complete list $\LD_3$ of lattice data  
of $Z$-splitting pairs $(B, \Gm)$ with $z_1(\lattype(B))=z_2(\lattype(B))=0$, $F_B\ne 0$ 
and $\Gm$ being smooth cubic.

For each $\latdataZP=[\EEE, h, \lat, S]$ in $\LD_1$ (resp.~$\LD_2$),
we calculate the class-order $d$  of $\latdataZP$ (that is, the order of $v\in S$ in 
the finite abelian group $\lat/(\gen{h}\oplus\gen{\EEE})$),
and confirm that $d$ is either $6,8,10$ or $12$ (resp.~$3,4,5,6,7$ or $8$).

For each $n=1, 2$ and the class-order $d$,
we denote by $\LD_{n, d}$ the set of  lattice data $\latdataZP\in \LD_n$  with the class-order $d$, and 
denote by $\ttlatdataZP_{n, d}$ the member of $\LD_{n, d}$ with  the total Milnor number $\mu_B=\rank \gen{\EEE}$ being \emph{minimal}.
It turns out that the condition that  $\mu_B$ be minimal determines
$\ttlatdataZP_{n, d}$ uniquely, and that the corresponding lattice types are 
equal to $\lattypeZP_{lin, d}$ or $\lattypeZP_{con, d}$ given in Definitions~\ref{def:Zlineslattype2}~or~\ref{def:Zconicslattype2}
according to $n=1$ or $2$.
Then,  for each $\latdataZP$ in  $\LD_{n, d}$ that is not $\ttlatdataZP_{n, d}$,
we search for a geometric embedding  of $\ttlatdataZP_{n, d}$ into $\latdataZP$
that satisfies the vanishing-$h^1$ condition,
and confirm that there exists at least one such embedding.
Thus Theorems~\ref{thm:Zlines} and~\ref{thm:Zconics} are proved.

We also confirm that
there exists  unique lattice data $\ttlatdataZP_{QC}$ in $\LD_3$ with $\mu_B$ being minimal,
that the  lattice type corresponding to $\ttlatdataZP_{QC}$ is $\lattype_{QC, n}$, 
and that,
for each piece of lattice data $\latdataZP$ in $\LD_3$ that is not $\ttlatdataZP_{QC}$, 
there exists at least one  geometric embedding  of $\ttlatdataZP_{QC}$ into $\latdataZP$
 satisfying the vanishing-$h^1$ condition.
Thus the second half of Theorem~\ref{thm:sp} is also proved.
\end{computation}
\section{Demonstration}\label{sec:demonstration}
We demonstrate the calculations for the $ADE$-type
$A_3+2A_7$.
 Let $\gen{\EEE}$ be the negative-definite root lattice of type $A_3+2A_7$
with a distinguished fundamental system of roots
$$
\EEE\;=\;\{t_1, t_2, t_3\}\:\perp \:    \{e_1, \dots, e_7 \}\:\perp \:      \{e\sprime_1, \dots, e\sprime_7\},
$$
where $\{t_1, t_2, t_3\}$ is of type $A_3$ 
with $\intnum{t_i}{t_{i+1}}=1$ for $i=1, 2$,
 and $\{e_1, \dots, e_7\}$
and  $\{e\sprime_1, \dots, e\sprime_7\}$ are of type $A_7$
with $\intnum{e_i}{e_{i+1}}=\intnum{e\sprime_i}{e\sprime_{i+1}}=1$ for $i=1, \dots, 6$.
The automorphism group $\Aut (\EEE)$ of $\EEE$ is isomorphic to
$$
\{\pm 1\} \times  (\{\pm 1\}\wr \SSSS_2),
$$
where the first factor is the involution $t_1\leftrightarrow t_3$ of $A_3$,
and $\{\pm 1\}\wr \SSSS_2$ is the wreath product of the involution $e_i \leftrightarrow e_{8-i}$
of $A_7$ and the permutation of the components of $2A_7$.
We put
$$
\blat=   \gen{\EEE}\oplus \gen{h},
$$
where $h^2=2$.
Then the discriminant group $\blat\dual/\blat$
of $\blat$ 
is 
\begin{equation*}\label{eq:discgroup}
\gen{\bar t_3\dual}\oplus  \gen{\bar e_7\dual}\oplus \gen{\bar e\sp{\prime\vee}_7} 
\oplus \gen{\bar h\dual}
\;\;\cong\;\;
(\Z/4\Z)\oplus (\Z/8\Z)\oplus (\Z/8\Z) \oplus (\Z/2\Z),
\end{equation*}
where $\bar x=x\bmod \blat$,
and the discriminant form $q: \blat\dual/\blat \to \Q/2\Z$ of $\blat$ is given by
$$
q(w,x,y,z)= -\frac{3}{4}w^2-\frac{7}{8}x^2-\frac{7}{8}y^2+ \frac{1}{2}z^2\bmod 2\Z,
$$
where $(w,x,y,z)=w \bar t_3\dual+ x \bar e_7\dual + y \bar e\sp{\prime\vee}_7 + z \bar h\dual$.
We determine all isotropic subgroups $H$ such that
the corresponding overlattice $\lat=\lat(H)$ satisfies the  three conditions in Proposition~\ref{prop:urabe}.
Up to the action of $\Aut(\EEE)$, 
they are given in Table~\ref{table:H}.
\begin{table}
$$
 \renewcommand{\arraystretch}{1.2}
 \begin{array}{c|l|l}
 &\textrm{Generators} &  \\ 
\hline   
H_{0} & 0 & 0 \\ 
\hline 
H_{1} & [[0, 4, 4, 0]] & \textrm{cyclic of order $2$} \\ 
\hline
H_{2} & [[1, 1, 1, 1]] & \textrm{cyclic of order $8$}  \\ 
 \hline
 H_{3} & [[2, 2, 2, 0]] & \textrm{cyclic of order $4$} 
 \end{array}
 $$
\vskip 10pt
\caption{The isotropic subgroups $H_i$}\label{table:H}
\end{table} 
Therefore there exist four lattice types 
$\lattype (H_i)$ of simple sextics $B$
with $R_B=A_3+2A_7$.
We denote by $B(H_i)$ a lattice-generic member of $\lattype  (H_i)$.

Next  we calculate the subsets $\LLL(H_i):=\LLL_{B(H_i)}$ and $\CCC(H_i):=\CCC_{B(H_i)}$ of $\lat(H_i)$ 
for each $H_i$,
and deduce information about the geometry of $B(H_i)$.
From now on,
vectors in $\lat(H_i)\subset \Sigma\dual$ are written with respect to the basis 
$$
t_1\dual, \dots, t_3\dual, e_1\dual, \dots,  e_7\dual, e_1\sp{\prime\vee}, \dots, e_7\sp{\prime\vee}, h\dual
$$
of 
$\Sigma\dual$
that is \emph{dual} to $\EEE\cup\{h\}$.
\par\smallskip
($H_0$) We have 
$\LLL(H_0)=\emptyset$ and $\CCC(H_0)=\emptyset$.
Hence $B(H_0)$ is irreducible.
(If $\degs B(H_0)=[3,3]$, then the two cubic irreducible components would intersect with multiplicity $10$.)
Moreover we have $z_1(\lattype(H_0))=z_2(\lattype(H_0))=0$.
\par\smallskip
($H_1$) We have $\LLL(H_1)=\emptyset$ and $\CCC(H_1)=\{ u\}$, where
$$
u:=[0, 0, 0, 0, 0, 0, 1, 0, 0, 0, 0, 0, 0, 1, 0, 0, 0, 2]. 
$$
Since $u$ is invariant under the involution on $\lat(H_1)$, 
we have  $\degs B(H_1)=[2,4]$ with the  irreducible component of degree $2$ passing through two $A_7$ points and disjoint from the tacnode $A_3$.
Moreover we have $z_1(\lattype(H_1))=z_2(\lattype(H_1))=0$.
This lattice type is denoted by $\lattype_{\smallBBBB, n}$ in Proposition~\ref{prop:Zlineslattype1}.
\par\smallskip
($H_2$) We have $\LLL(H_2)=\{v, \iota_B(v)\}$ and $\CCC(H_2)=\{ u\}$, where
$$
v:=[1, 0, 0, 1, 0, 0, 0, 0, 0, 0, 1, 0, 0, 0, 0, 0, 0, 1] \;\ne\; \iota_B(v),
$$
and $u$ is the same vector as in ($H_1$).
Hence we have $ B(H_2)\relconfig B(H_1)$, and 
$z_1(\lattype(H_2))=1, z_2(\lattype(H_2))=0$.
This lattice type is denoted by $\lattype_{\smallBBBB, l}$.
The class $v$ of the lift of  $Z$-splitting line 
is  of order $8$ in the discriminant group $\blat\dual/\blat$.
There are no $Z$-splitting lines of class-order $8$ for simple sextics of total Milnor number $<17$.
Hence the $Z$-splitting line for $B(H_2)$ is the originator
of the lineage of $Z$-splitting lines of class-order $8$,
whose lattice type is denoted by $\lattypeZP_{lin, 8}$.
\par\smallskip
($H_3$) We have $\LLL(H_3)=\emptyset$ and $\CCC(H_3)=\{ u, w, \iota_B(w)\}$, where
$$
w:=[0, 1, 0, 0, 1, 0, 0, 0, 0, 0, 0, 1, 0, 0, 0, 0, 0, 2]  \;\ne\; \iota_B(w), 
%[0, 1, 0, 0, 1, 0, 0, 0, 0, 0, 0, 1, 0, 0, 0, 0, 0, 2] \;\ne\; \iota_B(w).
$$
and $u$ is the same vector as in ($H_1$).
Hence we have $ B(H_3)\relconfig B(H_1)$, and 
$z_1(\lattype(H_3))=0, z_2(\lattype(H_3))=1$.
This lattice type is denoted by $\lattype_{\smallBBBB, c}$.
The class $w=[\tlGm] $ of the lift of  $Z$-splitting conic $\Gm$
is  of order $4$ in the discriminant group $\blat\dual/\blat$.
The conic $\Gm$ is tangent to the quartic irreducible component of $B(H_3)$
at the three singular points of $B(H_3)$.
\par\medskip
Next we 
describe the originator of 
 the lineage of $Z$-splitting conics of class-order $4$,
 and how the $Z$-splitting conic for $ B(H_3)$ above is obtained 
 from this originator by specialization.
\par\smallskip
Any simple sextic of
total Milnor number $<14$
does not have  $Z$-splitting conics of class-order $4$,
and there exists a unique lattice type $\lattype_{\smallbbbb, c}$
of total Milnor number $14$
whose lattice-generic member $B\sprime$ has  a $Z$-splitting conic $\Gm$  of class-order $4$.
The $ADE$-type of the lattice type is $2A_1+4A_3$.
Consider the negative-definite root lattice $\gen{\EEE\sprime}$ 
of type $2A_1+4A_3$ with a distinguished fundamental system of roots
$$
\EEE\sprime:=\{a^{(1)}\}\perp\{a^{(2)}\}\perp\{b^{(1)}, c^{(1)}, d^{(1)}\}\perp\dots \perp\{b^{(4)}, c^{(4)}, d^{(4)}\},
$$
where $\{a^{(\nu)}\}$ is of type $A_1$ and 
$\{b^{(\nu)}, c^{(\nu)}, d^{(\nu)}\}$  is of type $A_3$
with $\intnum{b^{(\nu)}}{c^{(\nu)}}=\intnum{c^{(\nu)}}{d^{(\nu)}}=1$.
We put
$$
\blat\sprime= \gen{\EEE\sprime} \oplus \gen{h}  .
$$
The discriminant group  of $\blat\sprime$ is 
isomorphic to 
$$
(\Z/2\Z)^2 \oplus (\Z/4\Z)^4 \oplus (\Z/2\Z),
$$
with 
$$
q\sprime (x_1, x_2, y_1, y_2, y_3, y_4, z)=
-\frac{1}{2}x_1^2-\frac{1}{2}x_2^2-
\frac{3}{4}y_1^2-\frac{3}{4}y_2^2-\frac{3}{4}y_3^2-\frac{3}{4}y_4^2+ \frac{1}{2}z^2\bmod 2\Z.
$$
The overlattice $\lat_{B\sprime}$ of the lattice type $\lattype_{\smallbbbb, c}$
corresponds to the isotropic subgroup
$$
H\sprime:=\gen{[1, 1, 1, 1, 1, 1, 0]},
$$
which is cyclic of order $4$.
We denote vectors of $\lat_{B\sprime}\subset (\blat\sprime)\dual$ 
with respect to the basis of $(\blat\sprime)\dual$ dual to  
the basis $\EEE\sprime\cup\{h\}$ of $\blat\sprime$.
Then the classes of the lifts of the $Z$-splitting conic $\Gm\sprime$
for the lattice-generic member $B\sprime$ of $\lattype_{\smallbbbb, c}$ are
equal to 
$$
w\sprime:=%[1, 1, 0, 0, 1, 0, 0, 1, 0, 0, 1, 0, 0, 1, 2]
[1, 1, 0, 0, 1, 0, 0, 1, 0, 0, 1, 0, 0, 1, 2]
$$
and $\iota_B(w\sprime)$.
Let $\sigma: (\blat\sprime)\dual \to \blat\dual$ be the homomorphism
given by the matrix 
{\scriptsize
$$
 \left[ \begin {array}{ccccccccccccccc} 
 1&0&0&0&0&0&0&0&0&0&0&0&0&0&0
\\-a&-a&0&0&0&0&0&0&0&0&0&0&0&0&0
\\0&1&0&0&0&0&0&0&0&0&0&0&0&0&0
\\0&0&0&0&0&-b&-a&-c&b&a&c&0&0&0&0
\\0&0&0&0&0&0&0&1&0&0&0&0&0&0&0
\\0&0&0&0&0&0&1&0&0&0&0&0&0&0&0
\\0&0&0&0&0&b&-a&-b&-b&-a&b&0&0&0&0
\\0&0&0&0&0&0&0&0&0&1&0&0&0&0&0
\\0&0&0&0&0&c&a&b&b&-a&-b&0&0&0&0
\\0&0&0&0&0&-c&-a&-b&c&a&b&0&0&0&0
\\0&0&0&0&1&0&0&0&0&0&0&0&0&0&0
\\0&0&b&a&-b&0&0&0&0&0&0&-b&-a&-c&0
\\0&0&0&0&0&0&0&0&0&0&0&0&0&1&0
\\0&0&-b&a&b&0&0&0&0&0&0&b&a&-b&0
\\0&0&1&0&0&0&0&0&0&0&0&0&0&0&0
\\0&0&-c&-a&-b&0&0&0&0&0&0&-b&a&b&0
\\0&0&0&0&0&0&0&0&0&0&0&1&0&0&0
\\0&0&0&0&0&0&0&0&0&0&0&0&0&0&1
\end {array} \right],
$$}%
where $a=1/2$, $b=1/4$ and $c=3/4$.
It can be easily checked 
that $\sigma (h)=h$, $\sigma (\EEE\sprime) \subset \gen{\EEE}^+$,
that $\sigma$ embeds the lattice $\lat_{B\sprime}\subset (\blat\sprime)\dual $
into the lattice $\lat_{B(H_3)} \subset \blat\dual$ primitively.
Moreover, we have
$$
\sigma (w\sprime)=w+t_2+e_2\sprime.
$$
We can easily see  that $\sigma (w\sprime)=w+t_2+e_2\sprime$
satisfies the vanishing-$h^1$ condition.
Therefore $\lattypeZP(B(H_3), \Gm)$ is a specialization of
$\lattypeZP(B\sprime, \Gm\sprime)$.
\begin{remark}
There are six configuration types
and seven lattice types  with $ADE$-type $2A_1+4A_3$.
\end{remark}
\begin{remark}
This triple 
$\{\lattype_{\smallBBBB, c}, \lattype_{\smallBBBB, l},  \lattype_{\smallBBBB, n}\}$
 is the example of  lattice Zariski triple
 with the smallest total Milnor number.
\end{remark}
\begin{remark}
Let $B_\tau$ be a sextic in the lattice type $\lattype_{\smallBBBB, \tau}$,
where $\tau=c,l,n$,
and let $B_\tau=C_\tau\cup Q_\tau$ be the irreducible decomposition
of $B_\tau$ with $\deg Q_\tau=4$.
Consider the normalization
$$
\nu : \tilde{Q}_\tau \to Q_\tau
$$
of the quartic curve $Q_\tau$ with one  tacnode.
Then $\tilde{Q}_\tau$ is a curve of genus $1$.
Let $p, q\in \tilde{Q}_\tau$ be the inverse images  of the tacnode,
and let $s, t\in \tilde{Q}_\tau$ be the inverse images  of the two $A_7$-singular points $C_\tau\cap Q_\tau$.
Then,
in the elliptic curve $\Pic^0 (\tilde{Q}_\tau)$,
the order of the class of the divisor
$p+q-s-t$ on $\tilde{Q}_\tau$ is $4$, $2$ or $1$ according to 
$\tau=c,l$ or $n$.
\end{remark}
\section{Miscellaneous facts and final remarks}\label{sec:remarks}
\subsection{Numerical criterion of the pre-$Z$-splittingness}\label{subsec:criterion}
%
%
%We have a numerical criterion for a  splitting curve $\Gm$ 
%of a simple sextic $B$ to be $Z$-splitting.
%
\begin{definition}
Let $\Gm$ be a smooth splitting curve for $B$ that is not contained in $B$.
Let  $P$ be a singular point of $B$.
We define $\sigma_P (\Gm)\in \Q$ as follows.
If $P\notin \Gm$,
we put $\sigma_P (\Gm):=0$.
Suppose that  $P\in \Gm$.
If $P$ is of type $A_l$, then 
$$
\sigma_P (\Gm):= -m^2/(l+1),\;\;\textrm{where}\;\; m=\min(\tau_P(\tlGmplus), l+1-\tau_P(\tlGmplus)).
$$
(Recall that  $\tau_P(\tlGmplus)$ is defined in Definition~\ref{def:tau}.)
If $P$ is of type $D_m$, then 
$$
\sigma_P (\Gm):=\begin{cases}
-m/4 & \text{if $m$ is even and $\tau_P(\tlGmplus)=1$ or $2$}, \\
1/2-m/4 & \text{if $m$ is odd and $\tau_P(\tlGmplus)=1$ or $2$}, \\
\tau_P(\tlGmplus)-m-1 & \text{if $\tau_P(\tlGmplus)\ge 3$}.
\end{cases}
$$
If $P$ is of type $E_n$, then 
$\sigma_P (\Gm)$ is defined by the following table:
$$
\renewcommand{\arraystretch}{1.2}
\begin{array}{c|ccccccccc}
\tau_P(\tlGmplus) & 1 & 2 & 3 & 4 & 5 & 6 & 7 & 8 \\
\hline
E_6 & -2 &-2/3 &-8/3 &-6 &-8/3 &-2/3 &&\\
E_7 & -7/2 &-2 &-6 &-12 &-15/2 &-4 &-3/2 &\\
E_8 & -8 &-4 &-14 &-30 &-20 &-12 &-6 &-2 
\end{array}
$$
We can easily check  that $\sigma_P (\Gm)$ does not depend on the choice of the lift $\tlGmplus$
by Remark~\ref{rem:iota}.
\end{definition}
\begin{proposition}\label{prop:criterion}
Let $\tilde{B}\subset \XB$ be the reduced part of
the strict transform of $B$.
Suppose that  $\Gm$ is a smooth splitting curve for $B$ not contained in  $B$.
We put
$$
t_\Gm:=\intnum{\tilde{B}}{ \tlGmplus}=\intnum{\tilde{B}}{\tlGmminus}.
$$
Then the following inequality holds:
\begin{equation}\label{eq:ineq}
(\deg \Gm)^2/2+\tsum_{P} \sigma_P(\Gm)\le t_\Gm.
\end{equation}
The splitting curve $\Gm$ is pre-$Z$-splitting if and only if 
the equality holds in~\eqref{eq:ineq}.
\end{proposition}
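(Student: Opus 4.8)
The plan is to recast everything in terms of the single integral class $x:=[\tlGmplus]\in\NS(\XB)$, the key observation being that $t_\Gm$ is nothing but the \emph{$\iota_B$-pairing} of $x$ with itself. Indeed $\iota_B(x)=[\tlGmminus]$, and Lemma~\ref{lem:tGamma} gives $\intnum{\tlGmplus}{\tlGmminus}=\intnum{\tilde B}{\tlGmplus}=t_\Gm$, so that $t_\Gm=\intnum{x}{\iota_B(x)}$. The whole proposition then reduces to evaluating $\intnum{x}{\iota_B(x)}$ by splitting $x$ with respect to $\latB$; note that it is $\intnum{x}{\iota_B(x)}$, not $x^2$, that carries the right information, which is why $t_\Gm$ rather than a genus computation enters.

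First I would decompose $x=a+b$ orthogonally, where $a$ is the projection of $x$ onto $\latB\tensor\Q$ and $b\in(\latB\sperp)\tensor\Q$. Since $\iota_B$ preserves $\latB$ and acts as $-1$ on its orthogonal complement, one has $\iota_B(x)=\iota_B(a)-b$; the cross terms vanish by orthogonality, giving $\intnum{x}{\iota_B(x)}=\intnum{a}{\iota_B(a)}-b^2$. Because $x$ is algebraic, $b$ lies in the orthogonal complement of $\latB$ inside $\NS(\XB)$, which is negative-definite by the Hodge index theorem; hence $b^2\le 0$, with equality if and only if $b=0$, i.e. if and only if $x\in\latB\tensor\Q$. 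As $\latB$ is the primitive closure of $\blatB$ in $H^2(\XB,\Z)$, this is equivalent to $x\in\latB$, which is exactly the pre-$Z$-splittingness of $\Gm$. This already yields both the inequality~\eqref{eq:ineq} and the equality clause, \emph{provided} $\intnum{a}{\iota_B(a)}$ equals the left-hand side of~\eqref{eq:ineq}.

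It then remains to compute $\intnum{a}{\iota_B(a)}$. Using the orthogonal decomposition $\blatB\tensor\Q=\gen{\polB}\tensor\Q\oplus\bigoplus_P\gen{\EEE_P}\tensor\Q$ (which spans the same $\Q$-vector space as $\latB$), write $a=x_h+\sum_P x_P$. Each summand is $\iota_B$-stable by Remark~\ref{rem:iota}, so $\intnum{a}{\iota_B(a)}=\intnum{x_h}{\iota_B(x_h)}+\sum_P\intnum{x_P}{\iota_B(x_P)}$. Since $\iota_B(\polB)=\polB$ and $\intnum{x}{\polB}=\deg\Gm$, the first term is $x_h^2=(\deg\Gm)^2/2$. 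For the local terms, $\intnum{x_P}{e_k}=\intnum{x}{e_k}=\intnum{\tlGmplus}{e_k}$, and because $\Gm$ is smooth (or misses $P$) its lift meets exactly one exceptional curve $e_{\tau_P(\tlGmplus)}$, transversally (Definition~\ref{def:tau} and Lemma~\ref{lem:vsmooth}); hence $x_P=e_{\tau_P(\tlGmplus)}\dual$, or $x_P=0$ when $P\notin\Gm$. Thus $\intnum{x_P}{\iota_B(x_P)}=\intnum{e_i\dual}{\iota_B(e_i\dual)}$ with $i=\tau_P(\tlGmplus)$.

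The main labor — and the only genuine obstacle — is the case-by-case verification that $\intnum{e_i\dual}{\iota_B(e_i\dual)}=\sigma_P(\Gm)$ for every $ADE$-type. This amounts to reading off (off-)diagonal entries of the inverse Gram matrices of the root lattices $A_l,D_m,E_6,E_7,E_8$ and combining them with the explicit action of $\iota_B$ from Remark~\ref{rem:iota}: for type $A_l$, where $\iota_B(e_i\dual)=e_{l+1-i}\dual$, one checks $\intnum{e_i\dual}{e_{l+1-i}\dual}=-m^2/(l+1)$ with $m=\min(i,l+1-i)$, and for $D_m,E_n$ (where $\iota_B$ is the identity except for the swap of $e_1,e_2$ in $D_{2k+1}$) one computes the corresponding diagonal or off-diagonal entries. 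These reproduce exactly the tabulated numbers defining $\sigma_P(\Gm)$. Granting this bookkeeping, $\intnum{a}{\iota_B(a)}=(\deg\Gm)^2/2+\sum_P\sigma_P(\Gm)$, and combining with $t_\Gm=\intnum{a}{\iota_B(a)}-b^2$ completes the proof.
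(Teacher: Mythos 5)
Your proof is correct and follows essentially the same route as the paper: both decompose $[\tlGmplus]$ into its $\latB\otimes\Q$-component plus a piece $n$ in the negative-definite orthogonal complement on which $\iota_B$ acts by $-1$, identify $t_\Gm=\intnum{[\tlGmplus]}{\iota_B([\tlGmplus])}$ via Lemma~\ref{lem:tGamma}, and observe that the defect is exactly $-n^2\ge 0$, vanishing precisely when the class lies in $\latB$. The only difference is that you spell out the reduction of $\sigma_P(\Gm)$ to $\intnum{e_i\dual}{\iota_B(e_i\dual)}$ slightly more explicitly than the paper, which simply states that $\sigma_P$ was defined to equal $\intnum{\gamma_P}{\iota_B(\gamma_P)}$.
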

\begin{proof}
Let $N_{\Q}$ denote the orthogonal complement of the  subspace $\blatB\otimes \Q=\latB\otimes \Q$
in $\NS(X_B)\otimes\Q$.
Then the intersection-paring is negative-definite on $N_{\Q}$,
and  the involution ${\iota_B}$ on $\NS(X_B)\tensor \Q$  acts on $N_{\Q}$ by the  multiplication by $-1$.
We have a decomposition
$$
[\tlGmplus]=(\deg \Gm /2) h+\sum \gamma_P +n,
$$
where $\gamma_P\in \gen{\EEE_P}\otimes\Q$ and $n\in N_{\Q}$.
Then we have
$$
t_{\Gm}=\intnum{[\tlGmplus]}{[\tlGmminus]}=(\deg \Gm)^2/2+\sum \intnum{\gamma_P }{ {\iota_B}(\gamma_P)}-n^2
$$
by Lemma~\ref{lem:tGamma}.
The value $\sigma_P (\Gm)$ is defined in such a way that 
$\sigma_P (\Gm)= \intnum{\gamma_P }{ {\iota_B}(\gamma_P)}$
holds.
Since $n^2\le 0$ and $n^2=0$  holds if and only if  $n=0$,
we obtain the proof.
\end{proof}
\begin{example}\label{example:numtorus}
Let $f$ and $g$ be general homogeneous polynomials of degree $2$ and $3$,
respectively.
The splitting conic $\Gamma=\{f=0\}$ 
for a  torus sextic $B_{\torus}=\{f^3+g^2=0\}$ is $Z$-splitting,
because we have $\deg \Gm=2$, $t_\Gm=0$ and $\sigma_P(\Gm)=-1/3$
for each ordinary cusp $P$ of $B_{\torus}$.
\end{example}
\begin{remark}
As a corollary of the classifications of $Z$-splitting pairs,
we obtain the following.
Let $(B, \Gm)$ be a lattice-generic $Z$-splitting pair with $\deg \Gm\le 2$.
Then $B\cap \Gm$ is contained in $\Sing B$,
and $\tlGm_+\cap \tlGm_-=\emptyset$.
\end{remark}
\subsection{Relation between$\relemb$ and $\rellat$}\label{subsec:lat_and_emb}
In  many  lattice Zariski $k$-ples,
the distinct lattice types have different embedding topology.
\begin{theorem}\label{thm:lat_and_emb}
Suppose that $B$ and $B\sprime$ satisfy $B\relconfig B\sprime$.
If $G_B$ and $G_{B\sprime}$ have different orders, then
$B\nrelemb B\sprime$.
\end{theorem}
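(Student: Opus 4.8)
The plan is to prove the contrapositive: assuming $B\relemb B\sprime$, I will deduce that $G_B$ and $G_{B\sprime}$ have the same order, contradicting the hypothesis. By definition $B\relemb B\sprime$ supplies a homeomorphism $\psi\colon(\Pt,B)\isom(\Pt,B\sprime)$ that is an analytic isomorphism of plane curve singularities near each singular point. The first step is to promote $\psi$ to the $K3$ surfaces. Since $\deg B=6$ is even, the double cover branched along $B$ is determined up to homeomorphism by the pair $(\Pt,B)$, and since $\psi$ is analytic near $\Sing B$, the rational double points of $Y_B$ and $Y_{B\sprime}$ correspond and their minimal resolutions match locally. Gluing, $\psi$ lifts to an equivariant homeomorphism $\Psi\colon\XB\isom X_{B\sprime}$ carrying the exceptional set over $\Sing B$ to that over $\Sing B\sprime$; this is the topological input provided by \cite{nonhomeo, MR2405237}, which I would cite here. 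Consequently $\Psi$ induces an isometry $\Psi^*\colon H^2(X_{B\sprime},\Z)\isom H^2(\XB,\Z)$ that commutes with the involutions $\iota_B$, $\iota_{B\sprime}$ and maps $\gen{\EEE_{B\sprime}}$ isomorphically onto $\gen{\EEE_B}$.

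Next I would reduce the claim to discriminants. Writing $\disc$ for the order of the discriminant group, the finite-index inclusion $\blatB\subset\latB$ gives $|\disc\blatB|=|G_B|^2\,|\disc\latB|$; and since $\blatB=\gen{\polB}\oplus\gen{\EEE_B}$ one has $|\disc\blatB|=2\,|\disc R_B|$, which depends only on the $ADE$-type $R_B$, hence only on the configuration type. As $B\relemb B\sprime$ implies $B\relconfig B\sprime$, we get $|\disc\blatB|=|\disc\blat_{B\sprime}|$. It therefore suffices to show that $|\disc\latB|$ is a topological invariant, i.e.\ $|\disc\latB|=|\disc\lat_{B\sprime}|$; the equality $|G_B|=|G_{B\sprime}|$ then follows at once.

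The heart of the argument is to read $|\disc\latB|$ off the anti-invariant part of $\Psi^*$. Put $L:=H^2(\XB,\Z)$, $L\sprime:=H^2(X_{B\sprime},\Z)$, and $L^-:=\shortset{x\in L}{\iota_B(x)=-x}$, with $(L\sprime)^-$ defined analogously. Because $L$ is unimodular, $|\disc\latB|=|\disc\latB^\perp|$ for the primitive sublattice $\latB$. By the Remark preceding Table~\ref{table:preZ}, $\iota_B$ acts as $-1$ on $\latB^\perp$, so $\latB^\perp\subset L^-$. A short eigenspace computation, using that $\polB$ is $\iota_B$-invariant and that $\latB=(\blatB\otimes\Q)\cap L$ is $\iota_B$-stable, shows that the orthogonal complement of $\latB$ inside $L^-$ equals $\latB^\perp$; that is, $\latB^\perp=(M_B\subset L^-)^\perp$ where $M_B:=L^-\cap\latB$. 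Moreover $M_B=\shortset{x\in L}{\iota_B(x)=-x,\ x\in\gen{\EEE_B}\otimes\Q}$, because the $\polB$-component of any anti-invariant vector in $\blatB\otimes\Q$ vanishes. All of $L$, $\iota_B$ and $\gen{\EEE_B}$ correspond under $\Psi^*$ to their primed counterparts, so $\Psi^*$ carries $(L\sprime)^-$ isometrically onto $L^-$, maps $M_{B\sprime}$ onto $M_B$, and hence maps $(M_{B\sprime}\subset (L\sprime)^-)^\perp=\lat_{B\sprime}^\perp$ onto $(M_B\subset L^-)^\perp=\latB^\perp$. Thus $\latB^\perp\cong\lat_{B\sprime}^\perp$, which gives $|\disc\latB|=|\disc\lat_{B\sprime}|$ and closes the argument.

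The main obstacle is the topological first step: producing the equivariant homeomorphism $\Psi$ of $K3$ surfaces whose induced isometry of $H^2$ commutes with the deck involution and preserves the classes of the exceptional curves. This is where the analyticity of $\psi$ near the singular points is indispensable, and where I would rely on \cite{nonhomeo, MR2405237}. Everything afterward is lattice bookkeeping, the only delicate point being the identity $\latB^\perp=(M_B\subset L^-)^\perp$, which I would verify by the elementary computation sketched above (namely, that an anti-invariant vector orthogonal to $M_B$ is automatically orthogonal to all of $\latB$, since $y-\iota_B(y)\in M_B$ for every $y\in\latB$).
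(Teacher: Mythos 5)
Your discriminant bookkeeping coincides exactly with the paper's: both proofs use $R_B=R_{B\sprime}$ to get $\disc\blatB=\disc\blat_{B\sprime}$, the index formula $\disc\blatB=|G_B|^2\disc\latB$, and unimodularity of $H^2$ to pass to the orthogonal complement. The identity $(M_B\subset L^-)\sperp=\latB\sperp$ and its verification via $y-\iota_B(y)\in M_B$ are also correct. Where you diverge is in how the topological input enters. The paper first replaces $B$ and $B\sprime$ by lattice-generic members of their equisingular families (harmless, since $\releqs$ implies $\relemb$ and $G_B$ is a lattice-type invariant); then $\latB\sperp$ \emph{is} the transcendental lattice $T_B=\NS(\XB)\sperp$, and the paper can quote the theorem of \cite{MR2405237, nonhomeo} verbatim: $T_B$ is a topological invariant of $(\Pt,B)$ for lattice-generic $B$. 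You skip this reduction and instead try to show directly that $\latB\sperp$ is preserved, for arbitrary $B$, by producing an equivariant homeomorphism $\Psi:\XB\isom X_{B\sprime}$ commuting with the deck involutions and carrying $\gen{\EEE_{B\sprime}}$ to $\gen{\EEE_B}$.

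That first step is the one place your argument is not yet a proof. The statement you attribute to \cite{MR2405237, nonhomeo} is strictly stronger than what those papers assert as theorems (their stated conclusion concerns $T_B$ for lattice-generic $B$, not the existence of $\Psi$ with the listed properties); you are in effect re-proving their main construction rather than citing their result, and the "gluing" of the local analytic resolutions near $\Sing B$ to a global homeomorphism of the minimal resolutions is precisely the nontrivial content there. The easiest repair is to do what the paper does: deform $B$ and $B\sprime$ to lattice-generic representatives first (Corollary~\ref{cor:perturb}), after which your anti-invariant computation becomes unnecessary because $\latB\sperp=T_B$ and the cited invariance applies as stated. If you insist on your route, you must also address orientation: a homeomorphism need not preserve orientation, so $\Psi^*$ may only be an anti-isometry; this is harmless for your purposes since $|\disc\latB\sperp|=|\disc(\latB\sperp(-1))|$, but it should be said. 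With either repair the proof is correct.
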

\begin{proof}
We can assume that $B$ and $B\sprime$ are lattice-generic.
We consider the \emph{transcendental lattices} of $X_B$ and $X_{B\sprime}$ defined by
$$
T_B:=(\NS(X_B)\inj H^2(X_B, \Z))\sperp,
\quad
T_{B\sprime}:=(\NS(X_{B\sprime})\inj H^2(X_{B\sprime}, \Z))\sperp.
$$
From $B\relconfig B\sprime$,
we have  $R_B=R_{B\sprime}$,
and hence
$\disc \blatB=\disc \blat_{B\sprime}$ holds,
where $\disc$ denotes the discriminant of the lattice.
Combining this with $|G_B|\ne |G_{B\sprime}|$,
we obtain
$\disc \latB\ne \disc \lat_{B\sprime}$.
Since $H^2(X_B, \Z)$ and $H^2(X_{B\sprime}, \Z)$ are unimodular, we obtain
$$
\disc T_B \ne \disc T_{B\sprime}.
$$
Then $B\nrelemb B\sprime$
follows from the fact that the transcendental lattice of $X_B$ is a topological invariant of $(\Pt, B)$
for a lattice-generic $B$,
which was proved in~\cite{MR2405237} and~\cite{nonhomeo}. 
\end{proof}
\begin{remark}
We have not yet obtained any  examples of pairs  $[B_1, B_2]$ of simple sextics
with $B_1\nrellat B_2$ but $B_1\relemb B_2$.
For the example of the lattice Zariski couple 
$\lattype_{\QC, c}$ and $\lattype_{\QC, n}$ in Proposition~\ref{prop:sp},
we have $|G_B|=|G_{B\sprime}|=4$,
where $B\in \lattype_{\QC, c}$ and $B\sprime \in \lattype_{\QC, n}$,
and hence Theorem~\ref{thm:lat_and_emb} does not apply.
It would be an interesting problem to study the topology of simple sextics
in $\lattype_{\QC, c}$ and $\lattype_{\QC, n}$.
\end{remark}
\subsection{Examples of many $Z$-splitting conics}
For any lattice type $\lattype (B)$ of simple sextics,
we have $z_1(\lattype (B))\le 1$.
On the other hand,
we have lattice types $\lattype (B)$ of simple sextics
such that $z_2(\lattype (B))=12$ or $z_2(\lattype (B))=6$.
(These two  are the largest and the second largest values for $z_2(\lattype (B))$.)
\par
\medskip
Suppose that $z_2(\lattype (B))=12$.
Then $B$ is a nine cuspidal sextic.
The configuration type of nine cuspidal sextics $B$ consists 
of a single lattice type,
and  the group $G_B$ is isomorphic to $\Z/3\Z\times \Z/3\Z \times \Z/3\Z$.
Moreover the class orders of the twelve $Z$-splitting conics for $B$ are all  $3$.
A  nine cuspidal sextic $B$ is the dual curve of a smooth cubic curve $C$,
and the nine cusps  are in one-to-one correspondence with
the inflection points of $C$.
In particular,
the set $\Sing B$ has a natural structure of the $2$-dimensional affine space
over $\F_3$.
Each $Z$-splitting conic $\Gm$  passes through $6$ points of $\Sing B$,
and the complement $\Sing B\setminus (\Sing B\cap \Gm)$ is an affine line of $\Sing B$.
Thus there is a one-to-one correspondence 
between the set of $Z$-splitting conics for $B$,
and the set of affine lines of $\Sing B$.
\par
\medskip
Suppose that $z_2(\lattype (B))=6$.
Then $B$ is a union of three smooth conics with $R_B=6A_3$.
The configuration type of simple  sextics $B$ with $\degs B=[2,2,2]$ and $R_B=6A_3$ consists 
of a single lattice type,
and  the group $G_B$ is isomorphic to $\Z/4\Z\times \Z/4\Z$.
Moreover the class orders of the six $Z$-splitting conics for $B$ are all  $4$.
Let $B=C_1+C_2+C_3$ be  a simple sextic in this lattice type.
There exists a one-to-one correspondence between the six $Z$-splitting conics for $B$ and the six tacnodes of $B$,
which is described as follows.
Let $P\in \Sing B$ be a tacnode that is a tangent  point of two distinct conics $C_i$ and $C_j$.
Then there exists a unique $Z$-splitting conic that does not pass through $P$
but is tangent to both of $C_i$ and $C_j$ at the other tacnode $P\sprime\in \Sing B$ on $C_i\cap C_j$,
and passes through  the other four tacnodes on $C_k$ $(k\ne i, j)$. 
\subsection{Degeneration of   $Z$-splitting conics}
Consider the following two lattice types of simple sextics:
\begin{eqnarray*}
\lattype_{\smallAAAA, l}&=&\lattype_{lin, 6}\quad\quad\, (R_B=3A_5,\;\; \degs B=[3,3], \;\;  z_1(\lattype_{\smallAAAA, l})=1), \quand\\
\lattype_{\smallaaaa, c}&=&\lattype_{con, 3}\quad\quad (R_B=6A_2,\;\;  \degs B=[6], \;\;\;\;\;  z_2(\lattype_{\smallaaaa, c})\,=1).
\end{eqnarray*}
It is well-known that any member of $\lattype_{\smallaaaa, c}=\lattype_{con, 3}$
is defined by an equation of $(2, 3)$-torus type
$$
B\;:\;f^3+g^2=0\qquad (\deg f=2, \;\deg g=3),
$$
while it is easy to see that any member of $\lattype_{\smallAAAA, l}=\lattype_{lin, 6}$
is defined by an equation of $(2, 6)$-torus type
$$
B\sprime \;:\;l^6+g^2=0\qquad (\deg l=1, \;\deg g=3).
$$
When the quadratic polynomial $f$ degenerates into $l^2$,
then $B$ degenerates into $B\sprime$ and the $Z$-splitting conic
$\Gm=\{f=0\}$ for $B$ degenerates into the double of the $Z$-splitting  line $\Gm\sprime=\{l=0\}$ for $B\sprime$.
Therefore we can regard the $Z$-splitting  line $\Gm\sprime$ as the reduced part of a \emph{non-reduced} $Z$-splitting conic.
\par
\medskip
It seems that any $Z$-splitting line can be obtained as the reduced part of a non-reduced $Z$-splitting conic as above.
For example,
it is quite plausible  that there may exist  the following specializations from 
the lattice type $\lattype$ with $z_2(\lattype)=1$ to the lattice type $\lattype\sprime$ with $z_1(\lattype\sprime)=1$
that makes the $Z$-splitting conic for $\lattype$ to the  double of the $Z$-splitting line for $\lattype\sprime$:
$$
\begin{array}{c|c}
\lattype & \lattype\sprime \mystrutd{4pt}\\
\hline \mystruthd{18pt}{14pt}
\parbox{6.2cm}{$\lattype_{\smallbbbb, c}=\lattype_{con, 4}\\ (R_B=2A_1+4A_3,\;  \degs B=[2, 4])$}
&
\parbox{5.8cm}{$\lattype_{\smallBBBB, l}=\lattype_{lin, 8}\\ (R_B=A_3+2A_7,\;  \degs B=[2, 4])$} \\
\hline \mystruthd{18pt}{14pt}
\parbox{6.2cm}{$\lattype_{\smallcccc, c}=\lattype_{con, 5}\\ (R_B=4A_4,\;  \degs B=[6])$}
&
\parbox{5.8cm}{$\lattype_{\smallCCCC, l}=\lattype_{lin, 10}\\ (R_B=2A_4+A_9,\;  \degs B=[1,5])$} \\
\hline \mystruthd{18pt}{14pt}
\parbox{6.2cm}{$\lattype_{\smalldddd, c}=\lattype_{con, 6}\\ (R_B=2A_1+2A_2+2A_5,\; \degs B=[2,4])$}
&
\parbox{5.8cm}{$\lattype_{\smallDDDD, l}=\lattype_{lin, 12}\\ (R_B=A_3+A_5+A_{11},\;  \degs B=[2,4])$} \\

\end{array}
$$
The adjacency of $ADE$-types in these conjectural  specializations are all of the type $2A_l\to A_{2l+1}$.
However the existence of these specializations has not yet been confirmed.
\subsection{$Z$-splitting curves in positive characteristics}
The study of $Z$-splitting curves 
has stemmed  from the research of supersingular $K3$ surfaces in characteristic $2$.
In~\cite{MR2129248}, 
we have developed the theory of $Z$-splitting curves 
for purely inseparable double covers of $\Pt$ 
by supersingular $K3$ surfaces  in characteristic $2$.
The configuration of $Z$-splitting curves for such a covering
is described by a  binary linear code of length $21$.
Using this theory,
we have described the stratification of the moduli of polarized supersingular $K3$ surfaces
of degree $2$ in characteristic $2$ by the Artin invariant.

\par
\medskip
Using the structure theorem of the N\'eron-Severi lattices of supersingular $K3$ surfaces
by Rudakov-Sharfarevich~\cite{MR633161},
we can construct the theory of $Z$-splitting curves for supersingular double sextics in odd characteristics.
Note that every  supersingular $K3$ surface 
can be obtained as double sextics~\cite{MR2059747, MR2036331}.

\bibliographystyle{plain}

\def\cprime{$'$} \def\cprime{$'$} \def\cprime{$'$} \def\cprime{$'$}
  \def\cprime{$'$}

\end{document}